\definecolor{sienne}{RGB}{136, 45, 23}
\newtheorem{theorem}{Theorem}
\newtheorem{prop}[theorem]{Proposition}
\newtheorem{corollary}[theorem]{Corollary}
\newtheorem{lemma}[theorem]{Lemma}
\theoremstyle{definition}
\newtheorem{definition}[theorem]{Definition}
\theoremstyle{remark}
\newtheorem{remark}[theorem]{Remark}
\newcommand{\puncfootnote}[1]{\kern-0.2em\footnote{#1}}
\newcommand*{\noic}{\sb{}\kern-\scriptspace }
\DeclareMathOperator{\Sp}{Sp}
\DeclareMathOperator{\Ima}{Im}
\DeclareMathOperator{\Span}{Span}
\DeclareMathOperator{\Supp}{supp}
\DeclareMathOperator{\codim}{codim}
\newcommand{\norme}[1]{\left\lVert#1\right\rVert}
\newcommand{\bigO}{O}
\newcommand{\longmapsfrom}{\mathrel{\mathord{\leftarrow}\mkern-1mu\rule{1pt}{1.1ex}\mkern1mu}}
\newcommand{\diff}[1][-3]{\ensuremath{\mathop{}\mkern#1mu\mathrm{d}}}
\newcommand{\set}{\mathbb}
\newcommand{\N}{\set{N}}
\newcommand{\Z}{\set{Z}}
\newcommand{\R}{} 
\renewcommand{\R}{\set{R}}
\newcommand{\C}{\set{C}}
\newcommand{\T}{\set{T}}
\newcommand{\hol}{{\mathcal O}}
\newcommand{\Tr}[1]{{#1}^{\mathrm{tr}}}
\newcommand{\coloneqq}{\mathrel{\mathord{:}\mathord=}}
\newcommand{\eq}{\Leftrightarrow}
\newcommand{\longeq}{\Longleftrightarrow}
\newcommand{\h}{{\mathrm h}}
\newcommand{\p}{{\mathrm p}}
\newcommand{\m}{{\mathrm m}}
\newcommand{\eu}{\mathrm e}
\newcommand{\iu}{\mathrm i}
\renewcommand{\epsilon}{\varepsilon}
\let\originalleft\left
\let\originalright\right
\renewcommand{\left}{\mathopen{}\mathclose\bgroup\originalleft}
\renewcommand{\right}{\aftergroup\egroup\originalright}
\newcommand{\ssqrt}[1]{\sqrt{\smash[b]{#1}}}
\newcommand{\iun}{\left(\frac\iu n\right)}
\renewcommand{\paragraph}[1]{\vskip-\lastskip\medskip\noindent\textit{#1}}
\title{Null-controllability of linear parabolic-transport systems}
\author{Karine Beauchard\thanks{Univ Rennes, CNRS, IRMAR - UMR 6625, F-35000 Rennes, France. email: \texttt{karine.beauchard@ens-rennes.fr}}, Armand Koenig\thanks{Université Côte d'Azur, CNRS, LJAD, France. email:~\texttt{armand.koenig@unice.fr} }, Kévin Le Balc'h\thanks{Univ Rennes,  CNRS, IRMAR - UMR 6625, F-35000 Rennes, France. email: \texttt{kevin.lebalch@ens-rennes.fr}}}
\begin{document}
\maketitle
\begin{abstract}
Over the past two decades, the controllability of several examples of parabolic-hyperbolic systems has been investigated. The present article is the beginning of an attempt to find a unified framework that encompasses and generalizes the previous results. 

We consider constant coefficients heat-transport systems with coupling of order zero and one, with a locally distributed control in the source term, posed on the one dimensional torus.

We prove the null-controllability, in optimal time (the one expected because of the transport component) when there is as much controls as equations. 
When the control acts only on the transport (resp. parabolic) component, we prove an algebraic necessary and sufficient condition, on the coupling term, for the null controllability.

The whole study relies on a careful spectral analysis, based on perturbation theory. 
For high frequencies, the spectrum splits into a parabolic part and an hyperbolic part. 
The negative controllability result in small time is proved on solutions localized on high hyperbolic frequencies, that solve a pure transport equation up to a compact term.
The positive controllability result in large time is proved by projecting the dynamics onto 3 eigenspaces associated to hyperbolic, parabolic and low frequencies; that defines 3 weakly coupled systems. 
\end{abstract}

Keywords: parabolic-transport systems, null-controllability, observability.

MSC 2010: 93B05, 93B07, 93C20, 35M30.
\setcounter{tocdepth}{1}
\tableofcontents
\section{Introduction}
\subsection{Parabolic-transport systems}
We consider the linear control system
\begin{equation}
\label{Syst}\tag{Sys}
\left\{
\begin{array}{l l}
\partial_t f-  B \partial_{x}^2 f + A \partial_x f + K f  = M u 1_{\omega} &\text{in}\ (0,T)\times \T,\\
f(0,\cdot)=f_0& \text{in}\ \T,
\end{array}
\right.
\end{equation}
where  
\begin{itemize}
\item $T >0$, $\T = \R/(2\pi \Z)$, $\omega$ is a nonempty open subset of $\T$, $d \in \N^{*}$, $m\in\{1,\dots,d\}$, $A, B, K \in \R^{d \times d}$, $M\in\R^{d\times m}$,
\item the state is $f:[0,T]\times\T \to \R^d$,
\item the control is $u:[0,T]\times\T \to \R^m$.
\end{itemize}
We assume 
\begin{gather}
d=d_1 + d_2\ \text{with}\ 1 \leq d_1 < d,\ 1 \leq d_2 < d,\label{dd1d2}\tag{H.1}\\
B = \begin{pmatrix}0&0\\0&D\end{pmatrix},\text{ with } D\in \set R^{d_2\times d_2},\label{h:B}\tag{H.2}\\
\Re(\Sp(D)) \subset (0,\infty). \label{h:D}\tag{H.3}
\end{gather}

Introducing the analogue block decomposition for the $d\times d$ matrices $A$ and $K$, the $d \times m$ matrix $M$ and the function $f$,
\[A = \begin{pmatrix}A'& A_{12}\\A_{21}&A_{22}\end{pmatrix}, \quad
K = \begin{pmatrix}K_{11}& K_{12}\\ K_{21}& K_{22}\end{pmatrix}, \quad
M= \begin{pmatrix}M_1 \\ M_2\end{pmatrix}, \quad
f(t,x)=\begin{pmatrix} f_1(t,x) \\ f_2(t,x) \end{pmatrix},\]
we see that the system~\eqref{Syst} couples a transport equation on $f_1$ with a parabolic equation on $f_2$
\begin{equation}
\label{Syst_bloc}
\mkern-3mu\left\{\!
\begin{array}{ll}
\left( \partial_t +A'\partial_x+K_{11}\right)f_1+\left(A_{12}\partial_x+K_{12}\right)f_2= M_1 u  1_\omega
&\text{in}\ (0,T)\times \T,\!\\
\left( \partial_t -D\partial_x^2+A_{22}\partial_x+K_{22}\right)f_2 +\left(A_{21}\partial_x+K_{21}\right)f_1 = M_2 u  1_\omega &\mathrm{in}\ (0,T)\times \T,\!\\
(f_1,f_2)(0,\cdot)=(f_{01},f_{02}) & \text{in}\ \T.
\end{array}
\right.
\end{equation}
We make the following hypothesis on the matrix $A'$
\begin{equation}
\tag{H.4}
A'\text{ is diagonalizable with }\Sp(A')\subset \set R.\label{h:A1}
\end{equation}
We will prove later, with vector valued Fourier series and a careful spectral analysis, that for every $f_0 \in L^2(\T,\C^d)$ and $u\in L^2((0,T)\times\T,\C^m)$, there exists a unique solution $f\in C^0([0,T],L^2(\T)^d)$ of \eqref{Syst} (see Section~\ref{sec:wp}).
In this article, we are interested in the null controllability of~\eqref{Syst}.
\begin{definition} \label{definition:NC}
The system \eqref{Syst} is null-controllable on $\omega$ in time $T$ if for every $f_0 \in L^2(\T;\C^d)$, there exists a control $u \in L^2((0,T)\times\T,\C^m)$ supported on $[0,T]\times\omega$ such that the solution $f$ of \eqref{Syst} satisfies $f(T,\cdot)=0$.
\end{definition}
We aim at
\begin{itemize}
\item identifying the minimal time for null controllability, 
\item controlling the system with a small number of controls $m<d$,
\item understanding the influence of the algebraic structure $(A,B,K,M)$ on the above properties. 
\end{itemize}

\subsection{Statement of the results}

\subsubsection{Control on any component, minimal time}

Our first result identifies the minimal time, when the control acts on each of the $d$ equations.
\begin{theorem}\label{th:main}
We assume that $\omega$ is a strict open subset of $\set T$. We also assume~\eqref{dd1d2}--\eqref{h:A1} and that the control matrix is $M=I_d$ (and so $m=d$).
We define\footnote{If $I\subset \set R$ is measurable, we note $|I|$ its Lebesgue measure.}
\begin{equation}\label{eq:def_l}\ell(\omega) \coloneqq \sup\{ |I|;\ I \text{ connected component of }  \T \setminus \omega \},\end{equation}
\[\mu_*=\min\{ |\mu|;\ \mu \in \Sp(A') \},\]
and
\begin{equation}\label{eq:T}
T^\ast = 
\left\lbrace \begin{array}{ll}
\frac{\ell(\omega)}{\mu_*}  & \text{ if } \mu_* >0, \\
+\infty & \text{ if } \mu_*=0.
\end{array}\right.
\end{equation}
Then
\begin{enumerate}
\item the system~\eqref{Syst} is not null-controllable on $\omega$ in time $T<T^\ast$,
\item the system~\eqref{Syst} is null-controllable on $\omega$ in any time $T>T^\ast$.
\end{enumerate}
\end{theorem}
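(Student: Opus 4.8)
The plan is to pass to the dual formulation and to exploit the spectral decomposition of the generator. By the Hilbert Uniqueness Method, statement \textit{i)} is equivalent to the failure, for $T<T^\ast$, of the observability inequality
\[\norme{\phi(0)}_{L^2(\T)}^2 \leq C\int_0^T \norme{\phi(t)}_{L^2(\omega)}^2\,\diff t\]
for solutions $\phi$ of the adjoint system $-\partial_t\phi - \Tr{B}\partial_x^2\phi - \Tr{A}\partial_x\phi + \Tr{K}\phi = 0$ (recall $M=I_d$), and statement \textit{ii)} is equivalent to its validity for $T>T^\ast$. Both rely on the spectral analysis of the Fourier symbols, a family of $d\times d$ matrices $\mathcal L_n$, which I take as given: for $\abs n$ large, $\Sp(\mathcal L_n)$ splits into a \textit{hyperbolic part} of $d_1$ eigenvalues $\iu\mu n+\grandO{1}$, $\mu\in\Sp(A')$, and a \textit{parabolic part} of $d_2$ eigenvalues with real part $\sim c\abs{n}^2$ for some $c>0$ (using~\eqref{h:D}), the corresponding spectral projectors being bounded uniformly in $n$ since the two groups are separated by a distance $\gtrsim\abs n$. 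This yields a decomposition $L^2(\T;\C^d)=E_{\mathrm{low}}\oplus E_{\mathrm{par}}\oplus E_{\mathrm{hyp}}$ into three closed subspaces, invariant under the flow, where $E_{\mathrm{low}}$ is the span of the finitely many modes $\abs n\leq N_0$ and $E_{\mathrm{hyp}}$, $E_{\mathrm{par}}$ collect the hyperbolic and parabolic parts of the modes $\abs n>N_0$; on $E_{\mathrm{hyp}}$, after using~\eqref{h:A1} to diagonalize $A'$, the dynamics is a system of scalar transport equations with velocities of modulus $\abs\mu$, $\mu\in\Sp(A')$, modulo a bounded and in fact relatively compact perturbation.

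For \textit{i)}, let $T<T^\ast$, so $\mu_*T<\ell(\omega)$ and $\T\setminus\omega$ has a connected component $I$ with $\abs I>\mu_*T$ (any component if $\mu_*=0$). Choose $\mu\in\Sp(A')$ with $\abs\mu=\mu_*$, a nonzero vector $v$ in the associated (perturbed) hyperbolic eigendirection, and $\chi\in C_c^\infty(I)$ supported in a subinterval short enough that its translates by any length $\leq\mu_*T$ stay inside $I$. The wave packets $\phi^{(k)}(T,\cdot)=\eu^{\iu k x}\chi(x)v$, $k\to+\infty$, are, up to an $\grandO{1/k}$ error, superpositions of hyperbolic eigenfunctions with eigenvalues clustered around $\pm\iu\mu k$; since the group velocity has modulus $\mu_*+\grandO{1/k}$, the corresponding solutions satisfy on $[0,T]$ an expansion $\phi^{(k)}(t,x)=\eu^{\iu k(x-\mu(T-t))}\chi\bigl(x-\mu(T-t)\bigr)v+r^{(k)}(t,x)$ with $\norme{r^{(k)}}_{C^0([0,T];L^2)}\to 0$, whose leading term is supported in $I\subset\T\setminus\omega$ by the choice of $\chi$. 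Hence $\int_0^T\norme{\phi^{(k)}(t)}_{L^2(\omega)}^2\,\diff t\to0$ while $\norme{\phi^{(k)}(0)}_{L^2}$ stays bounded away from $0$ (the hyperbolic flow distorts the norm by at most a factor $\eu^{CT}$), contradicting observability. The delicate point is to justify the error term $r^{(k)}$, i.e. that the relatively compact lower-order coupling is negligible on these highly oscillating states; this uses the quantitative eigenvalue and eigenvector expansions from perturbation theory.

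For \textit{ii)}, let $T>T^\ast$. I would first establish null-controllability of the system restricted to each of the three invariant blocks, in the appropriate time. On $E_{\mathrm{low}}$ the problem is finite-dimensional and the observation is the restriction to the open set $\omega$; since a nonzero $\C^d$-valued trigonometric polynomial cannot vanish on $\omega$, the block satisfies unique continuation, hence (finite dimension) null-controllability in every time $T>0$. On $E_{\mathrm{par}}$ the spectrum lies in $\{\Re z\gtrsim\abs{n}^2\}$ and, up to the uniformly bounded change of frame, the eigenfunctions are $\eu^{\iu n x}$ times eigenvectors of $D$; the classical spectral inequality on $\T$ then gives the Lebeau--Robbiano spectral inequality for finite sums of these eigenfunctions, which, combined with the dissipation $\eu^{-c t\abs{n}^2}$, yields null-controllability in every time $T>0$ by the Lebeau--Robbiano method. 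On $E_{\mathrm{hyp}}$ one has a diagonal system of scalar transport equations with velocities $\mu\in\Sp(A')$ plus a relatively compact perturbation; each scalar transport on the circle is null-controllable from $\omega$ in every time $>\ell(\omega)/\abs\mu$, and $T>T^\ast\geq\ell(\omega)/\abs\mu$ for all $\mu\in\Sp(A')$, so the diagonal part is null-controllable in time $T$, and the compact perturbation is absorbed by a compactness--uniqueness argument (a relaxed observability inequality together with unique continuation for transport systems with zero-order coupling, valid for $T>T^\ast$).

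It remains to deduce null-controllability of the whole system in time $T$ from that of the three blocks, and this is the main obstacle: the three flows are mutually invariant and have pairwise disjoint spectra once $N_0$ is large, but the control and observation operators ($M\mathds 1_\omega$ and restriction to $\omega$) do not respect the decomposition, so the subsystems are only \textit{weakly coupled}, not decoupled. The plan is to reassemble them by a perturbative construction of the control that exploits the spectral separation and the fact that $E_{\mathrm{par}}$ and $E_{\mathrm{low}}$ are null-controllable in arbitrarily small time: a control steering $E_{\mathrm{hyp}}$ to zero over $[0,T]$ is corrected, on shorter subintervals, so as to also steer $E_{\mathrm{par}}$ and $E_{\mathrm{low}}$ to zero, the coupling errors being controlled by the resolvent gaps and summed geometrically. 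Equivalently, at the level of observability, one proves a relaxed inequality in which the cross terms and the compact perturbations sit in a weaker norm, and then removes that norm by a compactness--uniqueness argument, using that a solution of the adjoint system whose restriction to $(0,T)\times\omega$ vanishes must vanish identically — again because, eigenspace by eigenspace, it reduces to a trigonometric polynomial vanishing on the open set $\omega$.
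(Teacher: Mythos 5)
Your overall architecture matches the paper's: for part \textit{i)} you build high-frequency approximate transport solutions of the adjoint system to disprove observability, and for part \textit{ii)} you split $L^2(\T)^d$ into low/parabolic/hyperbolic invariant blocks via the high-frequency spectral splitting. For \textit{i)}, modulating a bump $\chi$ by $\eu^{\iu k x}$ is a legitimate variant of the paper's high-pass filtering $\chi_N = P_N(-\iu\partial_x)\chi$; both concentrate the test state near the hyperbolic $\mu$-branch, and the remaining contamination (other hyperbolic branches, parabolic branch, low modes) is $O(1/k)$ either way. One imprecision: the hyperbolic branch carries a zero-order subprincipal matrix $R_\mu^\h(0)^*$, so the correct principal term is $\eu^{\iu k(x-\mu(T-t))}\chi(x-\mu(T-t))\,\eu^{(T-t)R_\mu^\h(0)^*}v$, not $\eu^{\iu k(x-\mu(T-t))}\chi(x-\mu(T-t))\,v$; as written your remainder $r^{(k)}$ is $O(1)$, not $o(1)$. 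The conclusion survives because the amplitude factor leaves the spatial support unchanged, but the claimed estimate is false as stated.

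The genuine gap is in the reassembly step of \textit{ii)}, which you yourself flag as the main obstacle. Your two suggested mechanisms --- a geometrically-summed iterative correction of the control, or a relaxed observability inequality with weaker cross-term norms removed by compactness--uniqueness --- are sketched but neither is made to close, and in particular you never say where compactness would actually come from. The paper's resolution is concrete: it stages the controls in time, $u_\h$ supported in $(0,T')\times\omega$ and $u_\p$ in $(T',T)\times\omega$ with $T^\ast<T'<T$, proves that each of the hyperbolic and parabolic blocks can be steered to zero given the other block's control, and casts the compatibility requirement as a fixed-point equation $u_\p = Cf_0 + \mathcal U^\p_2\mathcal U^\h_2\,u_\p$. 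This is solved by the Fredholm alternative, which applies \emph{because} $\mathcal U^\p_2\mathcal U^\h_2$ is compact --- and that compactness is manufactured by deliberately building the parabolic control to take values in $C^\infty_c((T',T)\times\omega)^{d_2}$ rather than merely $L^2$, via a regularized Lebeau--Robbiano moment construction. That smoothness requirement is the key technical move your proposal does not identify: without it there is no compactness and the weak coupling between blocks cannot be resolved. Even then, Fredholm only gives controllability on a finite-codimensional subspace; a separate compactness--uniqueness argument is still needed to absorb the cofinite defect, by showing the non-controllable space is finite-dimensional, semigroup-invariant, hence spanned by finitely many Fourier modes, and then applying the Lebeau--Robbiano spectral inequality --- a quantitative version of your ``trigonometric polynomial vanishing on $\omega$'' remark, which on its own is not enough.
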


In particular, when $\omega$ is an interval of $\T$ and $\mu_*>0$, then the minimal time for null controllability is $T^\ast = \frac{2 \pi - |\omega|}{\mu_*}$.

Actually, the controls may be more regular than in \Cref{definition:NC}: we construct controls of the form $u=(u_1,u_2)$ where $u_1 \in L^2((0,T)\times\omega)^{d_1}$ and $u_2 \in C^\infty_c((0,T)\times\omega)^{d_2}$.

The proof of Theorem \ref{th:main} relies on a spectral decomposition: for high frequencies, the spectrum splits into a parabolic part and a hyperbolic part. 

The negative result in time $T<T^\ast$ is expected, because of the transport component of the system, but its proof does require some care.
Indeed, because of the coupling with a parabolic component, in general, there does not exist pure transport solutions to the system~\eqref{Syst}, concentrated outside $(0,T)\times\omega$ (see Appendix~\ref{app:transport} for more precision).


The proof of the positive result, in time $T>T^\ast$ relies on an adaptation, to systems with arbitrary size, of the strategy introduced by Lebeau and Zuazua~\cite{lebeau_1998} to control the system of linear thermoelasticity, that couples a scalar heat equation and a scalar wave equation. By projecting the dynamics onto appropriate eigenspaces, the system is decomposed into 3 weakly coupled systems. The first one behaves like a transport system, its controllability is handled by hyperbolic methods from~\cite{alabau-boussouira_2017}. The second one behaves like a parabolic system, its controllability is handled by the Lebeau-Robbiano method. The third one, associated to low frequencies, has finite dimension; its controllability is handled by a compactness/uniqueness argument.

The null controllability of the system~\eqref{Syst} in time $T=T^\ast$ is an open problem.

\subsubsection{Control on the hyperbolic component}

Our second result concerns controls acting on the whole transport component, $M_1=I_{d_1}$, but not on the parabolic component, $M_2=0$.
To get an aesthetic necessary and sufficient algebraic condition for null controllability, we also assume that the diffusion is given by $D=I_{d_2}$, the coupling is realized exclusively by the transport term $A_{21}\partial_x f_1$, i.e.\ $K_{21}=0$ and there is no zero order term in the parabolic dynamics, i.e.\ $K_{22}=0$, which corresponds to the system
\begin{equation} \label{Syst_bloc_hyp_A}
\mkern-3mu\left\{\!
\begin{array}{ll}
\left( \partial_t +A'\partial_x+K_{11}\right)f_1+\left(A_{12}\partial_x+K_{12}\right)f_2= u_1  1_\omega
&\text{in}\ (0,T)\times \T,\!\\
\left( \partial_t - \partial_x^2+A_{22}\partial_x\right)f_2 + A_{21}\partial_x f_1 = 0 &\mathrm{in}\ (0,T)\times \T.
\end{array} \right.
\end{equation}
By integrating with respect to the space variable the second equation of (\ref{Syst_bloc_hyp_A}), we see that, for being steered to zero, an initial condition $f_0=(f_{01},f_{02})\in L^2(\T)^{d_1} \times L^2(\T)^{d_2}$ has to satisfy
\begin{equation} \label{Hyp_A_contrainte}
\int_{\T} f_{02}(x) \diff x =0.
\end{equation}
For any vector subspace $E$ of $L^1(\T)$ we denote by $E_\m$ the vector subspace made of functions $f\in E$ with zero mean value, i.e.\ $\int_{\T} f(x)dx =0$.

\begin{theorem}\label{th:main_2}
We assume~\eqref{dd1d2}--\eqref{h:A1}, $D=I_{d_2}$ $m=d_1$, $M_1=I_{d_1}$, $M_2=0$, $K_{21}=0$ and $K_{22}=0$. Let $T^\ast$ be defined by~\eqref{eq:T}.
The following statements are equivalent:
\begin{itemize}
\item For every $T>T^{\ast}$ and $f_0=(f_{01},f_{02})\in L^2(\T)^{d_1} \times L^2_\m(\T)^{d_2}$, there exists $u_1 \in L^2((0,T)\times\omega)^{d_1}$ such that the solution of (\ref{Syst_bloc_hyp_A}) satisfies $f(T)=0$.
\item The couple of matrices $(A_{22},A_{21})$ satisfies the Kalman rank condition:
\begin{equation} \label{Kalman_Thm2}
\Span \{ A_{22}^j A_{21} X_1 ; X_1 \in \C^{d_1}, 0 \leq j \leq d_2-1 \} = \C^{d_2}.
\end{equation}
\end{itemize}
\end{theorem}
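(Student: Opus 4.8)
The plan is to reduce the null-controllability of \eqref{Syst_bloc_hyp_A} to a duality/observability statement, then exploit the Fourier decomposition in $x$ to turn the PDE system into a family of finite-dimensional control problems indexed by the frequency $n\in\Z$, and finally recognize that the Kalman condition \eqref{Kalman_Thm2} is exactly what governs whether the parabolic block $f_2$ is "visible" from the transport component $f_1$ uniformly in $n$. Concretely, writing $f(t,x)=\sum_{n\in\Z}\hat f_n(t)\eu^{\iu n x}$, the operator $-B\partial_x^2+A\partial_x+K$ acts on the $n$-th mode as the matrix
\[
\mathcal A_n \coloneqq n^2 B + \iu n A + K
= \begin{pmatrix} \iu n A' + K_{11} & \iu n A_{12}+K_{12}\\ \iu n A_{21} & n^2 I_{d_2}+\iu n A_{22}\end{pmatrix},
\]
using $D=I_{d_2}$, $K_{21}=K_{22}=0$. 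The control only enters the $f_1$-block (times $1_\omega$), which does \emph{not} respect the Fourier decomposition; so the frequency-by-frequency analysis must be combined with the geometric information carried by $\omega$. This is precisely the point where the machinery announced in the excerpt — the spectral splitting into a hyperbolic part and a parabolic part for high frequencies, the hyperbolic controllability results of \cite{alabau-boussouira_2017}, the Lebeau–Robbiano method for the parabolic part, and a compactness/uniqueness argument for the low frequencies — is brought to bear, exactly as in the proof of \Cref{th:main}.

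For the implication \textit{(Kalman $\Rightarrow$ controllability)}, I would follow the three-block strategy: project onto the hyperbolic, parabolic, and low-frequency eigenspaces. On the hyperbolic eigenspaces the dynamics is a transport system controlled through $u_1$, hence controllable in time $T>T^\ast$ by \cite{alabau-boussouira_2017}; the key new verification is that the parabolic component $f_2$, which is \emph{not} directly controlled, is nonetheless steered to zero. Here one observes that for the $n$-th mode the pair "parabolic state driven by the transport state" is governed, at leading order in $n$, by the pair $(n^2 I_{d_2}+\iu n A_{22},\ \iu n A_{21})$, and the Kalman condition \eqref{Kalman_Thm2} for $(A_{22},A_{21})$ is equivalent (for each fixed $n\neq0$) to the Kalman condition for $(n^2 I+\iu n A_{22},\ \iu n A_{21})$, because adding a scalar multiple of the identity does not change the controllable subspace and $\iu n\neq 0$. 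Thus once $f_1$ is driven appropriately, a Duhamel/Gramian computation — made quantitative and uniform in $n$ by the spectral asymptotics — shows $f_2(T)=0$ too; the zero-mean constraint \eqref{Hyp_A_contrainte} takes care of the excluded mode $n=0$, on which the parabolic block has a kernel. The parabolic high-frequency part is handled by the Lebeau–Robbiano spectral inequality and the low-frequency finite-dimensional part by a Hautus/unique-continuation argument, the Kalman hypothesis again ensuring the relevant Hautus test holds on every mode.

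For the converse \textit{(controllability $\Rightarrow$ Kalman)}, I would argue by contraposition: if \eqref{Kalman_Thm2} fails, there is a nonzero vector $\xi\in\C^{d_2}$ orthogonal to $\Span\{A_{22}^{j}A_{21}X_1\}$, equivalently a nontrivial $A_{22}^{\mathrm{tr}}$-invariant subspace of $\C^{d_2}$ contained in $\ker A_{21}^{\mathrm{tr}}$. Feeding this into the adjoint system, one builds, for a single fixed frequency $n_0\neq0$ (or a family of frequencies), a nonzero solution of the dual equation whose $f_1$-component vanishes identically, hence whose observation $M_1^{\mathrm{tr}}1_\omega\,\varphi = 1_\omega\varphi_1$ is zero on $(0,T)\times\omega$; by the duality characterization of null-controllability this obstructs steering to zero the corresponding initial datum, which can be chosen with $\int_\T f_{02}=0$. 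The main obstacle, and the part requiring the most care, is the \emph{uniformity in the frequency $n$} in the positive direction: the transport controllability cost, the parabolic Gramian lower bound coming from the Kalman condition, and the spectral perturbation estimates must all be controlled by constants independent of $n$ (with the correct $n$-powers absorbed into the heat-type decay), so that the three weakly coupled subsystems can be patched into a single control for \eqref{Syst_bloc_hyp_A} — this is where the careful perturbation-theoretic spectral analysis announced in the abstract is indispensable.
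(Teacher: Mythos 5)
Your overall framing (duality, Fourier decomposition, spectral splitting into hyperbolic/parabolic/low frequencies, Kalman as the algebraic obstruction, Hautus-type test for the necessity) is consistent with the paper's philosophy, and your converse direction is essentially right: if \eqref{Kalman_Thm2} fails, the PBH test gives a nonzero $\xi\in\ker A_{21}^{\mathrm{tr}}$ that is an eigenvector of $A_{22}^{\mathrm{tr}}$, and the adjoint solution with $g_1\equiv 0$ and $g_2(t,x)=\eu^{(-n^2+\iu n\bar\lambda)t}\xi\,\eu^{\iu nx}$ kills the observation, exactly as the paper's short argument in \Cref{SectionRequirementKalman} does.

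However, for the implication \emph{Kalman} $\Rightarrow$ \emph{controllability} your plan has a genuine gap. You propose to re-run the whole three-block strategy of the proof of \Cref{th:main} with the control acting only on $f_1$, and to handle the parabolic high frequencies by "a Duhamel/Gramian computation, made quantitative and uniform in $n$." That step is precisely where the difficulty lies: in the paper's \Cref{LemPar}, the moment problem for the parabolic projection is solved with a control acting \emph{directly} on the last $d_2$ components, and once $M_2=0$ that construction does not apply as is. Making the indirect control through the coupling $\iu n A_{21}$ quantitative and uniform in $n$ would require a new vectorial Lebeau--Robbiano argument with source entering only through the hyperbolic coordinate — the coupling grows linearly in $n$ and interacts with the parabolic decay $\eu^{-n^2 T}$ in a way that a plain Gramian bound does not resolve — and nothing in your plan explains how to close this. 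The paper sidesteps the whole issue: it never re-does the three-block splitting for \Cref{th:main_2}. Instead it first proves a \emph{strengthened} observability inequality (Proposition \ref{PropObs1A}), obtained by applying the machinery of \Cref{th:main} to controls of the form $(u_\h,\partial_x^{d_2}u_\p)$, which yields
\[
\|g(T)\|_{L^2}\le C\big(\|g_1\|_{L^2(q_T)}+\|\partial_x^{d_2}g_2\|_{H^{-2d_2+1}(q_T)}\big),
\]
and then it \emph{eliminates} the $g_2$-observation purely algebraically on the dual side. The Kalman matrix $P=(A_{21},A_{22}A_{21},\dots,A_{22}^{d_2-1}A_{21})$ conjugates $(A_{22},A_{21})$ into companion/Brunovsky form, so the adjoint system becomes a cascade; one then iterates the bound
\[
\|\partial_x^i g_2^i\|_{H^{-2i+1}(q_T)}\le C\big(\|\partial_x^{i-1}g_2^{i-1}\|_{H^{-2(i-1)+1}(q_T)}+\|g_1\|_{L^2(q_T)}\big),
\]
down to the first adjoint equation, which trades $\|\partial_x g_2^1\|_{H^{-1}(q_T)}$ for $\|g_1\|_{L^2(q_T)}$. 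This cascade elimination in negative Sobolev norms is the key lemma your plan is missing, and it is what lets the Kalman condition be used without ever attacking the indirect parabolic control head on. You also do not notice the role of $\partial_x^{d_2}$ in the strengthened observability (which arises because the coupling $A_{21}\partial_x f_1$ is first-order, and is essential for the $H^{-s}$-counting in the cascade to close), nor the zero-mean bookkeeping needed to invert $\partial_x$ on the relevant subspace.
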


With the same proof, similar statements can be proved for the following systems:
\begin{equation} \label{Syst_bloc_hyp_K_0}
\mkern-3mu\left\{\!
\begin{array}{ll}
\left( \partial_t +A'\partial_x+K_{11}\right)f_1+\left(A_{12}\partial_x+K_{12}\right)f_2= u_1  1_\omega
&\text{in}\ (0,T)\times \T,\!\\
\left( \partial_t -\partial_x^2+K_{22}\right)f_2 + K_{21} f_1 = 0 &\text{in}\ (0,T)\times \T,\!\\
\end{array} \right.
\end{equation}
with arbitrary initial conditions $f_0 \in L^2(\T)^d$ and  Kalman rank condition on $(K_{22},K_{21})$ (see Section \ref{sec:cont_parab_simple}),
\begin{equation}
\mkern-3mu\left\{\!
\begin{array}{ll}
\left( \partial_t +A'\partial_x+K_{11}\right)f_1+\left(A_{12}\partial_x+K_{12}\right)f_2= u_1  1_\omega
&\text{in}\ (0,T)\times \T,\!\\
\left( \partial_t -\partial_x^2+A_{22} \partial_x \right)f_2 + K_{21} f_1 = 0 &\mathrm{in}\ (0,T)\times \T,\!\\ 
\end{array} \right.
\end{equation}
with arbitrary initial conditions $f_0 \in L^2(\T)^d$ and  Kalman rank condition on $(A_{22},K_{21})$,
\begin{equation}
\mkern-3mu\left\{\!
\begin{array}{ll}
\left( \partial_t +A'\partial_x+K_{11}\right)f_1+\left(A_{12}\partial_x+K_{12}\right)f_2= u_1  1_\omega
&\text{in}\ (0,T)\times \T,\!\\
\left( \partial_t -\partial_x^2+K_{22} \right)f_2 + A_{21} \partial_x f_1 = 0 &\mathrm{in}\ (0,T)\times \T,
\end{array} \right.
\end{equation}
with initial conditions $f_0 \in L^2(\T)^d$ satisfying (\ref{Hyp_A_contrainte}) and Kalman rank condition on $(K_{22},A_{21})$.

The proof of the controllability of (\ref{Syst_bloc_hyp_A}) uses 2 ingredients. The first ingredient is a strengthened version of Theorem \ref{th:main} with smoother controls, more precisely, the associated observability inequality with observation of negative Sobolev norms of the parabolic component. The second ingredient is a cascade structure (or Brunovski form) of the system (\ref{Syst_bloc_hyp_A}) ensured by the Kalman condition, to eliminate the observation of the parabolic component.

Proving an algebraic necessary and sufficient condition for null controllability of (\ref{Syst}), involving both matrices $D$, $A$ and $K$ is an open problem.
In the context of parabolic systems, this difficulty already appeared, see \cite{ammarkhodja_2009a} and \cite{duprez_2016a}.

\subsubsection{Control on the parabolic component}

Our third result concerns controls acting on the whole parabolic component, $M_2=I_{d_2}$, but not on the hyperbolic component of the system, $M_1=0$. To get an aesthetic necessary and sufficient condition for null controllability, we also assume that the coupling is realized exclusively by the transport term $A_{12}\partial_x f_2$, i.e.\ $K_{12}=0$, and there is no zero order term in the hyperbolic dynamics, i.e.\ $K_{11}=0$. This corresponds to the system
\begin{equation} \label{Syst_bloc_parab_A}
\mkern-3mu\left\{\!
\begin{array}{ll}
\left( \partial_t +A'\partial_x \right)f_1+ A_{12}\partial_x f_2= 0
&\text{in}\ (0,T)\times \T,\\
\left( \partial_t - D \partial_x^2+A_{22}\partial_x+K_{22}\right)f_2 + (A_{21}\partial_x + K_{21}) f_1 = u_2 1_\omega &\mathrm{in}\ (0,T)\times \T,\\
(f_1,f_2)(0,\cdot)=(f_{01},f_{02}) & \text{in}\ \T.
\end{array} \right.
\end{equation}
By integrating with respect to the space variable the first equation of (\ref{Syst_bloc_parab_A}), we see that, for being steered to zero, an initial condition $f_0=(f_{01},f_{02})\in L^2(\T)^{d_1} \times L^2(\T)^{d_2}$ has to satisfy
\begin{equation} \label{Hyp_A_contrainte_parab}
\int_{\T} f_{01}(x) \diff x =0 
\end{equation}
i.e.\ $f_0=(f_{01},f_{02})\in L^2_\m(\T)^{d_1} \times L^2(\T)^{d_2}$.

We need to adapt the notion of null controllability, because null controllable initial conditions necessarily have a regular hyperbolic component. Indeed, in (\ref{Syst_bloc_parab_A}), the source term 
$A_{12}\partial_x f_2$ entering the hyperbolic equation on $f_1$ --- that has to serve as an indirect control for $f_1$ ---  is smooth, because of the parabolic smoothing on $f_2$. Such a smooth source term cannot steer to zero non-smooth initial conditions.

\begin{theorem}\label{th:main_3}
Let $\omega$ be an open interval of $\T$. We assume~\eqref{dd1d2}--\eqref{h:A1}, $m=d_2$, $M_1=0$, $M_2=I_{d_2}$, $K_{11}=0$ and $K_{12}=0$. Let $T^\ast$ be defined by~\eqref{eq:T}.
The following statements are equivalent.
\begin{itemize}
\item For every $T>T^\ast$ and $f_0=(f_{01},f_{02}) \in H^{d_1+1}_\m(\T)^{d_1} \times H^{d_1+1}(\T)^{d_2}$ there exists $u_2 \in L^2((0,T)\times\omega)^{d_2}$ such that the solution of~\eqref{Syst_bloc_parab_A} satisfies $f(T)=0$.
\item The couple of matrices $(A',A_{12})$ satisfies the Kalman rank condition:
\begin{equation} \label{Kalman_Thm3}
\Span \{ (A')^j A_{12} X_2 ; X_2 \in \C^{d_2}, 0 \leq j \leq d_2-1 \} = \C^{d_2}.
\end{equation}
\end{itemize}
\end{theorem}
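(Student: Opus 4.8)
The plan is to mirror the structure of the proof of Theorem~\ref{th:main_2}, exchanging the roles of the transport and parabolic components. The key algebraic observation is that, under the hypotheses $K_{11}=0$, $K_{12}=0$, the first equation of~\eqref{Syst_bloc_parab_A} reads $\partial_t f_1 + A'\partial_x f_1 = -A_{12}\partial_x f_2$, so the hyperbolic component $f_1$ is driven \emph{only} by $f_2$, with no direct control. Iterating in the spirit of a Brunovski/cascade form, one differentiates the first equation and substitutes: the pair $(A',A_{12})$ governs how the control, acting on $f_2$, propagates into the various components of $f_1$. The Kalman condition~\eqref{Kalman_Thm3} is exactly the condition under which this cascade is non-degenerate, i.e.\ the control on $f_2$ eventually reaches every direction of $f_1$ after at most $d_2$ (in fact $d_1$, by Cayley--Hamilton on $A'$ restricted to the relevant invariant subspace) differentiations.

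For the implication (Kalman $\Rightarrow$ controllability), I would first apply the strengthened version of Theorem~\ref{th:main} announced in the excerpt: the system with a full control $M=I_d$ is null-controllable in any time $T>T^\ast$ with a control whose parabolic part is $C^\infty_c$; equivalently one has an observability inequality for the adjoint system in which the \emph{transport} component of the observation appears only through a negative Sobolev norm. The point is then to remove the need for a direct control on $f_1$ by exploiting the cascade: since $f_1$ solves a transport equation forced by $\partial_x f_2$ and its time-derivatives, knowing that $f_2$ (hence, through the Kalman condition and parabolic smoothing, a suitable combination of $\partial_x f_2, \partial_x \partial_t f_2, \dots$) is driven to zero forces $f_1$ to zero as well — this is why the initial data must be taken in $H^{d_1+1}_\m(\T)^{d_1}\times H^{d_1+1}(\T)^{d_2}$: one loses $d_1$ derivatives plus one through the $d_1$-fold differentiation and the $\partial_x$, and the mean-zero constraint on $f_{01}$ is forced by~\eqref{Hyp_A_contrainte_parab}. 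Concretely: apply the full-control result to steer $(f_1,f_2)$ to zero using some $u=(u_1,u_2)$, then observe that the Kalman condition lets one express the fictitious control $u_1$ as a differential operator in $x$ applied to (derivatives of) $f_2$ and the genuine control $u_2$, which can be reabsorbed — up to the loss of regularity — into a modified genuine control $\tilde u_2$ acting only on the parabolic block. The fact that $\omega$ is an interval is used here to localize these manipulations with cutoff functions without creating support problems.

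For the converse (controllability $\Rightarrow$ Kalman), I would argue by contraposition: if~\eqref{Kalman_Thm3} fails, there is a proper $A'$-invariant subspace $V\subsetneq\C^{d_2}$ containing $\Ima A_{12}$ and stable under the dynamics; more precisely, taking $W\subset\C^{d_1}$ the orthogonal (for the transpose) complement of the Kalman space, one finds a nonzero left-eigenvector structure yielding a scalar (or low-dimensional) quantity built from $f_1$ that is \emph{decoupled} from $f_2$ and hence from the control, and which evolves by a pure transport equation on $\T$. A nonzero mean-zero initial datum for this quantity, concentrated appropriately, cannot be steered to zero — exactly as in the negative part of Theorem~\ref{th:main}, because a pure transport equation on the torus conserves the $L^2$ norm and a smooth source (the genuine control being absent from this component) cannot kill rough data. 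One must check that such a datum is compatible with the regularity class $H^{d_1+1}_\m$; this is immediate since one can choose it smooth.

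The main obstacle I anticipate is the bookkeeping in the first implication: correctly tracking the regularity loss through the cascade of $d_1$ differentiations and the spatial derivative, and verifying that the reabsorption of the fictitious control $u_1$ into $u_2$ produces an $L^2((0,T)\times\omega)^{d_2}$ control rather than something with time-derivatives of the state that one cannot control — this requires using the smoothness ($C^\infty_c$) of the parabolic part of the control furnished by the strengthened Theorem~\ref{th:main}, plus parabolic regularity estimates on $f_2$ away from the final time, and a careful choice of cutoffs in $t$ near $t=T$ so that the modified control vanishes at the endpoints. A secondary subtlety is ensuring that the Kalman condition on $(A',A_{12})$, which a priori only controls $d_2$ directions, suffices given $\dim \C^{d_2}=d_2$: here it is exactly the right count, and~\eqref{Kalman_Thm3} says the iterated images span all of $\C^{d_2}$, so the cascade indeed reaches every parabolic direction — but one must be careful that it is $f_1$ (dimension $d_1$) that needs to be reconstructed, which is why only $d_1$ iterations, controlled again by Cayley--Hamilton, are needed and why the loss is $d_1+1$ derivatives.
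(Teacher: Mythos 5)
Your high-level plan — swap the roles of the parabolic and transport blocks relative to Theorem~\ref{th:main_2}, use a strengthened observability inequality plus a Kalman-driven cascade — is indeed the paper's strategy in \Cref{Sec:Par}, and your necessity argument (a left eigenvector $W$ of $\Tr{A'}$ in the orthocomplement of the Kalman space makes $\Tr{W}f_1$ solve a pure, uncontrolled transport equation that conserves the $L^2$ norm) is a valid variant of the Fourier-mode ODE argument the paper reuses from \Cref{SectionRequirementKalman}. For sufficiency, though, your primal ``reabsorption'' picture would not work as stated: the fictitious control $u_1$ acts on the transport equation, where there is no genuine control to absorb it into, and forcing $u_1=0$ over-determines the system once $f_2$ is fixed. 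The paper argues on the dual side: it proves the intermediate observability inequality of \Cref{PropObs1APar}, where $g_1$ is observed only through $\partial_x^{d_1}g_1$ in a negative Sobolev norm while $g_2$ is observed in $L^2(q_T)$, and then eliminates the $g_1$ observation equation by equation through the adjoint cascade in companion form furnished by the Kalman condition.

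The more substantial gap concerns which ``strengthened version of Theorem~\ref{th:main}'' is available. The regularity improvement announced after Theorem~\ref{th:main} and captured in \Cref{CorHUM} smooths only the \emph{parabolic} control $u_2$, hence weakens only the observation of $g_2$; that is the ingredient Theorem~\ref{th:main_2} uses, but it is the wrong one here. Theorem~\ref{th:main_3} needs the opposite: weak-norm observation of the transport component $g_1$, equivalently (by duality, \Cref{PropNullControldxPar}) controls acting on the transport block through $\partial_x^{d_1}u_\h$ with $u_\h\in H_0^{2d_1+1}(q_T)^{d_1}$, the parabolic control staying merely $L^2$. Constructing such \emph{regular} transport controls is the genuinely new technical step — \Cref{LemHyp_BIS1} through \Cref{PropExactControllabilityResultTransport} — and this is precisely where the hypothesis that $\omega$ is an interval is used, via the explicit cut-off construction of \Cref{LemmaCutOffTransport} adapted from \cite[Lemma 2.6]{alabau-boussouira_2017}. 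Your proposal treats this as routine cutoff bookkeeping, but without it the cascade has nothing to start from: \Cref{PropObs1APar} cannot be deduced from \Cref{CorHUM} alone.
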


In \Cref{th:main_3}, we assume that the open set of control $\omega$ is an interval because the proof uses \cite[Lemma 2.6]{alabau-boussouira_2017} (see \Cref{LemmaCutOffTransport} below). The generalization of this result to a general open set $\omega$ of $\T$ is not known.

A similar statement can be obtained with the same proof, when
$K_{11}=0$, $A_{12}=0$ under Kalman rank condition on $(A',K_{12})$.

The proof of \Cref{th:main_3} follows essentially the same strategy as the one of \Cref{th:main_2}: a strengthened version of \Cref{th:main} and a cascade structure ensured by Kalman condition. The regularity assumption on the hyperbolic component allows the elimination of the observation of the hyperbolic component.

After \Cref{th:main_3}, two problems are still open:
\begin{itemize}
\item the characterization of null controllable initial conditions: it may be a larger space than $H^{d_1+1}_\m(\T)^{d_1} \times H^{d_1+1}(\T)^{d_2}$, see Section \ref{Sec:Par},
\item the algebraic necessary and sufficient condition for null controllability, involving both matrices $A$ and $K$. In the context of parabolic systems, this difficulty already appeared, see \cite{ammarkhodja_2009a} and~\cite{duprez_2016a}.
\end{itemize}

\subsection{Organization of the article}

Section \ref{Sec:Prel} is dedicated to preliminary results concerning the spectral analysis of $-B\partial_x^2+A\partial_x+K$ on $\T$, the well posedness of (\ref{Syst}) and the Hilbert uniqueness method.

In Section \ref{Sec:Obst}, we prove the negative null controllability result in time $T<T^\ast$ of Theorem \ref{th:main}. 

In Section \ref{sec:NC}, we prove the positive null controllability result in time $T>T^\ast$ of Theorem \ref{th:main}. 

In Section \ref{sec:cont_parab_simple}, we explain how to adapt this proof to get the null controllability in time $T>T^\ast$ of system (\ref{Syst_bloc_hyp_K_0}). The interest of this section is to introduce the proof strategy of  \Cref{th:main_2} and \Cref{th:main_3}, in a less technical framework. 

Then, in Section \ref{Sec:Hyp}, we prove Theorem \ref{th:main_2} and in Section \ref{Sec:Par}, we prove Theorem \ref{th:main_3}.

\subsection{Bibliographical comments}

\subsubsection{Wave equation with structural damping}
We consider the 1D wave equation with structural damping and control $h$
\begin{equation} \label{wave}
\partial_t^2 y - \partial_x^2 y -\partial_t\partial_x^2y + b \partial_t y = h(t,x),
\end{equation}
where $b\in\R$. This equation can be splitted in a system of the form~\eqref{Syst} by considering $z\coloneqq \partial_t y - \partial_x^2 y+(b-1)y$,
\begin{equation} \label{wave_syst}
\left\lbrace\begin{array}{l}
\partial_t z+z+(1-b)y= h(t,x), \\
\partial_t y-\partial_x^2 y  -z + (b-1)y =0,
\end{array}\right.
\end{equation}
i.e.\ \eqref{Syst} with $d=2$, $d_1=d_2=1$, $m=1$,
\begin{equation} \label{wave_matrix}
f=\begin{pmatrix}
z \\ y
\end{pmatrix}, \quad
B=\begin{pmatrix}
0 & 0 \\ 0 & 1
\end{pmatrix}, \quad 
A=\begin{pmatrix}
0 & 0 \\ 0 & 0
\end{pmatrix}, \quad 
K=\begin{pmatrix}
1 & 1-b \\ -1 & b-1
\end{pmatrix}, \quad 
M=\begin{pmatrix}
1  \\ 0
\end{pmatrix}.
\end{equation}

Rosier and Rouchon~\cite{rosier_2007} studied the equation~\eqref{wave} on a 1D-interval, $x\in (0,1)$, with a boundary control at $x=1$ and $h=0$. This is essentially equivalent to take~\eqref{wave} with $x\in(0,1)$, Dirichlet boundary conditions at $x=0$ and $x=1$, and a source term of the form $h(t,x)=u(t)p(x)$, where $p$ is a fixed profile and $u$ is a scalar control. The authors prove that this equation is not controllable.

By Theorem~\ref{th:main}, we extend this negative result to general controls $h$ (i.e.\ without separate variables) for periodic boundary conditions.
Here, $A'=0$, $\mu_*=0$, $T^\ast=+\infty$, the system \eqref{wave_syst} is not controllable even with an additional control in the second equation.

In~\cite{rosier_2007}, the authors prove that this system is not even spectrally controllable, because of an accumulation point in the spectrum.
Indeed, by the moment method, a control that would steer the system from an eigenstate to another one would have a Fourier transform vanishing on a set with an accumulation point, which is not possible for an holomorphic function.

Martin, Rosier and Rouchon~\cite{martin_2013}, studied the null-controllability of the equation~\eqref{wave} on the 1D torus, $x\in\T$, with moving controls, i.e.\ $h(t,x)=u(t,x) 1_{\omega+ct}$ with $c\in\R^*$.
By the change of variable $x \longmapsfrom (x-ct)$, this is equivalent to study the null controllability of the system
\begin{equation} \label{wave_syst2}
\left\lbrace\begin{array}{l}
\partial_t z-c\partial_x z+z+(1-b)y= u(t,x) 1_{\omega}(x), \\
\partial_t y-c\partial_x y-\partial_x^2y  -z + (b-1)y =0
\end{array}\right.
\end{equation}
which has the form \eqref{Syst} with the same matrices $f$, $B$, $K$ as in~\eqref{wave_matrix} and 
\[A=\begin{pmatrix}
-c & 0 \\ 0 & -c
\end{pmatrix}.\]
In~\cite[Theorem 1.2]{martin_2013}, for $c=1$, the authors prove that any initial data $(y_0,y_1)\in H^{s+2}\times H^{s}(\T)$ with $s>15/2$ can be steered to $0$ in time $T>2\pi$ by mean of a control $u\in L^2((0,T)\times\omega)$.

By Theorem~\ref{th:main_2}, we recover this positive null controllability result with a smaller minimal time $T>\ell(\omega)/|c|$ and a weaker regularity assumption on the initial data $(y,\partial_t y)(0)=(y_0,y_1)\in H^2 \times L^2(\T)$ for (\ref{wave}). 
This corresponds to an initial data $(y,z)(0) \in L^2(\T)^2$ for (\ref{wave_syst2}) because $z(0)=y_1-\partial_x^2 y_0 + (b-1)y_0$. Actually, Theorem~\ref{th:main_2} can be applied for $b=1$ in \eqref{wave_syst2} but an easy adaptation of Theorem~\ref{th:main_2} gives the same result for every $b \in \R$.
We also prove the negative result in time $T<\ell(\omega)/|c|$. Here, $\mu_*=|c|$, $A_{21}=0$ and $K_{21}=-1$.

The limitations in~\cite[Theorem 1.2]{martin_2013} (regularity and time) are due to the use of controls with separate variables $u(t,x)=u_1(t)u_2(x)$. The proof relies on the moment method and the construction of a biorthogonal family. A key point in both~\cite{martin_2013} and the present article is a splitting of the spectrum in one parabolic-type part, and one hyperbolic-type part.

Finally, Chaves-Silva, Rosier and Zuazua~\cite{chaves-silva_2014} study the multi-dimensional case of equation~\eqref{wave}, $x\in \Omega$, with Dirichlet boundary conditions and locally distributed moving controls $h(t,x)=u(t,x)1_{\omega(t)}(x)$.
The control region $\omega(t)$ is assumed to be driven by the flow of an ODE that covers all the domain $\Omega$ within the alloted time $T$.
Then, the authors prove the null controllability of any initial data $(y_0,y_1)\in H^2\cap H^1_0(\Omega) \times L^2(\Omega)$ with a $L^2$-control. 

In the particular case $\Omega=\T$ with a motion with constant velocity, Theorem~\ref{th:main_2} gives the same minimal time for the null controllability and also the negative result in smaller time.

The proof strategy in~\cite{chaves-silva_2014} consists in proving Carleman estimates for the parabol\-ic equation and the ODE in~\eqref{wave_syst} with \emph{the same singular weight}, adapted to the geometry of the moving support of the control.

As explained in~\cite[Section 5.2]{chaves-silva_2014}, the same construction cannot be used with periodic boundary conditions. 

In the very recent preprint \cite{guzman_2019}, the authors propose another construction of a weight, to get Carleman estimates for parabolic and transport equations in the torus $\T^2$ (with the same weight). In the present article, we develop another strategy.

\subsubsection{Wave-parabolic systems}
Albano and Tataru~\cite{albano_2000} consider $2\times2$ parabolic-wave systems with boundary control, where
\begin{itemize}
\item the coupling term in the wave equation is given by a second order operator with respect to $x$,
\item the coupling term in the parabolic equation is given by a first order operator with respect to $(t,x)$.
\end{itemize}
This large class contains the linear system of thermoelasticity
\begin{equation} \label{syst:tataru}
\left\lbrace\begin{array}{ll}
\partial_t^2 w - \Delta w + \alpha \Delta \theta = 0, \quad & (t,x)\in(0,T)\times\Omega, \\
\partial_t \theta-\nu \Delta \theta + \beta \partial_t w = 0, \quad & (t,x)\in(0,T)\times\Omega, \\
w(t,x)=u_1(t,x), \quad & (t,x)\in(0,T)\times\partial \Omega, \\
\theta(t,x)=u_2(t,x), \quad &(t,x)\in(0,T)\times\partial \Omega, \\
\end{array}\right.
\end{equation}
where $\alpha, \beta,\nu>0$.

The authors of~\cite{albano_2000} prove the null controllability in large time of these systems, precisely in any time $T>2\sup\{|x|;x\in\Omega\}$ for the system~\eqref{syst:tataru}. The proof relies on Carleman estimates for the heat and the wave equation with the same singular weight. This strategy inspired Chaves-Silva, Rosier and Zuazua~\cite{chaves-silva_2014}.

Lebeau and Zuazua~\cite{lebeau_1998} prove the null-controllability of the linear system of thermoelasticity~\eqref{syst:tataru} with a locally distributed control in the source term of the wave equation, under the geometric control condition on $(\Omega,\omega,T)$.
The method is based on a spectral decomposition. For high frequencies, the spectrum splits into a parabolic part and a hyperbolic part. Projecting the dynamics onto the parabolic/hyperbolic subspaces, the system is decomposed into 2 weakly coupled systems, the first one behaving like a wave equation, the second one like a heat equation. The wave equation is handled by using the microlocal techniques developped for the wave equation~\cite{bardos_1992}. The parabolic equation is treated by using Lebeau and Robbiano's method~\cite{lebeau_1995}. The low frequency part is treated by a compactness argument relying on a unique continuation property.

The proof of the positive controllability results in the present article is an adaptation, to coupled transport-parabolic systems \emph{of any size}, of this approach, introduced for a $2\times2$ wave-parabolic system. The transport equation is handled by using the results from Alabau-Boussouira, Coron and Olive~\cite{alabau-boussouira_2017}.

The framework of systems (\ref{Syst}) does not cover the system (\ref{syst:tataru}) because the order of the coupling terms is too high.

\subsubsection{Heat equation with memory}
Ivanov and Pandolfi~\cite{ivanov_2009} and after them Guerrero and Imanuvilov~\cite{guerrero_2013} consider the heat equation with memory
\begin{equation} \label{chaleur_mem}
\left\lbrace\begin{array}{ll}
\partial_t y - \Delta y - \int_0^t \Delta y (\tau) \diff \tau = u 1_\omega, \quad & (t,x)\in(0,T)\times\Omega, \\
y(t,x)=0, & (t,x)\in(0,T)\times\partial\Omega.
\end{array}\right.
\end{equation}
In 1D, this equation can be splitted into a system of the form (\ref{Syst}) by considering $v(t,x)=-\int_0^t y_x(\tau) \diff \tau$:
\begin{equation} \label{chaleur_mem_syst}
\left\lbrace\begin{array}{ll}
\partial_t v +\partial_x y = 0, \\
\partial_t y - \partial_x^2 y + v_x = h 1_{\omega}, \\
y(t,0)=y(t,1)=v(t,0)=0, \\
\end{array}\right.
\end{equation}
i.e.
\[f=\begin{pmatrix}
v \\ y
\end{pmatrix}, \quad
B=\begin{pmatrix}
0 & 0 \\ 0 & 1
\end{pmatrix}, \quad 
A=\begin{pmatrix}
0 & 1 \\ 1 & 0
\end{pmatrix}, \quad 
K=\begin{pmatrix}
0 & 0 \\ 0 & 0
\end{pmatrix}.\]

In~\cite{ivanov_2009}, the authors prove that the heat equation with memory term is not ``null-controllable to the rest''.
In~\cite{guerrero_2013}, the authors prove that the scalar equation (\ref{chaleur_mem}) is not null controllable (whatever $T>0$).
Thus the system (\ref{chaleur_mem_syst}) is not null controllable.

Theorem \ref{th:main} proves that, when Dirichlet boundary conditions are replaced by periodic boundary conditions, then system (\ref{chaleur_mem_syst}) is not null controllable, even with an additional control in the first equation.

\subsubsection{1D-Linearized compressible Navier-Stokes equations}

The compressible Navier-Stokes equation on the 1D torus writes
\[\left\lbrace\begin{array}{ll}
\partial_t \rho + \partial_x (\rho  v ) = u_1(t,x) 1_\omega \quad & \text{ in } (0,T)\times\T \\
\rho[\partial_t v + v \partial_x v ] + \partial_x( a \rho^\gamma)- \mu \partial_x^2v  = u_2(t,x)1_\omega(x) & \text{ in } (0,T)\times\T
\end{array}\right.\]
where $a,\gamma, \mu>0$, $\rho, v$ are the density and velocity of the fluid. The state is $(\rho,v)$ and the control is $(u_1,u_2)$. We consider a constant stationary state $(\overline{\rho},\overline{v}) \in \R^*_+ \times \R^*$. The linearized system around the trajectory $((\rho,v)=(\overline{\rho},\overline{v}),(u_1,u_2)=(0,0))$ is
\begin{equation} \label{NSCL}
\left\lbrace\begin{array}{ll}
\partial_t \rho + \overline{v} \partial_x \rho + \overline{\rho}\partial_x  v  = u_1(t,x) 1_\omega, \ & \text{ in } (0,T)\times\T 
\\
\partial_t v + \overline{v} \partial_x v  +  a \overline{\rho}^{\gamma-2} \partial_x \rho - \frac{\mu}{\overline{\rho}} \partial_x^2 v  = u_2(t,x)1_\omega(x)\,, & \text{ in } (0,T)\times\T\,.
\end{array}\right.
\end{equation}
This system is in the form (\ref{Syst}) with 
\[f=\begin{pmatrix}
\rho \\ v
\end{pmatrix}, \quad
B=\begin{pmatrix}
0 & 0 \\ 0 & \frac{\mu}{\overline{\rho}}
\end{pmatrix}, \quad 
A=\begin{pmatrix}
\overline{v} & \overline{\rho} \\ a \overline{\rho}^{\gamma-2} & \overline{v}
\end{pmatrix}, \quad 
K=\begin{pmatrix}
0 & 0 \\ 0 & 0
\end{pmatrix}\]
and satisfies~\eqref{dd1d2}--\eqref{h:A1}.

By Theorem \ref{th:main}, the system (\ref{NSCL}) with two controls $(u_1,u_2)$ is null controllable in time $T>\frac{\ell(\omega)}{|\overline{v}|}$ and is not null controllable in time $T<\frac{\ell(\omega)}{|\overline{v}|}$.

By Theorem \ref{th:main_2}, the system (\ref{NSCL}) with one control $u_1$ in the first line (i.e.\ $u_2=0$) is null controllable in time $T>\frac{\ell(\omega)}{|\overline{v}|}$.

By Theorem \ref{th:main_3}, with one control $u_2$ in the second equation (i.e.\ $u_1=0$), any initial condition $(\rho_0,v_0)\in H^2_\m(\T)\times H^2(\T)$ can be steered to zero in time $T>\frac{\ell(\omega)}{|\overline{v}|}$ by a control $u_2 \in L^2((0,T)\times\omega)$.

In \cite{ervedoza_2012}, Ervedoza, Glass, Guerrero and Puel consider the (nonlinear) compressible Navier-Stokes equations on a bounded interval $x \in (0,L)$, without source term (i.e.\  $u_1=u_2=0$), but with a boundary control on both $\rho$ and $v$ at the two boundaries $x=0$ and $x=L$. They prove the local controllability of this nonlinear system, around the trajectory $(\rho,v)=(\overline{\rho},\overline{v})$, in appropriate functional spaces. A key ingredient is the controllability of the linearized system, which is proved to hold in time $T>\frac{L}{|\overline{v}|}$. Theorem \ref{th:main} of the present article allows to recover the same result with interior control instead of boundary control, and also proves the negative result in small time.

In \cite[Theorem 1.4]{chowdhury_2014},  Chowdhury, Mitra, Ramaswamy and Renardy prove the null controllability of (\ref{NSCL}) with two controls $(u_1,u_2)$ in time $T>\frac{2\pi}{|\overline{v}|}$, with spectral methods. Thus, Theorem \ref{th:main} of the present article allows to recover the same result but with a better minimal time $\smash{\frac{\ell(\omega)}{|\overline{v}|}}$ and also proves the negative result in time $T<\frac{\ell(\omega)}{|\overline{v}|}$.

In \cite[Theorem 1.3]{chowdhury_2015}, Chowdhury and Mitra prove with moment methods that any initial condition $(\rho_0,v_0)\in H^{s+1}_\m(\T)\times H^{s}(\mathbb{T})$ with $s> 6,5$ can be steered to zero in time $T>\frac{2\pi}{\overline{v}}$ by a control acting only on the second equation $u_2 \in L^2((0,T)\times\omega)$ (i.e.\  $u_1=0$).
In \cite[Theorem 1.2]{chowdhury_2014}, Chowdhury, Mitra, Ramaswamy and Renardy prove the same result for any initial conditions $(\rho_0,v_0)\in H^1_\m(\T) \times L^2(\T)$. Thus, Theorem \ref{th:main_3} of the present article provides a smaller minimal time $\frac{\ell(\omega)}{|\overline{v}|}$, for smoother initial conditions $(\rho_0,v_0)\in H^2_\m(\T)\times H^2(\T)$. It also proves the negative result in time $T<\frac{\ell(\omega)}{|\overline{v}|}$.

\section{Preliminary results} \label{Sec:Prel}

We want to understand the operator 
\begin{equation} \label{defL}
\mathcal{L} \coloneqq -B\partial_{x}^2 + A \partial_{x} + K
\end{equation}
with domain
\begin{equation} \label{def:D(L)}
D(\mathcal{L})=\left\{ f \in L^2(\T)^d ; -B\partial_{x}^2f + A \partial_{x}f + K f \in L^2(\T)^d \right\}
\end{equation}
where the derivatives are considered in the distributional sense $\mathcal{D}'(\T)$.
Throughout the article, we will note $e_n$ the function $x\mapsto \eu^{\iu nx}$.
We remark that applying $\mathcal L$ to $X e_n$, where $X \in \C^{d}$, we get
\begin{equation}
\label{FourierOp}
\mathcal L (X e_n) = n^{2}\left( B +\frac{\iu}{n} A + \frac{1}{n^{2}}K\right)X e_n.
\end{equation}
Thus, if we define $E(z)$ the following perturbation of $B$
\begin{equation}
\label{defBh}
\forall z \in \C,\ E(z) = B + z A - z^{2} K,
\end{equation}
then $\mathcal L$ acts on the Fourier side as a multiplication by $n^2E(\iu /n)$.

In \Cref{subsubsec:PT}, we apply the perturbation theory to the matrices $E(z)$ near $z=0$: the spectrum of $E(z)$ splits into 2 parts: one close to zero that defines the hyperbolic component, one close to the spectrum of $D$ that  defines the parabolic component. In \Cref{subsubsec:EF_WP}, we deduce the dissipation of the parabolic component and the boundedness of the hyperbolic component. Thanks to these estimates, we prove the well-posedness of System~\eqref{Syst}. Finally, in \Cref{Paragraphdual}, we recall the Hilbert Uniqueness Method.

\subsection{Perturbation theory}
\label{subsubsec:PT}

If we want to understand the semigroup $\eu^{t\mathcal L}$, we need to know the spectrum and the eigenvectors of $E(z)$. Here, we relate the spectral properties of $E(z)$ to those of $A$ and $B$, in the limit $z\to 0$. This is instrumental in all the article. Our proofs are essentially self-contained, but the reader unfamiliar with the analytic perturbation theory in finite dimension may read~\cite[Ch.~II \S1 and \S2]{kato_1995}. 

For $r>0$ and $m \in \N^{*}$, we define $\hol_{r}^{m\times m}$ as the set of holomorphic functions in the complex disk $D(0,r)$ with values in $\C^{m \times m}$. Our first result is the following one.

\begin{prop}\label{th:perturb_hyper}
There exist $r>0$ and a matrix-valued holomorphic function $P^\h\in \hol_{r}^{m\times  m}$ such that
\begin{enumerate}
\item $P^\h(0) = \big(\begin{smallmatrix}I_{d_1}&0\\0&0 \end{smallmatrix} \big)$,
\item for all $|z|<r$, $P^\h(z)$ is a projection that commutes with $E(z)$,
\item in the limit $z\to 0$, $E(z)P^\h(z) = \bigO(z)$.
\end{enumerate}
\end{prop}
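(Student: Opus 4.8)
The plan is to build $P^\h$ as the Riesz spectral projection of $E(z)$ associated with the eigenvalues in a small fixed disk around the origin, and then to read off (i)--(iii) from the fact that $E$ depends \emph{polynomially} on $z$ with $E(0)=B$.

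The first step is to record the spectral picture at $z=0$. By \eqref{h:B} and \eqref{h:D}, $E(0)=B$ has spectrum $\{0\}\cup\Sp(D)$ with $0\notin\Sp(D)$, so $\rho_0\coloneqq\mathrm{dist}\bigl(0,\Sp(D)\bigr)>0$. Fix any $\rho\in(0,\rho_0)$ and let $\Gamma$ be the positively oriented circle of radius $\rho$ centered at $0$, which isolates the eigenvalue $0$ of $B$ from $\Sp(D)$. Since $(z,\zeta)\mapsto\det\bigl(\zeta I_d-E(z)\bigr)$ is a polynomial, hence continuous, and does not vanish on the compact set $\{0\}\times\Gamma$, there exists $r>0$ such that $\zeta I_d-E(z)$ is invertible for all $|z|<r$ and all $\zeta\in\Gamma$. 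For such $z$ I set
\[
P^\h(z)\coloneqq\frac{1}{2\pi\iu}\oint_\Gamma\bigl(\zeta I_d-E(z)\bigr)^{-1}\diff\zeta .
\]
This is holomorphic on $D(0,r)$ because the integrand is holomorphic in $z$ there, jointly continuous in $(z,\zeta)$, and $\Gamma$ is compact; this already gives the claimed regularity.

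The three properties then follow. For (i): at $z=0$ we have $(\zeta I_d-B)^{-1}=\mathrm{diag}\bigl(\zeta^{-1}I_{d_1},(\zeta I_{d_2}-D)^{-1}\bigr)$; integrating over $\Gamma$, the first block gives $I_{d_1}$ (residue at $\zeta=0$) and the second gives $0$ (it is holomorphic inside $\Gamma$, since $\Sp(D)$ lies outside), so $P^\h(0)=\big(\begin{smallmatrix}I_{d_1}&0\\0&0\end{smallmatrix}\big)$. For (ii): $P^\h(z)$ commutes with $E(z)$ because $\bigl(\zeta I_d-E(z)\bigr)^{-1}$ does, and $P^\h(z)^2=P^\h(z)$ is the classical idempotency of a resolvent contour integral, obtained from the first resolvent identity applied to two concentric circles inside $\Gamma$ (shrinking $r$ if needed so both remain in the invertibility region); I would simply cite \cite[Ch.~III \S6]{kato_1995} here. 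For (iii): $z\mapsto E(z)P^\h(z)$ is holomorphic on $D(0,r)$ and vanishes at $z=0$, since $E(0)P^\h(0)=B\big(\begin{smallmatrix}I_{d_1}&0\\0&0\end{smallmatrix}\big)=0$; hence it is $\bigO(z)$ as $z\to0$.

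There is no deep difficulty here: the only points demanding care are the \emph{uniform} choice of the radius $r$ on which the contour integral makes sense --- handled by the compactness/continuity argument above --- and the verification that the resolvent integral is a projection, which is standard perturbation theory.
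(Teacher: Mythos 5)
Your proposal is correct and follows essentially the same route as the paper: both define $P^\h(z)$ as the Riesz spectral projection of $E(z)$ given by the contour integral of the resolvent around a small circle isolating the eigenvalue $0$ of $B=E(0)$, obtain holomorphy and commutation from the definition, compute $P^\h(0)$ by evaluating the resolvent of $B$ block-by-block, and deduce the $\bigO(z)$ estimate from holomorphy together with $E(0)P^\h(0)=0$. The only (immaterial) difference is that you supply a more self-contained continuity/compactness argument for the existence of the uniform radius $r$, whereas the paper cites Kato's treatment of the ``total projection for the $0$-group'' more directly.
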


\begin{proof}
The spectrum of $E(z)$ is continuous in $z$ (see~\cite[Ch.~II \S1.1]{kato_1995}). Let us consider the ``$0$-group'' of eigenvalues, i.e.\ the set of eigenvalues that tend to $0$ as $z\to 0$. Then we note $P^\h(z)$ the sum of the projections onto the eigenspace\footnote{We stress that when we talk about ``eigenspace'', we mean ``generalized eigenspace'' (or, in the terminology of Kato, algebraic eigenspace), i.e.\ the space of generalized eigenvectors.} of $E(z)$ associated with eigenvalues in the $0$-group along the other  eigenspaces. Another way to define $P^\h(z)$ is to choose $R=\frac12 \min_{\lambda\in\Sp(D)} |\lambda|$ and $r$ small enough so that for $|z|<r$, there is no eigenvalues of $E(z)$ on the circle $\partial D(0,R)$. Then, we define (see~\cite[Ch.~II, Eq.~(1.16)]{kato_1995})
\begin{equation}\label{eq:proj_0}
P^\h(z) = -\frac1{2\iu\pi}\int_{\partial D(0,R)} (E(z)-\zeta I_d)^{-1}\diff \zeta.
\end{equation}

In the terminology of Kato, $P^\h(z)$ is the ``total projection for the $0$-group''. Then, according to~\cite[Ch.~II \S1.4]{kato_1995}, $P^\h(z)$ is the projection onto the sum of eigenspaces associated to eigenvalues of $E(z)$ lying inside $D(0,R)$ along the other eigenspaces. It is holomorphic in $|z|<r$. For $z=0$, the formula~\eqref{eq:proj_0} that defines $P^\h(0)$ becomes
\[
P^\h(0) = -\frac1{2\iu\pi}\int_{\partial D(0,R)} (B-\zeta I_d )^{-1}\diff \zeta.
\]
Then, $P^\h(0)$ is the projection onto the eigenspace of $B$ associated to the eigenvalue $0$ along the other eigenspaces (see~\cite[Ch.~II \S1.4]{kato_1995}). So, according to the hypotheses~(\ref{h:B}--\ref{h:D}) on the blocks of $B$, $P^\h(0) =  \big(\begin{smallmatrix}I_{d_1}&0\\0&0 \end{smallmatrix} \big)$. This proves \textit{i)}.

According to the definition~\eqref{eq:proj_0}, $P^\h(z)$ commutes with $E(z)$. This proves \textit{ii)}. Then we have 
\[P^\h(0)E(0) = E(0)P^\h(0) = BP^\h(0) = 0,\]
which, along with the holomorphy of $P^\h$, proves \textit{iii)}.
\end{proof}

We say that $P^\h$ is the ``projection on the hyperbolic branches''. We note $P^\p(z) = I_d - P^\h(z)$, which we call the ``projection on the parabolic branches'', and satisfies properties analog to $P^\h$:
\begin{prop}\label{th:perturb_parab}
The matrix-valued function $P^\p$ is in $\hol_{r}^{m\times  m}$ and
\begin{enumerate}
\item $P^\p(0) = \big(\begin{smallmatrix}0&0\\0&I_{d_2} \end{smallmatrix} \big)$,
\item for all $|z|<r$, $P^\p(z)$ is a projection that commutes with $E(z)$,
\item in the limit $z\to 0$, $E(z)P^\p(z) = B + \bigO(z)$.
\end{enumerate}
\end{prop}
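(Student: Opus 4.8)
The statement about $P^\p = I_d - P^\h$ follows almost immediately from \Cref{th:perturb_hyper}, so the plan is essentially bookkeeping. First I would observe that holomorphy is inherited: since $P^\h \in \hol_r^{m\times m}$ and the constant map $z\mapsto I_d$ is holomorphic, their difference $P^\p = I_d - P^\h$ is in $\hol_r^{m\times m}$ as well.

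For \textit{i)}, I would evaluate at $z=0$ using \textit{i)} of \Cref{th:perturb_hyper}:
\[P^\p(0) = I_d - P^\h(0) = I_d - \begin{pmatrix}I_{d_1}&0\\0&0\end{pmatrix} = \begin{pmatrix}0&0\\0&I_{d_2}\end{pmatrix},\]
using the block decomposition $d = d_1 + d_2$ from~\eqref{dd1d2}.

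For \textit{ii)}, the complementary projection of a projection is again a projection: $(P^\p)^2 = (I_d - P^\h)^2 = I_d - 2P^\h + (P^\h)^2 = I_d - P^\h = P^\p$, using that $(P^\h)^2 = P^\h$ from \textit{ii)} of \Cref{th:perturb_hyper}. Moreover, since $P^\h(z)$ commutes with $E(z)$ and $I_d$ commutes with everything, $P^\p(z) = I_d - P^\h(z)$ commutes with $E(z)$ too.

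For \textit{iii)}, I would write $E(z) = E(z)P^\h(z) + E(z)P^\p(z)$, so that $E(z)P^\p(z) = E(z) - E(z)P^\h(z)$. By \textit{iii)} of \Cref{th:perturb_hyper}, $E(z)P^\h(z) = \bigO(z)$; and by the definition~\eqref{defBh}, $E(z) = B + zA - z^2K = B + \bigO(z)$. Combining, $E(z)P^\p(z) = B + \bigO(z)$ as $z\to 0$. There is no genuine obstacle here — the only thing to be slightly careful about is that the big-$O$ estimates are matrix-valued and hold coefficient-wise (equivalently, in any fixed norm on $\C^{d\times d}$), which is automatic since all norms on a finite-dimensional space are equivalent.
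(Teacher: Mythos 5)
Your proof is correct and follows exactly the route the paper intends: the paper does not even write out a proof of this proposition, stating only that $P^\p = I_d - P^\h$ ``satisfies properties analog to $P^\h$,'' and your bookkeeping is precisely what that remark leaves implicit. Nothing to add.
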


We will need to split the hyperbolic branches further.
\begin{prop}\label{th:perturb}
There exist $r>0$ and a family of matrix-valued holomorphic functions $(P_\mu^\h)_{\mu\in\Sp(A')} \in (\hol_{r}^{d\times d})^{\Sp(A')}$ satisfying
\begin{enumerate}[ref=\textit{\roman*)}]
\item
\label{eigenprojection}
for all $\mu \in \Sp(A')$ and $|z|<r$, $P_\mu^\h(z)$ is a non-zero projection that commutes with $E(z)$,
\item 
\label{Decompeigenprojection}
for all $|z|<r$, $P^\h(z) = \sum\limits_{\mu \in \Sp(A')} P^\h_\mu(z)$ and for all $\mu\neq \mu', P_\mu^\h(z) P_{\mu'}^\h(z) = 0$,
\item
\label{defAsymHyp}
for every $\mu\in \Sp(A')$, there exists $R_\mu^{\h} \in \mathcal{O}_{r}^{d\times d}$ such that
\begin{equation*}
\forall |z|<r,\ E(z)P_\mu^\h(z) = \mu z P_\mu^\h(z) + z^2R_\mu^\h(z).
\end{equation*}
\end{enumerate}
\end{prop}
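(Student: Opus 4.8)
The plan is to rescale and apply the perturbation theory a second time, this time on $\Ima P^\h$. Heuristically, on the hyperbolic branches the eigenvalues of $E(z)$ are $\bigO(z)$ and should split as $\mu z$, $\mu\in\Sp(A')$; accordingly I would introduce
\[ G(z) \coloneqq \frac1z\,E(z)P^\h(z), \]
which by \Cref{th:perturb_hyper}~\textit{iii)} has a removable singularity at $z=0$ and therefore lies in $\hol_r^{d\times d}$. Using $E(z)P^\h(z)=P^\h(z)E(z)$ and $P^\h(z)^2=P^\h(z)$ one checks readily that $G(z)$ commutes with $E(z)$ and with $P^\h(z)$, that $G(z)P^\h(z)=P^\h(z)G(z)=G(z)$ (in particular $G(z)P^\p(z)=0$), and --- by expanding $E(z)P^\h(z)=zG(z)$ to first order, with $E(0)=B$, $E'(0)=A$, $P^\h(0)=\big(\begin{smallmatrix}I_{d_1}&0\\0&0\end{smallmatrix}\big)$ and $P^\h(0)B=0$ --- that $G(0)=P^\h(0)\,A\,P^\h(0)=\big(\begin{smallmatrix}A'&0\\0&0\end{smallmatrix}\big)$. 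By \eqref{h:A1}, $G(0)$ is then diagonalizable, $\Sp(G(0))=\Sp(A')\cup\{0\}$, and $G(0)$ restricted to $\Ima P^\h(0)\cong\C^{d_1}$ is exactly $A'$.

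Next I would apply the analytic perturbation theory of \cite[Ch.~II]{kato_1995} to the holomorphic family $G$ near $z=0$. Fix for each $\mu\in\Sp(A')$ a small circle $\gamma_\mu$ around $\mu$, with the disks they bound pairwise disjoint; after shrinking $r$, no eigenvalue of $G(z)$ lies on any $\gamma_\mu$ for $|z|<r$, and I set
\[ \Pi_\mu(z)\coloneqq -\frac1{2\iu\pi}\int_{\gamma_\mu}\bigl(G(z)-\zeta I_d\bigr)^{-1}\diff\zeta\in\hol_r^{d\times d},\qquad P^\h_\mu(z)\coloneqq \Pi_\mu(z)P^\h(z). \]
Since $E(z)$ and $P^\h(z)$ commute with $G(z)$, they commute with each $\Pi_\mu(z)$; combined with $G(z)P^\h(z)=G(z)$ this immediately gives that $P^\h_\mu(z)$ is a projection commuting with $E(z)$ (property~\ref{eigenprojection}), that $P^\h_\mu(z)P^\h_{\mu'}(z)=0$ for $\mu\ne\mu'$ (disjoint contours), and that it is nonzero, as $P^\h_\mu(0)$ is the spectral projection of $A'$ onto its $\mu$-eigenspace. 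For property~\ref{Decompeigenprojection}, $\sum_\mu\Pi_\mu(z)$ is the Riesz projection of $G(z)$ for $\bigsqcup_\mu\gamma_\mu$; for $r$ small all eigenvalues of $G(z)$ inside $\Ima P^\h(z)$ lie within $\bigcup_\mu\gamma_\mu$, whence $\bigl(\sum_\mu\Pi_\mu(z)\bigr)P^\h(z)=P^\h(z)$, i.e.\ $\sum_\mu P^\h_\mu(z)=P^\h(z)$. For property~\ref{defAsymHyp}, write $E(z)P^\h_\mu(z)=\Pi_\mu(z)\,E(z)P^\h(z)=z\,G(z)\Pi_\mu(z)$ and observe that $\bigl(G(z)-\mu I_d\bigr)\Pi_\mu(z)$ is holomorphic and vanishes at $z=0$ --- here using that $G(0)$ is diagonalizable, so its $\mu$-generalized eigenspace is its $\mu$-eigenspace --- hence equals $z\,R^\h_\mu(z)$ with $R^\h_\mu\in\hol_r^{d\times d}$; together with $\mu\,\Pi_\mu(z)=\mu\,P^\h_\mu(z)$ this yields $E(z)P^\h_\mu(z)=\mu z\,P^\h_\mu(z)+z^2R^\h_\mu(z)$.

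The one genuinely delicate point --- and the main obstacle --- is the degenerate case $\mu_*=0$, i.e.\ $0\in\Sp(A')$: there the eigenvalue $0$ of $G(z)$ supports \emph{both} the genuine hyperbolic $0$-branch (inside $\Ima P^\h(z)$) and the whole $d_2$-dimensional subspace $\ker P^\h(z)$ on which $G$ vanishes identically, and one must make sure that $P^\h_0(z)=\Pi_0(z)P^\h(z)$ picks out exactly the former. The composition with $P^\h(z)$ is designed for this, and one has to verify --- it does --- that every step above goes through uniformly in $\mu$. (A more streamlined alternative is to run the whole argument with $G(z)+cP^\p(z)$ for any fixed $c\notin\Sp(A')$, which moves the parabolic part of the spectrum to the eigenvalue $c$ and removes the case distinction.)
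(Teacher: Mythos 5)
Your proof is correct and follows the same route as the paper's, namely Kato's reduction process: form $E^{(1)}(z)=z^{-1}E(z)P^\h(z)$ (your $G$), apply the Riesz projection a second time around each $\mu\in\Sp(A')$, and use that $A'$ is diagonalizable, so $E^{(1)}(0)$ has no nilpotent part, to get the $\bigO(z^2)$ remainder. The only place you diverge is the degenerate case $0\in\Sp(A')$: you set $P^\h_\mu\coloneqq\Pi_\mu P^\h$ uniformly, letting the right-multiplication by $P^\h(z)$ excise the parabolic part of the $0$-eigenvalue of $G(z)$, whereas the paper first shifts $E(z)$ by $\alpha zI_d$ (equivalently $E^{(1)}$ by $\alpha P^\h$) so that $0\notin\alpha+\Sp(A')$ and no excision is needed --- your parenthetical alternative $G+cP^\p$ is closest in spirit to the paper's move, and all three are equivalent ways to handle that degeneracy.
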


\begin{remark}
\label{RmkRk}
For $\mu \in \Sp(A')$, the projection $P_\mu^\h$ is holomorphic and thus continuous in $D(0,r)$. Therefore, the rank of $P_\mu^\h(z)$, which is its trace, does not depend on $|z|<r$ (the $P_\mu^\h(z)$ even are similar, see~\cite[Ch.~I, \S4.6, Lem.~4.10]{kato_1995}). In the same vein, the ranks of $P^\h(z)$ and $P^\p(z)$ do not depend on $z$.
\end{remark}

\begin{proof}
The proof is essentially the ``reduction process'' of Kato~\cite[Ch.~II \S2.3]{kato_1995}. 
According to Prop.~\ref{th:perturb_hyper}, $P^\h$ is holomorphic and $P^\h(z)E(z) = \bigO(z)$. Then we define 
\[E^{(1)}(z) = z^{-1} E(z) P^\h(z) = z^{-1} P^\h(z)E(z),\]
which is holomorphic in $|z|<r$. Note that we have according to Kato~\cite[Ch.~II Eq.~(2.38)]{kato_1995}
\[E^{(1)}(0) = P^\h(0)E'(0)P^\h(0) = \begin{pmatrix}A'&0\\0&0\end{pmatrix}.\]

Let us assume for the moment that $0$ is not an eigenvalue of $A'$. Then, for $\mu\in \Sp(A')$, we define $P_\mu^\h(z)$ the total projection on the $\mu$-group of eigenvalues of $E^{(1)}(z)$. Said otherwise, and according to the definition of $E^{(1)}(z)$, $P^\h_\mu(z)$ is the total projection on the $\mu z$-group of eigenvalues of $E(z)$. The projection $P_\mu^\h(z)$ is defined and holomorphic for $z$ small enough according to~\cite[Ch.~II, \S1.4]{kato_1995}.

Since for $z$ small enough, $P_\mu^\h(z)$ is the projection on some eigenspaces of $E^{(1)}(z)$ associated with non-zero eigenvalues, 
\[ \Ima(P_\mu^\h(z))\subset\Ima(E^{(1)}(z)) \subset \Ima(P^\h(z)),\]
with the last inclusion coming from the definition of $E^{(1)}(z)$. Thus $P_\mu^\h(z)$ is a subprojection of $P^\h(z)$. Moreover, $P_\mu^\h(z)$ commutes with $E^{(1)}(z)$, so it commutes with $E(z)$. This proves Item~\ref{eigenprojection} in the case $0\notin \Sp(A')$.

For $\mu\neq \nu$, $P_\mu^\h(z)$ and $P_\nu^\h(z)$ are the projections on  some sums of eigenspaces associated with different eigenvalues, so $P_\mu^\h(z)P_\nu^\h(z) = 0$. Let us note for convenience $Q^\h(z) = \sum_{\mu\in\Sp(A')} P_\mu^\h(z)$. Then, for $z$ small, $Q^\h(z)$ is the projection on all the eigenspaces of $E^{(1)}(z)$ associated with non-zero eigenvalues. According to the definition of $E^{(1)}(z)$, this proves that $Q^\h(z)$ is a subprojection of $P^\h(z)$. Let us check that $Q^\h(z)$ and $P^\h(z)$ have the same rank. This will prove that for all $z$ small enough, $Q^\h(z) = P^\h(z)$. The rank of $Q^\h(z)$, which is its trace, does not depend on $z$. The same is true for $P^\h(z)$. For $z=0$, we have $E^{(1)}(0) = (\begin{smallmatrix} A'&0\\0&0 \end{smallmatrix})$, so by using the fact that $0\notin \Sp(A')$,
\[Q^\h(0) = \begin{pmatrix} I_{d_1}&0\\0&0 \end{pmatrix} = P^\h(0).\]
This proves that for all $z$ small enough, $Q^\h(z) = P^\h(z)$, and in turn finishes the proof of Item~\ref{Decompeigenprojection} in the case where $0\notin\Sp(A')$.

If $0\in\Sp(A')$, then we add $\alpha zI$ to $E(z)$ for some $\alpha\in\set C$. This amounts to adding $\alpha P^\h(z)$ to $E^{(1)}(z)$. This only shifts the eigenvalues of  the restriction of $E^{(1)}(z)$ to $\Ima(P^\h(z))$ (but not of its restriction to $\Ima(I_d-P^\h(z))$) by $\alpha$, while leaving the eigenprojections unchanged. Thus, choosing $\alpha$ so that $0\notin \alpha+\Sp(A')$, we get the Items~\ref{eigenprojection} and~\ref{Decompeigenprojection} in the case $0\in\Sp(A')$.

We still need to prove the asymptotics of Item~\ref{defAsymHyp}. Since $A'$ is diagonalizable, so is $E^{(1)}(0) = (\begin{smallmatrix} A'&0\\0&0 \end{smallmatrix})$. So, there is no nilpotent part in the spectral decomposition of $E^{(1)}(0)$. That is to say, for all $\mu\in\Sp(A')$,
\[E^{(1)}(0)P_\mu^\h(0) = \mu P_\mu^\h(0).\]
Since $z\mapsto E^{(1)}(z)P_\mu^\h(z)$ is holomorphic,  we have
\[E^{(1)}(z)P_\mu^\h(z) = \mu P_\mu^\h(z)+\bigO(z).\]
Finally, we multiply by $z$ to come back to $E(z)$, which gives us
\[E(z)P_\mu^\h(z) = \mu zP_\mu^\h(z)+\bigO(z^2).\qedhere\]
\end{proof}

\subsection{Estimates on Fourier components and well-posedness}
\label{subsubsec:EF_WP}
\subsubsection{Dissipation of the parabolic component}

The goal of this section is the proof of the following result.

\begin{prop} \label{Prop:dissip_p}
There exist $r, K_\p, c_\p >0$ such that for every $|z|<r$, $\tau >0$ and $X\in\Ima(P^\p(z))$,
\[|\eu^{- E(z) \tau} X | \leq K_\p \eu^{-c_\p \tau} |X|.\]
\end{prop}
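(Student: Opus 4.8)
The plan is to reduce the estimate to a uniform bound on the restriction of $E(z)$ to the parabolic subspace $\Ima(P^\p(z))$, and then invoke the Hille--Yosida-type spectral bound that makes a semigroup exponentially stable when the numerical range (or spectrum together with resolvent control) sits in a half-plane $\{\Re \zeta \geq c_\p\}$. Concretely, write $E^\p(z) \coloneqq E(z)P^\p(z) = P^\p(z)E(z)P^\p(z)$, viewed as an operator on the (complex, $z$-dependent) subspace $V^\p(z) = \Ima(P^\p(z))$. By \Cref{th:perturb_parab}, $E^\p(z) = B + \bigO(z)$, and since $B$ restricted to its own range-of-$P^\p(0)$ (the bottom block) is exactly $D$, we have $E^\p(0) = D$ acting on $\C^{d_2}$. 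Hypothesis~\eqref{h:D} gives $\Re(\Sp(D)) \subset (0,\infty)$, so there is $2c_\p > 0$ with $\Re(\Sp(D)) \subset (2c_\p, \infty)$.

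First I would fix the functional-analytic mechanism. For a single matrix $N$ on $\C^k$ with $\Re(\Sp(N)) \subset (c,\infty)$, one has $\norme{\eu^{-Nt}} \leq C(N)\eu^{-ct}$ for all $t\geq 0$; this is classical (Jordan form, or the Dunford integral $\eu^{-Nt} = \frac1{2\iu\pi}\int_\Gamma \eu^{-\zeta t}(\zeta - N)^{-1}\diff\zeta$ over a contour $\Gamma$ in $\{\Re\zeta > c\}$ enclosing $\Sp(N)$). The issue is uniformity of the constant $C(N)$ as $N$ varies; this is where compactness enters. The family $z \mapsto E^\p(z)$ is continuous (indeed holomorphic) on the compact disk $\overline{D(0,r_0)}$ for $r_0 < r$, and the spectrum is continuous, so after shrinking $r$ we may assume $\Re(\Sp(E^\p(z))) \subset (c_\p, \infty)$ for all $|z| < r$. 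Then I would produce the constant uniformly: pick a fixed contour $\Gamma$ (say the boundary of a large rectangle in $\{\Re\zeta \geq c_\p\}$) enclosing $\Sp(E^\p(z))$ for every $|z| \leq r_0$ — possible because all these spectra lie in a fixed bounded region by continuity/compactness — and set $K_\p \coloneqq \frac{\length(\Gamma)}{2\pi}\sup_{|z|\leq r_0,\ \zeta\in\Gamma}\norme{(\zeta - E^\p(z))^{-1}}$, which is finite since $(z,\zeta) \mapsto (\zeta - E^\p(z))^{-1}$ is continuous on the compact set $\overline{D(0,r_0)}\times\Gamma$ (no $\zeta\in\Gamma$ is ever in a spectrum). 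Then for $X \in V^\p(z)$, $\eu^{-E(z)\tau}X = \eu^{-E^\p(z)\tau}X = \frac1{2\iu\pi}\int_\Gamma \eu^{-\zeta\tau}(\zeta - E^\p(z))^{-1}X\diff\zeta$, and bounding $|\eu^{-\zeta\tau}| \leq \eu^{-c_\p\tau}$ on $\Gamma$ gives $|\eu^{-E(z)\tau}X| \leq K_\p \eu^{-c_\p\tau}|X|$. One small point to check: that $\eu^{-E(z)\tau}$ genuinely preserves $V^\p(z)$ and agrees there with $\eu^{-E^\p(z)\tau}$ — this follows because $P^\p(z)$ commutes with $E(z)$ (\Cref{th:perturb_parab}\textit{ii)}), hence with $\eu^{-E(z)\tau}$, and on $V^\p(z)$ one has $E(z) = E^\p(z)$.

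The main obstacle is the uniformity of the constant, i.e.\ making sure the resolvent bound on the contour does not blow up as $z\to 0$ from within; this is handled cleanly by the compactness argument above (continuity of the resolvent on a compact set disjoint from all the relevant spectra), so there is no real analytic difficulty, only bookkeeping. An alternative to the contour integral, if one prefers to avoid it, is to equip each $V^\p(z)$ with the Lyapunov inner product adapted to $E^\p(z)$ (solution of $(E^\p(z))^*S + SE^\p(z) = I$), note that $S = S(z)$ depends continuously on $z$ and is uniformly bounded above and below on $\overline{D(0,r_0)}$ by compactness, and deduce $\frac{d}{d\tau}\psc{S\eu^{-E^\p\tau}X}{\eu^{-E^\p\tau}X} \leq -2c_\p' \psc{S\eu^{-E^\p\tau}X}{\eu^{-E^\p\tau}X}$ for a suitable $c_\p' > 0$; comparing the two norms yields the claim with a suitable $K_\p$. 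Either route works; I would present the Dunford-integral one as it is the most direct and self-contained given that the perturbation-theoretic contour-integral language is already in use in the section.
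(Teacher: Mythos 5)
Your proposal is correct and takes essentially the same route as the paper: both restrict $E(z)$ to $\Ima(P^\p(z))$, use continuity of the spectrum to place $\Sp(E^\p(z))$ in a fixed compact region of $\{\Re\zeta > c_\p\}$ for $|z|$ small, represent $\eu^{-E^\p(z)\tau}$ by a Dunford/Cauchy contour integral, and extract the uniform constant from compactness of $(z,\zeta)\mapsto(\zeta I - E^\p(z))^{-1}$. The only cosmetic differences are that the paper uses a disk $\partial\Omega$ where you use a rectangle, and the paper spells out the verification that the contour integral equals the exponential rather than citing it as classical.
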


\begin{proof}
By using \Cref{th:perturb_parab}, for $|z| \leq r$, we denote by $E^\p(z)$ the restriction of $E(z)$ to the vector subspace $\Ima[P^\p(z)]$, which is an endomorphism of $\Ima[P^\p(z)]$.

By assumption (\ref{h:D}), there exists $c>0$ such that $\Re(\Sp(D)) \subset (c, \infty)$. 
There exists an open disk $\Omega$ in the complex plane such that $\Sp(D) \subset \Omega$ and $\min\{ \Re(z);\ z\in \overline{\Omega} \} >c$. Then, by continuity of the spectrum, for $r$ small enough, we have, for every $|z|\leq r$, $\Sp( E^\p(z) ) \subset \Omega$.

\paragraph{Step 1: Cauchy formula.} We prove the following equality between endomorphisms of $\Ima[P^\p(z)]$
\begin{equation} \label{exp_Cauchy} 
\forall |z|\leq r,\ \tau \in \R, \quad
\eu^{ -E^\p (z)\tau}=\frac{1}{2\pi \iu } \int_{\partial\Omega} \eu^{-\tau \xi} \left( \xi I - E^\p(z) \right)^{-1} \diff\xi,
\end{equation}
where $I$ is the identity on $\Ima[P^\p(z)]$.
The right hand side is well defined because $\partial\Omega \cap\Sp(E^\p(z)) = \emptyset$. Let us denote it by $\phi(\tau)$.
Then
\begin{align*}
\phi'(\tau) & =\frac{-1}{2\pi \iu } \int_{\partial\Omega} \eu^{-\tau \xi} \xi \left( \xi I - E^\p(z) \right)^{-1} \diff\xi 
\\ & = \frac{-1}{2\pi \iu } \int_{\partial\Omega} \eu^{-\tau \xi} \left( (\xi I - E^\p(z))+E^\p(z) \right) \left( \xi I - E^\p(z) \right)^{-1} \diff\xi.
\end{align*}
By the Cauchy formula, $\int_{\partial\Omega} \eu^{-\tau \xi} \diff\xi=0$ thus
$\phi'(\tau)=-E^\p(z)\phi(\tau)$. Moreover $\phi(0)=I$ because all the eigenvalues of $E^\p(z)$ are inside $\Omega$ (see~\cite[Ch.~I, Problem~5.9]{kato_1995}). Thus $\phi(\tau)=\eu^{-\tau E^\p(z)}$.

\emph{Step 2: Estimate.}
We deduce from (\ref{exp_Cauchy}) the following equality between endomorphisms of $\C^d$
\begin{equation} \label{exp_Cauchy_Pp} 
\forall |z|\leq r,\ \tau \in \R, \quad
\eu^{ - E (z)\tau} P^\p(z) =\frac{1}{2\pi \iu } \int_{\partial\Omega} \eu^{-\tau \xi} \left( \xi I_d - E(z) \right)^{-1} P^\p(z) \diff\xi.
\end{equation} 
Note that, if $r$ is small enough, then the eigenvalues of $E(z)$ are either inside $\Omega$ (parabolic branch) or close to $0$ (hyperbolic branch), for instance in $\{ \Re(\xi) < c/2 \}$. Thus $\left( \xi I_d - E(z) \right)$ is invertible on $\C^d$ for every $\xi \in \partial\Omega$ and the above right hand side is well defined.

We deduce from (\ref{exp_Cauchy_Pp}) that
\[\left| \eu^{-E(z)\tau} P^\p(z) \right|
\leq \frac{1}{2\pi }\int_{\partial\Omega} \eu^{-\tau \Re(\xi)} \left| \left( \xi I_d - E(z) \right)^{-1} P^\p(z) \right| \diff\xi.\]
The map $(\xi,z)\in\partial\Omega \times \overline{D}(0,r) \mapsto \left| \left( \xi I_d - E(z) \right)^{-1} P^\p(z) \right|$ is continuous  on a compact set thus bounded. Thus there exists a positive constant $K$ such that, for every $|z|<r$  and $\tau>0$,
$\left| \eu^{-E(z)\tau} P^\p(z) \right| \leq K \eu^{-c\tau}$.
\end{proof}

\subsubsection{Boundedness of the transport component}

The goal of this section is to prove the following result.

\begin{prop} \label{Prop:dissip_h}
There exists $r, K_\h, c_\h >0$ such that for every $x\in [-r,r]\setminus\{0\}$, $t\in\R$ and $X \in \Ima(P^\h(\iu x))$,
\[\left\lvert \exp\left( \frac{t}{x^2} E(\iu x) \right) X \right\rvert \leq K_\h \eu^{c_\h |t|} |X|.\]
\end{prop}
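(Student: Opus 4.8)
The goal is to bound the matrix exponential $\exp\big(\tfrac{t}{x^2} E(\iu x)\big)$ on the hyperbolic subspace $\Ima(P^\h(\iu x))$, uniformly in $x \in [-r,r]\setminus\{0\}$. The plan is to use the finer splitting of \Cref{th:perturb}: write $P^\h(\iu x) = \sum_{\mu\in\Sp(A')} P^\h_\mu(\iu x)$ and, since the projections $P^\h_\mu(z)$ are mutually orthogonal (in the algebraic sense) and commute with $E(z)$, it suffices to bound $\exp\big(\tfrac{t}{x^2}E(\iu x)\big)$ on each $\Ima(P^\h_\mu(\iu x))$ separately, then combine with a triangle-inequality argument using the uniform boundedness of the $P^\h_\mu(\iu x)$ (they are holomorphic in $z$ on $\overline{D}(0,r')$ for $r'<r$, hence bounded on the compact set $\{|z|\le r'\}$, which one re-labels $r$).

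On the branch $\mu$, \Cref{th:perturb}~\ref{defAsymHyp} gives $E(z)P^\h_\mu(z) = \mu z P^\h_\mu(z) + z^2 R^\h_\mu(z)$ with $R^\h_\mu \in \hol_r^{d\times d}$. Specializing $z=\iu x$ and restricting to $\Ima(P^\h_\mu(\iu x))$, the generator is $\tfrac{1}{x^2}E(\iu x)\restrict_{\Ima(P^\h_\mu(\iu x))} = \tfrac{\iu\mu}{x}\,\mathrm{Id} + G_\mu(x)$, where $G_\mu(x) := \tfrac{1}{x^2}\cdot(\iu x)^2 R^\h_\mu(\iu x)\restrict = -R^\h_\mu(\iu x)\restrict_{\Ima(P^\h_\mu(\iu x))}$ is \emph{bounded} uniformly in $x$, because $R^\h_\mu$ is holomorphic hence continuous on the compact disk and $P^\h_\mu$ likewise. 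The scalar term $\tfrac{\iu\mu}{x}\mathrm{Id}$ commutes with everything, so
\[
\exp\!\Big(\tfrac{t}{x^2}E(\iu x)\Big)\Big\restrict_{\Ima(P^\h_\mu(\iu x))} = \eu^{\iu\mu t/x}\,\exp\big(t\,G_\mu(x)\big),
\]
and the first factor has modulus $1$. For the second factor, $|\exp(tG_\mu(x))| \le \eu^{|t|\,\|G_\mu(x)\|} \le \eu^{c_\mu|t|}$ where $c_\mu := \sup_{|x|\le r}\|G_\mu(x)\| < \infty$. Taking $c_\h' := \max_{\mu\in\Sp(A')} c_\mu$ handles each branch.

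To reassemble: for $X \in \Ima(P^\h(\iu x))$ write $X = \sum_\mu P^\h_\mu(\iu x) X$, apply the exponential branchwise (it preserves each $\Ima(P^\h_\mu(\iu x))$ since $P^\h_\mu$ commutes with $E$), and use $|X| \ge$ (a positive constant times) $\max_\mu |P^\h_\mu(\iu x)X|$ — or more simply bound $|P^\h_\mu(\iu x)X| \le \|P^\h_\mu(\iu x)\|\,|X| \le C|X|$ with $C$ uniform — to get
\[
\Big|\exp\!\Big(\tfrac{t}{x^2}E(\iu x)\Big)X\Big| \le \sum_{\mu}\eu^{c_\mu|t|}\,|P^\h_\mu(\iu x)X| \le \Big(\#\Sp(A')\cdot C\Big)\,\eu^{c_\h'|t|}\,|X|,
\]
which is the claim with $K_\h := \#\Sp(A')\cdot C$ and $c_\h := c_\h'$. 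The only mild subtlety, and the step that needs the most care, is the uniform-in-$x$ control of all the quantities near $x=0$: one must fix an $r$ small enough that all of $P^\h$, the $P^\h_\mu$ and the $R^\h_\mu$ from \Cref{th:perturb_hyper} and \Cref{th:perturb} are simultaneously defined and holomorphic on the closed disk $\overline{D}(0,r)$, so that continuity on a compact set yields the uniform bounds; the apparent singularity $\tfrac{1}{x^2}$ is exactly cancelled by the $z^2$ in \ref{defAsymHyp}, which is the whole point.
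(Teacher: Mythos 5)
Your proposal is correct and follows essentially the same route as the paper: split $P^\h$ into the $P^\h_\mu$ via \Cref{th:perturb}, use the expansion $E(z)P^\h_\mu(z) = \mu z P^\h_\mu(z) + z^2 R^\h_\mu(z)$ so that the $1/x^2$ cancels the $z^2$ factor, observe that the scalar part $\eu^{\iu\mu t/x}$ has modulus one, bound the remainder $R^\h_\mu$ uniformly on $\overline{D}(0,r)$ by holomorphy, and reassemble with the triangle inequality and the uniform bound on the projections. Nothing essential is missing.
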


\begin{proof}
Let $r$ be as in \Cref{th:perturb}, $x \in [-r,r]\setminus\{0\}$, $t\in\R$, $\mu \in \Sp(A')$ and $Y \in \Ima[P^\h_\mu(\iu x)]$.
Taking into account that $\Ima[P^\h_\mu(\iu x)]$ is stable by $E(\iu x)$, we get
\[\exp \left( \frac{t}{x^2} E(\iu x) \right) Y 
=\exp \left( \frac{t}{x^2} E(\iu x) P_\mu^\h(\iu x) \right) Y \\
=\exp \left( \frac{t}{x^2} \left( \iu\mu x  P_\mu^\h(\iu x)- x^2 R_\mu^\h(\iu x) \right)  \right) Y.\]
Note that $P_\mu^\h(\iu x)$ and $R_\mu^\h(\iu x)$ commute 
because $P_\mu^\h(\iu x)$ and $E(\iu x)$ commute and
$E(\iu x)P_\mu^\h(\iu x)=\mu ix P_\mu^\h(\iu x)-x^2 R_\mu^\h(\iu x)$. Thus, by using that $\iu \mu/ x \in \iu\R$,  we obtain
\[\left\lvert \exp \left( \frac{t}{x^2} E(\iu x) \right) Y \right\rvert =
\left\lvert\eu^{\iu\mu t/x} \exp\left(-tR_\mu^\h(\iu x) \right) Y\right\rvert 
\leq \eu^{c_\mu |t|} | Y |,\]
where $c_\mu=\max\{ \lvert R_\mu^\h(z)\rvert ;\: z \in \overline{D}(0,r) \}$.
We conclude for $X\in\Ima[P^\h(\iu x)]$ that
\begin{align*}
\left\lvert \exp \left( \frac{t}{x^2} E(\iu x) \right) X \right\rvert
& \leq
  \sum\limits_{\mu \in \Sp(A')} \left\lvert \exp \left( \frac{t}{x^2} E(\iu x) \right) P^\h_\mu(\iu x) X \right\rvert 
\\ & \leq
 \sum\limits_{\mu \in \Sp(A')}  \eu^{c_\mu |t|} \lvert  P^\h_\mu(\iu x) X \rvert
 \leq
K \eu^{c|t|} |X|
\end{align*}
with 
$c=\max\{ c_\mu ;\: \mu \in \Sp(A') \}$ and
$K=\max\left\{  \sum_{\mu \in \Sp(A')}   \lvert  P^\h_\mu(z)  \rvert;\: z \in \overline{D}(0,r) \right\}$. 
\end{proof}

\subsubsection{Well-posedness}\label{sec:wp}
By gathering the results of the previous two subsubsections, we can prove that the heat-transport system~\eqref{Syst} is well-posed.
We define the Fourier coefficients of $f\in L^2(\set T)^d$ by 
\[\forall n \in \Z, \  \hat{f}(n)=\frac1{2\pi}\int_{\T} f(t) \eu^{-\iu nt} \diff t \in \C^d.\]
We consider the operator $\mathcal{L}$ defined by (\ref{defL}) and (\ref{def:D(L)}). By Bessel-Parseval identity and the fact that $\mathcal L(Xe_n) = n^2E(\iu/n)X e_n$,
\begin{equation}\label{D(L)_BIS}
D(\mathcal{L})=\Big\{ f \in L^2(\T)^d;\  
\sum\limits_{n\in\Z} \left\lvert n^2 E\left(\frac\iu n\right) \hat{f}(n)\right\rvert^2
< \infty \Big\}.
\end{equation}
The goal of this section is to prove the following result.

\begin{prop} \label{Prop:C0_sg}
$-\mathcal{L}$ generates a $C^0$ semi-group of bounded operators on $L^2(\T^d)$.
\end{prop}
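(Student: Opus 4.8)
The plan is to use the Hille--Yosida / Lumer--Phillips machinery via the Fourier decomposition, but the cleanest route is to build the semigroup directly on each Fourier mode and then verify the uniform bounds that let the pieces assemble into a strongly continuous semigroup on $L^2(\T)^d$. On the $n$-th Fourier mode, the operator $-\mathcal{L}$ acts as multiplication by $-n^2 E(\iu/n)$, so the candidate semigroup is $S(t)f = \sum_{n\in\Z} \eu^{-t n^2 E(\iu/n)} \hat f(n)\, e_n$, and also the mode $n=0$ contributes $\eu^{-tK}\hat f(0)$ (bounded, finitely many issues). The whole point is a uniform-in-$n$ estimate $\lvert \eu^{-t n^2 E(\iu/n)}\rvert \le C\eu^{\omega t}$ for $t\ge 0$, from which boundedness of $S(t)$, the semigroup property, and strong continuity all follow by standard Fourier-series arguments.

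First I would fix $r>0$ as in Propositions~\ref{th:perturb_hyper}--\ref{Prop:dissip_h} and split, for $\lvert n\rvert$ large enough that $1/\lvert n\rvert < r$, the identity $I_d = P^\h(\iu/n) + P^\p(\iu/n)$, both projections commuting with $E(\iu/n)$ and hence with $\eu^{-tn^2 E(\iu/n)}$. On the parabolic part, apply Proposition~\ref{Prop:dissip_p} with $z = \iu/n$ and $\tau = t n^2 > 0$: this gives $\lvert \eu^{-t n^2 E(\iu/n)} P^\p(\iu/n)\rvert \le K_\p \eu^{-c_\p n^2 t} \le K_\p$ for $t\ge 0$, a genuine decay term. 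On the hyperbolic part, apply Proposition~\ref{Prop:dissip_h} with $x = 1/n$ and with time parameter $tn^2\cdot x^2 = t$, i.e.\ $\exp\bigl(\tfrac{t}{x^2}E(\iu x)\bigr)$ evaluated at $x=1/n$ is exactly $\eu^{-t n^2 E(\iu/n)}$ up to sign — more precisely $\exp\bigl(t n^2 E(\iu/n)\bigr)$; so we use it with the time $-t$ to get $\lvert \eu^{-t n^2 E(\iu/n)} P^\h(\iu/n)\rvert \le K_\h \eu^{c_\h t}$ for $t\ge 0$. Adding these, together with the uniform bound $\lvert P^\h(\iu/n)\rvert + \lvert P^\p(\iu/n)\rvert \le C$ on $[-r,r]$ by continuity (Remark~\ref{RmkRk} and holomorphy), yields a constant $C$ and growth rate $\omega = c_\h$ with $\lvert \eu^{-t n^2 E(\iu/n)}\rvert \le C\eu^{\omega t}$ uniformly over all large $\lvert n\rvert$; the finitely many remaining modes (including $n=0$) are handled trivially since each $\eu^{-t n^2 E(\iu/n)}$ is a continuous matrix-valued function, bounded on compact time intervals.

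With this uniform bound in hand, I would conclude as follows. For $f\in L^2(\T)^d$, Bessel--Parseval gives $\lVert S(t)f\rVert_{L^2}^2 = \sum_n \lvert \eu^{-tn^2 E(\iu/n)}\hat f(n)\rvert^2 \le C^2\eu^{2\omega t}\sum_n \lvert \hat f(n)\rvert^2 = C^2\eu^{2\omega t}\lVert f\rVert_{L^2}^2$, so $S(t)$ is a bounded operator with $\lVert S(t)\rVert \le C\eu^{\omega t}$. The semigroup identity $S(t+s) = S(t)S(s)$ is immediate modewise from $\eu^{-(t+s)n^2E(\iu/n)} = \eu^{-tn^2E(\iu/n)}\eu^{-sn^2E(\iu/n)}$, and $S(0) = I$. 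For strong continuity at $t=0$: given $f$ and $\epsilon>0$, choose $N$ with $\sum_{\lvert n\rvert > N}\lvert\hat f(n)\rvert^2$ small; on the tail use the uniform bound $\lvert \eu^{-tn^2E(\iu/n)} - I\rvert \le C\eu^{\omega t}+1 \le C'$ for $t\in[0,1]$, and on the finite head $\lvert n\rvert\le N$ use continuity of each matrix exponential at $t=0$; this shows $\lVith S(t)f - f\rVert_{L^2} \to 0$, wait — $\lVert S(t)f - f\rVert_{L^2}\to 0$ as $t\to 0^+$. Finally one identifies the generator with $-\mathcal{L}$ on the domain~\eqref{D(L)_BIS}: the generator $\mathcal{A}$ of $S$ satisfies, modewise, $\widehat{\mathcal{A}f}(n) = -n^2E(\iu/n)\hat f(n)$ with domain exactly those $f$ for which this sequence is in $\ell^2$, which is precisely $D(\mathcal{L})$; hence $\mathcal{A} = -\mathcal{L}$.

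\textbf{Main obstacle.} The only real subtlety is the bookkeeping in the hyperbolic estimate: matching the time scaling in Proposition~\ref{Prop:dissip_h} (stated for $\exp(\tfrac{t}{x^2}E(\iu x))$ with $x\in[-r,r]\setminus\{0\}$) with the mode operator $\eu^{-tn^2E(\iu/n)}$ by the substitution $x = 1/n$, and keeping track of the sign so that the exponential growth $\eu^{c_\h t}$ (not decay) comes out — the hyperbolic branch is only bounded, not dissipative, which is why the resulting semigroup is merely $\omega$-quasicontractive rather than bounded in the naive sense; this is consistent with the statement, which only claims "bounded operators", i.e.\ each $S(t)$ is bounded, with locally bounded norm.
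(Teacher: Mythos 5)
Your proposal is correct and takes essentially the same route as the paper: both build the semigroup modewise as $S(t)f = \sum_n \eu^{-tn^2E(\iu/n)}\hat f(n)e_n$, derive the uniform bound $\lvert\eu^{-tn^2E(\iu/n)}\rvert \le K\eu^{c_\h t}$ by combining the parabolic dissipation (Prop.~\ref{Prop:dissip_p}) and hyperbolic boundedness (Prop.~\ref{Prop:dissip_h}) via the splitting $I_d = P^\p + P^\h$, and then conclude by Bessel--Parseval. The only stylistic difference is that you establish strong continuity by a head/tail truncation and then assert the generator identification, whereas the paper folds both into a single dominated-convergence computation of $\lVert(\tfrac{S(t)-I}{t}+\mathcal L)f\rVert$ for $f\in D(\mathcal L)$; your final sentence ``the generator equals $-\mathcal L$ because the modewise domains match'' glosses over the standard (resolvent-surjectivity) step needed to show $D(\text{generator})\subset D(\mathcal L)$, but that gap is equally implicit in the paper's own proof.
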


This result will ensure well posedness of \eqref{Syst} in the following sense.

\begin{definition}\label{def:solution}
Let $T>0$, $f_0 \in L^2(\T)^{d}$ and $u \in L^2(Q_T)^{d}$.
The solution of \eqref{Syst} is the function $f \in C^0([0,T];L^2(\T)^d)$ defined for $t\in [0,T]$ by
\[f(t)=\eu^{-t\mathcal{L}} f_0+ \int_0^t \eu^{-(t-\tau)\mathcal{L}} u(\tau) \diff\tau.\]
Moreover, $f(t)$ satisfies the estimate
\begin{equation} \label{WP_estim}
\forall 0\leq t\leq T,\ \lVert f(t)\rVert_{L^2(\set T)}\leq C\big(\lVert f_0\rVert_{L^2(\set T)} + \lVert u\rVert_{L^2([0,T]\times\omega)}\big),
\end{equation}
where $C$ depends on $T$ but not on $f_0$ and $u$. We will also note $S(t,f_0,u) \coloneqq f(t)$ this solution.
\end{definition}


\begin{proof}
We deduce from \Cref{Prop:dissip_p} and \Cref{Prop:dissip_h} that for every $x \in [-r,r]\setminus\{0\}$, $t> 0$ and $X \in \C^d$,
\begin{equation} \label{EDO_estim}
\begin{aligned} 
\left\lvert \exp\left( -\frac{t}{x^2} E(\iu x) \right) X \right\rvert
& \leq 
\left\lvert \exp\left( - E(\iu x) \frac{t}{x^2} \right) P^\p(\iu x) X \right\rvert +
\left\lvert \exp\left(- \frac{t}{x^2} E(\iu x) \right) P^\h(\iu x) X \right\rvert
\\ & \leq
K_\p \eu^{-c_\p tx^{-2}} \left\lvert P^\p(\iu x) X \right\rvert 
+
K_\h \eu^{c_\h t} \left\lvert P^\h(\iu x) X \right\rvert
\\ & \leq
K \eu^{c_\h t} \left\lvert  X \right\rvert 
\end{aligned}
\end{equation}
where $K=\max\left\{ K_\p  \left\lvert P^\p(\iu x) \right\rvert 
+ K_\h  \left\lvert P^\h(\iu x)  \right\rvert ;  x \in [-r,r] \right\}$.

For $f \in L^2(\T)^d$ and $t\in[0,\infty)$ we define
\[S(t)=\sum\limits_{n\in\Z} \eu^{-tn^2 E\left( \frac{\iu}{n} \right)}  \hat{f}(n) e_n.\]
By Bessel Parseval equality and (\ref{EDO_estim}) with $x=1/n$, $S(t)$ is a bounded operator on $L^2(\T)^d$, because the number of $n\in\Z$ such that $\frac{1}{n} \notin [-r,r]$ is finite.
The semi-group properties $S(0)=I$ and $S(t+s)=S(t) S(s)$ are clearly satisfied. For $f\in D(\mathcal{L})$, we have, by Bessel Parseval equality
\[
\left\lVert \left(\frac{S(t)-I}{t} + \mathcal{L} \right)f \right\rVert_{L^2(\T)^d}^2\mkern-12mu 
 =
\sum_{n\in\Z} \left\lvert  \left(\dfrac{\eu^{-tn^2 E\left( \frac{\iu}{n} \right)}-I_d}{t} - n^2 E\left(\frac\iu n\right) \right) \hat{f}(n)\right\rvert^2 .
\]
In the right hand side, each term of the series converges to zero when $[t \rightarrow 0]$ and, thanks to (\ref{EDO_estim}), is dominated for every $t \in [0,1]$ and $n>1/r$ by
\[  \left\lvert  
 \left( \int_0^1 \eu^{-t\theta n^2 E\left( \frac{\iu}{n} \right)} \diff\theta - I_d \right)  n^2 E\left(\frac\iu n\right) \hat{f}(n)\right\rvert^2 \leq (K \eu^{c_h} + 1)^2\left\lvert   n^2 E\left(\frac\iu n\right) \hat{f}(n)\right\rvert^2,
 \]
 which can be summed over $n\in\Z$ because $f\in D(\mathcal{L})$, see (\ref{D(L)_BIS}).
 By the dominated convergence theorem, the sum of the series converges to zero.
\end{proof}

\begin{remark}\label{rk:semigroup_Hs}
We can see from this proof that the semi-group $\eu^{-t\mathcal L}$ is strongly continuous on any $H^s(\T)^d$ for any $s\geq 0$, i.e.\ we have
\[
\|\eu^{-t\mathcal L} f_0\|_{H^s(\T)^d} \leq K\eu^{c_\h t}\|f_0\|_{H^s(\T)^d}.
\]
\end{remark}

\subsection{Adjoint system and observability}
\label{Paragraphdual}

The null-controllability of a linear system is equivalent to a dual notion called ``observability''. We have the following general, abstract result (see \cite[Lemma 2.48]{coron_2007}).
\begin{lemma}
\label{LemmaDuality}
Let $H_1$, $H_2$ and $H_3$ be three Hilbert spaces. Let $\Phi_2\colon H_2 \to H_1$ and $\Phi_3\colon H_3 \to H_1$ be continuous linear maps. Then 
\[ \Ima(\Phi_2) \subset \Ima(\Phi_3)\]
if and only if there exists $C>0$ such that 
\[ \forall h_1 \in H_1,\ \norme{\Phi_2^{*} h_1}_{H_2} \leq C \norme{\Phi_3^{*} h_1}_{H_3}.\]
\end{lemma}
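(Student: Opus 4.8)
The statement is the classical duality between image inclusion and observability (Douglas' range‑inclusion theorem); I would prove the two implications separately, the common thread being to manufacture a bounded operator intertwining $\Phi_2$ and $\Phi_3$ (resp.\ their adjoints).

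\emph{From the observability inequality to the range inclusion.} Assume $\norme{\Phi_2^{*} h_1}_{H_2} \leq C \norme{\Phi_3^{*} h_1}_{H_3}$ for all $h_1 \in H_1$. In particular, $\Phi_3^* h_1 = \Phi_3^* h_1'$ forces $\Phi_2^* h_1 = \Phi_2^* h_1'$, so the assignment $R(\Phi_3^* h_1) := \Phi_2^* h_1$ is a well-defined linear map on $\Ima(\Phi_3^*)$, bounded by $C$. I would extend it by continuity to $\overline{\Ima(\Phi_3^*)}$ and by $0$ on $\overline{\Ima(\Phi_3^*)}^{\perp} = \ker(\Phi_3)$, obtaining a bounded $R\colon H_3 \to H_2$ with $R\Phi_3^* = \Phi_2^*$. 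Taking adjoints yields $\Phi_3 R^* = \Phi_2$, hence $\Ima(\Phi_2) = \Ima(\Phi_3 R^*) \subset \Ima(\Phi_3)$.

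\emph{From the range inclusion to the observability inequality.} Assume $\Ima(\Phi_2) \subset \Ima(\Phi_3)$. For each $h_2 \in H_2$, the set $\{ g \in H_3 : \Phi_3 g = \Phi_2 h_2 \}$ is nonempty (by the inclusion) and closed (by continuity of $\Phi_3$), so it has a unique element of minimal norm, which is also its unique element lying in $\ker(\Phi_3)^{\perp}$; call it $G h_2$. Uniqueness of this minimal-norm selection makes $G\colon H_2 \to H_3$ linear with $\Phi_3 G = \Phi_2$. I would then show $G$ is bounded by the closed graph theorem: if $h_2^{(n)} \to h_2$ and $G h_2^{(n)} \to k$, then $\Phi_2 h_2^{(n)} = \Phi_3 G h_2^{(n)} \to \Phi_3 k$ while also $\Phi_2 h_2^{(n)} \to \Phi_2 h_2$, so $\Phi_3 k = \Phi_2 h_2$; since $\ker(\Phi_3)^{\perp}$ is closed, $k \in \ker(\Phi_3)^{\perp}$, and uniqueness gives $k = G h_2$. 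Adjoining $\Phi_3 G = \Phi_2$ gives $G^* \Phi_3^* = \Phi_2^*$, whence $\norme{\Phi_2^{*} h_1}_{H_2} \leq \norme{G} \, \norme{\Phi_3^{*} h_1}_{H_3}$ for all $h_1$, i.e.\ the claimed inequality with $C = \norme{G}$.

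\emph{Expected difficulty.} The first implication is routine (a bounded extension). The substance is in the second: producing the intertwining operator $G$ relies on the orthogonal projection onto a closed affine subspace of the Hilbert space $H_3$ and on the closed graph theorem, which is where completeness of all three spaces enters. An alternative is to quote the abstract factorization lemma directly, but the argument above is elementary and self-contained.
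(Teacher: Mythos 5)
Your proof is correct. Note, however, that the paper does not actually prove this lemma: it is quoted directly from Coron's book with the citation ``see [Lemma 2.48, coron\_2007]'', so there is no in-text proof to compare against. Your argument is a complete, self-contained proof following the standard Douglas-factorization strategy, and it matches what one finds in the cited reference: one direction by constructing a bounded left factor $R$ with $R\Phi_3^* = \Phi_2^*$ and passing to adjoints, the other by producing the minimal-norm intertwiner $G$ with $\Phi_3 G = \Phi_2$ and invoking the closed graph theorem. Both implications, the well-definedness and linearity of $G$, the identification $\overline{\Ima(\Phi_3^*)}^\perp = \ker \Phi_3$, and the final adjoint bookkeeping are all handled correctly.
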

 
From \Cref{LemmaDuality}, see \cite[Theorem 2.44]{coron_2007}, we deduce the following result.
\begin{prop}\label{Hum}
Given $T>0$, the system \eqref{Syst} is null-controllable on $\omega$ in time $T$ if and only if there exists $C>0$ such that for every $g_0 \in L^2(\T;\C^{d})$, the solution $g$ to the equation
\begin{equation}
\label{SystAdj}
\left\{
\begin{array}{l l}
\partial_t g -  \Tr{B} \partial_{x}^2 g  - \Tr{A} \partial_x g + \Tr{K} g  = 0&\mathrm{in}\ (0,T)\times\T,\\
g(0,\cdot)=g_{0}& \mathrm{in}\ \T.
\end{array}
\right.
\end{equation}
satisfies
\begin{equation}
\label{ObsHumG}
\norme{g(T,\cdot)}_{L^2(\T;\C^{d})}^2 \leq C \int_{0}^{T} \int_{\omega} |M^* g(t,x)|^{2} \diff t \diff x.
\end{equation}
\end{prop}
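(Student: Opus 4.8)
The plan is to derive this directly from the abstract duality \Cref{LemmaDuality} by identifying the right Hilbert spaces and continuous linear maps, exactly as in \cite[Theorem 2.44]{coron_2007}. First I would fix $T>0$ and set $H_1 = L^2(\T;\C^d)$. The map $\Phi_3$ encodes the control-to-state operator: let $\Phi_3\colon L^2((0,T)\times\T;\C^m)\to L^2(\T;\C^d)$ send a control $u$ (supported or not on $\omega$; one restricts afterwards by precomposing with multiplication by $1_\omega$, or equivalently takes $H_3 = L^2((0,T)\times\omega;\C^m)$) to $\int_0^T \eu^{-(T-\tau)\mathcal L} M u(\tau)1_\omega \diff\tau$, i.e. the inhomogeneous part of the Duhamel formula from \Cref{def:solution} evaluated at time $T$. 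This is a bounded operator by the well-posedness estimate \eqref{WP_estim}, which in turn rests on \Cref{Prop:C0_sg}. The map $\Phi_2$ encodes the free evolution: $\Phi_2 = -\eu^{-T\mathcal L}\colon L^2(\T;\C^d)\to L^2(\T;\C^d)$, again bounded by \Cref{Prop:C0_sg} (or \Cref{rk:semigroup_Hs} with $s=0$). With these choices, $f(T) = 0$ for some admissible $u$ is precisely $-\eu^{-T\mathcal L}f_0 \in \Ima(\Phi_3)$, so null-controllability on $\omega$ in time $T$ (\Cref{definition:NC}) is exactly the inclusion $\Ima(\Phi_2)\subset\Ima(\Phi_3)$.

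Next I would compute the adjoints. For $\Phi_2^*$: since $-\mathcal L$ generates the $C^0$-semigroup $\eu^{-t\mathcal L}$, its adjoint semigroup is $\eu^{-t\mathcal L^*}$, and $\mathcal L^* = -\Tr B\partial_x^2 - \Tr A\partial_x + \Tr K$ on $L^2(\T;\C^d)$ — this is a direct integration-by-parts computation on the Fourier side (the operator acts as multiplication by $n^2 E(\iu/n)$, whose adjoint is multiplication by $n^2 \overline{E(\iu/n)}{}^{\,\mathrm{tr}} = n^2(\Tr B - (\iu/n)\Tr A - (1/n^2)\Tr K)$, matching $\mathcal L^*$). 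Hence $\Phi_2^* h_1 = -\eu^{-T\mathcal L^*}h_1$; writing $g(t) = \eu^{-t\mathcal L^*}g_0$, which solves \eqref{SystAdj}, we get $\|\Phi_2^* g_0\|_{H_1} = \|g(T)\|_{L^2(\T)}$. For $\Phi_3^*$: by the usual adjoint-of-Duhamel computation, $(\Phi_3^* h_1)(t,x) = 1_\omega(x)\, M^* \eu^{-(T-t)\mathcal L^*}h_1$, so with $h_1 = g_0$ this is $1_\omega(x)M^* g(T-t,x)$; its $L^2((0,T)\times\omega;\C^m)$-norm squared is $\int_0^T\int_\omega |M^* g(T-t,x)|^2\diff x\diff t = \int_0^T\int_\omega|M^* g(t,x)|^2\diff x\diff t$ after the change of variable $t\mapsto T-t$.

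Plugging these into \Cref{LemmaDuality} yields: null-controllability holds iff there is $C>0$ with $\|g(T)\|_{L^2(\T)}^2 \le C\int_0^T\int_\omega|M^* g(t,x)|^2\diff t\diff x$ for all $g_0$, which is exactly \eqref{ObsHumG}. (Note the constant gets squared relative to \Cref{LemmaDuality}, which is harmless.) I do not anticipate a serious obstacle here; the only points requiring a little care are the precise identification of $\mathcal L^*$ with the differential operator in \eqref{SystAdj} — best done on Fourier coefficients using \eqref{FourierOp}–\eqref{defBh} — and the bookkeeping of the time-reversal $t\mapsto T-t$ in the adjoint of the Duhamel map. Both are routine; the substantive content is entirely in \Cref{LemmaDuality} and the well-posedness results already established, so this proposition is essentially a translation exercise.
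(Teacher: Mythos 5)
Your proposal follows exactly the route the paper intends: it cites \Cref{LemmaDuality} and \cite[Theorem 2.44]{coron_2007} and does not spell out the duality computation, so your write-up is essentially the detailed unwinding of that citation — same choice of $H_1,H_2,H_3$, same $\Phi_2,\Phi_3$, same adjoint computations via the Fourier-side representation and the time-reversal $t\mapsto T-t$. One small slip: in your Fourier check of $\mathcal L^*$ you expand $n^2\overline{E(\iu/n)}^{\mathrm{tr}}$ as $n^2(\Tr B - (\iu/n)\Tr A - (1/n^2)\Tr K)$, but since $E(z)=B+zA-z^2K$ and $(\iu/n)^2=-1/n^2$, the correct expansion is $n^2(\Tr B - (\iu/n)\Tr A + (1/n^2)\Tr K)$; this is only a sign typo and your stated conclusion $\mathcal L^*=-\Tr B\partial_x^2-\Tr A\partial_x+\Tr K$ is the right one.
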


Note that the solutions of the adjoint system~\eqref{SystAdj} are of the form\footnote{When we write $E(z)^\ast$, it is to be understood as $(E(z))^\ast$. We will use the same notation for $P_\mu^\h(z)^\ast$ etc.}
\begin{equation}\label{eq:sol_adj}
g(t,x) = \sum_{n\in\set Z} \eu^{- tn^2E\left(\frac\iu n\right)^\ast} \widehat{g}_0(n) \eu^{\iu nx}.
\end{equation}
Moreover, we have a spectral theory for the adjoint system that is similar to Prop.~\ref{th:perturb_hyper}--\ref{th:perturb}. We just have to take the adjoint of each formulas of these Propositions.

\begin{remark}\label{rk:semigroup*_Hs}
As for the semi-group $\eu^{-t\mathcal L}$ (see \Cref{rk:semigroup_Hs}), the dual semi-group $\eu^{-t\mathcal L^*}$ is strongly continuous on any $H^s(\T)^d$ for any $s\geq 0$, i.e.\ we have
\[
\|\eu^{-t\mathcal L^*} g_0\|_{H^s(\T)^d} \leq K'\eu^{c' t}\|g_0\|_{H^s(\T)^d}.
\]
\end{remark}

\section{Obstruction to the null-controllability in small time} \label{Sec:Obst}

The goal of this section is to prove the first point of \Cref{th:main}, by disproving the observability inequality \eqref{ObsHumG} on an appropriate solution of the adjoint system~\eqref{SystAdj}. Intuitively, the lack of null-controllability in small time should come from the transport components. So, the idea is to construct approximate transport solutions. Note that in general, there is no \emph{exact} transport solutions that are supported on a strict subset of $[0,T]\times \set T$ (see Appendix~\ref{app:transport}).

\begin{proof}[Proof of the lack of null-controllability in time $T<T^*$]

\paragraph{Step 1: Construction of approximate transport solutions.} Let $\mu \in \Sp(A')$ with minimum absolute value (i.e.\ $|\mu| = \mu^*$). Let $\chi\in C^\infty(\set T)\setminus\{0\}$ be such that the solution $\eta(t,x)=\chi(x+\mu t)$ of the transport equation $(\partial_t-\mu\partial_x)\eta(t,x) = 0$ on $[0,T]\times\set T$, with initial condition $\eta(0,\cdot) = \chi$, has its support $\Supp(\eta)$ is disjoint from $[0,T]\times \omega$. Such a solution exists because $T<T^*$.

To exploit the spectral asymptotics of the previous section, that are valid in the high-frequency limit, we want a high-frequency version of $\chi$. To that end, for $N\in\set N^*$ we consider the polynomial $P_N(X) = \prod_{j=-N}^N(X-j)$ and $\chi_N = P_N(-\iu \partial_x)\chi$. Since $\chi_N$ is the image of $\chi$ by a differential operator, we have $\Supp(\chi_N)\subset \Supp(\chi)$. If we note $\chi(x) = \sum_{n\in\set Z} a_n \eu^{\iu nx}$ and $\chi_N(x) = \sum_{n\in\set Z} a_n^N \eu^{\iu nx}$, we have $a_n^N = P_N(n) a_n$. In particular, for $|n|\leq N$, $a_n^N = 0$.

In summary, $\chi_N$ satisfies the following properties
\begin{itemize}
\item $\chi_N$ is non-zero,
\item $\chi_N$ has no components along frequencies less than $N$,
\item the support of $\chi_N$ is a subset of the support of $\chi$.
\end{itemize}
In particular, the last property implies that the solution $\eta_N$ of $(\partial_t-\mu\partial_x)\eta_N(t,x) = 0$, with initial condition $\eta_N(0,\cdot) = \chi_N$ is such that $\Supp(\eta_N)$ is disjoint from $[0,T]\times \omega$.

We adopt the notations of \Cref{th:perturb}. Let $\varphi_0\in\Ima(P_\mu^\h(0)^*)\setminus\{0\}$. We define
\begin{align}
\widetilde g_N(t,x) 
  &= \sum_{n\in\set Z} a_n^N \eu^{\iu n(x+\mu t)+tR_\mu^\h(0)^*}\varphi_0 
  = \chi_N(x+\mu t)\eu^{+tR_\mu^\h(0)^*}\varphi_0\label{eq:def_gN_approx}\\
g_N(t,x), 
  &=\sum_{n\in\set Z} a_n^N 
   \eu^{\iu n(x+\mu t) +tR_\mu^\h\iun^*}P_\mu^\h\iun^{\!*}\varphi_0. \label{eq:def_gN}
\end{align}

By \Cref{th:perturb}, $E(z)^*$ acts as $\mu z +z^2R_\mu^\h(z)$ on the range of $P_\mu^\h(z)^*$. So the definition of $g_N$ can be written alternatively as
\[
g_N(t,x) = \sum_{n\in\set Z} a_n^N 
   \eu^{\iu nx-tn^2 E\iun^*}P_\mu^\h\iun^{\!*}\varphi_0.
\]
So, according to the representation of the solutions of the adjoint system (Eq.~\eqref{eq:sol_adj}), $g_N$ solves the parabolic-transport system~\eqref{SystAdj}. On the other hand, $\widetilde g_N$ solves the transport equation $(\partial_t-\mu\partial_x -R_\mu^\h(0)^*)\widetilde g_N = 0$. We will prove that in the limit $N\to +\infty$, $g_N$ is an approximation of $\widetilde g_N$.

\paragraph{Step 2: Approximation of the exact solution by the transport solution.}
According to Parseval's identity, we have for every $t\geq 0$
\[
\lVert g_N(t,\cdot)-\widetilde g_N(t,\cdot)\rVert^2_{L^2(\set T)}
= 2\pi\sum_{n\in\set Z} |a_n^N|^2\left| \left(
  \eu^{tR_\mu^\h(0)^*} - \eu^{tR_\mu^\h\iun^*}P_\mu^\h\iun^{\!*}
 \right)\varphi_0\right|^2.
 \]
Then, according to the holomorphy of $z\mapsto R_\mu^\h(z)$ and $z\mapsto P_\mu^\h(z)$, and the fact that $P_\mu^\h(0)^*\varphi_0 = \varphi_0$, we have locally uniformly with respect to $t\in [0,T^*]$
\[
\lVert g_N(t,\cdot)-\widetilde g_N(t,\cdot)\rVert^2_{L^2(\set T)}
= 2\pi\sum_{n\in\set Z} |a_n^N|^2\bigO\left(\frac1{n^2}\right).
 \]
Now, reminding that for $|n|\leq N$, $a_n^N = 0$, we deduce that
\[
\lVert g_N(t,\cdot)-\widetilde g_N(t,\cdot)\rVert^2_{L^2(\set T)}
= \bigO\left(\frac1{N^2}\right)\sum_{n\in\set Z} |a_n^N|^2.
 \]
Thanks to Parseval's identity we rewrite it as
\begin{equation}
\label{eq:approx_gN}
\lVert g_N(t,\cdot)-\widetilde g_N(t,\cdot)\rVert_{L^2(\set T)} =
\bigO\left(\frac 1N\right)\lVert\chi_N\rVert_{L^2(\set T)}.
\end{equation}

\paragraph{Step 3: Conclusion.}
By the triangle inequality, we have for $0\leq t\leq T$
\[
\lVert g_N(t,\cdot)\rVert_{L^2(\omega)} \leq \lVert \widetilde g_N(t,\cdot)\rVert_{L^2(\omega)}
+\lVert g_N(t,\cdot)-\widetilde g_N(t,\cdot)\rVert_{L^2(\omega)}.
\]
Then, since the support of $\widetilde g_N$ does not intersect $[0,T]\times \omega$, the first term of the right-hand side is zero, and according to the inequality~\eqref{eq:approx_gN}, we have uniformly in $0\leq t\leq T$
\[
\lVert g_N(t,\cdot)\rVert_{L^2(\omega)}^2 = 
 \bigO\left(\frac 1{N^2}\right)\lVert\chi_N\rVert_{L^2(\set T)}^2.
\]
Integrating this estimate for $0\leq t\leq T$, we get the following upper bound on $\lVert g_N\rVert_{L^2([0,T]\times\omega)}$:
\begin{equation}
\label{eq:upper_gN}
\lVert g_N\rVert_{L^2([0,T]\times \omega)}^2 = 
 \bigO\left(\frac 1{N^2}\right)\lVert\chi_N\rVert_{L^2(\set T)}^2.
\end{equation}

To disprove the observability inequality, we also need a lower bound of $\| g_N(T,\cdot) \|_{L^2(\T)}$. According to Parseval's identity, we have 
\[
\lVert g_N(T,\cdot)\rVert_{L^2(\set T)}^2 
= 2\pi\sum_{n\in \set Z} |a_n^N|^2\left| \eu^{TR_\mu^\h\iun^* } P_\mu^\h\iun^{\!*} \varphi_0\right|^2.
\]
Since $\varphi_0$ is in the range of $P_\mu^\h(0)$, for $n$ large enough, we have $|\eu^{TR_\mu^\h\iun^* } P_\mu^\h\iun^{\!*}\varphi_0| \geq c>0$. Then, since $a_n^N = 0$ for $|n|\leq N$, as soon as $N$ is large enough, 
\begin{equation}
\label{eq:lower_gN}
\lVert g_N(T,\cdot)\rVert_{L^2(\set T)}^2 
\geq 2\pi c^2\sum_{n\in \set Z} |a_n^N|^2 = 2\pi c^2\lVert \chi_N\rVert_{L^2(\set T)}^2.
\end{equation}

Comparing the lower bound~\eqref{eq:lower_gN} and the upper bound~\eqref{eq:upper_gN}, we see that the observability inequality~\eqref{ObsHumG} (with $M = \text{identity matrix of size } d$) cannot hold.
\end{proof}

\section{Large time null-controllability} \label{sec:NC}

The goal of this section is to prove the point $(ii)$ of \Cref{th:main}. An adapted decomposition of $L^2(\T)^d$ is introduced in \Cref{subsec:decomp}. The control strategy is presented in \Cref{subsec:CS}.
Projecting the dynamics onto the parabolic/hyperbolic subspaces, the system is decomposed into 2 weakly coupled systems, the first one behaving like a transport equation, the second one like a heat equation. The transport equation is handled  in \Cref{subsec:hyp} by using the methods developped in~\cite{alabau-boussouira_2017}. The parabolic equation is treated in \Cref{subsec:parab} by adapting the Lebeau-Robbiano method~\cite{lebeau_1995} to systems with arbitrary size. The low frequency part is treated by a compactness argument and a unique continuation property in \Cref{ControlLowFreqSec}.

In the whole \Cref{sec:NC}, the parameter $r>0$ is assumed to be small enough so that Propositions \ref{th:perturb_hyper}, \ref{th:perturb_parab}, \ref{th:perturb}, \ref{Prop:dissip_p} and \ref{Prop:dissip_h} hold.

\subsection{An adapted decomposition of $L^2(\T)^{d}$} \label{subsec:decomp}

\begin{prop}
\label{decompL2}
Let $n_0 \in \N$ be such that $\frac{1}{n_0} < r$. We have the following decomposition
\begin{equation}
\label{DecompRd}
L^2(\T)^{d} = F^0 \oplus F^\p \oplus F^\h,
\end{equation}
where 
\begin{align}
\label{defW0}
F^0 &\coloneqq \bigoplus\limits_{|n| \leq n_0} \set C^d e_n,\\
\label{defWp}
F^\p &\coloneqq\bigoplus\limits_{|n| > n_0}  \Ima\left(P^\p\Big(\frac \iu n\Big)\right) e_n,\\
\label{defWh}
F^\h &\coloneqq\bigoplus\limits_{|n| > n_0}  \Ima\left(P^\h\Big(\frac \iu n\Big)\right) e_n.
\end{align}
Moreover the projections $\Pi^{0}$, $\Pi^{\p}$, $\Pi^{\h}$ and $\Pi$ defined by
\[\begin{array}{r@{}c@{}c@{}c@{}c@{}c@{}c@{}c}
L^2(\T)^{d} &{}={}& F^0 & {}\oplus{} & F^\p & {}\oplus{} & F^\h & \\
\Pi^0       &{}={}& I_{F^0} & {}+{} & 0   & {}+{}      & 0  & \\
\Pi^\p       &{}={}& 0   &  {}+{}  & I_{F^\p} & {}+{}    & 0  &  \\
\Pi^\h       &{}={}& 0 & {}+{} & 0 & {}+{} & I_{F^\h} & \\
\Pi\,         &{}={}& 0 & {}+{} & I_{F^\p} & {}+{} & I_{F^\h}  & {}= \Pi^\p+\Pi^\h \end{array}\]
are bounded operators on $L^2(\T)^d$.
\end{prop}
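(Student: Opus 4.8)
The plan is to verify that the three spaces $F^0$, $F^\p$, $F^\h$ are closed subspaces of $L^2(\T)^d$ in direct sum spanning the whole space, and that the associated projections are bounded. The key structural input is Proposition~\ref{th:perturb_hyper}: for each frequency $n$ with $|n|>n_0$ (so that $|1/n|<r$), the matrix $P^\h(\iu/n)$ is a projection of $\C^d$ commuting with $E(\iu/n)$, and $P^\p(\iu/n) = I_d - P^\h(\iu/n)$ is the complementary projection. Hence for each such $n$ we have a \emph{fixed} direct-sum decomposition $\C^d = \Ima P^\h(\iu/n) \oplus \Ima P^\p(\iu/n)$, and together with the trivial decomposition $\C^d = \C^d$ for $|n|\leq n_0$ this gives, frequency by frequency, a decomposition of each Fourier component $\C^d e_n$. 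First I would assemble these componentwise decompositions into the Hilbert-space direct sum~\eqref{DecompRd}: since the Fourier modes $\{\C^d e_n\}_{n\in\Z}$ already give an orthogonal Hilbert-space decomposition of $L^2(\T)^d$, regrouping them as in~\eqref{defW0}--\eqref{defWh} immediately yields $L^2(\T)^d = F^0 \oplus F^\p \oplus F^\h$ as an algebraic (and topological) direct sum, provided we check that each of $F^0, F^\p, F^\h$ is closed.

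The main point to control — and the only place where something could go wrong — is the \textbf{uniform boundedness} of the projections. Writing $f = \sum_n \hat f(n) e_n$, the projection $\Pi^\h$ acts as $\Pi^\h f = \sum_{|n|>n_0} P^\h(\iu/n)\hat f(n)\, e_n$, so by the Bessel--Parseval identity
\[
\|\Pi^\h f\|_{L^2(\T)^d}^2 = 2\pi \sum_{|n|>n_0} \left| P^\h\Big(\tfrac\iu n\Big)\hat f(n)\right|^2 \leq \Big(\sup_{|z|\leq r}\|P^\h(z)\|\Big)^2 \|f\|_{L^2(\T)^d}^2 ,
\]
and the supremum is finite because $z\mapsto P^\h(z)$ is holomorphic, hence continuous, on the compact disk $\overline{D}(0,r)$ (this is exactly the kind of estimate already used in the proofs of Propositions~\ref{Prop:dissip_p} and~\ref{Prop:dissip_h}). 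The same bound applies to $\Pi^\p$, using $\sup_{|z|\leq r}\|P^\p(z)\| = \sup_{|z|\leq r}\|I_d - P^\h(z)\| < \infty$, and $\Pi^0$ is simply the partial Fourier sum over $|n|\leq n_0$, hence an orthogonal projection of norm $1$; finally $\Pi = \Pi^\p + \Pi^\h = I - \Pi^0$ is bounded as a sum of bounded operators. Boundedness of $\Pi^\p$ and $\Pi^\h$ (which are idempotent by construction, since $P^\p(\iu/n)$ and $P^\h(\iu/n)$ are) gives in particular that $F^\p = \Ima\Pi^\p$ and $F^\h = \Ima \Pi^\h$ are closed, and $F^0 = \Ima\Pi^0$ is finite-dimensional hence closed.

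It remains only to observe that the sum is direct: if $f \in F^0\cap(F^\p\oplus F^\h)$ then $f$ has no Fourier modes with $|n|>n_0$ from membership in $F^0$ is false in general — rather, one argues modewise: for $|n|\leq n_0$ the spaces $F^\p, F^\h$ contribute nothing, so $\hat f(n)$ is unconstrained, while for $|n|>n_0$, $F^0$ contributes nothing; and for a fixed $|n|>n_0$ the relation $\Ima P^\h(\iu/n) \cap \Ima P^\p(\iu/n) = \{0\}$ holds because $P^\h(\iu/n)$ and $P^\p(\iu/n)=I_d-P^\h(\iu/n)$ are complementary projections. Summing these trivial modewise facts gives the direct-sum property and shows $\Pi^0 + \Pi^\p + \Pi^\h = I$, completing the proof. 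I do not expect any genuine obstacle here; the content is entirely bookkeeping on top of the perturbation-theoretic Propositions~\ref{th:perturb_hyper}--\ref{th:perturb_parab} plus the continuity of holomorphic matrix functions on a compact disk.
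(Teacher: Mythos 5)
Your proposal is correct and follows essentially the same route as the paper: both prove boundedness of the parabolic/hyperbolic projections by combining the Bessel--Parseval identity with the uniform bound on $\lVert P^\p(z)\rVert$ (equivalently $\lVert P^\h(z)\rVert$) from continuity of the holomorphic family on the compact disk, obtain $\Pi^\h$ (resp.\ $\Pi^\p$) as $\Pi-\Pi^\p$ where $\Pi^0$ and $\Pi=I-\Pi^0$ are orthogonal partial Fourier sums, and establish directness modewise from $\Ima P^\p(\iu/n)\cap\Ima P^\h(\iu/n)=\{0\}$. The only differences are cosmetic (you bound $P^\h$ first, the paper bounds $P^\p$ first, and your directness paragraph has a garbled clause but lands on the correct modewise argument).
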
 

\begin{proof}
The function $z \in D(0,r) \mapsto P^\p(z)$ is continuous thus there exists $C>0$ such that, for every $z \in \overline{D}(0,1/n_0)$, $|P^\p(z)|\leq C$. Let $f \in L^2(\T)^d$. We deduce from
\begin{equation} \label{Pp_c0}
\sum\limits_{|n| > n_0} \left| P^\p\left(\frac{\iu}{n}\right) \hat{f}(n) \right|^2 \leq C^2
 \sum\limits_{|n| > n_0} |\hat{f}(n)|^2  \leq C^2 \|f\|_{L^2(\T)^d}^2
 \end{equation}
and Bessel-Parseval identity that the series $\sum P^\p\left(\frac{\iu}{n}\right) \hat{f}(n) e_n$ converges in $L^2(\T)^d$.
Using $I_d=P^\p(z)+P^\h(z)$, we get the decomposition
\[f = \sum\limits_{n\in\Z} \hat{f}(n) e_n =
\sum\limits_{|n|\leq n_0} \hat{f}(n) e_n +
\sum\limits_{|n| > n_0} P^\p\left(\frac{\iu}{n}\right) \hat{f}(n) e_n +
\sum\limits_{|n| > n_0} P^\h\left(\frac{\iu}{n}\right) \hat{f}(n) e_n\]
with convergent series in $L^2(\T)^d$.
This proves $L^2(\T)^{d} = F^0 + F^\p + F^\h$. The sum is direct because $(e_n)_{n\in\Z}$ is orthogonal and $\Ima(P^\p(z))\cap\Ima(P^\h(z))=\{0\}$ when $|z|<r$.
The linear mappings $\Pi^0$ and $\Pi$ are orthogonal projections, thus bounded operators on $L^2(\T)^d$. We deduce from Bessel-Parseval identity and \eqref{Pp_c0} that
$\Pi^\p$ is a bounded operator on $L^2(\T)^d$ and so is $\Pi^\h=\Pi-\Pi^\p$.
\end{proof}

The operator $\mathcal{L}$ defined in \eqref{defL} maps $D(\mathcal{L})\cap F^0 = F^0$ into $F^0$ thus we can define an operator $\mathcal{L}^0$ on $F^0$ by
$D(\mathcal{L}^0)=D(\mathcal{L})\cap F^0$ and $\mathcal{L}^0=\mathcal{L}|_{F^0}$. Moreover, $-\mathcal{L}^0$ generates a $C^0$-semi-group of bounded operators on $F^0$ and $\eu^{-t\mathcal{L}^0}=\eu^{-t\mathcal{L}}|_{F^0}$.
For the same reasons, we can define an operator $\mathcal{L}^\p$ on $F^\p$ by $D(\mathcal{L}^\p)=D(\mathcal{L})\cap F^\p$ and $\mathcal{L}^\p=\mathcal{L}|_{F^\p}$, that generates a $C^0$-semi-group of bounded operators on $F^\p$: $\eu^{-t\mathcal{L}^\p}=\eu^{-t\mathcal{L}}|_{F^\p}$.
Finally, we can define an operator $\mathcal{L}^\h$ on $F^\h$ by $D(\mathcal{L}^\h)=D(\mathcal{L})\cap F^\h$ and $\mathcal{L}^\h=\mathcal{L}|_{F^\h}$, that generates a $C^0$-semi-group of bounded operators on $F^\h$: $\eu^{-t\mathcal{L}^\h}=\eu^{-t\mathcal{L}}|_{F^\h}$.

\begin{prop}\label{ExtendSemigroups}
The operator $-\mathcal{L}^0$  generates a $C^0$ group $(\eu^{-t\mathcal{L}^0})_{t\in\R}$ of bounded operators on $F^0$.
The operator $-\mathcal{L}^\h$ generates a $C^0$ group $(\eu^{-t\mathcal{L}^\h})_{t\in\R}$ of bounded operators on $F^\h$
\end{prop}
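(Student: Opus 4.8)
The plan is to establish the group property on the two finite-dimensional-per-frequency subspaces $F^0$ and $F^\h$ by exhibiting, in each case, a bounded inverse $\eu^{t\mathcal L^0}$ and $\eu^{t\mathcal L^\h}$, i.e.\ by showing that $\eu^{-t\mathcal L}$ restricted to these subspaces extends to all $t\in\R$ with uniform bounds on compact time intervals. For $F^0$ this is immediate: $F^0 = \bigoplus_{|n|\le n_0}\C^d e_n$ is \emph{finite-dimensional}, $\mathcal L^0$ is a bounded operator on it (it acts as multiplication by the finitely many matrices $n^2E(\iu/n)$, $|n|\le n_0$, together with the block for $n=0$, which is just $K$), and a bounded operator on a finite-dimensional space always generates a uniformly continuous group $\eu^{-t\mathcal L^0} = \sum_{k\ge 0}\frac{(-t\mathcal L^0)^k}{k!}$, defined and bounded for every $t\in\R$. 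So the first, easy step is to record that $\mathcal L^0\in\mathcal B(F^0)$ and conclude.

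For $F^\h$ the point is that, although $F^\h$ is infinite-dimensional, the generator is still ``bounded in the right direction'': on each summand $\Ima(P^\h(\iu/n))e_n$ the operator $\mathcal L^\h$ acts as $n^2E(\iu/n)P^\h(\iu/n)$, and \Cref{th:perturb_hyper}\textit{iii)} gives $E(\iu/n)P^\h(\iu/n) = \bigO(1/n)$, hence $n^2E(\iu/n)P^\h(\iu/n) = \bigO(n)$ — this is unbounded, so one cannot simply invoke the bounded-generator theorem. Instead I would use the already-proved two-sided exponential estimate. By \Cref{Prop:dissip_h}, applied with $x = 1/n$ and $t$ replaced by $t$ (so that $\frac{t}{x^2}E(\iu x) = tn^2E(\iu/n)$), we have for all $|n|>n_0$, all $t\in\R$ and all $X\in\Ima(P^\h(\iu/n))$,
\[
\bigl\lvert \eu^{-tn^2E(\iu/n)}X\bigr\rvert \le K_\h\,\eu^{c_\h\lvert t\rvert}\,\lvert X\rvert,
\]
where the constants $K_\h,c_\h$ are \emph{independent of $n$}. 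Therefore, for $t\in\R$ and $f\in F^\h$, define
\[
U(t)f \coloneqq \sum_{|n|>n_0} \eu^{-tn^2E(\iu/n)}\widehat f(n)\,e_n;
\]
by Bessel–Parseval and the displayed bound, $\lVert U(t)f\rVert_{L^2(\T)^d}\le K_\h\eu^{c_\h\lvert t\rvert}\lVert f\rVert_{L^2(\T)^d}$, so $U(t)$ is a bounded operator on $F^\h$ for every $t\in\R$ (positive and negative). The identities $U(0)=I_{F^\h}$ and $U(t+s)=U(t)U(s)$ hold frequency-by-frequency, so $(U(t))_{t\in\R}$ is a group; for $t\ge 0$ it coincides with $\eu^{-t\mathcal L^\h}$ by definition of the latter (same Fourier multiplier, already shown to be the semigroup generated by $-\mathcal L^\h$ in \Cref{subsec:decomp}); hence $-\mathcal L^\h$ generates the group $(U(t))_{t\in\R} = (\eu^{-t\mathcal L^\h})_{t\in\R}$. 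Strong continuity at each fixed $t$ follows from the dominated convergence argument already used in the proof of \Cref{Prop:C0_sg}: each Fourier term is continuous in $t$, and on any interval $t\in[-T_0,T_0]$ it is dominated by $K_\h^2\eu^{2c_\h T_0}\lvert\widehat f(n)\rvert^2$, which is summable.

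The only mild subtlety — and the place where a careless argument could go wrong — is making sure that the constants $K_\h$, $c_\h$ coming out of \Cref{Prop:dissip_h} really are uniform over all $|n|>n_0$, i.e.\ that the estimate is not merely pointwise in $n$. This is exactly what \Cref{Prop:dissip_h} provides, since its constants depend only on $r$ and on $\max\{\lvert R_\mu^\h(z)\rvert : z\in\overline D(0,r)\}$ and $\max\{\sum_\mu\lvert P_\mu^\h(z)\rvert : z\in\overline D(0,r)\}$, all finite by holomorphy on the closed disk; the passage $x=1/n$ with $|n|>n_0$ just restricts $x$ to a subset of $[-r,r]\setminus\{0\}$. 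With that observation in hand there is no remaining obstacle: the rest is the routine verification that a $t$-indexed family of uniformly (locally) bounded Fourier multipliers satisfying the cocycle relation and strong continuity is a $C^0$ group whose generator is the one we started with.
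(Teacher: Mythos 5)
Your argument is correct and follows exactly the route the paper takes: for $F^0$ the bounded-generator argument on a finite-dimensional space, and for $F^\h$ extending to negative times via \Cref{Prop:dissip_h} (which is stated for all $t\in\R$) and reusing the Parseval/dominated-convergence machinery of the proof of \Cref{Prop:C0_sg}. You have merely unpacked the paper's two-sentence proof into its constituent steps, including the (correct) observation that the constants of \Cref{Prop:dissip_h} are uniform in $n$.
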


\begin{proof}
We just need to check that $\eu^{-t\mathcal{L}}$ defines a bounded operator of $F^0$ and $F^\h$ when $t<0$. It is clear for $F^0$ because it has finite dimension. For $F^\h$, one may proceed as in the proof of \Cref{Prop:C0_sg}, noticing that the estimate of \Cref{Prop:dissip_h} is valid for any $t\in\R$.
\end{proof}

For the duality method, we will need the dual decomposition of \eqref{DecompRd}, i.e.
\begin{equation}
\label{duadecomp}
\begin{array}{ll}
&\displaystyle L^2(\T)^{d} = F^0 \oplus \widetilde{F^\p} \oplus \widetilde{F^\h},\\
&\displaystyle  \text{where}\ \widetilde{F^\p} \coloneqq \Ima\big((\Pi^{\p})^{*}\big),\ \widetilde{F^\h} \coloneqq \Ima\left((\Pi^{\h})^{*}\right).
\end{array}
\end{equation}
By using the definitions of $F^\p$ and $F^\h$ in \eqref{defWp} and~\eqref{defWh} and the fact that $(e_n)_{n \in \Z}$ is an Hilbert basis of $L^2(\T)$, we get
\begin{equation}
\label{defWptilde}
\widetilde{F^{\p}} =\bigoplus\limits_{|n| > n_0}  \Ima\left(P^\p\Big(\frac \iu n\Big)^\ast\right) e_n,
\end{equation}
\begin{equation}
\label{defWhtilde}
\widetilde{F^{\h}} =\bigoplus\limits_{|n| > n_0}  \Ima\left(P^\h\Big(\frac \iu n\Big)^\ast\right) e_n.
\end{equation}
Moreover, 
\begin{equation}\label{eq:adj_semigroup}
( \eu^{-t\mathcal L})^\ast f = \eu^{-t\mathcal L^\ast} f= \sum_{n\in\set Z} \eu^{-tn^2 E\left(\frac\iu n\right)^\ast} \widehat{f}(n) e_n
\end{equation}
and the spaces $F^0$, $\widetilde{F^\p}$ and $\widetilde{F^\h}$ are stable by $\eu^{t\mathcal L^\ast}$.

\subsection{Control strategy} \label{subsec:CS}

Let $T^{*}$ be as in \eqref{eq:T} and $T, T'$ be such that
\begin{equation}
T^\ast  < T' < T. 
\label{defTT'}
\end{equation}
In this section, we consider controls $u$ of the form 
\begin{equation}
\label{DecompH}
u \coloneqq \Tr{(u_{\h}, u_{\p})} \in \C^{d_1} \times \C^{d_2},
\end{equation}
where 
\begin{equation}
\Supp(u_{\h}) \subset [0,T']\times\overline \omega, \qquad \Supp(u_{\p}) \subset [T',T] \times \overline \omega,
\label{suppHhHp}
\end{equation}
\begin{equation*}
u_{\h} \in L^2((0,T')\times\T)^{d_1}, \qquad u_{\p} \in L^2((T',T)\times\T)^{d_2}.
\end{equation*}
The control $u_{\h}$ is intended to control the hyperbolic component of the system and the control $u_{\p}$ the parabolic component.

The control strategy for system \eqref{Syst} consists in
\begin{itemize}
\item first proving the null controllability in time $T$ in a subspace of $L^2(\T)^d$ with finite codimension,  
\item then using a unique continuation argument, to get the full null controllability.
\end{itemize}
The first step of this strategy is given by the following statement.
\begin{prop}
\label{PropControlHighFreq}
There exists a closed subspace $\mathcal{G}$ of $L^2(\T)^{d}$ with finite codimension and a continuous operator 
\begin{equation*}
\mathcal U \colon\begin{array}[t]{@{}c@{}l}
\mathcal{G} &\to  L^2((0,T')\times\omega)^{d_1} \times C^\infty_c((T',T)\times\omega)^{d_2}\\
f_0 &\mapsto (u_{\h},u_{\p}),
\end{array}
\end{equation*}
that associates with each $f_0 \in \mathcal{G} $ a pair of controls $\mathcal U f_{0}=(u_{\h}, u_{\p})$ such that 
\begin{equation}
\label{PropK}
\forall f_0 \in \mathcal{G},\ \Pi S(T;f_0, \mathcal Uf_0) = 0.
\end{equation}
\end{prop}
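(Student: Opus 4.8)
The plan is to control separately the hyperbolic high-frequency part $\Pi^\h S$ and the parabolic high-frequency part $\Pi^\p S$, using the time splitting $[0,T']$ then $[T',T]$ already built into the ansatz \eqref{suppHhHp}, and to absorb the (finitely many) low frequencies into the codimension of $\mathcal G$. Writing the solution $f(t) = S(t;f_0,u)$ with $u = {}^t(u_\h,u_\p)$, the key structural point is that on the high-frequency blocks $F^\h$ and $F^\p$ the operator $\mathcal L$ decouples into $\mathcal L^\h$ and $\mathcal L^\p$, and the control enters each block only through the (bounded) projectors $\Pi^\h$, $\Pi^\p$ applied to the source term. So one first writes Duhamel for $\Pi^\h f$ on $[0,T']$, driven only by $u_\h$ (since $u_\p$ is supported later), plus a contribution coming from the coupling with $\Pi^\p f$ and $\Pi^0 f$ on the right-hand side — that coupling term is what makes the three systems only ``weakly'' coupled, and it is treated as a perturbation. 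Symmetrically one writes Duhamel for $\Pi^\p f$ on $[T',T]$, driven by $u_\p$ once the hyperbolic phase is over.

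Concretely I would proceed in the following order. First, introduce the observability reformulation (Proposition~\ref{Hum} / Lemma~\ref{LemmaDuality}): proving \eqref{PropK} on a finite-codimension subspace $\mathcal G$ is equivalent to an observability inequality for the adjoint semigroup $\eu^{-t\mathcal L^*}$, with observation $M^* g = (g_\h\text{-block component}, g_\p\text{-block component})$ on $(0,T')\times\omega$ and $(T',T)\times\omega$ respectively, \emph{modulo} a finite-dimensional space of initial data (the low frequencies, together with finitely many exceptional high frequencies if needed). Second, establish the hyperbolic observability: on $\widetilde{F^\h}$, the adjoint dynamics is, after the change of variables diagonalizing $A'$ and on each branch $P_\mu^\h$, a transport equation at speed $\mu$ perturbed by the bounded holomorphic term $z^2 R_\mu^\h(z)$ (Proposition~\ref{th:perturb}, item~\ref{defAsymHyp}), so for $T' > T^* = \ell(\omega)/\mu_*$ one invokes the observability of transport equations on the torus from \cite{alabau-boussouira_2017} and absorbs the compact perturbation by a standard argument (this is where the localized high-frequency structure matters, and where one may have to discard finitely many frequencies). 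Third, establish the parabolic observability on $\widetilde{F^\p}$ over $[T',T]$ via the Lebeau--Robbiano strategy adapted to systems: use the dissipation estimate of Proposition~\ref{Prop:dissip_p} together with spectral inequalities for sums of $Xe_n$ with $X\in\Ima(P^\p(\iu/n)^*)$, to get null-observability on $F^\p$ in arbitrarily short time, hence on $[T',T]$. Fourth, patch the two half-interval estimates and deal with the cross terms: since the coupling contributions from the other blocks are bounded (the projectors are bounded, and $\eu^{-t\mathcal L^\h}$, $\eu^{-t\mathcal L^0}$ are groups by Proposition~\ref{ExtendSemigroups} while $\eu^{-t\mathcal L^\p}$ is exponentially stable), a Gronwall / fixed-point absorption closes the estimate. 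Fifth, upgrade the regularity of $u_\p$ to $C_c^\infty((T',T)\times\omega)^{d_2}$ by the usual trick — solving the parabolic block with a smooth cutoff in time and using interior parabolic smoothing — and note $u_\h\in L^2$ suffices, then the map $f_0\mapsto(u_\h,u_\p)$ obtained by an explicit HUM-type formula (minimal-norm control) is linear and bounded on $\mathcal G$.

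The main obstacle, I expect, is the hyperbolic part: getting from ``pure transport on the torus is observable from $\omega$ in time $>\ell(\omega)/\mu_*$'' to the observability of $\eu^{-t\mathcal L^\h}$ requires (a) handling simultaneously all the distinct speeds $\mu\in\Sp(A')$ — so $\omega$ must observe \emph{every} branch, which forces the time $T'>\ell(\omega)/\mu_*$ with $\mu_*$ the slowest speed, and needs the branch decomposition Proposition~\ref{th:perturb}.\ref{Decompeigenprojection} — and (b) absorbing the $z^2R_\mu^\h(z)$ perturbation uniformly in the frequency $n$ (here the scaling $t\mapsto t/x^2$ of Proposition~\ref{Prop:dissip_h} and the uniform bound on $R_\mu^\h$ on $\overline D(0,r)$ are essential), typically via a compactness–uniqueness argument that is precisely what produces the finite codimension of $\mathcal G$. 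The parabolic Lebeau--Robbiano part is technically heavy but routine given Proposition~\ref{Prop:dissip_p}; the coupling/absorption is bookkeeping; the smoothing of $u_\p$ is standard.
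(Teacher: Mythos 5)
Your high-level plan matches the paper's architecture (time-split controls supported on $(0,T')\times\omega$ and $(T',T)\times\omega$, exact controllability of the hyperbolic block via~\cite{alabau-boussouira_2017} plus a compactness–uniqueness cleanup, Lebeau--Robbiano for the parabolic block, finite codimension for the remainder), and your steps~2 and~3 correspond faithfully to Propositions~\ref{LemHyp_BIS}/\ref{ObsHyp} and~\ref{LemPar_Bis}. The gap is in your step~4 and in where you locate the origin of the finite codimension of~$\mathcal{G}$.

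The paper does not patch two half-interval observability inequalities. Instead, it proves on the \emph{control side} two open-loop statements (\Cref{LemHyp} and~\Cref{LemPar}): given \emph{any} $u_\p$ one can choose $u_\h = \mathcal U^\h(f_0,u_\p)$ killing $\Pi^\h S(T;\cdot)$, and given any $u_\h$ one can choose $u_\p = \mathcal U^\p(f_0,u_\h)$ killing $\Pi^\p S(T;\cdot)$. Closing the loop means solving the coupled system $u_\h = \mathcal U^\h_1 f_0 + \mathcal U^\h_2 u_\p$, $u_\p = \mathcal U^\p_1 f_0 + \mathcal U^\p_2 u_\h$, i.e.\ $(I-\mathcal U^\p_2\mathcal U^\h_2)u_\p = Cf_0$. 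The operator $\mathcal U^\p_2\mathcal U^\h_2$ is \emph{not} small, so a contraction or Gronwall argument does not close anything; what saves the day is that $\mathcal U^\p_2\mathcal U^\h_2$ maps $L^2$ into $C^\infty_c((T',T)\times\T)^{d_2}$, hence is \emph{compact} on $L^2$, and the Fredholm alternative then furnishes finitely many linear forms $l_1,\dots,l_N$ whose common kernel is exactly~$\mathcal G$. Thus the finite codimension of $\mathcal G$ is produced by Fredholm applied to the cross-coupling, not by the compactness–uniqueness step inside the hyperbolic observability (that step, via Peetre's lemma, only removes the $\|g_0\|_{H^{-1}}$ correction in~\eqref{inobsHypRed4} and in fact shows the associated defect space is trivial for $n_0$ large).

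This also changes the role of the $C^\infty_c$ regularity of $u_\p$: in your plan it is an afterthought (``upgrade the regularity by the usual trick''), but in the paper it is structurally essential \emph{before} the combination, because it is what makes $\mathcal U^\p_2\mathcal U^\h_2$ compact. Accordingly, \Cref{LemPar} is proved directly with $C^\infty_c$ output via a moment construction and the modified Lebeau--Robbiano iteration of~\Cref{Subsec:LR}, not by first building an $L^2$ control and then mollifying. Your dual-side version could in principle be made rigorous by noting that the cross-traces (the parabolic trace $g_1^\p$ observed in the hyperbolic window and the hyperbolic trace $g_2^\h$ observed in the parabolic window) are controlled by $\|g_0\|_{H^{-1}}$, since on the parabolic branch $g_1 = G g_2$ with $G(0)=0$ and on the hyperbolic branch the last $d_2$ components are $O(1/n)$ of the first $d_1$, and then applying Peetre's lemma to the resulting weak-norm correction; but that is a genuinely different closing argument than the ``Gronwall / fixed-point absorption'' you describe, and neither it nor the compactness of $\mathcal U^\p_2\mathcal U^\h_2$ appears in your write-up.
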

By “continuous operator”, we mean that, for every $s\in\N$, the map $\mathcal{U}\colon \mathcal{G} \mapsto L^2((0,T')\times\omega)^{d_1}\times H_0^s((T',T)\times \omega)^{d_2}$ is continuous: there exists $C_s>0$ such that
\[\forall f_0 \in \mathcal{G}, \quad
\|u_{\h}\|_{L^2((0,T')\times\omega)^{d_1}} + \|u_{\p}\|_{H_0^s((T',T)\times\omega)^{d_2}} \leq C_s \|f_0\|_{L^2(\T)^d}.\]
The proof strategy of \Cref{PropControlHighFreq} consists in splitting the problem in two parts:
\begin{itemize}
\item for any initial data $f_0$ and parabolic control $u_{\p}$, steer the hyperbolic high frequences to zero at time $T$ (\Cref{LemHyp}),
\item for any initial data $f_0$ and hyperbolic control $u_{\h}$, steer the parabolic high frequences to zero at time $T$ (\Cref{LemPar}).
\end{itemize}

\begin{prop}
\label{LemHyp}
If $n_0$ (in Eq.~(\ref{defW0}--\ref{defWp})) is large enough, there exists a continuous operator 
\begin{equation*}
\mathcal U^\h \colon\begin{array}[t]{@{}c@{}l}
 L^2(\T)^{d} \times L^2((T',T)\times\omega)^{d_2} & \rightarrow  L^2((0,T')\times\omega)^{d_1}\\
 (f_0,u_{\p}) &\mapsto u_{\h},
\end{array}
\end{equation*}
such that for every $(f_0,u_{\p})\in  L^2(\T)^{d} \times L^2((T',T)\times\omega)^{d_2} $,
\begin{equation*}
\Pi^{\h} S(T;f_0, (\mathcal U^\h (f_0,u_{\p}),u_{\p})) = 0.
\end{equation*}
\end{prop}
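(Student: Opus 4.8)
The plan is to project the dynamics onto the hyperbolic high-frequency subspace $F^\h$ and show that, after this projection, the system behaves — up to bounded/compact perturbations coming from the coupling with $F^0$ and $F^\p$ — like a direct sum of transport equations, one for each $\mu \in \Sp(A')$, with a locally distributed control $M_1 u_\h 1_\omega$ projected onto $\Ima(P^\h_\mu(\iu/n))$. Since $M_1 = I_{d_1}$ here (we are in the setting of Theorem~\ref{th:main}), the full transport component is actuated. First I would write $f = \Pi^0 f + \Pi^\p f + \Pi^\h f$ and observe that $\Pi^\h \mathcal L$ restricted to $F^\h$ decomposes along the branches $P^\h_\mu$, on each of which $n^2 E(\iu/n)$ acts as $-\iu n\mu\, \mathrm{Id} + R^\h_\mu(\iu/n)$ by Proposition~\ref{th:perturb}; so on $F^\h$ the semigroup is, branch by branch, a transport semigroup $e^{\mu t \partial_x}$ composed with a uniformly bounded operator (uniform in $n$ by Proposition~\ref{Prop:dissip_h}). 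The contribution of the parabolic control $u_\p$ and of the already-fixed coupling terms $\Pi^\h$ acting on $\Pi^0 f$ and $\Pi^\p f$ enters the $F^\h$-equation only as a source term $h(t,x)$ which, by the well-posedness estimates and $\Supp(u_\p)\subset[T',T]$, is controlled in $L^2((0,T)\times\T)$ by $\|f_0\| + \|u_\p\|$; so the task reduces to: for the transport-like system on $F^\h$ with an arbitrary $L^2$ source $h$ and control $u_\h$ supported on $[0,T']\times\omega$, reach $0$ at time $T$ (equivalently, at time $T'$, then let the group run freely on $[T',T]$ — but the source on $[T',T]$ must be absorbed, hence it is cleaner to drive to a prescribed state at $T'$ that is annihilated by the free evolution, or simply to control to $0$ on the whole window and note $\Pi^\h S$ of the free $[T',T]$ piece is handled).

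The key analytic input is the controllability of transport equations on the torus with internal control from Alabau-Boussouira, Coron and Olive~\cite{alabau-boussouira_2017}: for a scalar transport equation $\partial_t v - \mu\partial_x v = \mathbb 1_\omega w$ on $\T$, one has null-controllability in any time $T' > \ell(\omega)/|\mu|$, with a control operator bounded uniformly. Here the equation on each branch is a transport equation perturbed by the bounded zero-order term $R^\h_\mu$ and coupled (within $F^\h$) to the other branches only through bounded operators that are $O(1/n)$-small off-diagonal — more precisely, on $F^\h$ the generator is block-diagonal along the $P^\h_\mu$ up to the single operator-valued symbol, so there is genuine block structure. Because $T' > T^* \geq \ell(\omega)/\mu_* \geq \ell(\omega)/|\mu|$ for every $\mu\in\Sp(A')$, each branch is controllable in time $T'$; a perturbation argument (the zero-order term $R^\h_\mu$ is bounded, and one can use the standard fact that bounded perturbations of exactly controllable systems remain exactly controllable, or invoke directly the version of \cite{alabau-boussouira_2017} covering such systems) upgrades this to the full $F^\h$ block. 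Taking $n_0$ large is what guarantees the branch decomposition of Proposition~\ref{th:perturb} and the uniform bounds hold on $F^\h$; this is where the hypothesis "$n_0$ large enough" is used.

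Concretely the steps are: \emph{(1)} fix $f_0$ and $u_\p$; write the $\Pi^\h$-projected equation as an abstract transport-type control system on $F^\h$ with source $h \in L^2((0,T)\times\T)^d$ depending linearly and continuously on $(f_0,u_\p)$; \emph{(2)} by Duhamel, reaching $0$ at time $T$ amounts to controlling, in time $T'$, the homogeneous transport-type system on $F^\h$ to the target $-\int_0^{T'}$-evolution-of-$h$ minus the free evolution on $[T',T]$ of the residual — i.e.\ to an exact controllability statement for the $F^\h$ system; \emph{(3)} invoke \cite{alabau-boussouira_2017} branch by branch (valid since $T' > \ell(\omega)/|\mu|$) plus a bounded-perturbation argument for $R^\h_\mu$ and the inter-branch coupling to get exact controllability of the $F^\h$ block in time $T'$, with a bounded solution operator; \emph{(4)} define $\mathcal U^\h(f_0,u_\p) := u_\h$ as the resulting control, whose $L^2$-norm is bounded by $\|h\| \lesssim \|f_0\| + \|u_\p\|$, giving continuity. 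The main obstacle I expect is Step~(3): making precise that the $F^\h$-dynamics is, uniformly in the high-frequency index, a compact (zero-order) perturbation of a decoupled family of transport equations, and that the internal-control exact controllability of \cite{alabau-boussouira_2017} is robust under such perturbations with uniform control cost — this is exactly where the spectral asymptotics $E(\iu/n)P^\h_\mu(\iu/n) = \mu(\iu/n)P^\h_\mu + (\iu/n)^2 R^\h_\mu$ and the uniform boundedness of $R^\h_\mu$, $P^\h_\mu$ on $\overline D(0,r)$ are essential.
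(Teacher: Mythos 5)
Your approach is structurally very close to the paper's: the paper also reduces \Cref{LemHyp} to an exact controllability problem on $F^\h$ in time $T'$ (\Cref{LemHyp_BIS}), decomposes $F^\h$ along the branches $P^\h_\mu$, and uses the observability of the transport equation from \cite{alabau-boussouira_2017} on each branch. One notable difference is that the paper works entirely on the dual side, proving the observability inequality $\|g_0\|^2_{L^2}\leq C\|g_1\|^2_{L^2(q_{T'})}$ for $g_0\in\widetilde{F^\h}$ (\Cref{ObsHyp}), whereas you propose a primal perturbation argument; this is mostly a matter of taste, but the dual side turns out to be the natural place for the compactness argument below.

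Where I see a genuine gap is your Step~(3). The assertion that ``bounded perturbations of exactly controllable systems remain exactly controllable'' is false as a general principle, and it also conflates two perturbations of different nature. The paper's proof of \Cref{ObsHyp} keeps them carefully separate. First, the constant matrix $R^\h_\mu(0)^*$ is a genuine bounded zero-order perturbation of the transport operator, but it is not treated by a black-box ``controllability is stable under bounded perturbations'' argument: it is absorbed exactly by the conjugation $\widetilde G_\mu(t,x)=\eu^{tR^\h_\mu(0)^*}G_\mu(t,x)$, after which $\widetilde G_\mu$ solves a pure transport equation with a source term $\eu^{tR^\h_\mu(0)^*}S_\mu$. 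Second, the residual source $S_\mu$ coming from $R^\h_\mu(\iu/n)-R^\h_\mu(0)$ and $P^\h_\mu(\iu/n)-P^\h_\mu(0)$ is \emph{not} a zero-order multiplication operator: it is a Fourier multiplier of size $O(1/n)$ on the frequency-$n$ component, so it is controlled only in the weaker norm $\|g_0\|_{H^{-1}(\T)^d}$, which is a compact perturbation of $\|g_0\|_{L^2}$ — not a small or bounded one uniformly in $g_0$. This yields the intermediate estimate $\|g_0\|_{L^2}\leq C(\|g_1\|_{L^2(q_{T'})}+\|g_0\|_{H^{-1}})$, and removing the $\|g_0\|_{H^{-1}}$ term is exactly the compactness–uniqueness step (Peetre's lemma): one shows that the null space of the observation, restricted to $\widetilde{F^\h}$, is finite-dimensional, stable by the semigroup, and contains no eigenfunction, because an eigenfunction $Xe_n\in\widetilde{F^\h}$ with vanishing first $d_1$ components would satisfy $P^\h(0)X=0$, hence $|X|\leq\frac{C}{|n|}|X|$, which is impossible once $n_0$ is large. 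This last step is precisely where the hypothesis ``$n_0$ large enough'' is used; your sketch invokes $n_0$ large only for the validity of the spectral asymptotics, which is necessary but not where the proof bites.

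A minor point: you refer to ``inter-branch coupling'' within $F^\h$, but \Cref{th:perturb} gives $P^\h_\mu(z)P^\h_{\mu'}(z)=0$ and $E(z)P^\h_\mu(z)=P^\h_\mu(z)E(z)P^\h_\mu(z)$, so the branches are exactly decoupled on $F^\h$; the only coupling to worry about is through the observation $g_1=P^\h(0)^*g+O(1/n)$, not through the dynamics.
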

\begin{prop}
\label{LemPar}
If $n_0$ is large enough, there exists a continuous operator 
\begin{equation*}
\mathcal U^\p \colon \begin{array}[t]{@{}c@{}l}
  L^2(\T)^{d} \times L^2((0,T')\times\omega)^{d_1}  & \rightarrow  C^\infty_c((T',T)\times\omega)^{d_2} \\
 (f_0,u_{\h}) &\mapsto u_{\p},
\end{array}
\end{equation*}
such that for every $(f_0,u_{\h})\in  L^2(\T)^{d} \times L^2((0,T')\times\omega)^{d_1}$,
\begin{equation*}
\Pi^{\p} S(T;f_0, (u_{\h},\mathcal U^\p(f_0,u_{\h})) = 0.
\end{equation*}
\end{prop}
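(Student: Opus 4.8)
The plan is to reduce \Cref{LemPar} to a null-controllability statement for the parabolic subsystem $\mathcal L^\p$ in the remaining time $T-T'>0$, and then to prove that statement by the Lebeau--Robbiano strategy, with extra care to produce a control in $C^\infty_c$ rather than merely in $L^2$. \textbf{Reduction.} Since $\Supp u_\h\subset[0,T']\times\overline\omega$, the state at time $T'$ is $f(T')=S(T';f_0,(u_\h,0))$, which is a continuous linear function of $(f_0,u_\h)$ by~\eqref{WP_estim}. For $t\geq T'$ one has $f(t)=\eu^{-(t-T')\mathcal L}f(T')+\int_{T'}^t\eu^{-(t-\tau)\mathcal L}\Tr{(0,u_\p(\tau))}\,\diff\tau$, and applying $\Pi^\p$ (which commutes with $\eu^{-s\mathcal L}$ since the decomposition~\eqref{DecompRd} is semigroup-invariant) gives
\[
\Pi^\p S(T;f_0,(u_\h,u_\p))=\eu^{-(T-T')\mathcal L^\p}g_1+\int_{T'}^T\eu^{-(T-\tau)\mathcal L^\p}\Pi^\p\Tr{(0,u_\p(\tau))}\,\diff\tau,\qquad g_1:=\Pi^\p f(T')\in F^\p .
\]
It therefore suffices to build a continuous linear map $g_1\in F^\p\mapsto u_\p\in C^\infty_c((T',T)\times\omega)^{d_2}$ making this right-hand side vanish (``continuous'' meaning, as for the other control operators, continuous into every $H_0^s((T',T)\times\omega)^{d_2}$); precomposing with $(f_0,u_\h)\mapsto g_1$ then yields $\mathcal U^\p$. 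This is precisely null-controllability of the parabolic component in time $T-T'>0$, the control entering through the second block and localized on $\omega$.

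\textbf{The two ingredients.} First, \Cref{Prop:dissip_p} with $z=\iu/n$ and $\tau=tn^2$ shows that on the $n$-th Fourier block $\eu^{-t\mathcal L^\p}$, and likewise the adjoint semigroup on $\widetilde{F^\p}$, is bounded by $K_\p\eu^{-c_\p n^2 t}$; hence the part of $\widetilde{F^\p}$ carried by frequencies $|n|>N$ is contracted by $\eu^{-c_\p N^2 t}$. Second, I would prove a Lebeau--Robbiano spectral inequality for the partially observed adjoint: there is $C>0$ such that
\[
\forall\,\phi=\sum_{n_0<|n|\leq N}Y_n e_n\ \text{with}\ Y_n\in\Ima P^\p(\iu/n)^\ast,\qquad \|\phi\|_{L^2(\T)^d}\leq C\eu^{CN}\big\|(0\ I_{d_2})\,\phi\big\|_{L^2(\omega)^{d_2}}.
\]
This follows from the classical spectral inequality on the torus --- equivalently, the Turán/Remez-type bound $\|P\|_{L^2(\T)}\leq C\eu^{CN}\|P\|_{L^2(\omega)}$ for $\C^{d_2}$-valued trigonometric polynomials $P$ of degree $\leq N$, applied componentwise to $(0\ I_{d_2})\phi$ --- together with the non-degeneracy $|(0\ I_{d_2})Y_n|\geq\tfrac12|Y_n|$, valid for $|n|>n_0$ with $n_0$ large because $P^\p(\iu/n)^\ast=\big(\begin{smallmatrix}0&0\\0&I_{d_2}\end{smallmatrix}\big)+\bigO(1/n)$. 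A standard consequence is that, on any time interval, the finite-dimensional block $\{n_0<|n|\leq N\}$ of the parabolic subsystem is controllable through its second block on $\omega$, with control cost $\leq C\eu^{CN}$.

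\textbf{Iteration with smooth controls.} Split $[T',T]$ into countably many successive intervals alternating dissipation and control phases, starting with a dissipation phase, with cut-offs $N_k\to\infty$ and phase-lengths tuned in the usual way so that the dissipation gain $\eu^{-c_\p N_k^2\delta_k}$ dominates $\eu^{CN_k}$ --- which controls both the previous residual and the (non-band-limited, hence high-frequency-generating) contribution of the $k$-th control. On the $k$-th control phase $(a_k,b_k)$, instead of an $L^2$-optimal control I would use a weighted-HUM control: fixing once and for all $\psi\in C^\infty_c(\omega)$ with $\psi\equiv1$ on some nonempty open $\omega'\Subset\omega$ and $\rho_k\in C^\infty_c((a_k,b_k))$ positive on a subinterval, minimizing the weighted HUM functional over $\phi$ in the block $\{n_0<|n|\leq N_k\}$ of $\widetilde{F^\p}$ --- whose coercivity comes from the spectral inequality above --- produces a control $u_\p^{(k)}(\tau,x)=\rho_k(\tau)\psi(x)\,(0\ I_{d_2})\,\eu^{-(b_k-\tau)\mathcal L^\ast}\phi_k$ which lies in $C^\infty_c((a_k,b_k)\times\omega)^{d_2}$, sends the frequency-$\leq N_k$ part of the projected state to zero, and depends linearly and continuously on the current state with all Sobolev norms $\leq C_s\eu^{CN_k}$ times it. Setting $u_\p=\sum_k u_\p^{(k)}$: the supports are pairwise disjoint and accumulate only at $t=T$, and the tuning forces every $H_0^s$-norm of $u_\p^{(k)}$ to go to $0$, so $u_\p\in C^\infty_c((T',T)\times\omega)^{d_2}$ with $g_1\mapsto u_\p$ continuous into every $H_0^s$; moreover the frequency-$>N_k$ remainder of the projected state is driven to $0$ as $k\to\infty$ by the dissipation, so $\Pi^\p S(T;\dots)=0$ exactly.

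\textbf{Main obstacle.} The delicate points are the system version of the spectral inequality --- namely, checking that the partial observation $(0\ I_{d_2})$ does not degenerate after projection onto $\widetilde{F^\p}$, which is where ``$n_0$ large enough'' is used --- and keeping $C^\infty_c$ regularity in both variables while preserving \emph{exact} controllability; the weighted-HUM construction on each active phase is the device for the latter, and verifying that the tuned series $\sum_k u_\p^{(k)}$ is genuinely smooth up to (and with all derivatives vanishing at) $t=T$ is the bookkeeping that needs the most care.
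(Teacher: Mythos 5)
Your plan matches the paper's essentially step for step: reduce to null-controllability of the parabolic projection $\Pi^\p$ (\Cref{LemPar_Bis}), establish a low-mode observability for the parabolic packet, and iterate dissipation (\Cref{Prop:dissip_p}) against smooth weighted controls in the Lebeau--Robbiano manner (\Cref{Subsec:LR}). Two of your choices are legitimate alternatives to the paper's presentation: you get the system spectral inequality directly from the non-degeneracy $P^\p(\iu/n)^\ast=\big(\begin{smallmatrix}0&0\\0&I_{d_2}\end{smallmatrix}\big)+\bigO(1/n)$, where the paper factors through the map $G(z)$ (\Cref{Prop:DefG}, \Cref{LemEquationg2}) to reduce to a decoupled scalar parabolic operator $\mathfrak D$; and you phrase the per-block control as a weighted-HUM minimization, where the paper phrases it as a moment problem (\Cref{Prop:FNPF}, \Cref{Prop:ANinv}) --- but those are the same device, since the paper's ansatz $u_2=\rho(t,x)v_2(t,x)$ with $v_2$ a backward adjoint solution \emph{is} a weighted-HUM control.

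The genuine gap is in the per-block cost. You assert that, with a weight $\rho_k\in C^\infty_c((a_k,b_k))$ merely ``positive on a subinterval'', the weighted-HUM control has all $H^s_0$-norms $\leq C_s\eu^{CN_k}$ times the current state. That is not automatic. Coercivity of the weighted Gramian requires a lower bound on $\int\rho_k(\tau)\,|(0\ I_{d_2})\eu^{-(b_k-\tau)\mathcal L^\ast}\phi|^2\diff\tau$, and on the parabolic branch the solution can decay like $\eu^{-cN_k^2(b_k-\tau)}|\phi|$ with $c$ of order $|D|$ (the operator norm of the diffusion block, not its smallest real eigenvalue). A weight that vanishes on a neighbourhood of $\tau=b_k$ therefore costs a factor $\eu^{cN_k^2 T_k}$ in the Gramian inverse; since the dissipation gain per step is only $\eu^{-c_\p N_k^2 T_k}$ with $c_\p<\min\Re\Sp(D)\leq|D|\leq c$, this extra factor can overwhelm the dissipation and the iteration need not converge. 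The paper handles this precisely with the \emph{quantified} weight $\rho_1(\tau)\sim\eu^{-1/\tau}$ near $\tau=0$, which satisfies $\int_0^1\rho_1(\tau)\eu^{-\gamma\tau}\diff\tau\geq C_0^{-1}\eu^{-C_0\sqrt{\gamma}}$ (Eq.~\eqref{def:rho1}); this trades the lethal $\eu^{cN^2T}$ for a harmless $\eu^{CN\sqrt{T}}$, which the dissipation dominates for every $\rho\in(0,1)$ in the tuning $N_k=2^k$, $T_k\sim 2^{-\rho k}$. Your proof needs this (or an equally quantitative) specification of the weight; ``positive on a subinterval'' does not supply it, and the claimed $C_s\eu^{CN_k}$ cost does not follow without it.
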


Admitting that \Cref{LemHyp} and \Cref{LemPar} hold, we can now prove \Cref{PropControlHighFreq}. 

\begin{proof}
We observe that the relation $\Pi S(T;f_0, (u_{\h},u_{\p})) = 0$ holds
if the two following equations are simultaneously satisfied
\begin{equation}
\begin{split}
\label{SystK}
 u_{\h} & = \mathcal U^\h(f_0, u_{\p}) = \mathcal U^\h_{1}(f_0) + \mathcal U^\h_{2}(u_{\p}),\\
u_{\p} &  = \mathcal U^\p(f_0, u_{\h}) = \mathcal U^\p_{1}(f_0) + \mathcal U^\p_{2}(u_{\h}).
\end{split}
\end{equation}
If we set
\begin{equation*}
C\coloneqq \mathcal U^\p_{1} + \mathcal U^\p_{2} \mathcal U^\h_{1} : L^2(\T)^{d} \rightarrow C^\infty_c((T',T)\times\T)^{d_2},
\end{equation*}
then solving system \eqref{SystK} is equivalent to 
\begin{equation}
\label{ResoudreC}
\text{find}\ u_{\p} \in C^\infty_c((T',T)\times\T)^{d_2},\ \text{such}\ \text{that}\  Cf_0 = (I-\mathcal U^\p_{2}\mathcal U^\h_{2})u_{\p}.
\end{equation}
The operator $\mathcal U^\p_{2}\mathcal U^\h_{2}$ is compact on $L^2((T',T)\times\T)^{d_2}$ because it takes values in   $C^\infty_c((T',T)\times \T)^{d_2}$. Thus, by Fredhlom's alternative (see~\cite[Thm. 6.6]{brezis_2011}), there exist $N \in \N$ and $l_1, \dots, l_N$ continuous linear forms on $L^2((T',T)\times\T)^{d_2}$ such that the equation \eqref{ResoudreC} has a solution $u_{\p} \in L^2((T',T)\times\T)^{d_2}$ if and only if 
\begin{equation}
\label{ResoudreFL}
\forall j \in \{1, \dots, N\},\ l_j(C(f_0)) = 0.
\end{equation}
Under these conditions \eqref{ResoudreFL}, the equation \eqref{ResoudreC} has a solution $u_{\p} = L(f_0)$ given by a continuous map $L\colon\mathcal{G} \rightarrow L^2((T',T)\times\T)^{d_2}$ defined on the closed vector subspace of $L^2(\T)^d$ defined by
\begin{equation}
\mathcal{G} \coloneqq \{ f_0\in L^2(\T)^d\ ;\  l_j(Cf_0) = 0,\ 1\leq j \leq N\}.
\label{defmathcalW}
\end{equation} 
Then $L(f_0)=u_{\p} =  \mathcal{U}_2^{\p}\mathcal{U}_2^{\h} u_{\p} + C f_0$ belongs to $C^\infty_c((T',T)\times\omega)$.
We get the conclusion with
\[ \forall f_0 \in \mathcal{G},\ \mathcal{U}(f_0) \coloneqq (\mathcal{U}^{\h}(f_0, L( f_0)) ,L( f_0 )).\qedhere\]
\end{proof}

\Cref{LemHyp} is proved in Section~\ref{subsec:hyp}.
\Cref{LemPar} is proved in Section~\ref{subsec:parab}.
The unique continuation argument to control the low frequencies is presented in \Cref{ControlLowFreqSec}.

\subsection{Control of the hyperbolic high frequencies} \label{subsec:hyp}
The goal of this subsection is to prove \Cref{LemHyp}. We remind that $T > T' >T^\ast$ and that the control $u = (u_\h,u_\p)$ satisfies \eqref{suppHhHp}.

\subsubsection{Reduction to an exact controllability problem}
\label{SecReduceHyp}

The goal of this paragraph is to transform the null-controllability problem of \Cref{LemHyp} into an exact controllability problem associated with an hyperbolic system. Precisely, we will get \Cref{LemHyp} as a corollary of the following result.

\begin{prop} \label{LemHyp_BIS}
If $n_0$ (in Eq.~(\ref{defW0}--\ref{defWp})) is large enough, then, for every ${T'}>T^*$, there exists a continuous operator 
\begin{equation*}
\underline{\mathcal U}_{T'}^\h \colon\begin{array}[t]{@{}c@{}l}
 F^{\h}  & \rightarrow  L^2((0,T')\times\omega)^{d_1}\\
 f_{T'} &\mapsto u_{\h},
\end{array}
\end{equation*}
such that for every $f_{T'} \in  F^{\h}$,
\begin{equation*}
\Pi^{\h} S \left( {T'}; 0 , (\underline{\mathcal U}_{T'}^\h(f_{T'}),0) \right) = f_{T'}.
\end{equation*}
\end{prop}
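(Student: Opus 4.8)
The plan is to set up the problem as an exact controllability statement for the hyperbolic component alone, reduced modulo a compact perturbation to a direct sum of scalar transport equations, and then to invoke the controllability result of Alabau-Boussouira, Coron and Olive for transport equations on $\T$. First I would make the change of unknown that absorbs the parabolic and low-frequency parts: writing a solution of \eqref{Syst} as $f = f^0 + f^\p + f^\h$ with $f^\bullet = \Pi^\bullet f$, the projected equation $\partial_t(\Pi^\h f) + \mathcal L^\h(\Pi^\h f) = \Pi^\h(M u 1_\omega)$ is an evolution equation on the Hilbert space $F^\h$ governed by the bounded generator $-\mathcal L^\h$ (by \Cref{ExtendSemigroups}, it even generates a group). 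Since $\Pi^\h(Mu1_\omega) = \Pi^\h(u_\h 1_\omega, u_\p 1_\omega)$ and we are free to choose $u_\p = 0$ on $(0,T')$ (recall \eqref{suppHhHp} puts $\Supp u_\p$ in $[T',T]$), the only control entering $F^\h$ is $u_\h$. So it suffices to prove exact controllability from $0$ to an arbitrary $f_{T'}\in F^\h$ in time $T'$ of
\[
\partial_t f^\h + \mathcal L^\h f^\h = \Pi^\h\big(\Tr{(u_\h,0)}\,1_\omega\big),\qquad f^\h(0)=0 .
\]

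Next I would identify $F^\h$, via the Fourier decomposition \eqref{defWh}, with the orthogonal sum over $|n|>n_0$ of the spaces $\bigoplus_{\mu\in\Sp(A')}\Ima(P^\h_\mu(\iu/n))e_n$. On each block $\Ima(P^\h_\mu(\iu/n))e_n$, \Cref{th:perturb} \ref{defAsymHyp} gives $n^2 E(\iu/n)P^\h_\mu(\iu/n) = -\iu\mu n\,P^\h_\mu(\iu/n) + R^\h_\mu(\iu/n)$ (up to signs and conjugation to track carefully), so $\mathcal L^\h$ acts there as transport at speed $\mu$ plus the \emph{bounded} operator $\mathcal R$ built from the $R^\h_\mu(\iu/n)$'s, uniformly bounded in $n$ by \Cref{Prop:dissip_h}. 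Thus $\mathcal L^\h = \mathcal T + \mathcal R$ where $\mathcal T$ is (conjugate to) a finite direct sum, indexed by $\mu\in\Sp(A')$, of scalar transport operators $\partial_t + \mu\partial_x$ on the high-frequency subspace of $L^2(\T)$, and $\mathcal R$ is bounded on $F^\h$. For the group generated by $\mathcal T$ alone, $\mu\ne 0$ transport on $\T$ is exactly controllable from $\omega$ in any time $T' > \ell(\omega)/|\mu| \ge \ell(\omega)/\mu_* = T^*$; this is \cite[and in particular the cut-off localisation Lemma quoted later as \Cref{LemmaCutOffTransport}]{alabau-boussouira_2017}. One must also note that the map sending $u_\h$ into $F^\h$ (i.e. $\Pi^\h$ composed with multiplication by $1_\omega$ and embedding $u_\h\mapsto\Tr{(u_\h,0)}$) is, after the high-frequency conjugation, a compact perturbation of the diagonal embedding that feeds each transport block — because $P^\h_\mu(\iu/n)\to P^\h_\mu(0)=\big(\begin{smallmatrix}\ast&0\\0&0\end{smallmatrix}\big)$, the off-diagonal/frequency-dependent corrections are $O(1/n)$, hence the difference is compact.

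The final step is the standard perturbation-of-controllability argument: exact controllability in time $T'$ for the group $\eu^{-t\mathcal T}$ with control operator $B_0$ (the transport embedding) is equivalent, by \Cref{LemmaDuality}/HUM, to an observability inequality for the adjoint group; adding the bounded perturbation $\mathcal R$ and replacing $B_0$ by the compact-perturbed $B = B_0 + (\text{compact})$ preserves observability provided we can absorb the compact/low-order terms — this is where taking $n_0$ large is used, since all the perturbing terms are $O(1/n_0)$ on $F^\h$, so the perturbed observability constant is controlled by the unperturbed one. Concretely I would write the fixed-point (Neumann series) for the control operator: if $U_0\colon F^\h\to L^2((0,T')\times\omega)^{d_1}$ is the continuous right-inverse for the transport problem, solve $f_{T'} = \Pi^\h S(T';0,(U_0 v + \text{correction},0))$ for $v$ by inverting $I + (\text{small/compact})$; continuity of $\underline{\mathcal U}^\h_{T'}$ follows from continuity of $U_0$ and boundedness of the inverse. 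The main obstacle I anticipate is precisely this last absorption: the perturbation $\mathcal R$ is bounded but \emph{not} small (its size is fixed, not $O(1/n_0)$), so a naive Neumann series does not close; the correct route — as in Lebeau–Zuazua — is to use that exact controllability of a group is stable under \emph{bounded} perturbations of the generator on a fixed time interval (the time $T'>T^*$ gives the needed slack), combined with the genuine smallness (in $n_0$) only of the \emph{difference between $\mathcal T$ and the exactly-controllable model} and of the compact correction to the control operator. Getting the bookkeeping of which terms are small, which are merely bounded, and which are compact exactly right — and checking that the cut-off Lemma of \cite{alabau-boussouira_2017} indeed lets one localise the transport control to $\omega$ while keeping the high-frequency structure — is the technical heart of the argument.
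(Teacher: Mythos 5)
Your overall skeleton is right and matches the paper's: work on $F^\h$, split over $\mu\in\Sp(A')$, view each block of $\mathcal L^\h$ as transport at speed $\mu$ plus a perturbation, invoke the transport observability from Alabau--Boussouira, Coron and Olive (indeed through \Cref{LemmaDuality}/HUM and the inequality of \Cref{ObsHyp}), and note that the off-block contributions are $O(1/n)$. You also correctly flag the danger: the zero-order term coming from $R_\mu^\h(0)^*$ is bounded but \emph{not} $O(1/n_0)$. That is exactly the right place to worry, and it is also where your plan has a genuine gap.

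Your proposed fix---invoking that ``exact controllability of a group is stable under bounded perturbations of the generator on a fixed time interval'' and that $T'>T^*$ provides ``slack'' to absorb $\mathcal R$---is not a theorem you can cite as stated, and the paper does not use it. As you yourself compute, a Duhamel/Neumann argument produces an $O(1)$ remainder that cannot be absorbed by smallness, and the time slack $T'>T_\mu$ does not manufacture a small parameter to beat an $O(1)$ perturbation. What makes the paper's argument close is the very specific structure of that perturbation: $R_\mu^\h(0)^*$ is a single fixed matrix acting identically on every Fourier mode, hence commutes with translations, so one \emph{conjugates} the solution by $\eu^{tR_\mu^\h(0)^*}$ (Eq.~(\ref{def:gmutilde})); the $O(1)$ term disappears into a bounded change of unknown, and only the truly $O(1/n)$ remainder $R_\mu^\h(\iu/n)^*-R_\mu^\h(0)^*$ survives as a source. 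That source is estimated in $H^{-1}$, giving the weak observability
$\lVert g_0\rVert_{L^2}\le C\big(\lVert g_1\rVert_{L^2(q_{T'})}+\lVert g_0\rVert_{H^{-1}}\big)$,
and the lower-order term is removed \emph{not} by smallness but by compactness--uniqueness (Peetre's lemma), which is also where $n_0$ large is actually used: to rule out eigenfunctions $Xe_n$ of ${\mathcal L^*}^\h$ with $X_1=0$, via $\lvert X\rvert\le\tfrac{C}{\lvert n\rvert}\lvert X\rvert$. So the two ingredients missing from your plan are the conjugation that eats the constant bounded term and the Peetre elimination of the resulting $H^{-1}$ remainder; without them, the ``absorption'' step you flag as the technical heart does not close.
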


\Cref{LemHyp_BIS} will be proved in Section~\ref{subsec:Hyp_NC}. Now, we prove \Cref{LemHyp} thanks to \Cref{LemHyp_BIS}.

\begin{proof}[Proof of \Cref{LemHyp}]
Let  $(f_0,u_{\p})\in  L^2(\T)^{d} \times L^2((T',T)\times\omega)^{d_2} $. We have to find $u_{\h} \in L^2((0,T')\times\omega)^{d_1}$ such that
\[\Pi^{\h} S(T;f_0, (u_{\h},u_{\p})) = 0,\]
or, equivalently,
\begin{equation}
\label{HypTransf1}
\Pi^{\h} S(T;0, (u_{\h},0))= -\Pi^{\h} S(T;f_0, (0,u_{\p})).
\end{equation}
According to the well-posedness of the system~\eqref{Syst} and the continuity of the projection $\Pi^\h$ (Definition~\ref{def:solution} and Proposition~\ref{decompL2}), the linear map
\begin{equation}
\label{Continuous}
(f_0, u_{\p}) \mapsto -\Pi^{\h} S(T;f_0, (0,u_{\p})),
\end{equation}
is continuous from $L^2(\T)^{d} \times L^2((T',T)\times\omega)^{d_2}$ into $F^{\h}$, equipped with the $L^2(\T)^d$-norm. Since $u_{\h}$ is supported in $(0,T')\times\omega$ by \eqref{suppHhHp}, we have
\begin{equation}
\label{HypTransf2}
\Pi^{\h} S(T;0, (u_{\h},0))= \eu^{-(T-T')\mathcal{L}^{\h}}\Pi^{\h} S(T';0, (u_{\h},0)).
\end{equation}
As pointed out in \Cref{ExtendSemigroups}, $\eu^{t\mathcal{L}^{\h}}$ is well-defined for all $t \in \R$. Therefore, by using \eqref{Continuous} and \eqref{HypTransf2}, \eqref{HypTransf1} is equivalent to 
\begin{equation}
\label{HypTransf3}
\Pi^{\h} S(T';0, (u_{\h},0))= -\eu^{(T-T')\mathcal{L}^{h}}\Pi^{\h} S(T;f_0, (0,u_{\p})) \in F^{\h}.
\end{equation}
We get the conclusion with
\[\mathcal{U}^\h(f_0,u_p) = \underline{\mathcal{U}}_{T'}^{\h} \left(-\eu^{(T-T')\mathcal{L}^{h}}\Pi^{\h} S(T;f_0, (0,u_{\p})) \right).\qedhere\]
\end{proof}

\subsubsection{Exact controllability of the hyperbolic part} \label{subsec:Hyp_NC}

The goal of this section is to prove \Cref{LemHyp_BIS}.
By the Hilbert Uniqueness Method, \Cref{LemHyp_BIS} is equivalent to the following observability inequality (it is an adaptation of~\cite[Thm. 2.42]{coron_2007}).
\begin{prop}
\label{ObsHyp}
If $n_0$ is large enough, there exists a constant $C>0$ such that for every $g_0 \in \widetilde{F^{\h}}$, the solution $g$ of \eqref{SystAdj} satisfies
\begin{equation}
\label{inobsHyp}
\norme{g_0}_{L^2(\T)^{d}}^{2} \leq C \int_{0}^{T'} \int_{\omega} |g_{1}(t,x)|^{2} \diff t \diff x,
\end{equation}
where $g_{1}$ denotes the first $d_1$ components of $g$.
\end{prop}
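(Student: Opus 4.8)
\textbf{Proof strategy for Proposition~\ref{ObsHyp}.}
The plan is to isolate the hyperbolic dynamics from the full adjoint system and reduce to a known observability result for transport systems. The starting point is the spectral decomposition of Section~\ref{subsubsec:PT}: for $g_0 \in \widetilde{F^\h}$, the solution $g$ of~\eqref{SystAdj} stays in $\widetilde{F^\h}$ for all times (by the last sentence of Section~\ref{subsec:decomp}), and on each Fourier mode $n$ with $|n| > n_0$ it evolves by the matrix $\exp(-tn^2 E(\iu/n)^\ast)$ restricted to $\Ima(P^\h(\iu/n)^\ast)$. By Proposition~\ref{th:perturb} (adjointed), $E(\iu/n)^\ast$ acts on $\Ima(P_\mu^\h(\iu/n)^\ast)$ as $-\tfrac{\iu\mu}{n}P_\mu^\h(\iu/n)^\ast + \tfrac1{n^2}R_\mu^\h(\iu/n)^\ast$, so $-tn^2 E(\iu/n)^\ast$ acts as $\iu\mu n t\, P_\mu^\h(\iu/n)^\ast - t R_\mu^\h(\iu/n)^\ast$. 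Hence, writing $g = \sum_\mu g^\mu$ along the splitting $\widetilde{F^\h} = \bigoplus_\mu \widetilde{F^\h_\mu}$, each piece $g^\mu$ satisfies, up to the bounded (in $n$) perturbation $R_\mu^\h$, a pure transport equation with speed $\mu$: its Fourier mode $n$ is $\eu^{\iu\mu nt}$ times a fixed vector twisted by $\eu^{-tR_\mu^\h(\iu/n)^\ast}$, which is a bounded invertible factor uniformly in $t\in[0,T']$ and $|n|>n_0$.

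\textbf{Reduction to a genuine transport system.} The key observation is that, modulo a compact perturbation and a bounded invertible change of unknown, $g^\mu$ is a solution of the scalar-speed transport system $\partial_t h - \mu \partial_x h = 0$ (in the space of high-frequency $\Ima(P_\mu^\h)$-valued functions). More precisely, I would introduce an operator on $\widetilde{F^\h_\mu}$ that on mode $n$ multiplies by $\eu^{tR_\mu^\h(\iu/n)^\ast}$; since these matrices are holomorphic hence uniformly bounded with uniformly bounded inverses for $|n|>n_0$ and $t\in[0,T']$, this operator is bounded and boundedly invertible, and it conjugates the evolution of $g^\mu$ to the pure transport semigroup on the high frequencies. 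The transport equation on the torus with a vector unknown and velocity $\mu$, observed on $\omega$ over a time $T' > \ell(\omega)/|\mu| \geq \ell(\omega)/\mu_* = T^\ast$, is observable: this is exactly the content of the results of Alabau-Boussouira, Coron and Olive~\cite{alabau-boussouira_2017} (and classically, finite speed of propagation plus $\ell(\omega)/|\mu| < T'$). This gives, for each $\mu$, an observability inequality $\|g_0^\mu\|^2 \leq C\int_0^{T'}\int_\omega |\text{(first }d_1\text{ components of }g^\mu)|^2$, provided $n_0$ is large enough that the perturbative picture holds; summing over the finitely many $\mu\in\Sp(A')$ and using that $\widetilde{F^\h} = \bigoplus_\mu \widetilde{F^\h_\mu}$ (with $\Pi^\h$-bounded projections) yields the claimed inequality~\eqref{inobsHyp}, since the observed quantity there is the first $d_1$ components of $g = \sum_\mu g^\mu$.

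\textbf{Main obstacle.} The delicate point is not the abstract transport observability (which is quoted), but verifying that the observed quantity in~\eqref{inobsHyp}, namely the first $d_1$ components $g_1$ of $g$, really controls the observed quantity needed for the transport observability of each $g^\mu$. At $z = 0$ we have $P_\mu^\h(0)^\ast$ landing in the range of $\big(\begin{smallmatrix}I_{d_1}&0\\0&0\end{smallmatrix}\big)^\ast$, so the eigenvectors are purely ``hyperbolic'' ($f_2$-component zero); for $z = \iu/n \neq 0$ this holds only up to $O(1/n)$, so the map ``full $d$-vector $\mapsto$ first $d_1$ components'' restricted to $\Ima(P_\mu^\h(\iu/n)^\ast)$ is invertible with uniformly bounded inverse only once $n_0$ is large enough — this is where the hypothesis ``$n_0$ large'' is consumed, and it must be checked by a compactness/continuity argument on $z\in\overline D(0,1/n_0)$. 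Once this is in hand, the equivalence of the $L^2$-norm of $g^\mu$ with the $L^2$-norm of its first $d_1$ components (on high frequencies, uniformly) lets one transfer the transport observability to an observability of $g_1$, and the proof concludes. A secondary technical point is to absorb the bounded perturbation $R_\mu^\h$ into the constant $C$ and to handle the finitely many low frequencies $|n|\leq n_0$, which are excluded from $\widetilde{F^\h}$ by construction and thus cause no trouble.
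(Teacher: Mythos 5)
Your strategy is the right one — reduce the hyperbolic component to a family of (perturbed) transport equations indexed by $\mu\in\Sp(A')$, quote the transport observability from~\cite{alabau-boussouira_2017}, and sum. But there is a genuine gap in the "bounded invertible change of unknown" step. The operator you propose, which on mode $n$ multiplies by $\eu^{t R_\mu^\h(\iu/n)^*}$, is a Fourier multiplier, hence \emph{nonlocal} in $x$. It does conjugate the evolution of $g^\mu$ to the pure transport semigroup, so observability gives you control of $h^\mu\coloneqq M(t)g^\mu$ on $q_{T'}=(0,T')\times\omega$. But $M(t)$ does not commute with restriction to $\omega$: $\norme{h^\mu(t,\cdot)}_{L^2(\omega)}$ cannot be compared to $\norme{g^\mu(t,\cdot)}_{L^2(\omega)}$ by the mere boundedness and bounded invertibility of $M(t)$ on $L^2(\T)^d$. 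The same issue recurs in your "Main Obstacle" step: the reconstruction of $g^\mu$ from its first $d_1$ components is again a Fourier multiplier (the map $\hat g^\mu_1(n)\mapsto \hat g^\mu_2(n)$ depends on $n$), so the claimed pointwise equivalence of $L^2(\omega)$-norms does not hold as stated; it holds only globally on $\T$, and passing to $\omega$ leaves a nonlocal remainder.

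The paper avoids both problems by conjugating with the $n$-independent matrix $\eu^{t R_\mu^\h(0)^*}$ — a pointwise (local) operation — and isolating all the $n$-dependent corrections as a \emph{source term} $S_\mu$ (Eq.~\eqref{Smu}) whose $L^\infty L^2$-norm is controlled by $\norme{g_0}_{H^{-1}(\T)}$. The same device is used to pass from $G_\mu$ to $P_\mu^\h(0)^*g$: the difference is a Fourier multiplier with symbol $\bigO(1/n)$, so it produces another $\norme{g_0}_{H^{-1}(\T)}$ error. Every nonlocal correction you gloss over is therefore an explicit "compact" term of size $\norme{g_0}_{H^{-1}}$, and one arrives at the \emph{weak} observability inequality~\eqref{inobsHypRed4}, $\norme{g_0}_{L^2}\leq C(\norme{g_1}_{L^2(q_{T'})}+\norme{g_0}_{H^{-1}})$. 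The remaining $\norme{g_0}_{H^{-1}}$ term is then removed by a compactness-uniqueness argument (Peetre's lemma), using $n_0$ large only at that last stage to show the unobservable space is trivial. Your instinct to absorb the error using $\norme{g_0}_{H^{-1}}\leq n_0^{-1}\norme{g_0}_{L^2}$ and $n_0$ large is a legitimate alternative to the compactness-uniqueness step, but it does nothing to repair the missing quantitative control of the nonlocal errors themselves; for that you must trade your exact-but-nonlocal conjugation for the paper's local conjugation plus tracked compact remainder.
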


\begin{proof}
Let $g_0\in  \widetilde{F^{\h}}$. By using the definition~\eqref{defWhtilde} of $\widetilde{F^\h}$, $g_0$ decomposes as follows
\begin{equation}
\label{DecompVarphiThyp}
g_0  = \sum\limits_{\mu\in\Sp(A')}\sum\limits_{|n| > n_0} {P_\mu^\h}\Big(\frac \iu n\Big)^* \widehat{g}_0(n) e_n.
\end{equation}
Then, the solution $g$ of \eqref{SystAdj} is 
\begin{equation}
\label{DecompVarphiTHypJ}
g(t) = \sum\limits_{\mu \in \Sp(A')} G_\mu(t) 
\quad \text{ where } \quad
G_\mu(t) = \sum\limits_{|n| > n_0} \eu^{-t n^2 {E}\left(\frac \iu n\right)^*}P_\mu^\h\Big(\frac \iu n\Big)^* \widehat{g}_0(n) e_n.
\end{equation}
Let $\mu\in\Sp(A')$.

\paragraph{Step 1: We prove the existence of $C_1=C_1(T')>0$, independent of $g_0$, such that}
\begin{equation}
\label{HF_Hyp_Step1}
\lVert{{G_\mu}(0,\cdot)}\rVert_{L^2(\T)^{d}} \leq C_1 \left( \|G_\mu\|_{L^2(q_{T'})^d} + \|g_0\|_{H^{-1}(\T)^d} \right)
\end{equation}
where $q_{T'} = (0,T')\times\omega$ and
\begin{equation} \label{normeH-1}
\|g_0\|_{H^{-1}(\T)^d} = \left( \sum\limits_{|n| > n_0} \frac{|\widehat{g}_0(n)|^{2}}{n^2} \right)^{1/2}.
\end{equation}
By using~\ref{eigenprojection} and~\ref{defAsymHyp} of \Cref{th:perturb}, we have
\[ \eu^{-t n^2 {E}\left(\frac \iu n\right)^*}{P_\mu^\h}\Big(\frac \iu n\Big)^* = \eu^{-tn^2\left(\mu \frac{\iu}{n} + \left(\frac \iu n\right)^{2} R_\mu^\h\left(\frac \iu n\right)\right)^*} {P_\mu^\h}\Big(\frac \iu n\Big)^* = \eu^{t \mu \iu n + t {R_\mu^\h}\left(\frac \iu n\right)^*}{P_\mu^\h}\Big(\frac \iu n\Big)^*,\]
which leads to
\begin{equation}
\label{SystAdjSAsymp}
 \partial_t G_\mu -  \mu \partial_x G_\mu  - {R_\mu^{\h}}(0)^* G_\mu  = S_\mu \quad \text{in}\ (0,T')\times\T,
\end{equation}
where
\begin{equation}\label{Smu}
S_\mu(t)=\sum\limits_{|n|>n_0} \left(R_\mu^\h\left( \frac{\iu}{n} \right)^* - R_\mu^\h(0)^*  \right) \eu^{t \mu \iu n + t {R_\mu^\h}\left(\frac \iu n\right)^*}{P_\mu^\h}\Big(\frac \iu n\Big)^* \widehat{g}_0(n) e_n.
\end{equation}
By regularity of $z\mapsto R_\mu^\h(z)$, Bessel-Parseval identity and (\ref{normeH-1}) there exists $C=C(T')>0$, independent of $g_0$, such that
\begin{equation}
\label{defsourcecomp}
 \norme{S_\mu}_{L^\infty((0,T'),L^2(\T)^{d})} \leq C \| g_0\|_{H^{-1}(\T)^d}.
\end{equation}
By (\ref{SystAdjSAsymp}), the function $\widetilde{G}_\mu$ defined by
\begin{equation} \label{def:gmutilde}
\widetilde{G}_{\mu}(t,x) =\eu^{t R_{\mu}^{\h}(0)^*} G_{\mu}(t,x)
\end{equation}
solves
\begin{equation} \label{syst_gmutilde}
\left\{
\begin{array}{l l}
 \partial_t \widetilde{G}_\mu -  \mu \partial_x \widetilde{G}_\mu    = \eu^{t R_{\mu}^{\h}(0)^*} S_\mu&\text{in}\ (0,T')\times\T,\\
\widetilde{G}_\mu(0,\cdot)={G_\mu}(0,\cdot)& \text{in}\ \T.
\end{array}\right.
\end{equation}
We introduce the solution $G_\mu^\flat$ of
\begin{equation} \label{eq_gmudiese}
\left\{
\begin{array}{ll}
 \partial_t G_\mu^\flat -  \mu \partial_x G_\mu^\flat    = 0 &\text{in}\ (0,T')\times\T,\\
G_\mu^\flat (0,\cdot)= G_\mu(0,\cdot)& \text{in}\ \T.
\end{array} \right.
\end{equation}
Using the Duhamel formula for system (\ref{syst_gmutilde}) and the estimate (\ref{defsourcecomp}), we obtain
\begin{equation} \label{dmutilde-gmudiese}
\| \widetilde{G}_\mu - G_\mu^\flat \|_{L^\infty((0,T'),L^2(\T)^d)} \leq C \| \eu^{t R_{\mu}^{\h}(0)^*} S_\mu \|_{L^1((0,T'),L^2(\T)^d)} \leq C
\| g_0 \|_{H^{-1}(\T)^d}
\end{equation}
where $C=C(T')>0$ is independent of $g_0$.
The time $T_{\mu} \coloneqq \ell(\omega)/|\mu|$ is the minimal time for the observability of the system (\ref{eq_gmudiese}) on $\omega$ (see for instance~\cite[Theorem 2.2]{alabau-boussouira_2017}). Indeed, for any $T''>T_\mu$,
\[ \T \subset\{ x - \mu t;\: (t,x) \in [0,T'']\times\omega\} . \]
Since $T'>T_\mu$, there exists $C=C(T',\omega)>0$, independent of $g_0$, such that
\[\| G_\mu(0,\cdot) \|_{L^2(\T)^d} \leq C \| G_\mu^\flat \|_{L^2(q_{T'})^d}.\]
By the triangular inequality, (\ref{def:gmutilde}) and (\ref{dmutilde-gmudiese}),  we deduce that
\[
\| G_\mu(0,\cdot) \|_{L^2(\T)^d} 
\leq C \left( \| \widetilde{G}_\mu \|_{L^2(q_{T'})^d} +  \| \widetilde{G}_\mu - G_\mu^\flat \|_{L^2(q_{T'})^d} \right) \\
\leq C \left( \|G_\mu\|_{L^2(q_{T'})^d}+ \| g_0 \|_{H^{-1}(\T)^d} 
\right)\]
which ends the first step.

\paragraph{Step 2: We prove the existence of $C_2=C_2(T',\omega)>0$, independent of $g_0$, such that}
\begin{equation} \label{HF_Hyp_Step2}
\lVert{{G_\mu}(0,\cdot)}\rVert_{L^2(\T)^{d}} \leq C_2 \left( \|P^\h_\mu(0)^* g \|_{L^2(q_{T'})^d} + \| g_0\|_{H^{-1}(\T)^d} \right).
\end{equation}
Taking into account that the projection $P^\h_\lambda(z)$ commutes wih $E(z)$ we deduce from (\ref{DecompVarphiTHypJ}) that for any $\lambda \in \Sp(A')$,
\[G_\lambda(t) = \sum\limits_{|n| > n_0} P_\lambda^\h\Big(\frac \iu n\Big)^* \eu^{-t n^2 {E}\left(\frac \iu n\right)^*} P_\lambda^\h\Big(\frac \iu n\Big)^* \widehat{g}_0(n) e_n \]
thus,
\begin{equation} \label{Gmu_Phmu_decom}
\begin{aligned} 
&G_{\mu}(t)-P^\h_\mu(0)^* g(t)\\
&\quad =\sum\limits_{|n|>n_0} \left( P_\mu^\h\Big(\frac \iu n\Big)^* - P^\h_\mu(0)^* \right) \eu^{-t n^2 {E}\left(\frac \iu n\right)^*} P_\mu^\h\Big(\frac \iu n\Big)^*\widehat{g}_0(n) e_n 
\\  &
\qquad- \sum\limits_{\lambda \in \Sp(A') \setminus \{\mu\}} \sum\limits_{|n| > n_0} P^\h_\mu(0)^* \left( P_\lambda^\h\Big(\frac \iu n\Big)^* - P_\lambda^\h (0)^* \right) \eu^{-t n^2 {E}\left(\frac \iu n\right)^*}P_\lambda^\h\Big(\frac \iu n\Big)^* \widehat{g}_0(n) e_n
\end{aligned}
\end{equation}
because, for $\lambda \neq \mu$, $P^\h_\mu(0)^* P_\lambda^\h (0)^*=0$. By using the regularity of $z\mapsto P^\h_\lambda(z)$, Bessel-Parseval identity and (\ref{normeH-1}), we obtain $C=C(T')>0$ independent of $g_0$ such that
\[\|G_{\mu} -P^\h_\mu(0)^* g \|_{L^\infty((0,T'),L^2(\T)^d)} \leq C \| g_0\|_{H^{-1}(\T)^d}.\]
We deduce from Step 1, the triangular inequality and the previous estimate that
\begin{align*}
\lVert{{G_\mu}(0,\cdot)}\rVert_{L^2(\T)^{d}} 
& \leq C \left( \|G_\mu\|_{L^2(q_{T'})} + \| g_0\|_{H^{-1}(\T)^d} \right)
\\ & \leq C \left( \|P^\h_\mu(0)^* g\|_{L^2(q_{T'})} +  \|G_\mu - P^\h_\mu(0)^* g  \|_{L^2(q_{T'})} + \| g_0\|_{H^{-1}(\T)^d} \right)
\\ & \leq C \left( \|P^\h_\mu(0)^* g\|_{L^2(q_{T'})}+ \| g_0\|_{H^{-1}(\T)^d} \right),
\end{align*}
which ends Step 2.

\paragraph{Step 3: Conclusion.} 
For every $\mu \in \Sp(A')$, we have 
$P^\h_\mu(0)^*=P^\h_\mu(0)^* P^\h(0)^*$ thus
\[\|P^\h_\mu(0)^* g\|_{L^2(q_{T'})} \leq |P^\h_\mu(0)^*| \| P^\h(0)^* g\|_{L^2(q_{T'})} \leq C \|g_1\|_{L^2(q_{T'})}.\]u
Using (\ref{DecompVarphiTHypJ}), the triangular inequality, Step 2 and the previous inequality, we obtain
\begin{equation} \label{inobsHypRed4}
\|g_0\|_{L^2(\T)^d}  \leq \sum\limits_{\mu \in \Sp(A')} \|G_\mu(0,\cdot)\|_{L^2(\T)^d} 
 \leq C \left( \|g_1\|_{L^2(q_{T'})^d}+ \| g_0\|_{H^{-1}(\T)^d} \right).
\end{equation}
From this estimate and the compact embedding $L^2(\T) \hookrightarrow H^{-1}(\T)$, a classical com\-pact\-ness-uniqueness argument gives the observability inequality \eqref{inobsHyp} (see for instance~\cite[Lemma 2.1 and Remark 2.2]{duprez_2018b}).

Indeed, by Peetre's lemma (see~\cite[Lemma 3]{peetre_1961}), we have from \eqref{inobsHypRed4} that 
\[ N_{T'} \coloneqq \{g_{0} \in \widetilde{F^{\h}};\ g_{1} = 0\ \text{in}\ (0,T')\times\omega\},\]
has finite-dimension. Moreover, from~\cite[Lemma 4]{peetre_1961}, to prove~\eqref{inobsHyp}, we only need to show that $N_{T'}$ is reduced to zero. First, by definition, we remark that $N_{T'}$ decreases as $T'$ increases. By a small perturbation of $T'$, we may therefore assume that $N_T = N_{T'}$ for $T-T'$ small, in which case $N_{T'}$ is stable by $\eu^{-t {\mathcal{L}^*}^\h}$ where ${\mathcal{L}^*}^\h$ is the restriction of ${\mathcal{L}^*}$ to $\widetilde{F^\h}$. Then, if $N_{T'}$ is not reduced to zero, it contains an eigenfunction of ${\mathcal{L}^*}^\h$, i.e.\ a function of the form $X e_n$ where $X \in \C^d$, $|n|>n_0$ and $X =P^\h\left(\frac{\iu}{n}\right) X$. By definition of $N_{T'}$, the first components of that eigenfunction vanishes on $\omega$ i.e.\ $X_1=0$, or equivalently $P^\h(0)X=0$. Thus
\[|X| = \left| \left( P^\h\left(\frac{\iu}{n}\right)  - P^\h (0) \right) X \right| \leq \frac{C}{|n|} |X|\]
where $C>0$ does not depend on $n$. For a large enough choice of $n_0$, this is impossible. 
\end{proof}

\subsection{Control of the parabolic high frequencies} \label{subsec:parab}
The goal of this subsection is to prove \Cref{LemPar}.  We recall that $T$ and $T'$ are chosen such that $T^*<T'<T$ and the control $u$ is such that~\eqref{DecompH} and~\eqref{suppHhHp} hold.

The strategy is the following one: identify the equation satisfied by the last $d_2$ components of the parabolic equation~\eqref{SystAdj} with the help of the asymptotics of \Cref{th:perturb}, then construct smooth controls by adapting the Lebeau-Robbiano's method to systems.

In this section, for every vector $\varphi\in\set C^d$, we will note $\varphi_1$ its first $d_1$ components and $\varphi_2$ its last $d_2$ components.

\subsubsection{Reduction to a null-controllability problem}\label{sec:reduc_parab}

The goal of this paragraph is to transform the null-controllability problem of \Cref{LemPar} into a null-controllability problem associated to a parabolic system. Precisely, we will prove that \Cref{LemPar} is a consequence of the following result.

\begin{prop}
\label{LemPar_Bis}
If $n_0$ is large enough, then for every $T>0$, there exists a continuous operator 
\begin{equation*}
\underline{\mathcal{U}}_T^\p \colon \begin{array}[t]{@{}c@{}l}
  F^\p  & \rightarrow  C^\infty_c((0,T)\times\omega)^{d_2} \\
  f_0 &\mapsto u_{\p},
\end{array}
\end{equation*}
such that for every $f_0 \in F^\p$,
\begin{equation*}
\Pi^{\p} S(T; f_0, (0, \underline{\mathcal{U}}_T^\p(f_0))) = 0.
\end{equation*}
\end{prop}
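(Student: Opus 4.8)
\emph{Reformulation.} Since $f_0\in F^\p$ and $F^\p$ is invariant under $\eu^{-t\mathcal L}$, applying $\Pi^\p$ to Duhamel's formula turns the claim into an abstract null-controllability statement for the semigroup $\eu^{-t\mathcal L^{\p}}$ on $F^\p$: for each $f_0\in F^\p$ one must produce a control $u_\p$, as smooth and localised as announced, with
\[
\eu^{-T\mathcal L^{\p}}f_0+\int_0^T\eu^{-(T-s)\mathcal L^{\p}}\,\Pi^\p\big[(0,u_\p(s))1_\omega\big]\diff s=0 .
\]
I would do this by the Lebeau--Robbiano method, whose two inputs are both furnished by \Cref{Sec:Prel}. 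The first is a \emph{frequency-localised dissipation}: setting $F^{\p}_{>N}:=\bigoplus_{|n|>N}\Ima(P^\p(\iu/n))\,e_n$, \Cref{Prop:dissip_p} used with $z=\iu/n$, $\tau=tn^2$ gives $\big\|\eu^{-t\mathcal L^{\p}}\big\|_{\mathcal L(F^{\p}_{>N})}\le K_\p\,\eu^{-c_\p N^2 t}$ for every $N\ge n_0$ and $t>0$; moreover $\eu^{-t\mathcal L^{\p}}$ is Fourier-diagonal, hence leaves each frequency band invariant. The second is a \emph{spectral inequality}: after fixing a subinterval $\omega'\Subset\omega$, there is $C>0$ such that for all $N\ge n_0$ and all $g=\sum_{n_0<|n|\le N}X_n e_n$ with $X_n\in\Ima\big(P^\p(\iu/n)^{*}\big)$,
\[
\|g\|_{L^2(\T)^{d}}\le C\,\eu^{CN}\,\|g_2\|_{L^2(\omega')^{d_2}},
\]
$g_2$ denoting the last $d_2$ components of $g$.

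\emph{Proof of the spectral inequality.} By holomorphy of $P^\p$ and $P^\p(0)^{*}=\big(\begin{smallmatrix}0&0\\0&I_{d_2}\end{smallmatrix}\big)$, one has $P^\p(\iu/n)^{*}=\big(\begin{smallmatrix}0&0\\0&I_{d_2}\end{smallmatrix}\big)+\grandO{1/n}$, so $X_n\in\Ima(P^\p(\iu/n)^{*})$ forces $|(X_n)_1|\le (C/|n|)\,|X_n|$, hence $|X_n|^2\le 2\,|(X_n)_2|^2$ after possibly enlarging $n_0$; by Parseval, $\|g\|_{L^2(\T)^{d}}^2\le 2\,\|g_2\|_{L^2(\T)^{d_2}}^2$. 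Each scalar component of $g_2$ is a trigonometric polynomial of degree $\le N$, so the elementary spectral inequality on the torus, $\|p\|_{L^2(\T)}\le C\eu^{CN}\|p\|_{L^2(\omega')}$ for such $p$, applied componentwise, closes the estimate. No algebraic condition on the coupling enters here, since the control sees all $d_2$ parabolic components.

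\emph{The iteration.} Fix $0<t_1<t_2<T$, set $z_0:=\eu^{-t_1\mathcal L^{\p}}f_0$, and split $[t_1,t_2]$ into intervals $I_j=(a_j,a_{j+1})$, $j\ge0$, with $a_0=t_1$, $a_j\uparrow t_2$, $\sum_j(a_{j+1}-a_j)=t_2-t_1$ and $a_{j+1}-a_j\simeq 2^{-j/2}$. On the first half of $I_j$, the Hilbert Uniqueness Method applied to the subsystem carried by the frequencies $|n|\le 2^j$, whose observability constant is $\le C\eu^{C2^j}$ by the spectral inequality together with an elementary lower bound for the adjoint semigroup on that band, produces a control $u_j\in C^\infty_c(I_j\times\omega')^{d_2}$ (smooth in time via a cut-off, in space since the modes are band-limited and localised in $\omega'$) steering the frequencies $|n|\le 2^j$ of the state to $0$ at the midpoint of $I_j$. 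On the second half of $I_j$ no control is applied; since $\eu^{-t\mathcal L^{\p}}$ is Fourier-diagonal the state still has no component of frequency $\le 2^j$, so the dissipation estimate with $N=2^j$ contracts its norm by $K_\p\eu^{-c_\p 4^j(a_{j+1}-a_j)/2}$. Iterating yields $\|z_{j+1}\|\le C\eu^{C2^j-c'2^{3j/2}}\|z_j\|$, so $\|z_j\|\to0$ super-exponentially, and likewise $\|u_j\|_{H^s_0(I_j\times\omega')^{d_2}}\le C_s\,\eu^{C'2^j}\|z_j\|$ with $C'$ independent of $s$; hence $\sum_j\|u_j\|_{H^s_0}<\infty$ for every $s$. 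Setting $u_\p:=\sum_j u_j$ (extended by $0$), its support lies in the compact set $[t_1,t_2]\times\overline{\omega'}\subset(0,T)\times\omega$ and the series converges in every $H^s_0((0,T)\times\omega)^{d_2}$, so $u_\p\in C^\infty_c((0,T)\times\omega)^{d_2}$ and $f_0\mapsto u_\p$ is linear and continuous in the sense required; since the state at $t_2$ equals $\lim_j z_j=0$ while no control acts on $[t_2,T]$, we get $\Pi^\p S(T;f_0,(0,u_\p))=0$.

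\emph{Expected main obstacle.} The delicate point is calibrating the iteration so that the contraction rate $c_\p 4^j(a_{j+1}-a_j)$ dominates, uniformly in the Sobolev index $s$, both the exponential cost $\eu^{C2^j}$ from the spectral inequality and the losses (polynomial in $(a_{j+1}-a_j)^{-1}$) coming from differentiating the time cut-offs; step lengths $\simeq 2^{-j/2}$, a geometric ratio strictly between $2^{-1}$ and $2^{0}$, are precisely what makes $\sum_j\|u_j\|_{H^s}$ finite for all $s$ and thus produce a $C^\infty_c$ control depending continuously on $f_0$ in every Sobolev norm. A subsidiary but indispensable ingredient is the construction, at each step, of genuinely $C^\infty$ compactly supported controls for the finite-dimensional (per Fourier mode) parabolic subsystem, obtained by minimising the usual HUM functional against a weight vanishing to infinite order at the endpoints of $I_j$.
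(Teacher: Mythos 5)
Your proposal is essentially the paper's own Lebeau–Robbiano strategy: a frequency-localised dissipation estimate from \Cref{Prop:dissip_p}, a spectral inequality for band-limited elements of $\widetilde{F^\p}$ observed through their last $d_2$ components, and the standard alternating control/decay iteration with step lengths $\simeq 2^{-\rho j}$ giving a control series summable in every $H^s_0$ norm. The quantitative calibration you describe (exponential cost $\eu^{CN}$ versus contraction $\eu^{-cN^2T_j}$, with $C$ independent of the Sobolev index $s$) is exactly what the paper's Lemma~\ref{Prop:ANinv} and Proposition~\ref{Prop:FNPF} deliver.

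Two points where the paper works harder than your sketch and where the missing details are genuinely delicate (as you yourself flag). First, you assert that weighted HUM on a frequency band ``produces a control $u_j\in C^\infty_c(I_j\times\omega')^{d_2}$'' with the claimed $H^s_0$ bounds, but this requires an actual construction. The paper does it by the explicit moment-problem ansatz $u_2=\rho(t,x)\,v_2(t,x)$ with $v_2$ band-limited, and the invertibility with $\eu^{CN}/T$-growth of the associated Gram matrix (Lemma~\ref{Prop:ANinv}) is what simultaneously gives existence, support, smoothness, and the $H^s$ estimate with $s$-independent exponential rate; your phrase ``the modes are band-limited and localised in $\omega'$'' is literally self-contradictory and should be replaced by exactly this cut-off-times-band-limited ansatz. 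Second, the observability needed for HUM is not the static spectral inequality you state but its time-weighted version; the paper obtains it by combining the spectral inequality with the componentwise lower bound $|\eu^{-k^2(T-t)E_2(k)}X_k|\ge \eu^{-ck^2(T-t)}|X_k|$ and the carefully chosen weight $\rho_1$ satisfying~\eqref{def:rho1}, which loses only $\eu^{-C_0\sqrt{\gamma}}$; your ``elementary lower bound for the adjoint semigroup on that band'' is this step in disguise and must be made precise before the observability constant $\eu^{CN}$ (rather than $\eu^{CN^2}$) can be claimed. Your route does bypass the paper's closed parabolic equation $\partial_t g_2-\mathfrak{D}g_2=0$ (Propositions~\ref{Prop:DefG}–\ref{LemEquationg2}) by working mode-by-mode and using that $\Ima(P^\p(\iu/n)^*)$ is an $O(1/n)$-perturbation of $\{0\}\times\C^{d_2}$; that is a legitimate mild simplification, but it does not relieve you of the two items above, which are the real content of the paper's proof.
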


\Cref{LemPar_Bis} will be proved thanks to an adaptation of Lebeau-Robbiano's method in Section~\ref{Subsec:LR}, after two sections of preliminary results. Now we prove \Cref{LemPar} thanks to \Cref{LemPar_Bis}.

\begin{proof}[Proof of \Cref{LemPar}]
Let $(f_0,u_{\h}) \in L^2(\T)^d\times L^2((0,T')\times \omega)^{d_1}$. We have to find $u_{\p}\in C^\infty_c((T',T)\times\omega)^{d_2}$ such that 
\begin{equation}
\label{HypTransf1Par}
\Pi^{\p} S(T;f_0, (u_{\h},u_{\p}))= 0,
\end{equation}
or equivalently,
\begin{equation}
\label{HypTransf2Par}
\Pi^{\p} S(T;0, (0,u_{\p}))= -\Pi^{\p} S(T;f_0, (u_{\h},0)).
\end{equation}
In view of the support of the controls (Eq.~\eqref{suppHhHp}), the equality \eqref{HypTransf2Par} is equivalent to
\begin{equation}
\label{HypTransf3Par}
\Pi^{\p} S(T-T';0, (0,u_{\p}(\cdot+T')))= - \eu^{-(T-T')\mathcal{L}^{\p}} \Pi^{\p} S(T';f_0, (u_{\h},0)),
\end{equation}
or
\begin{equation}
\label{HypTransf3Par2}
\Pi^{\p} S\Big( T-T'; \Pi^{\p} S(T';f_0, (u_{\h},0))  , (0,u_{\p}(\cdot+T')) \Big)= 0.
\end{equation}
By using Definition~\ref{def:solution} and Proposition~\ref{decompL2}, we see that the mapping $(f_0, u_{\h}) \mapsto \Pi^{\p} S(T';f_0, (u_{\h},0))$ is continuous from $L^2(\T)^d \times L^2((0,T')\times \omega)^{d_1}$ into $F^{\p}$ . Thus we get the conclusion with
\[\forall t \in (T',T),\
\mathcal{U}^\p(f_0,u_h)(t)=\underline{\mathcal{U}}_{(T-T')}^\p\Big(\Pi^{\p} S(T';f_0, (u_{\h},0)) \Big)(t-T').\qedhere\]
\end{proof}

\subsubsection{Equation satisfied by the parabolic components of the free system}

We begin by proving that if $g$ is in $\widetilde{F^\p}$ then we can compute the first $d_1$ components of $g$ from the last $d_2$. This will allow us to write an uncoupled equation for these components.

\begin{prop}
\label{Prop:DefG}
If $z$ is small enough, there exists a matrix $G(z)$ such that for every $\varphi\in\set C^d$,  
\[\varphi\in\Ima(P^\p(z)^\ast) \longeq \varphi_1 = G(z)\varphi_2.\]
Moreover, $G$ is holomorphic in $z$ and $G(0) = 0$.
\end{prop}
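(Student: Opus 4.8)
The plan is to use the structure of the parabolic projector at $z=0$, namely $P^\p(0)^\ast = \left(\begin{smallmatrix}0&0\\0&I_{d_2}\end{smallmatrix}\right)$, together with a holomorphic implicit-function-type argument. First I would write a general vector $\varphi\in\set C^d = \set C^{d_1}\times\set C^{d_2}$ as $\varphi = (\varphi_1,\varphi_2)$ and observe that $\varphi\in\Ima(P^\p(z)^\ast)$ is equivalent to $P^\h(z)^\ast\varphi = 0$, since $P^\h+P^\p = I_d$ and these are complementary projections. So the condition is a linear system $P^\h(z)^\ast\varphi = 0$ in the unknown $\varphi$, with coefficients holomorphic in $z$.

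Next I would exploit the block structure at $z=0$. Write $P^\h(z)^\ast$ in blocks $\left(\begin{smallmatrix}P_{11}(z)&P_{12}(z)\\ P_{21}(z)&P_{22}(z)\end{smallmatrix}\right)$; by \Cref{th:perturb_hyper} (adjointed), $P_{11}(0) = I_{d_1}$, $P_{12}(0)=P_{21}(0)=P_{22}(0)=0$, and all blocks are holomorphic in $z$. The equation $P^\h(z)^\ast\varphi=0$ reads $P_{11}(z)\varphi_1 + P_{12}(z)\varphi_2 = 0$ and $P_{21}(z)\varphi_1 + P_{22}(z)\varphi_2 = 0$. Since $P_{11}(0)=I_{d_1}$ is invertible, by continuity $P_{11}(z)$ is invertible for $|z|$ small, and $z\mapsto P_{11}(z)^{-1}$ is holomorphic there (inversion of a holomorphic matrix-valued function with invertible value is holomorphic). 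From the first block equation I get $\varphi_1 = -P_{11}(z)^{-1}P_{12}(z)\varphi_2$, so I would \emph{define} $G(z) \coloneqq -P_{11}(z)^{-1}P_{12}(z)$, which is holomorphic and satisfies $G(0) = -I_{d_1}\cdot 0 = 0$ since $P_{12}(0)=0$.

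It remains to check the equivalence is genuine, i.e.\ that $\varphi_1 = G(z)\varphi_2$ already forces $\varphi\in\Ima(P^\p(z)^\ast)$, not merely that membership implies the relation. The cleanest way is a dimension count: $\Ima(P^\p(z)^\ast)$ has dimension $d_2$ (the rank of $P^\p(z)^\ast$ is constant in $z$ by \Cref{RmkRk}, equal to its value $d_2$ at $z=0$). The set $\{\varphi : \varphi_1 = G(z)\varphi_2\}$ is the graph of a linear map $\set C^{d_2}\to\set C^{d_1}$, hence also a subspace of dimension $d_2$. Since membership in $\Ima(P^\p(z)^\ast)$ implies $\varphi_1 = G(z)\varphi_2$ (from the first block equation above), $\Ima(P^\p(z)^\ast)$ is a $d_2$-dimensional subspace of the $d_2$-dimensional graph, so the two coincide. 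This gives the stated equivalence. I expect the only mild subtlety to be verifying holomorphy of $z\mapsto P_{11}(z)^{-1}$ and tracking that the second block equation $P_{21}(z)\varphi_1 + P_{22}(z)\varphi_2 = 0$ is automatically consistent — but the dimension argument sidesteps the need to verify the second equation by hand, so there is no real obstacle; this is essentially a linear-algebra computation organized around the block decomposition of \Cref{th:perturb_hyper}.
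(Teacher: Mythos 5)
Your proposal is correct and takes essentially the same route as the paper: the paper works with the fixed-point characterization $P^\p(z)^\ast\varphi = \varphi$ and inverts $I_{d_1}-p_{11}(z)$, whereas you work with the equivalent kernel characterization $P^\h(z)^\ast\varphi = 0$ and invert $P_{11}(z)$, but since $P_{11} = I_{d_1}-p_{11}$ and $P_{12} = -p_{12}$ these produce the same $G(z)$, and the paper's converse (bijectivity of $\varphi\mapsto\varphi_2$ on $\Ima(P^\p(z)^\ast)$ via the constant rank $d_2$) is exactly your dimension-count argument.
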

\begin{proof}
We write 
\[P^\p(z)^\ast = \begin{pmatrix} p_{11}(z)&p_{12}(z)\\p_{21}(z)&p_{22}(z)\end{pmatrix}.\]
Since $P^\p(z)^\ast$ is a projection, $\varphi$ is in $\Ima (P^\p(z)^\ast)$ if and only if
\begin{equation*}
 \left\{\begin{aligned}p_{11}(z)\varphi_1 + p_{12}(z)\varphi_2 &= \varphi_1\\
 p_{21}(z)\varphi_1 + p_{22}(z)\varphi_2 &= \varphi_2. \end{aligned}\right.
\end{equation*}

In particular, if $\varphi\in\Ima(P^\p(z)^\ast)$, then $(I_{d_1}-p_{11}(z))\varphi_1 = p_{12}(z) \varphi_2$. And since $P^\p(0)^\ast = \big(\begin{smallmatrix}0&0\\0&I_{d_2}\end{smallmatrix} \big)$ (see Proposition~\ref{th:perturb_parab}), $p_{11}(0) = 0$, and so, if $z$ is small enough, $|p_{11}(z)|<1$ and $I_{d_1} - p_{11}(z)$ is invertible.

In that case, $\varphi_1 = (I_{d_1} - p_{11}(z))^{-1} p_{12}(z) \varphi_2$. This proves that the map 
\[\varphi \in \Ima(P^\p(z)^\ast) \mapsto \varphi_2 \in \C^{d_2}\]
is one-to-one. But the rank of $P^\p(z)^\ast$ does not depend on $z$ (Remark~\ref{RmkRk}), and so it is always $d_2$. So the previous map is bijective. We note $G(z)$ the first $d_1$ component of its inverse. Note that we have $G(z) = (I_{d_1} - p_{11}(z))^{-1} p_{12}(z)$. Then, if $\varphi\in\Ima(P^\p(z)^\ast)$, we have
\[\varphi = (\varphi_1,\varphi_2) = (G(z)\varphi_2,\varphi_2).\]

To prove the converse, note that the inverse of $\varphi\in\Ima(P^\p(z)^\ast)\mapsto \varphi_2$ is $\varphi_2\in\C^{d_2}\mapsto (G(z)\varphi_2,\varphi_2)$.
\end{proof}

Increasing $n_0$ if necessary, we may assume that for $|n|> n_0$, $G(\iu/n)$ is well-defined. Then, we define the (bounded) operator $G$ from $L^2(\set T,\set C^{d_2})$ to $L^2(\set T, \set C^{d_1})$ by
\begin{equation}\label{eq:G}
G\left( \sum_{n\in \set Z} \varphi_{n,2} e_n\right)  = \sum_{|n|>n_0} G\left(\frac \iu n\right) \varphi_{n,2} e_n.
\end{equation}
Then, according to the definition of $\widetilde{F^\p}$, we have the following corollary that allows us to compute the first $d_1$ components from the last $d_2$.
\begin{corollary}\label{th:G}
For every $g\in (F^0)^\perp$ (the space of functions with no components along frequencies less than $n_0$), we have the equivalence $g\in \widetilde{F^\p} \eq g_1 = Gg_2$.
\end{corollary}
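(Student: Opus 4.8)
The plan is to push the statement to the Fourier side, where it becomes \Cref{Prop:DefG} applied frequency by frequency. First I would expand $g\in (F^0)^\perp$ as $g=\sum_{|n|>n_0}\widehat g(n)e_n$ and split each coefficient into its blocks $\widehat g(n)=(\widehat g_1(n),\widehat g_2(n))\in\C^{d_1}\times\C^{d_2}$, so that $g_1=\sum_{|n|>n_0}\widehat g_1(n)e_n$, $g_2=\sum_{|n|>n_0}\widehat g_2(n)e_n$, and, by the very definition \eqref{eq:G} of $G$ (legitimate once $n_0$ is large enough so that $G(\iu/n)$ is defined for all $|n|>n_0$), $Gg_2=\sum_{|n|>n_0}G(\iu/n)\widehat g_2(n)e_n$.

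Next I would record the componentwise characterization of $\widetilde{F^\p}$. Since $(e_n)_{n\in\Z}$ is orthogonal, $\Pi^\p$ acts diagonally on the Fourier side, $\Pi^\p f=\sum_{|n|>n_0}P^\p(\iu/n)\widehat f(n)e_n$, and hence $(\Pi^\p)^\ast f=\sum_{|n|>n_0}P^\p(\iu/n)^\ast\widehat f(n)e_n$. As $\widetilde{F^\p}=\Ima\big((\Pi^\p)^\ast\big)$ and $(\Pi^\p)^\ast$ is a projection, $g$ lies in $\widetilde{F^\p}$ if and only if $(\Pi^\p)^\ast g=g$; for $g\in(F^0)^\perp$ this is equivalent to $\widehat g(n)\in\Ima\big(P^\p(\iu/n)^\ast\big)$ for every $|n|>n_0$. (This is exactly the content of \eqref{defWptilde}.)

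Finally, for each fixed $|n|>n_0$, \Cref{Prop:DefG} with $z=\iu/n$ yields $\widehat g(n)\in\Ima\big(P^\p(\iu/n)^\ast\big)\Leftrightarrow\widehat g_1(n)=G(\iu/n)\widehat g_2(n)$. Combining this with the previous paragraph, $g\in\widetilde{F^\p}$ if and only if $\widehat g_1(n)=G(\iu/n)\widehat g_2(n)$ for all $|n|>n_0$, which by uniqueness of Fourier coefficients is exactly $g_1=Gg_2$.

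I do not expect any real obstacle here: the argument is essentially bookkeeping on Fourier coefficients. The only points deserving a line of justification are that membership in the closed subspace $\widetilde{F^\p}$ can be tested coefficient by coefficient (handled via the diagonal action of $(\Pi^\p)^\ast$ above) and that $G$ in \eqref{eq:G} is a genuine bounded operator, which follows from the local boundedness of $z\mapsto G(z)$ near $0$ just as in the proof of \Cref{decompL2}.
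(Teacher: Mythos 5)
Your proof is correct and follows precisely the route the paper leaves implicit: reduce to the Fourier-side characterization \eqref{defWptilde} of $\widetilde{F^\p}$ and apply \Cref{Prop:DefG} at each frequency $z=\iu/n$. The paper states the corollary as an immediate consequence of those two facts without spelling out the bookkeeping, and your write-up supplies exactly that bookkeeping.
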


The Corollary~\ref{th:G} makes it easy to write an equation on the last $d_2$ components of the adjoint system~\eqref{SystAdj} if the initial condition is in $\widetilde{F^\p}$.
\begin{prop}\label{LemEquationg2}
We define the operator  $\mathfrak{D}$ by
\begin{equation}\label{eq:d'}
D(\mathfrak{D})= H^2(\T)^{d_2}, \quad \mathfrak{D} = \Tr{D} \partial_x^2 + \Tr{A}_{22}\partial_x -  \Tr{K}_{22} + \Tr{A}_{12} \partial_x G -  \Tr{K}_{12}G.
\end{equation}
Let $g_0 \in \widetilde{F^\p}$ and $g(t) = \eu^{-t\mathcal L^\ast} g_0$. Then, for all $t\geq 0$, $g_1(t) = G g_2(t)$ and $g_2$ satisfies the following equation
\begin{equation}\label{eq:g_2}
\partial_t g_2(t,x) - \mathfrak{D} g_2(t,x) = 0\qquad \mathrm{in}\ (0,T)\times\T.
\end{equation}
\end{prop}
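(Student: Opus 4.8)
The plan is to combine the reduction $g_1 = Gg_2$, already contained in \Cref{th:G}, with the block structure of the adjoint system \eqref{SystAdj}. First I would check that $g(t)$ stays in $\widetilde{F^\p}$ for every $t\ge 0$: by \eqref{defWptilde} the coefficients $\widehat{g}_0(n)$ vanish for $|n|\le n_0$ and lie in $\Ima(P^\p(\iu/n)^\ast)$ for $|n|>n_0$, and since $P^\p(\iu/n)^\ast$ commutes with $E(\iu/n)^\ast$ (the adjoint of \Cref{th:perturb_parab}) it commutes with $\eu^{-tn^2E(\iu/n)^\ast}$; hence, by the representation \eqref{eq:sol_adj}, $\widehat{g(t)}(n)\in\Ima(P^\p(\iu/n)^\ast)$ for every $n$, i.e.\ $g(t)\in\widetilde{F^\p}\subset(F^0)^\perp$. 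Then \Cref{th:G} gives $g_1(t)=Gg_2(t)$ for every $t\ge 0$, which is the first assertion.

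Next I would record the regularity needed to make sense of \eqref{eq:g_2}: for $t>0$, the adjoint version of the dissipation estimate of \Cref{Prop:dissip_p} yields $|\widehat{g(t)}(n)|\le K_\p\eu^{-c_\p tn^2}|\widehat{g}_0(n)|$ for $|n|>n_0$, so $g(t)\in H^s(\T)^d$ for every $s\ge 0$, the series \eqref{eq:sol_adj} may be differentiated term by term, and in particular $g_2(t)\in H^2(\T)^{d_2}=D(\mathfrak D)$. Then, writing \eqref{SystAdj} block-wise (using \eqref{h:B} and transposing the blocks of $A$ and $K$), its last $d_2$ lines read
\[\partial_t g_2 - \Tr{D}\partial_x^2 g_2 - \Tr{A}_{12}\partial_x g_1 - \Tr{A}_{22}\partial_x g_2 + \Tr{K}_{12}g_1 + \Tr{K}_{22}g_2 = 0.\]
Since $G$ is a Fourier multiplier it commutes with $\partial_x$, so substituting $g_1=Gg_2$ and rearranging produces exactly $\partial_t g_2 = \mathfrak D g_2$ with $\mathfrak D$ as in \eqref{eq:d'}.

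The argument has essentially no hard core: all the content is the identity $g_1=Gg_2$, which is \Cref{th:G}. The only steps that require a bit of care are the block transposition — in particular checking that the matrix multiplying $\partial_x g_1$ in the $g_2$-line is the transpose $\Tr{A}_{12}=(A_{12})^{\mathrm{tr}}$ of the block $A_{12}$, consistently with \eqref{eq:d'} — together with the appeal to parabolic smoothing guaranteeing $g_2(t)\in H^2$ for $t>0$, so that \eqref{eq:g_2} holds as a pointwise-in-$t$ identity rather than merely in $\mathcal D'((0,T)\times\T)$.
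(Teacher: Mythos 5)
Your proof is correct and follows essentially the same route as the paper: write out the last $d_2$ block rows of the adjoint system and substitute $g_1 = Gg_2$ via \Cref{th:G}. The extra verifications you include — that $\eu^{-t\mathcal L^\ast}$ preserves $\widetilde{F^\p}$ (via commutation of $P^\p(z)^\ast$ with $E(z)^\ast$), and that parabolic smoothing puts $g_2(t)$ in $H^2$ — are sound and merely make explicit what the paper takes from the remarks surrounding \eqref{eq:adj_semigroup}.
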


\begin{proof}
The function $g$ satisfies the system
\[(\partial_t - \Tr{B}\partial_x^2 - \Tr{A} \partial_x  + \Tr{K})g(t,x) = 0\qquad \mathrm{in}\ (0,T)\times\T.\]
If we take the last $d_2$ components of this system, we get, in $(0,T)\times\T$,
 \begin{equation}\label{eq:syst_g2}
 \left( \partial_t  - \Tr{D}\partial_x^2  - \Tr{A}_{22}\partial_x  + \Tr{K}_{22}  \right) g_2(t,x) - \left(  \Tr{A}_{12}\partial_x  - \Tr{K}_{12} \right) g_1(t,x) = 0.
 \end{equation}
 But for all $t \in [0,T]$, $g(t,\cdot)\in \widetilde{F^\p}$, so, according to Corollary~\ref{th:G}, $g_1(t) = Gg_2(t)$. Substituting this inside the equation~\eqref{eq:syst_g2} gives the stated equation~\eqref{eq:g_2}.
\end{proof}

\subsubsection{Smooth control of a finite number of parabolic vectorial components}

For $N>n_0$ we introduce
\begin{equation} \label{def:FpN}
F^\p_N \coloneqq \bigoplus\limits_{n_0 < |n| \leq N}  \Ima\left(P^\p\Big(\frac \iu n\Big)\right) e_n,
\end{equation}
\[F^\p_{>N} \coloneqq\bigoplus\limits_{|n|>N  }  \Ima\left(P^\p\Big(\frac \iu n\Big)\right) e_n.\]
and the projection $\Pi^\p_N$ defined by
\[\begin{array}{r@{}c@{}c@{}c@{}c@{}c@{}c@{}c@{}c}
L^2(\T)^d & {}={} & F^0 & {}\oplus{} & F^\p_N & {}\oplus{} & F^\p_{>N} & {}\oplus{} & F^\h \\
\Pi^\p_N   &{} = {}& 0   &{} + {}& I_{F^\p_N} &{} +{}   & 0 & {}+{} & 0
   \end{array}\]
which is a bounded operator on $L^2(\T)^d$ (compostion of the bounded operator $\Pi^\p$ with an orthogonal projection).
The  goal of this section is to prove the following result.

\begin{prop} \label{Prop:FNPF}
There exists $\mathcal{C}>0$ such that, for every $T \in(0,1]$ and $N>n_0$, there exists a linear map\footnote{The space $C_0^\infty((0,T)\times \omega)$ means that the function is supported on $[0,T]\times K$ where $K$ is a compact subset of $\omega$, and
all the derivatives vanish on $\omega$ at time $t=0$ and $t=T$.}
\[\mathcal{K}_{T,N}\colon F^\p \rightarrow C^\infty_0((0,T) \times \omega)\]
such that, for every $f_0 \in F^\p$ and $s \in\N$
\[\Pi^\p_N   S \Big(T ; f_0 , \left(0,\mathcal{K}_{T,N}(f_0)\right) \Big) = 0,\]
\[\left\| \mathcal{K}_{T,N}(f_0) \right\|_{H_0^s((0,T)\times\T)} \leq \frac{\mathcal{C} }{ T^{s+1} } N^{2s} \eu^{\mathcal{C} N} \|f_0\|_{L^2(\T)^d}.\]
\end{prop}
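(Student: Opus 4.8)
The plan is to reduce the problem to a classical moment/Lebeau--Robbiano estimate on the finite-dimensional space $F^\p_N$, where, by \Cref{LemEquationg2}, the adjoint dynamics on $\widetilde{F^\p}$ is governed by the uncoupled parabolic operator $\mathfrak{D}$. First I would pass to the dual picture: via the Hilbert Uniqueness Method (and the continuity of $\Pi^\p_N$), steering $\Pi^\p_N S(T;f_0,(0,u_\p))$ to $0$ with a control $u_\p$ supported in $\omega$ is equivalent to an observability-type inequality for the solution $g(t)=\eu^{-t\mathcal L^\ast}g_0$ with $g_0$ ranging over the finite-dimensional dual space $\widetilde{F^\p_N}\coloneqq\Ima((\Pi^\p_N)^\ast)$. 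By \Cref{LemEquationg2} and \Cref{th:G}, for such $g_0$ one has $g_1(t)=Gg_2(t)$ and $g_2$ solves $\partial_t g_2 = \mathfrak{D} g_2$, so everything reduces to a spectral estimate for $\mathfrak{D}$ restricted to frequencies $n_0<|n|\le N$.

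Next I would analyze $\mathfrak{D}$ on each Fourier mode $e_n$: since $G$ acts on $e_n$ as multiplication by the matrix $G(\iu/n)$, $\mathfrak{D}$ acts on $\C^{d_2}e_n$ as the matrix $\mathfrak{D}(n) \coloneqq -n^2\Tr{D} + \iu n(\Tr{A}_{22}+\Tr{A}_{12}G(\iu/n)) - \Tr{K}_{22}-\Tr{K}_{12}G(\iu/n)$. The eigenvalues of $\mathfrak{D}(n)$ have real part $\le -c_\p n^2$ for $|n|>n_0$ (this is exactly the dissipation of the parabolic branch, \Cref{Prop:dissip_p}, rephrased for the adjoint), and $\|\eu^{t\mathfrak D(n)}\|\le K_\p \eu^{-c_\p n^2 t}$. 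To build the smooth control one follows the standard duality-with-weight construction: look for $u_\p$ of the form $u_\p(t,x)=\psi(t)\sum_{n_0<|n|\le N} v_n(t)e_n(x)$ cut off spatially by a fixed $\theta\in C^\infty_c(\omega)$, impose the $2(N-n_0)$ vectorial moment conditions coming from pairing with a basis of eigen/generalized-eigenvectors of the $\mathfrak D(n)^\ast$, and solve the resulting linear system. The quantitative cost is controlled by (i) the spectral gap giving the factor $\eu^{\mathcal C N}$, (ii) the spectral observability inequality for sums of exponentials $\eu^{\mathfrak D(n)^\ast t}$ on the interval (a Lebeau--Robbiano / Ingham-type bound, available uniformly since all the relevant matrices $G(\iu/n)$, $A_{ij}$, $K_{ij}$ are bounded), and (iii) the time-dependence: choosing the temporal cutoff $\psi$ and solving on $(0,T)$ produces the $T^{-(s+1)}$ and $N^{2s}$ factors from differentiating $s$ times a profile concentrated on a time-scale $\sim T$ with Fourier frequencies $\lesssim N^2$ in the parabolic variable but only $\lesssim N$ in $x$.

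The main obstacle is the uniformity of the construction in both $N$ and $T\in(0,1]$ while simultaneously getting $C^\infty_0$ regularity with the stated Sobolev bounds: one must produce a control that is flat at $t=0$ and $t=T$ (hence the $H^s_0$ estimates for all $s$) yet whose $H^s_0$-norm blows up only polynomially in $N$ times $\eu^{\mathcal C N}$ and like $T^{-(s+1)}$. This is precisely the technical heart of the Lebeau--Robbiano smooth-control construction adapted to systems; the delicate points are (a) checking that the block-diagonal structure over $n$ lets one invert the moment problem with norm $\lesssim \eu^{\mathcal C N}$ uniformly in $T$ (using that the dissipation rate $c_\p n^2$ dominates all lower-order terms uniformly), and (b) tracking how the time cutoff interacts with the spatial frequency $N$ to yield the exponent $N^{2s}/T^{s+1}$ rather than something worse. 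The rest — existence and uniqueness of the solution, the identity $\Pi^\p_N S(T;f_0,(0,\mathcal K_{T,N}f_0))=0$ — follows formally from the duality and the decomposition \eqref{DecompRd}.
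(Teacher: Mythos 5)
Your outline matches the paper's strategy at a high level (reduce to a finite moment problem governed by $\mathfrak D$ over the modes $n_0<|n|\le N$, take the control as a compactly supported space--time cutoff times a finite superposition of parabolic exponentials, and bound the inverse of the resulting linear system), but the part you flag as the ``technical heart'' is exactly where your description diverges from the paper's in a way that would open a gap. You propose pairing against eigen/generalized-eigenvectors of $\mathfrak D(n)^*$ and invoking ``a Lebeau--Robbiano/Ingham-type bound for sums of exponentials $\eu^{\mathfrak D(n)^*t}$ on the interval, available uniformly since the coefficient matrices are bounded.'' Boundedness of the coefficients is \emph{not} sufficient for a temporal Ingham-type inequality: such bounds require controlling gaps and multiplicities of the eigenvalues of all the $\mathfrak D(n)$'s simultaneously, and the constants degenerate as $T\to0$; this is precisely the uniformity you worry about, and on the route you sketch it does not obviously close. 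The paper avoids both issues. It pairs against the plain basis $Xe_n$, $X\in\set C^{d_2}$ (no Jordan structure of $E_2(n)$ needed), writes the moment system as $AV=F$ for the Gram-type matrix
$A_{n,k}=\int_0^T\!\!\int_\omega \eu^{-n^2(T-t)E_2(n)^*}\eu^{-k^2(T-t)E_2(k)}\eu^{\iu(k-n)x}\rho(t,x)\diff x\diff t$,
and proves coercivity $\langle AX,X\rangle\gtrsim T\,\eu^{-\mathcal CN}|X|^2$ from three elementary ingredients: the purely \emph{spatial} Lebeau--Robbiano spectral inequality for $\sum_k a_k\eu^{\iu kx}$ (no time frequencies, no separation hypothesis); the pointwise lower bound $|\eu^{-k^2\tau E_2(k)}X|\geq\eu^{-ck^2\tau}|X|$, which needs only that $|E_2(k)|$ is bounded and not any diagonalizability; and a Laplace-method lower bound $\int_0^1\rho_1(\tau)\eu^{-\gamma\tau}\diff\tau\gtrsim\eu^{-C\ssqrt\gamma}$. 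This gives $|A^{-1}|\lesssim T^{-1}\eu^{\mathcal CN}$ uniformly in $T\in(0,1]$ with no Ingham argument at all; the $N^{2s}/T^{s+1}$ Sobolev bound then comes from differentiating the cutoff ($T^{-s}$), the parabolic exponentials ($k^{2s}\le N^{2s}$), and the factor $T^{-1}\eu^{\mathcal CN}$ from the inverse Gram matrix. Replace the Ingham/eigenbasis step of your plan with this coercivity argument --- it is the idea that makes the whole construction uniform.
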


\begin{proof}
Let $f_0 \in F^\p$. Throughout this proof, we will note $E_2(n)$ the $d_2\times d_2$ matrices defined by
\[\forall |n|>n_0, \  E_2(n)\coloneqq \Tr{D} - \frac{\iu}{n} \Tr{A}_{22} + \frac{1}{n^2} \Tr{K}_{22} - \left( \frac{\iu}{n} \Tr{A}_{12} - \frac{1}{n^2} \Tr{K}_{12} \right) G \left( \frac{\iu}{n} \right).\]

\paragraph{Step 1:} We prove that $u_2 \in C^\infty_0((0,T)\times\omega)$ satisfies $\Pi^\p_N S(T;f_0,(0,u_2))=0$ if and only if $u_2$ solves the following moments problem in $\C^{d_2}$
\begin{multline} \label{moment}
\forall n_0 < |n| \leq N, \  
\int_0^T \int_{\omega} \eu^{-n^2(T-t)E_2(n)^*} u_2(t,x)\eu^{-\iu nx} \diff x \diff t= F_n \\  
\text{ where } F_n = - \eu^{-n^2 T E_2(n)^*} \left( G {\left( \frac{\iu}{n} \right)\!}^\ast \widehat{f}_{01}(n) + \widehat{f}_{02}(n) \right)
\end{multline}
and $E_2(n)^*=\Tr{\overline{E_2(n)}}$.

We first recall that, if $P$ is a projection operator on $\R^d$ and $x\in\Ima(P)$, then 
\[ \left( x=0 \right) \Leftrightarrow \left(\forall z\in\Ima(P^*), \langle x ,  z \rangle=0 \right)\]
because $|x|^2 = \langle x , x \rangle = \langle Px , x \rangle = \langle x , P^* x \rangle$.

As a consequence, the relation $\Pi^\p_N S(T;f_0,(0,u_2))=0$ is equivalent to
\begin{equation} \label{NC-p_HF}
\forall g_T \in \widetilde{F^\p_N}, \  
\left\langle  S(T;f_0,(0,u_2)) , g_T \right\rangle = 0
\end{equation}
where $\langle \cdot , \cdot \rangle$ is the scalar product of $L^2(\T,\C^d)$ and
\[\widetilde{F^\p_N} \coloneqq \bigoplus\limits_{n_0 < |n| \leq N}  \Ima\left(P^\p\Big(\frac \iu n{\Big)\!}^\ast\right) e_n.\]

For $g_T \in \widetilde{F^\p_N}$, we denote by $g(t)=\eu^{-\mathcal{L}^*(T-t)}g_T$ the solution of the adjoint system~\eqref{SystAdj}. Then, by \Cref{LemEquationg2}, $g=(g_1,g_2)$, where $g_1=G(g_2)$ and
\[\left\langle S(T;f_0,(0,u_2)) ,  g_T  \right\rangle = \left\langle f_0, g(0)  \right\rangle + \int_0^T \int_{\omega} \left\langle u_2(t,x) ,  g_2(t,x) \right\rangle \diff x \diff t.\]
where the first 2 scalar products are in $L^2(\T)^{d}$ and the last one is in $\C^{d_2}$.
By Corollary~\ref{th:G}, the assertion~\eqref{NC-p_HF} is equivalent to 
\[\forall g_{2}^T \in L^2(\T,\C^{d_2}), \  
\int_0^T \int_{\omega} \langle u_2(t,x) ,  g_2(t,x) \rangle \diff x \diff t = -
\left\langle f_0 ,  \left(G(g_2^0),g_2^0\right)  \right\rangle,\]
where $g_2(t)=\eu^{-\mathfrak{D}(T-t)} g_2^T$ and $g_2^0=g_2(0)$. 
By considering $g_2^T = X e_n$ with $X \in \C^{d_2}$ and $n_0 < |n|\leq N$, we obtain
\[g_2(t)=\eu^{-n^2(T-t)E_2(n)} X e_n \text{  and  } 
G(g_2^0)=G\left(\frac{\iu}{n}\right)\eu^{-n^2 T E_2(n)} X e_n.\]
The previous property is equivalent to
\begin{align*}
\forall n_0 < |n|\leq N,\  \forall X \in \C^{d_2}, \ 
 \int_0^T \int_{\omega} \langle u_2(t,x) , \eu^{-n^2(T-t)E_2(n)} X   \rangle \eu^{-\iu nx} \diff x \diff t \\
 = -
\langle f_{01} , G(\iu /n)\eu^{-n^2 T E_2(n)} X e_n  \rangle - \langle f_{02} , \eu^{-n^2 T E_2(n)} X e_n \rangle
\end{align*}
or, equivalently,
\begin{align*}
\forall n_0 < |n|\leq N,\ \forall X \in \C^{d_2}, \ 
\left\langle \int_0^T \int_{\omega}  \eu^{-n^2(T-t)E_2(n)^*} u_2(t,x) \eu^{-\iu nx} \diff x \diff t  ,  X   \right\rangle\\
 = -
\left\langle \eu^{-n^2 T E_2(n)^*} G(\iu /n)^* \widehat{f}_{01}(n) + \eu^{-n^2 T E_2(n)^*} \widehat{f}_{02}(n) , X \right\rangle
\end{align*}
which proves~\eqref{moment}.

\paragraph{Step 2: Solving the moment problem.} We look for a solution $u_2 \in C^\infty_0((0,T)\times \omega)$ of the moment problem~\eqref{moment} of the form
\begin{equation} \label{form_u2}
u_2(t,x)=\rho(t,x) v_2(t,x)
\end{equation}
where $v_2 \in C^\infty((0,T)\times\T)^{d_2}$ and $\rho \in C^\infty_0((0,T)\times\omega)$ is a scalar function with an appropriate support. More precisely, let 
\begin{itemize}
\item $\widehat{\omega}$ be an open subset such that $\widehat{\omega} \subset \subset \omega$ and $\rho_2 \in C^\infty_c(\omega,\mathbb{R}_+)$ such that $\rho_2=1$ on $\widehat{\omega}$,
\item $\rho_1 \in C^\infty([0,1],\R_+)$ such that $\rho_1(0)=\rho_1(1)=0$ and 
\begin{equation} \label{def:rho1}
\exists C_0 >0, \forall \gamma>0, \quad \int_0^1 \rho_1(\tau) \eu^{-\gamma \tau} \diff \tau \geq \frac{1}{C_0} \eu^{-C_0\ssqrt{\gamma}}.
\end{equation}
\end{itemize}

For instance, we may consider $\rho_1$ such that $\rho_1(\tau)=\rho_1(1-\tau)=\eu^{-\frac{1}{\tau}}$ for $\tau\in(0,1/4)$. Indeed, for every $\gamma>0$,
the change of variable $s=\ssqrt{\gamma} \tau$ gives
\[\int_0^1 \rho_1(\tau) \eu^{-\gamma \tau} \diff \tau \geq \frac1{\ssqrt\gamma}\int_{0}^{\ssqrt{\gamma}/4} \eu^{-\ssqrt{\gamma}\phi(s)} \diff s\]
where $\phi(s)=\frac{1}{s}+s$. The function $\phi$ takes its minimal value at $s_*=1$ and $\phi''(1)=2>0$ thus, by Laplace's method (see \cite[Chapitre 9, Théorème VI.1]{queffelec_2013}),
\[\int_{0}^{2} \eu^{-\ssqrt{\gamma}\phi(s)} \diff s \underset{\gamma \rightarrow \infty}{\sim} \frac{\sqrt{\pi}}{\sqrt[4]{\smash[b]\gamma}} \eu^{-2\ssqrt{\gamma}}.\]
which proves \eqref{def:rho1} for a large enough constant $C_0$.

Then we choose $\rho(t,x) = \rho_1((T-t)/T)\rho_2(x)$. We also look for $v_2$ of the form
\begin{equation} \label{form_v2}
v_2(t,x)=\sum\limits_{n_0<|k|\leq N} \eu^{-k^2(T-t)E_2(k)} V_k \eu^{\iu kx}\, \text{ where } V_k \in \C^{d_2}.
\end{equation}

The construction of $v_2$ will use the following algebraic result.
\begin{lemma} \label{Prop:ANinv}
There exists $\mathcal{C}>0$ such that, for every $N>n_0$ and $T\in(0,1]$
the matrix $A$ in $\C^{(2(N-n_0)d_2)\times(2(N-n_0)d_2)}$, defined by blocks
$A=(A_{n,k})_{\smash[b]{\substack{n_0 < |n|\leq N\\ n_0 < |k| \leq N}}}$ by
\[A_{n,k}=\int_0^T \int_{\omega} \eu^{-n^2(T-t)E_2(n)^*} \eu^{-k^2(T-t)E_2(k)} \eu^{\iu(k-n)x} \rho(t,x) \diff x \diff t\, \in \C^{d_2\times d_2},\]
is invertible and 
\[\forall F\in\C^{2(N-n_0)d_2}, \ |A^{-1} F|  \leq \frac{\mathcal{C}}{T} \eu^{\mathcal{C} N}  |F|,\]
where $|\cdot|$ is the hermitian norm on $\C^{2(N-n_0)d_2}$.
\end{lemma}

\begin{remark}
For instance, when $N=n_0+2$, then $A$ is given by 
\[A=\begin{pmatrix}
A_{-n_0-2,-n_0-2} & A_{-n_0-2,-n_0-1} & A_{-n_0-2,n_0+1} & A_{-n_0-2,n_0+2} \\
A_{-n_0-1,-n_0-2} & A_{-n_0-1,-n_0-1} & A_{-n_0-1,n_0+1} & A_{-n_0-1,n_0+2} \\
A_{n_0+1,-n_0-2} & A_{n_0+1,-n_0-1} & A_{n_0+1,n_0+1} & A_{n_0+1,n_0+2} 
\\
A_{n_0+2,-n_0-2} & A_{n_0+2,-n_0-1} & A_{n_0+2,n_0+1} & A_{n_0+2,n_0+2} 
\end{pmatrix}.\]
For $X \in \C^{4d_2}$ with block decomposition
\[X=\begin{pmatrix}
X_{-n_0-2} \\ X_{-n_0-1} \\ X_{n_0+1} \\ X_{n_0+2}
\end{pmatrix}\]
where $X_k \in \C^{d_2}$ for every $n_0 < |k| \leq n_0+2$, 
we have
\[AX=\begin{pmatrix}
\sum\limits_{n_0<|k|\leq n_0+2} A_{-n_0-2,k} X_{k} \\
\sum\limits_{n_0<|k|\leq n_0+2} A_{-n_0-1,k} X_{k} \\
\sum\limits_{n_0<|k|\leq n_0+2} A_{n_0+1,k} X_{k} \\
\sum\limits_{n_0<|k|\leq n_0+2} A_{n_0+2,k} X_{k} 
\end{pmatrix}.\]
Thus $\langle X , AX \rangle = \sum_{n_0 < |n|,|k|\leq n_0+2} X_n^* A_{n,k} X_k$.
\end{remark}

\begin{proof}[Proof of Lemma~\ref{Prop:ANinv}]
The proof relies on the following spectral inequality, due to Lebeau and Robbiano (see~\cite{lebeau_1995} and also~\cite[Thm. 5.4]{lerousseau_2012}): 
\begin{equation}\label{eq:spectral}
\exists C_1>0,\ \forall N \in \N,\ \forall (a_n)_{n\in\Z} \in \C^\Z, \ 
\sum_{n=-N}^{+N} |a_n|^2 \leq C_1 \eu^{C_1 N} \int_{\widehat{\omega}} \left|\sum_{n=-N}^{+N} a_n \eu^{\iu nx}\right|^2 \diff x.
\end{equation}
By summing the components, the same inequality holds when  $a_n$ is a vector, $a_n \in \C^{d_2}$, and $|\cdot|$ denotes the hermitian norm on $\C^{d_2}$.

Let $N>n_0$ and $X \in \C^{2(N-n_0)d_2}$ written by blocks $X=(X_k)_{n_0<|k|\leq N}$ with $X_k \in \C^{d_2}$. Then, by using the definition of $A$, $\rho$, the properties of $\rho_2$ and the above spectral inequality in vectorial form, we obtain
\begin{align*}
\langle AX,X\rangle
& = \sum\limits_{n_0 < |n|,|k|\leq N} X_n^* A_{n,k} X_k \\
& = \int_0^T \int_{\omega} \Big| \sum\limits_{n_0 < |k|\leq N} 
\eu^{-k^2(T-t)E_2(k)} X_k \eu^{\iu kx} \Big|^2 \rho(t,x) \diff x \diff t
\\ & \geq
 \int_0^T \int_{\widehat{\omega}} \Big| \sum\limits_{n_0 < |k|\leq N} 
\eu^{-k^2(T-t)E_2(k)} X_k \eu^{\iu kx} \Big|^2 \rho_1\left( \frac{T-t}{T} \right) \diff x \diff t
\\ & \geq \frac{\eu^{-C_1 N}}{C_1} \int_0^T \sum\limits_{n_0 < |k|\leq N}  \left| \eu^{-k^2(T-t)E_2(k)} X_k \right|^2  \rho_1 \left( \frac{T-t}{T} \right) \diff t.
\end{align*}
There exists $c>0$ such that, for every $|k|>n_0$, $|E_2(k)| \leq c$. Then, 
\[\forall |k| >n_0,\  \tau>0,\   Y\in\C^{d_2},\   | \eu^{E_2(k)\tau}Y| \leq  \eu^{c \tau} |Y|.\]
Then, by considering $\tau=k^2(T-t)$ and $Y=\exp \left(-k^2(T-t)E_2(k)\right) X_k$, we obtain
\[\forall |k| >n_0,\ t \in (0,T), \  
\big| \eu^{-k^2(T-t)E_2(k)} X_k \big| \geq \eu^{-ck^2(T-t)} |X_k| .\]
Therefore, by using the change of variable $\tau=\frac{T-t}{T}$ and~\eqref{def:rho1}, we get
\begin{align*}
\langle AX,X\rangle
 & \geq
\frac{T \eu^{-C_1 N}}{C_1}  \sum\limits_{n_0 < |k|\leq N}  | X_k |^2 \int_0^T \eu^{-2ck^2T \tau} \rho_1 \left( \tau \right) \diff \tau
\\ & \geq
\frac{T \eu^{-C_1 N}}{C_1 C_0}  \sum\limits_{n_0 < |k|\leq N}  | X_k |^2 \eu^{-C_0 k\sqrt{2cT}}
\\ & \geq
\frac{T}{C_1 C_0} \eu^{- (C_1+C_0 \sqrt{2cT})N}   | X |^2.
\end{align*}
The above relation, valid for any $X\in\C^{2(N-n_0)d_2}$ proves that any eigenvalue of $A$ is positive, thus $A$ is invertible.
Moreover, for any $F \in \C^{2(N-n_0)d_2} \setminus \{0\}$, the vector $X=A^{-1}F$ satisfies
\[\frac{T}{C_1 C_0} \eu^{- (C_1+C_0 \sqrt{2cT})N}   | X |^2 
\leq \langle AX , X \rangle = \langle F , X \rangle \leq |F| |X|.\]
Thus
\[|X| \leq \frac{C_1 C_0}{T} \eu^{ (C_1+C_0 \sqrt{2cT})N} |F|.\]
This gives the conclusion with
$\mathcal{C}=\max\left\{ C_1 C_0 ;\ C_1+C_0 \sqrt{2c}  \right\}$.
\end{proof}

Now, let us come back to the proof of Proposition~\ref{Prop:FNPF}.
For such a control of the form given by equations~\eqref{form_u2} and~\eqref{form_v2}, the moment problem~\eqref{moment} writes
\[\forall n_0 < |n|\leq N, \ 
\sum\limits_{n_0 < |k|\leq N} A_{n,k} V_k = F_n\]
or equivalently $AV=F$ with the notations of \Cref{Prop:ANinv}.
Thus, it is sufficient to take $V=A^{-1}F$.
By the definition of $F$ in~(\ref{moment}), and Bessel-Parseval identity there exists $C_2>0$ independent of $(T,N)$ such that
\[|F| = \Big( \sum_{n_0<|n|\leq N} |F_n|^2 \Big)^{1/2} \leq C_2 \|f_0\|_{L^2(\T)^d}.\] Thus, by \Cref{Prop:ANinv}
\begin{equation}\label{est_V}
|V| = \Big( \sum_{n_0<|k|\leq N} |V_k|^2 \Big)^{1/2} \leq  \frac{C_2 \mathcal{C} }{T} \eu^{\mathcal{C}N} \|f_0 \|_{L^2(\T)^d}.
\end{equation}

\paragraph{Step 3: Estimates on $u_2$.} Let $s\in\N^*$. By~\eqref{form_u2} and the definition of $\rho$,
there exists $C=C(\rho,s)>0$ such that 
\begin{equation} \label{estim:u2/v2}
\| u_2 \|_{H^s((0,T)\times\omega)} \leq \frac{C}{T^s} \|v_2\|_{H^s((0,T)\times\T)}.
\end{equation}
For any $s_1, s_2 \in \N$ such that $s_1+s_2 \leq s$ we have,
\[\partial_t^{s_1} \partial_x^{s_2} v_2 (t,x) = 
\sum\limits_{n_0<|k|\leq N} k^{2s_1} E_2(k)^{s_1} \eu^{-k^2(T-t)E_2(k)} V_k (\iu k)^{s_2} \eu^{\iu kx}.\]
By Bessel-Parseval identity, we have
\begin{align*}
\left\| \partial_t^{s_1} \partial_x^{s_2} v_2 \right\|_{L^2((0,T)\times\T)}^2
& = \int_0^T \sum\limits_{n_0<|k|\leq N} \left| k^{2s_1+s_2}  E_2(k)^{s_1} \eu^{-k^2(T-t)E_2(k)} V_k \right|^2 \diff t\\
& \leq  C  \int_0^T \sum\limits_{n_0<|k|\leq N} k^{4s} \left| \eu^{-k^2(T-t)E_2(k)} V_k\right|^2   dt
\end{align*}
By working as in the proof of \Cref{Prop:dissip_p}, we obtain, for $n_0$ large enough, positive constants $K_p, c_p>0$ such that
\begin{align*}
\left\| \partial_t^{s_1} \partial_x^{s_2} v_2 \right\|_{L^2((0,T)\times\T)}^2
& \leq C   \sum\limits_{n_0<|k|\leq N} k^{4s} K_p^2 \int_0^T  \eu^{-2c_pk^2(T-t)} \diff t\, |V_k|^2 \\
& \leq \frac{C K_p^2}{2c_p}  \sum\limits_{n_0<|k|\leq N}  k^{4s-2}  |V_k|^2 \leq \frac{C K_p^2}{c_p} N^{4s-1} |V|^2
\end{align*}
By~\eqref{est_V},
\[\left\|\partial_t^{s_1} \partial_x^{s_2} v_2 \right\|_{L^2((0,T)\times\T)}
\leq \sqrt{\frac{C}{c_p}} K_p N^{2s-1/2} \frac{C_2 \mathcal{C} }{T} \eu^{\mathcal{C}N} \|f_0 \|_{L^2(\T)^d}. \]
This provides a constant $C>0$ independant of $(T,N)$ such that
\[\left\| v_2 \right\|_{H^s((0,T)\times\T)}
\leq  \frac{C}{T} N^{2s-1/2} \eu^{\mathcal{C}N} \|f_0 \|_{L^2(\T)^d} \]
and~\eqref{estim:u2/v2} gives the expected estimate on $u$ in $H^s$.
\end{proof}

\subsubsection{Lebeau-Robbiano's method} \label{Subsec:LR}

The goal of this section is to prove \Cref{LemPar_Bis}. Let $T>0$. 
We fix $\delta \in (0,T/2)$ and $\rho \in (0,1)$.
For $\ell \in \set N^*$, we set $N_\ell=2^\ell$, $T_{\ell}=A2^{-\rho \ell}$ where $A>0$ is such that $2\sum_{\ell=1}^\infty T_\ell = T - 2\delta$.
Let $f_0 \in F^\p$. We define
\[\left\lbrace\begin{aligned}
& f_1=\eu^{-\delta \mathcal{L}^\p} f_0, \\
& g_\ell = \Pi^\p S( T_\ell ; f_\ell , u_\ell) \text{ where } u_\ell = \left(0,K_{T_\ell,N_\ell}(f_\ell)\right), \\
& f_{\ell+1}=\eu^{-T_\ell \mathcal{L}^\p} g_\ell,
\end{aligned}\right.\]
where $K_{T_\ell,N_\ell}$ is the control operator introduced in \Cref{Prop:FNPF}. By construction $\Pi^\p_{N_\ell} g_\ell=0$ and therefore, by \Cref{Prop:dissip_p}
\begin{multline*}
\|f_{\ell+1}\|_{L^2(\T)^d}^2 = \| \eu^{-T_\ell \mathcal{L}^\p} g_\ell \|_{L^2(\T)^d}^2
= \sum_{|n|>N_\ell} \left\lvert  \eu^{-n^2 E(\iu/n)T_\ell}  \widehat{g}_\ell(n) \right\rvert^2 \\
 \leq \sum_{|n|>N_\ell} K_p^2 \eu^{-2 n^2 c_p T_\ell} \left\lvert  \widehat{g}_\ell(n) \right\rvert^2
 \leq K_p^2 \eu^{-2 c_p N_\ell^2 T_\ell} \|g_\ell\|_{L^2(\T)^d}^2.
\end{multline*}
By the semi-group property proved in \Cref{Prop:C0_sg}, there exists positive constants $K$ and $c$ such that
\[\forall f \in L^2(\T)^d, t\geq 0 \quad \| \eu^{-t\mathcal{L}} f \|_{L^2(\T)^d}\leq K \eu^{ct} \|f\|_{L^2(\T)^d}.\]
Then, according to the triangle inequality and Cauchy-Schwarz inequality,
\begin{align*}
\|g_\ell\|_{L^2(\T)^d} 
\leq \| S( T_\ell ; f_\ell , u_\ell) \|
& \leq K \eu^{cT_\ell} \|f_\ell\|_{L^2(\T)^d} + \int_0^{T_\ell} K \eu^{c(T_\ell-t)} \|u_\ell(t)\|_{L^2(\T)} \diff t \\
& \leq K \eu^{cT_\ell} \left( \|f_\ell\|_{L^2(\T)^d} +  \ssqrt{T_\ell} \|u_\ell\|_{L^2((0,T_\ell)\times\omega)} \right),
\end{align*}
and by \Cref{Prop:FNPF}
\begin{equation}\label{Control:EstimationL2}\| u_\ell \|_{L^2((0,T_\ell)\times\omega)} \leq \frac{\mathcal{C}}{T_\ell} \eu^{\mathcal{C} N_\ell} \|f_\ell\|_{L^2(\T)^d}.\end{equation}
Thus
\[\|g_\ell\|_{L^2(\T)^d} \leq K \eu^{cT_\ell} \left( 1 +  \frac{\mathcal{C}}{\sqrt{T_\ell}} \eu^{\mathcal{C} N_\ell}\right)
\|f_\ell\|_{L^2(\T)^d}.\]
By setting 
\[m_\ell = K_p \eu^{- c_p N_\ell^2 T_\ell}  K \eu^{cT_\ell} \left( 1 +  \frac{\mathcal{C}}{\sqrt{T_\ell}} \eu^{\mathcal{C} N_\ell}\right),\]
we get
\[\|f_{\ell+1}\|_{L^2(\T)^d} \leq m_\ell \|f_\ell \|_{L^2(\T)^d}.\]
It is easy to see that there exists $C_1, C_2>0$ such that $m_\ell \leq C_1 \eu^{-C_2 2^{(2-\rho)\ell}}$. Thus $\|f_\ell\|_{L^2(\T)^d} \rightarrow 0$ and more precisely there exists positive constants $C_3, C_4>0$ such that
\[\|f_\ell\|_{L^2(\T)^d} \leq C_3 \exp \left( -C_4 2^{(2-\rho)\ell} \right) \|f_0\|_{L^2(\T)^d}.\]
Moreover, from \eqref{Control:EstimationL2},
\begin{equation}\label{controle_L2}
\sum_{\ell=1}^\infty \| u_\ell \|_{L^2((0,T_\ell)\times\omega)}^2 \leq
\mathcal{C} \sum_{\ell=1}^\infty \frac{\eu^{\mathcal{C}N_\ell}}{T_\ell} C_3 \exp(-C_4 2^{(2-\rho)\ell}) \|f_0\|_{L^2(\T)^d} <\infty.
\end{equation}

We set $a_0=\delta$, $a_2=\delta+2T_1$, \ldots , $a_\ell=a_{\ell-1}+2T_\ell$. We have $a_\ell \rightarrow (T-\delta)$ as $\ell \rightarrow \infty$. Then, for any $f_0 \in F^\p$, we define the control
\[\underline{\mathcal{U}}_T^{\p}(f_0)(t,x)=\left\lbrace\begin{array}{ll}
 K_{T_\ell,N_\ell}(f_\ell)(t-a_{\ell-1})\quad & \text{ for } a_{\ell-1}\leq t \leq a_{\ell-1}+T_\ell, \\
 0 & \text{ for } a_{\ell-1}+T_\ell\leq t \leq a_{\ell-1}+2T_\ell=a_{\ell}, 
 \\ 
 0 & \text{ for } T-\delta \leq t \leq T.
 \end{array}\right.\]
Then, $\underline{\mathcal{U}}_T^{\p}(f_0) \in C^\infty_0((\delta,T-\delta)\times \omega)^{d_2}$ because all its derivatives vanish at times $t=a_\ell$. Thus $\underline{\mathcal{U}}_T^{\p}(f_0) \in C^\infty_c((0,T)\times\omega)^{d_2}$.

By~\eqref{controle_L2}, $\underline{\mathcal{U}}_T^{\p}(f_0) \in L^2((0,T)\times\omega)^d$ thus
$S(T-\delta;f_0,\underline{\mathcal{U}}_T^{\p}(f_0))$ is the limit,  in $L^2(\T)^d$, as $\ell \rightarrow \infty$, of the sequence $S(a_\ell;f_0,\underline{\mathcal{U}}_T^{\p}(f_0))$. As a consequence,
$\Pi^\p S(T-\delta;f_0,\underline{\mathcal{U}}_T^{\p}(f_0))$ is the limit in $L^2(\T)$ of the sequence $\Pi^\p S(a_\ell;f_0,\underline{\mathcal{U}}_T^{\p}(f_0))=f_{\ell+1}$.
Finally,
\[\Pi^\p S(T;f_0,\underline{\mathcal{U}}_T^{\p}(f_0))=\Pi^\p S(T-\delta;f_0,\underline{\mathcal{U}}_T^{\p}(f_0))=0.\]
By \Cref{Prop:FNPF}, for any $s\in\mathbb{N}^*$, 
\[\left\| \underline{\mathcal{U}}_T^{\p}(f_0) \right\|_{H^s((0,T)\times\omega)}
\leq \underbrace{\sum\limits_{\ell=1}^\infty \frac{\mathcal{C}}{T_\ell^{s+1}} N_\ell^{2s} \eu^{\mathcal{C} N_\ell} C_3 \exp \left( -C_4 2^{(2-\rho)\ell} \right)}_{< \infty}
\|f_0\|_{L^2(\T)^d} .\]
This concludes the proof of \Cref{LemPar_Bis}. \qed

\subsection{Control of the low frequencies}
\label{ControlLowFreqSec}

The goal of this subsection is to prove \Cref{th:main}. 
Let $T > T^{*}$ where $T^{*}$ is defined in~\eqref{eq:T}. Then, there exists $T'>0$ such that \eqref{defTT'} holds. Let $\mathcal{G}$ and $\mathcal{U}$ be as in Proposition \ref{PropControlHighFreq}. 

Without loss of generality, we may assume that $F_0\subset \mathcal G$ by the following procedure. Let $W$ be a complement of $\mathcal G\cap F^0$ in $F^0$. Then $W$ is a complement of $\mathcal G$ in\footnote{If $f\in \mathcal G+F^0$, we write it as $f_{\mathcal G} + f_{F_0}$, and in turn we decompose $f_{F_0}$ along the sum $F_0 =\mathcal G\cap F_0\oplus W$: $f_{F_0} =  f_{\mathcal G\cap F_0}+f_W\in \mathcal G+W$. So $f = (f_{\mathcal G} + f_{ \mathcal G \cap F_0}) + f_{W}$. This proves that $\mathcal G+F_0 = \mathcal G+W$. Moreover, if $f\in \mathcal G\cap W$, since $W\subset F_0$, we have $f\in \mathcal G\cap F_0\cap W$, which is $\{0\}$. So the sum $\mathcal G+W$ is direct.} $\mathcal G+F^0$, and we extend $\mathcal U$ to $\mathcal G\oplus W$ by setting $\mathcal U(f_0) = 0$ for every $f_0\in W$.

Implicitly, $\mathcal{G}$ is equipped with the topology of the $L^2(\T)^d$-norm. The operator $S$ is defined in Definition~\ref{def:solution}.

We introduce the vector subspace of $L^2(\T)^d$ defined by
\begin{align*}
\mathcal{F}_{T} = \big\{ f_0 \in L^2(\T)^{d} ;\ & \exists u \in L^2((0,T')\times\omega)^{d_1} \times C^\infty_c ((T',T)\times\omega)^{d_2} / S(T;f_0, u)=0
\big\}.
\end{align*}

\paragraph{Step 1: We prove that $\mathcal{F}_{T}$ is a closed subspace of $L^2(\T)^d$ with finite codimension.} 

For $f_0 \in \mathcal{G}$, 
the function $S(T;f_0,\mathcal{U}f_0)$ belongs to $F^0$, 
thus 
\begin{equation} \label{def:Kf00}
\mathcal{K}(f_0)\coloneqq -\eu^{T \mathcal{L}^0} S(T;f_0,\mathcal{U}f_0)
\end{equation}
is well defined in $F^0$ by \Cref{ExtendSemigroups}. Then, $\mathcal{K}$ is a compact operator on $\mathcal{G}$ because it has finite rank.
By the Fredholm alternative, 
$(I+\mathcal{K})(\mathcal{G})$ is a closed subspace of $\mathcal{G}$ and
there exists a closed subspace $\mathcal{G}'$ of $\mathcal{G}$, with finite codimension in $\mathcal{G}$, such that $(I+\mathcal{K})$ is a bijection from $\mathcal{G}'$ to $(I+\mathcal{K})(\mathcal{G})$.
Note that $\mathcal{G}'$ is also a closed subspace with finite codimension in $L^2(\T)^d$.

For any  $f_0 \in \mathcal{G}'$, by using that $\mathcal{K}(f_0) \in F^0$ and (\ref{def:Kf00}), we obtain
\[S\left(T,\mathcal{K}(f_0),0\right)=
\eu^{-T\mathcal{L}} \mathcal{K}(f_0) = 
\eu^{-T\mathcal{L}^0} \mathcal{K}(f_0) = 
- S\left(T,f_0,\mathcal{U} f_0 \right)\]
thus
\[S\left(T,f_0+\mathcal{K}(f_0),\mathcal{U} f_0\right)=S\left(T,f_0,\mathcal{U} f_0 \right)+ S\left(T,\mathcal{K}(f_0),0\right)=0. \]

This proves that $\mathcal{F}_{T}$ contains $(I+\mathcal{K})(\mathcal{G}')$,
which is a closed subspace with finite codimension in $L^2(\T)^d$. Therefore, there exists a finite dimensional subspace $F_\sharp$ of $L^2(\T)^d$ such that 
$\mathcal{F}_{T} = (I+\mathcal{K})(\mathcal{G}') \oplus F_\sharp$. This gives the conclusion of Step 1.

\paragraph{Step 2: We prove that, up to a possibly smaller choice of $T>T^\ast$, there exists $\delta>0$ such that $\mathcal{F}_{T'}=\mathcal{F}_{T}$ for every $T'\in[T,T+\delta]$.}
When $0<T'<T''$, by extending controls defined on $(0,T')$ by zero on $(T',T'')$, we see that $\mathcal{F}_{T'} \subset \mathcal{F}_{T''}$. Thus, the map $T' \mapsto \codim(\mathcal{F}_{T'})$ is decreasing and takes integer values. As a consequence the discontinuities on $(T^\ast,T+1]$ are isolated. If $T$ is not such a discontinuity point, then there exists $\delta>0$ such that $\codim(\mathcal{F}_{T'})=\codim(\mathcal{F}_{T})$ for every $T'\in[T,T+\delta]$. In case $T$ is such a discontinuity point, one may replace $T$ by a smaller value, still such that $T>T^\ast$, for which this holds.

\paragraph{Step 3: We prove that $\left( \eu^{-t {\mathcal{L}^*}} \mathcal{F}_{T}^{\perp} \right)^\perp \subset \mathcal{F}_{T}$ for every $t\in(0,\delta)$.} Let $t \in (0,\delta)$ and $g_0 \in L^2(\T)^d$ be such that $\langle g_0 , \eu^{-t {\mathcal{L}^*}} f_0 \rangle = 0$ for every $f_0 \in \mathcal{F}_{T}^{\perp}$. Then $\langle \eu^{-t {\mathcal{L}}} g_0 ,  f_0 \rangle = 0$ for every $f_0 \in \mathcal{F}_{T}^{\perp}$, i.e.
$\eu^{-t {\mathcal{L}}} g_0 \in (\mathcal{F}_{T}^{\perp})^\perp$.
By Step 1, $\mathcal{F}_{T}$ is a closed subspace of $L^2(\T)^d$ thus $(\mathcal{F}_{T}^{\perp})^\perp = \mathcal{F}_{T}$. Therefore $\eu^{-t {\mathcal{L}}} g_0 \in \mathcal{F}_{T}$. By definition of $\mathcal{F}_T$, this implies that $g_0 \in \mathcal{F}_{T+t}$. By Step 2, we get $g_0 \in \mathcal{F}_T$, which ends the proof of Step 3.

\paragraph{Step 4: We prove that $\mathcal{F}_{T}^{\perp}$ is left invariant by $\eu^{-t {\mathcal{L}^*}}$, i.e.\ $\mathcal{F}_{T}^{\perp}=\eu^{-t {\mathcal{L}^*}} \mathcal{F}_{T}^{\perp}$ for every $t>0$.}
The subspace $\eu^{-t {\mathcal{L}^*}} \mathcal{F}_{T}^{\perp}$ is closed in $L^2(\T)^d$ because it has finite dimension. Thus 
$\left( \left( \eu^{-t {\mathcal{L}^*}} \mathcal{F}_{T}^{\perp} \right)^\perp \right)^\perp=\eu^{-t {\mathcal{L}^*}} \mathcal{F}_{T}^{\perp}$
 and we deduce from Step 3 that, for every $t\in(0,\delta)$, $\mathcal{F}_{T}^\perp \subset \eu^{-t {\mathcal{L}^*}} \mathcal{F}_{T}^{\perp}$.
Taking into account that $\dim( \eu^{-t {\mathcal{L}^*}} \mathcal{F}_{T}^{\perp} ) \leq \dim(\mathcal{F}_{T}^{\perp})$, we obtain $\mathcal{F}_{T}^\perp = \eu^{-t {\mathcal{L}^*}} \mathcal{F}_{T}^{\perp}$ for every $t\in(0,\delta)$. By the semi-group property, this equality holds for every $t >0$.

\paragraph{Step 5: We prove the existence of $N\in\N$ such that any 
$f_0 \in \mathcal{F}_{T}^{\perp}$ can be written}
\begin{equation}
f_0  = \sum_{k \leq N} \varphi_k e_k \quad \text{ with } \varphi_k \in \C^d.
\label{sommewfin}
\end{equation}
Let ${S}(t)^*$ be the restriction of the semigroup $\eu^{t {\mathcal{L}^*}}$ to $\mathcal{F}_{T}^{\perp}$, i.e.\ ${S}(t)^* = \eu^{-t {\mathcal{L}^*}}|_{\mathcal{F}_{T}^{\perp}}$. Then ${S}(t)^*=\eu^{tM}$ where $M$ is a matrix such that ${\mathcal{L}^*} f_0 = M f_0$ for every $f_0 \in \mathcal{F}_{T}^{\perp}$. But then $\ker(M-\overline{\lambda})^{j} = \ker({\mathcal{L}^*}-\overline{\lambda})^{j} \cap \mathcal{F}_{T}^\perp$. The Kernel decomposition theorem applied to $M$, and the structure of the generalized eigenspaces of $\mathcal{L}^*$ gives the conclusion of Step 4.

\paragraph{Step 6: We prove that any element of $L^2(\T)^{d}$ can be steered to $\mathcal{F}_{T}$ in an arbitrary short time}, i.e.\ for every $\varepsilon >0$ and $f_0 \in L^2(\T)^{d}$, there exists $u \in L^2((0,T')\times\omega)^{d_1} \times C^\infty_c ((T',T)\times\omega)^{d_2}$ such that $S(\varepsilon;f_0, u) \in \mathcal{F}_{T}$.
By the Hilbert Uniqueness Method, it is sufficient to prove an observability inequality for ${S}(t)^*$. By using the finite-dimensionality of $\mathcal{F}_{T}^{\perp}$, it is equivalent to prove that the following unique continuation property holds: if $f(t,\cdot)=\eu^{tM} f_0$ with $f=0$ in $(0,\varepsilon)\times \omega$, then $f_0 = 0$. By using the spectral inequality of Lebeau-Robbiano, i.e.~\eqref{eq:spectral} and~\eqref{sommewfin}, we readily get the result. 

\paragraph{Step 7: Conclusion.} Step 5 implies the controllability of the system in  any time $\tau>T$. As $T$ is an arbitrary time such that $T>T^\ast$, this concludes the null-controllability in any time $T>T^\ast$. \hfill\qed
\medskip

By a duality argument, we obtain the following result, that will be used in the next sections.

\begin{corollary}
\label{CorHUM}
For every $T>T^*$ and $s\in\mathbb{N}$, there exists $C_{T,s}>0$ such that,
for every $g_0 \in L^2(\T)^d$ the solution $g(t)=\eu^{-t\mathcal{L}^*} g_0=(g_1,g_2)(t)$ of the adjoint system \eqref{SystAdj} satisfies
\[\|g(T)\|_{L^2(\T)^d} \leq C_{T,s} \left( \|g_1\|_{L^2(q_T)^{d_1}} + \|g_2\|_{H^{-s}(q_T)^{d_2}} \right),\]
where $q_T=(0,T)\times\omega$.
\end{corollary}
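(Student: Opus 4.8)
The plan is to derive the Corollary from point $(ii)$ of \Cref{th:main} by transposition, exactly as \Cref{Hum} follows from \Cref{LemmaDuality}, but using a Hilbert space of controls that are smooth in the parabolic directions. Fix $T>T^\ast$ and $s\in\N$. First I would observe that the proof of point $(ii)$ of \Cref{th:main} produces, for every $f_0\in L^2(\T)^d$, a control $(u_1,u_2)$ steering $f_0$ to $0$ at time $T$ with $u_1\in L^2(q_T)^{d_1}$ and $u_2\in C^\infty_c(q_T)^{d_2}$: the hyperbolic part lies in $L^2$, the parabolic part is built in $C^\infty_c$ in \Cref{subsec:parab}, and the low-frequency correction of \Cref{ControlLowFreqSec} is of the same type, the smoothness being preserved upon concatenation since each $C^\infty_c$ piece has all its derivatives vanishing at the junction times. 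Since $C^\infty_c(q_T)^{d_2}\subset H_0^s(q_T)^{d_2}$, the system \eqref{Syst} (recall that $M=I_d$ here) is null-controllable at time $T$ by controls belonging to the Hilbert space $\mathcal H\coloneqq L^2(q_T)^{d_1}\times H_0^s(q_T)^{d_2}$.

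Then I would apply \Cref{LemmaDuality} with $H_1=H_2=L^2(\T)^d$, $H_3=\mathcal H$, $\Phi_2=\eu^{-T\mathcal L}\colon H_2\to H_1$ (continuous by \Cref{Prop:C0_sg}) and $\Phi_3\colon\mathcal H\to H_1$ the control-to-state map $\Phi_3 u\coloneqq S(T;0,u)=\int_0^T\eu^{-(T-\tau)\mathcal L}\big(u(\tau)1_\omega\big)\diff\tau$, which is continuous because $H_0^s(q_T)\hookrightarrow L^2(q_T)$ and \eqref{WP_estim} holds. Since $S(T;f_0,u)=\eu^{-T\mathcal L}f_0+\Phi_3 u$, the null-controllability just obtained means precisely $\Ima(\Phi_2)\subset\Ima(\Phi_3)$, so \Cref{LemmaDuality} yields a constant $C_{T,s}>0$ with $\|\Phi_2^\ast h\|_{L^2(\T)^d}\le C_{T,s}\,\|\Phi_3^\ast h\|_{\mathcal H}$ for all $h\in L^2(\T)^d$.

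It remains to identify the adjoints. Let $g(t)=\eu^{-t\mathcal L^\ast}h$ be the solution of \eqref{SystAdj} with $g(0)=h$; then $\Phi_2^\ast h=\eu^{-T\mathcal L^\ast}h=g(T)$. For $\Phi_3$, the relation $\langle\eu^{-(T-\tau)\mathcal L}v,h\rangle=\langle v,g(T-\tau)\rangle$ gives
\[
\langle\Phi_3 u,h\rangle=\int_0^T\!\int_\omega\Big(\langle u_1(\tau,x),g_1(T-\tau,x)\rangle+\langle u_2(\tau,x),g_2(T-\tau,x)\rangle\Big)\diff x\,\diff\tau,
\]
so $\Phi_3^\ast h$ is the pair formed by $(\tau,x)\mapsto g_1(T-\tau,x)$, measured in $L^2(q_T)^{d_1}$, and the linear form $u_2\mapsto\int_{q_T}\langle u_2,g_2(T-\cdot)\rangle$, whose norm in the dual of $H_0^s(q_T)^{d_2}$, that is in $H^{-s}(q_T)^{d_2}$, equals $\|g_2(T-\cdot)\|_{H^{-s}(q_T)^{d_2}}$. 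As the time reflection $\tau\mapsto T-\tau$ is an isometry of $L^2(q_T)$ and of $H_0^s(q_T)$, hence of $H^{-s}(q_T)$, this gives $\|\Phi_3^\ast h\|_{\mathcal H}^2=\|g_1\|_{L^2(q_T)^{d_1}}^2+\|g_2\|_{H^{-s}(q_T)^{d_2}}^2$, and therefore $\|g(T)\|_{L^2(\T)^d}\le C_{T,s}\big(\|g_1\|_{L^2(q_T)^{d_1}}+\|g_2\|_{H^{-s}(q_T)^{d_2}}\big)$, which is the claim.

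The argument is a routine transposition of the Hilbert Uniqueness Method, and I expect no serious obstacle. The point requiring genuine attention is to go back to the proof of \Cref{th:main} and confirm that the controls it delivers are indeed globally of class $C^\infty_c$ on $(0,T)\times\omega$ in the parabolic component (this is precisely why the parabolic controls there are constructed in $C^\infty_0$ with all derivatives vanishing at the gluing times); the identification of the dual of $H_0^s(q_T)^{d_2}$ for the $L^2$-pairing with $H^{-s}(q_T)^{d_2}$, and the invariance of these spaces under the time reflection $\tau\mapsto T-\tau$, are standard.
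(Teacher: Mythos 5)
Your proposal is correct and follows essentially the same route as the paper's own proof: both start from the fact (established in the proof of Theorem~\ref{th:main}) that the parabolic control can be taken in $C^\infty_c\subset H^s_0$, apply \Cref{LemmaDuality} with $\Phi_2 = \eu^{-T\mathcal L}$ and $\Phi_3$ the input-output map on $L^2(q_T)^{d_1}\times H^s_0(q_T)^{d_2}$, and identify $\|\Phi_3^*g_T\|_{\mathcal H}$ with the $L^2\times H^{-s}$ norm of the restriction of the adjoint solution to $q_T$. The only presentational difference is that you argue directly that the $H^s_0$-norm of the Riesz representative of the linear form $u_2\mapsto\int_{q_T}\langle u_2,g_2(T-\cdot)\rangle$ equals $\|g_2(T-\cdot)\|_{H^{-s}}$, whereas the paper factors $\Phi_3=\mathcal F_T\circ(I,\iota_s)$ and invokes the explicit isometry $\iota_s^*\colon H^{-s}\to H^s_0$ of \Cref{th:lemma_dual_Hs} --- the two arguments express the same fact.
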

We will use the following standard lemma that gives a canonical isometry between $H^{-s}(\Omega)$ and $H_0^s(\Omega)$.
\begin{lemma}\label{th:lemma_dual_Hs}
Let $\Omega$ be an open subset of $\R^d$ or a compact manifold (possibly with boundary). Let $s\geq 0$ and $\iota_s\colon H^s_0(\Omega)\to L^2(\Omega)$ be the inclusion map.\puncfootnote{We recall that $H^s_0(\Omega)$ is the closure of $C_c^\infty(\Omega)$ for the $H^s$-norm, and that $H^{-s}(\Omega)$ is the dual of $H^s_0(\Omega)$ with respect to the pivot space $L^2(\Omega)$.} The map $\iota_s^*\colon L^2(\Omega) \to H^s_0(\Omega)$ extends to a bijective isometry from $H^{s}_0(\Omega)$ to $H^{-s}(\Omega)$. 
\end{lemma}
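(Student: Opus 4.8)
The plan is to obtain the identification from the Riesz representation theorem applied to the Hilbert space $H^s_0(\Omega)$, and then to transport it to $H^{-s}(\Omega)$ by continuous extension from the dense subspace $L^2(\Omega)$. First I would fix once and for all the inner product $\langle\cdot,\cdot\rangle_{H^s_0}$ on $H^s_0(\Omega) = \overline{C^\infty_c(\Omega)}$ with respect to which $H^{-s}(\Omega) = (H^s_0(\Omega))'$ and its dual norm are defined, and record the Riesz isometry $\Lambda\colon H^s_0(\Omega)\to H^{-s}(\Omega)$, $\Lambda(u) = \langle u,\cdot\rangle_{H^s_0}$, which is by construction a bijective isometry. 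On the $L^2$ side, the inclusion $\iota_s\colon H^s_0(\Omega)\hookrightarrow L^2(\Omega)$ is bounded with dense range; using the pivot-space convention this produces the canonical bounded injection $J\colon L^2(\Omega)\to H^{-s}(\Omega)$, $Jf = \langle f,\cdot\rangle_{L^2}$, and, with the identification it encodes, $\|f\|_{H^{-s}} = \|Jf\|_{H^{-s}}$.

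The core computation is the identity $\iota_s^* = \Lambda^{-1}\circ J$ on $L^2(\Omega)$. For $f\in L^2(\Omega)$ and $v\in H^s_0(\Omega)$, the definition of the adjoint gives $\langle\iota_s^* f, v\rangle_{H^s_0} = \langle f, \iota_s v\rangle_{L^2} = \langle f, v\rangle_{L^2} = (Jf)(v) = \langle\Lambda^{-1}Jf, v\rangle_{H^s_0}$, hence $\iota_s^* f = \Lambda^{-1}Jf$. Because $\Lambda^{-1}$ is an isometry, this yields $\|\iota_s^* f\|_{H^s_0} = \|Jf\|_{H^{-s}} = \|f\|_{H^{-s}}$ for all $f\in L^2(\Omega)$. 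Since $L^2(\Omega)$ is dense in $H^{-s}(\Omega)$, the map $\iota_s^*$ extends uniquely to a linear isometry from $H^{-s}(\Omega)$ into $H^s_0(\Omega)$; by density this extension equals $\Lambda^{-1}$ on all of $H^{-s}(\Omega)$, so it is onto, with inverse the Riesz isometry $\Lambda$. In other words, $\iota_s^*$ extends to a bijective isometry, and $\Lambda$ and $\Lambda^{-1}$ are the two mutually inverse canonical isometries between $H^s_0(\Omega)$ and $H^{-s}(\Omega)$.

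The only point requiring genuine, if routine, care is the density of $L^2(\Omega)$ in $H^{-s}(\Omega)$ for the dual definition of $H^{-s}$ used here: I would deduce it from reflexivity, observing that any continuous functional on $H^{-s}(\Omega)$ is represented by some $w\in H^s_0(\Omega)$, and if it annihilates $J(L^2(\Omega))$ then $\langle f,w\rangle_{L^2}=0$ for every $f\in L^2(\Omega)$, so $w=0$. One should also bear in mind that the whole construction depends on the fixed choice of inner product on $H^s_0(\Omega)$ — relevant for non-integer $s$, and for a compact manifold with boundary, where $H^s_0$ must be read as the closure of $C^\infty_c$ of the interior — but once that choice is made, everything is a formal consequence of the Riesz theorem and of bounded extension from a dense subspace, so no real obstacle remains.
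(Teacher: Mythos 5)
Your proposal is correct and follows essentially the same route as the paper: the key step in both is the identity $\langle \iota_s^* f, v\rangle_{H^s_0} = \langle f, v\rangle_{L^2}$ for $f\in L^2(\Omega)$, $v\in H^s_0(\Omega)$, from which the equality $\|\iota_s^* f\|_{H^s_0} = \|f\|_{H^{-s}}$ and the continuous extension to $H^{-s}(\Omega)$ follow. The only cosmetic difference is the surjectivity step: you identify the extension with $\Lambda^{-1}$ (the inverse Riesz isometry), which is automatically onto, whereas the paper argues that the range of the isometric extension is closed and dense (its orthogonal in $H^s_0(\Omega)$ is zero because $H^s_0(\Omega)$ is dense in $L^2(\Omega)$); both arguments are elementary and interchangeable.
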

\begin{proof}
The map $\iota_s^*$ is defined on $L^2(\Omega)$ by 
\begin{equation}\label{eq:def_iota*}
\forall f\in L^2(\Omega),\ \forall v\in H^s_0(\Omega),\ \langle \iota_s^*f,v\rangle_{H^s_0} = \langle f,v\rangle_{L^2}.\end{equation}
Thus, for evey $f\in L^2(\Omega)$,
\[
|\iota_s^*f|_{H^s_0} = \sup_{|v|_{H^s_0} = 1} \langle \iota_s^*f,v\rangle_{H^s_0} = \sup_{|v|_{H^s_0}=1} \langle f,v\rangle_{L^2} = |f|_{H^{-s}},\]
where we used the definition of $H^{-s}(\Omega)$ as the dual of $H^s_0(\Omega)$ with respect to the pivot space $L^2(\Omega)$ (see for instance~\cite[Sec.~2.9]{tucsnak_2009}). Since $L^2(\Omega)$ is dense in $H^{-s}(\Omega)^d$, this proves that $\iota_s^*$ extends by continuity to $H^{-s}(\Omega)$.

This extension is an isometry from $H^{-s}(\Omega)$ onto its range. As such it is injective and its range is closed. To prove it is bijective, we check that its range is dense, i.e.\ that its orthogonal is zero.

If $g_0\in H^s_0(\Omega)$ is orthogonal to $\Ima(\iota_s^*)$, then, according to the definition of $\iota_s^*$ (Eq.~\eqref{eq:def_iota*}) $g_0$ is orthogonal in $L^2(\Omega)$ to $H^s_0(\Omega)$. But $H^s_0(\Omega)$ is dense in $L^2(\Omega)$, so $g = 0$. Thus $\Ima(\iota_s^*)^\perp = \{0\}$.
\end{proof}

\begin{proof}[Proof of \Cref{CorHUM}]
We apply the duality \Cref{LemmaDuality} with
\[ \Phi_2 \colon f_0 \in L^2(\T)^d \mapsto f(T,\cdot) \in L^2(\T)^d,\]
where $f$ is the solution to the system \eqref{Syst} with initial data $f_0$ and control $u=0$, and
\[ \Phi_3 \colon u=(u_1,u_2)  \in L^2(q_T)^{d_1} \times H_0^{s}(q_T)^{d_2} \mapsto f(T,\cdot) \in L^2(\T)^d,\]
where $f$ is the solution to the system \eqref{Syst} with initial data $f_0=0$ and control $u$. 
The null-controllability result proved above is equivalent to the inclusion $\Ima(\Phi_2)\subset \Ima(\Phi_3)$, thus to the existence of $C>0$ such that for every $g_T\in L^2(\T)^d$,
\begin{equation}\label{eq:obs_lemma2}
\lVert\Phi_2^*(g_T)\rVert_{L^2(\T)^d}\leq C\lVert \Phi_3^*(g_T)\rVert_{L^2(q_T)^{d_1}\times H^s_0(q_T)^{d_2}}.
\end{equation}

We compute the adjoint operators of $\Phi_2$ and $\Phi_3$ thanks to the duality relation between the solution $f$ of \eqref{Syst} and the solution $\varphi(\cdot)= g(T-\cdot)$ of the adjoint system~\eqref{SystAdj}:
\begin{align} \label{dualite}
& \langle f(T) , \varphi(T) \rangle_{L^2(\T)^d}  = \langle f(0), \varphi(0) \rangle_{L^2(\T)^d} + \int_{0}^{T}\int_{\omega} \langle u(t,x) , \varphi(t,x) \rangle \diff t \diff x \\
= & \langle f(0), \varphi(0) \rangle_{L^2(\T)^d} + \int_{0}^{T}\int_{\omega} \langle u_1(t,x) , \varphi_1(t,x) \rangle + \langle u_2(t,x) , \varphi_2(t,x) \rangle \diff t \diff x.
\end{align}
First, we have $\Phi_2^*(g_T) = (\eu^{-T\mathcal L})^* g_T = \eu^{-T\mathcal L^*}g_T$.
To compute $\Phi_3^*$, we introduce the input-output operator $\mathcal F_T \colon u \in L^2(q_T)^d \mapsto f(T,.) \in L^2(\T)^d$, where $f$ is the solution of~\eqref{Syst} with initial condition $f_0 = 0$ and right-hand side $u$. By (\ref{dualite}), $\mathcal F_T^*(g_T)$ is the restriction of $\eu^{(t-T)\mathcal L^*}g_T$  to $[0,T]\times \omega$. 
We have $\Phi_3 = \mathcal F_T \circ (I,\iota_s)$, where $(I,\iota_s)$ stands for the inclusion map $L^2(q_T)^{d_1}\times H^s_0(q_T)^{d_2}\to L^2(q_T)^d$. Thus, according to \Cref{th:lemma_dual_Hs}, the right-hand side of the inequality~\eqref{eq:obs_lemma2} is
\[
\lVert (I,\iota_s^*) \circ \mathcal F_T^* (g_T)\rVert_{L^2(q_T)^{d_1}\times H^s_0(q_T)^{d_2}} = \lVert \mathcal F_T^* (g_T)\rVert_{L^2(q_T)^{d_1}\times H^{-s}(q_T)^{d_2}},
\]
which gives the conclusion.
\end{proof}

\section{Hyperbolic control: coupling of order zero}
\label{sec:cont_parab_simple}

The goal of this section is to prove the following result on the system
\begin{equation} \label{Syst_bloc_hyp_K}
\mkern-3mu\left\{\!
\begin{array}{ll}
\left( \partial_t +A'\partial_x+K_{11}\right)f_1+\left(A_{12}\partial_x+K_{12}\right)f_2= u_1  1_\omega
&\text{in}\ (0,T)\times \T,\!\\
\left( \partial_t -\partial_x^2+K_{22}\right)f_2 + K_{21} f_1 = 0 &\text{in}\ (0,T)\times \T.\!\\
\end{array} \right.
\end{equation}

\begin{theorem} \label{th:main_2_facile}
We assume~\eqref{dd1d2}--\eqref{h:A1}, $D=I_{d_2}$ $m=d_1$, $M_1=I_{d_1}$, $M_2=0$, $A_{21}=0$ and $A_{22}=0$. Let $T^\ast$ be defined by~\eqref{eq:T}.
The following statements are equivalent.
\begin{itemize}
\item The system (\ref{Syst_bloc_hyp_K}) is null controllable in any time $T>T^{\ast}$.
\item The couple of matrices $(K_{22},K_{21})$ satisfies the Kalman rank condition:
\begin{equation} \label{Kalman_Thm2_facile}
\Span \{ K_{22}^j K_{21} X_1 ; X_1 \in \C^{d_1}, 0 \leq j \leq d_2-1 \} = \C^{d_2}.
\end{equation}
\end{itemize}
\end{theorem}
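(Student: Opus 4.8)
The plan is to treat both implications through the Hilbert Uniqueness Method (\Cref{Hum}). Since the control matrix here is $M=\big(\begin{smallmatrix}I_{d_1}\\0\end{smallmatrix}\big)$, so that $M^{\ast}g=g_1$, null-controllability of~\eqref{Syst_bloc_hyp_K} in time $T$ is equivalent to the observability inequality
\[
\|g(T)\|_{L^2(\T)^d}^2\le C\int_0^T\int_\omega|g_1(t,x)|^2\diff t \diff x
\]
for the solutions $g=(g_1,g_2)$ of the adjoint system~\eqref{SystAdj}, which under the present hypotheses reads $\partial_t g_1-\Tr{A'}\partial_x g_1+\Tr{K}_{11}g_1+\Tr{K}_{21}g_2=0$ and $\partial_t g_2-\partial_x^2 g_2-\Tr{A}_{12}\partial_x g_1+\Tr{K}_{12}g_1+\Tr{K}_{22}g_2=0$.

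\emph{Sufficiency.} The starting point is the strengthened observability of \Cref{CorHUM}, which applies because~\eqref{Syst_bloc_hyp_K} is of the form~\eqref{Syst} with $D=I_{d_2}$ (so that~\eqref{h:B}--\eqref{h:D} hold): for every $T>T^\ast$ and $s\in\N$, the solution $g$ of~\eqref{SystAdj} satisfies
\[
\|g(T)\|_{L^2(\T)^d}\le C_{T,s}\bigl(\|g_1\|_{L^2(q_T)}+\|g_2\|_{H^{-s}(q_T)}\bigr),\qquad q_T=(0,T)\times\omega.
\]
It remains to absorb $\|g_2\|_{H^{-s}(q_T)}$ into $\|g_1\|_{L^2(q_T)}$ by a cascade (Brunovský) argument. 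The first line of the adjoint system gives $\Tr{K}_{21}g_2=P_0g_1$ with $P_0\coloneqq-(\partial_t-\Tr{A'}\partial_x+\Tr{K}_{11})$, a constant-coefficient differential operator of order $\le1$; differentiating in $t$ and substituting $\partial_t g_2$ from the second line (the constant matrices commute with $\partial_x$ and $\partial_x^2$) yields, inductively for $0\le j\le d_2-1$, that $\Tr{K}_{21}(\Tr{K}_{22})^j g_2=P_jg_1$ where $P_{j+1}=\partial_x^2P_j-\partial_tP_j+\Tr{K}_{21}(\Tr{K}_{22})^j(\Tr{A}_{12}\partial_x-\Tr{K}_{12})$ has order $\le 2j+1$. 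The Kalman condition~\eqref{Kalman_Thm2_facile} says precisely that the stacked matrix $[\Tr{K}_{21};\Tr{K}_{21}\Tr{K}_{22};\dots;\Tr{K}_{21}(\Tr{K}_{22})^{d_2-1}]$ has full column rank $d_2$, hence admits a left inverse $(R_0\mid\cdots\mid R_{d_2-1})$, so that
\[
g_2=\sum_{j=0}^{d_2-1}R_j\,\Tr{K}_{21}(\Tr{K}_{22})^jg_2=Pg_1,
\]
where $P\coloneqq\sum_{j=0}^{d_2-1}R_jP_j$ is a constant-coefficient differential operator of order $\le 2d_2-1$. Choosing $s=2d_2-1$ and pairing this identity against $\phi\in C^\infty_c(q_T)$ through the formal adjoint of $P$, $|\langle g_2,\phi\rangle|=|\langle g_1,P^{\ast}\phi\rangle|\le\|g_1\|_{L^2(q_T)}\|P^{\ast}\phi\|_{L^2(q_T)}\le C\|g_1\|_{L^2(q_T)}\|\phi\|_{H^s_0(q_T)}$, whence $\|g_2\|_{H^{-s}(q_T)}\le C\|g_1\|_{L^2(q_T)}$. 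Inserting this into the inequality of \Cref{CorHUM} produces the required observability inequality, hence null-controllability in every time $T>T^\ast$.

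\emph{Necessity.} I argue by contraposition. If~\eqref{Kalman_Thm2_facile} fails, then $V\coloneqq\Span\{K_{22}^jK_{21}X_1;\ X_1\in\C^{d_1},\ 0\le j\le d_2-1\}$ is a proper $K_{22}$-invariant subspace of $\C^{d_2}$ containing $\Ima K_{21}$; since $K_{21}$ and $K_{22}$ are real, $V^\perp\neq\{0\}$ is $\Tr{K}_{22}$-invariant and contained in $\ker\Tr{K}_{21}$, so it contains an eigenvector $\xi$ of $\Tr{K}_{22}$: $\Tr{K}_{22}\xi=\lambda\xi$, $\Tr{K}_{21}\xi=0$. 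Then, for any scalar $\phi\in L^2(\T)\setminus\{0\}$, the pair $g_1\equiv0$, $g_2(t,x)=\eu^{-\lambda t}\bigl(\eu^{t\partial_x^2}\phi\bigr)(x)\,\xi$ solves the adjoint system above --- its first line reduces to $\Tr{K}_{21}g_2=0$ and its second to $\partial_t g_2-\partial_x^2 g_2+\Tr{K}_{22}g_2=0$, both true by construction --- with $g_1=0$ on $\omega$ but $g(T)\neq0$ because $\eu^{T\partial_x^2}$ is injective on $\T$. This contradicts the observability inequality, so~\eqref{Syst_bloc_hyp_K} is not null-controllable in any time, a fortiori not for $T>T^\ast$.

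\emph{Main obstacle.} The delicate point is the cascade step at the $L^2$-regularity level: the transport component $g_1$ enjoys no smoothing, so the identity $g_2=Pg_1$ can only be understood in $\mathcal D'((0,T)\times\T)$ and the estimate on $\|g_2\|_{H^{-s}(q_T)}$ must be obtained weakly, by transposing the differential operator $P$ onto test functions supported in $q_T$. This is exactly why the observability with negative-Sobolev observation of the parabolic component (\Cref{CorHUM}), rather than a plain $L^2$-observation, is indispensable; a careful bookkeeping of the differential orders ($\mathrm{ord}\,P_j\le 2j+1$, hence $\mathrm{ord}\,P\le 2d_2-1$) is needed to see that $s=2d_2-1$ suffices.
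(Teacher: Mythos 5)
Your proof is correct, and it departs from the paper's route in both directions, in ways that arguably streamline the argument. For sufficiency, the paper passes to a Brunovsk\'y (cascade) normal form by conjugating with the Kalman matrix $P=(K_{21}\,|\,K_{22}K_{21}\,|\,\dots\,|\,K_{22}^{d_2-1}K_{21})$, then eliminates the parabolic observation component by component, estimating $\|g_2^i\|_{H^{-2i+1}(q_T)}$ inductively from the cascade structure of the companion matrix $\widehat{K_{22}}$; moreover, the paper treats $d_1=1$ first and needs the separate, heavier machinery of Lemma~\ref{AdaptedBasisLemma} for $d_1>1$. You instead keep the original coordinates, iterate the adjoint equations to produce $\Tr{K}_{21}(\Tr{K}_{22})^j g_2 = P_j g_1$ with $\mathrm{ord}\,P_j\le 2j+1$, invoke left-invertibility of the stacked Kalman matrix to obtain $g_2=Pg_1$ with $\mathrm{ord}\,P\le 2d_2-1$, and transpose $P$ onto test functions to absorb $\|g_2\|_{H^{-(2d_2-1)}(q_T)}$ into $\|g_1\|_{L^2(q_T)}$. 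Both arguments lean on exactly the same strengthened observability of \Cref{CorHUM} at the same Sobolev index; the gain of your approach is that it handles all $d_1\ge 1$ uniformly, without the change of variables or the adapted-basis combinatorics. For necessity, the paper projects onto Fourier modes and invokes finite-dimensional controllability theory, whereas you argue by contraposition: a violation of~\eqref{Kalman_Thm2_facile} produces a $\Tr{K}_{22}$-eigenvector $\xi\in\ker\Tr{K}_{21}$, hence an explicit non-observable adjoint solution $g=(0,\eu^{-\lambda t}(\eu^{t\partial_x^2}\phi)\xi)$ with $g_1\equiv 0$ but $g(T)\neq 0$. This is a clean, self-contained alternative; I checked the eigenvector construction ($V$ is $K_{22}$-invariant by Cayley--Hamilton, $V^\perp\subset\ker\Tr{K_{21}}$ is $\Tr{K_{22}}$-invariant) and the verification that $g$ solves the adjoint system, and they are correct.
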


The interest of this theorem is that its proof is essentially the same as the one of Theorems \ref{th:main_2} and \ref{th:main_3} (that will be done in the next sections) but it is less technical. 

In \Cref{SectionRequirementKalman}, we prove that the Kalman condition (\ref{Kalman_Thm2_facile}) is necessary for the null controllability of System (\ref{Syst_bloc_hyp_K}). In \Cref{subsec:K_S_K}, we prove that the Kalman condition (\ref{Kalman_Thm2_facile}) is sufficient for the null controllability of System (\ref{Syst_bloc_hyp_K}), first in the case $d_1=1$ (i.e.\ with one hyperbolic line in the system) where the cascade structure (or Brunovski form) is easy to handle, then in the general case $d_1>1$ which is more delicate to write.

\subsection{The Kalman condition is necessary}
\label{SectionRequirementKalman}

If the null controllability property for (\ref{Syst_bloc_hyp_K}) holds, then, by
considering the Fourier components of the solution and the control, we obtain the null controllability, for every $n\in\Z\setminus\{0\}$, of the system
\[\left\lbrace\begin{aligned}
& X_1(t)'+ (\iu nA'+K_{11})X_1(t) + (\iu nA_{12}+K_{12})X_1(t) = v_1(t), \\
& X_2'(t)+(n^2 I_{d_2}+K_{22})X_2(t)+ K_{21} X_1(t)=0,
\end{aligned}\right.\]
with state $X(t)=(X_1,X_2)(t)\in\C^{d_1}\times\C^{d_2}$ and control $v_1 \in L^2(0,T)^{d_1}$. This requires the null controllability of the control system
\[X_2'(t)+(n^2 I_{d_2}+ K_{22})X_2(t) + K_{21} X_1(t)=0,\]
with state $X_2(t) \in \C^{d_2}$ and control $X_1 \in L^2(0,T)^{d_1}$, i.e.\ the Kalman rank condition (see for instance \cite[Thm.~1.16]{coron_2007})
\[\Span\{ (n^2 I_{d_2}+K_{22})^j K_{21} v_1 ; v_1 \in \C^{d_1},\ j \in \{0,\dots,d_2-1\}\}=\C^{d_2},\]
that can equivalently be written in the form (\ref{Kalman_Thm2_facile}).

\subsection{The Kalman condition is sufficient}
\label{subsec:K_S_K}

In this section, we explain how to complete the proof of Theorem~\ref{th:main} to prove that the Kalman rank condition \eqref{Kalman_Thm2_facile} implies the null controllability of \eqref{Syst_bloc_hyp_K} in time $T>T^\ast$, in \Cref{th:main_2_facile}.

First, we treat the case $d_1 = 1$ then we generalize to the case $d_1 >1$.

From now and until end of this subsection, $C$ will denote positive constants which will vary from line to line. For $1 \leq i \leq 2$ and $1 \leq j \leq d_i$, we denote by $v_{i}^j$ the $j$-th component of a vector $v_i \in \C^{d_i}$.

\subsubsection{The case of one hyperbolic component: $d_1 = 1$}

By using the Hamilton-Cayley's theorem, we know that there exist $c_0, \dots, c_{d_2-1} \in \R$ such that
\begin{equation}
\label{HamiltonCayley}
K_{22}^{d_2} = c_0 I_{d_2} + c_1 K_{22} + \dots + c_{d_2-1} K_{22}^{d_2-1}.
\end{equation}
By using the Kalman condition \eqref{Kalman_Thm2_facile}, the matrix $P$ defined as follows
\begin{equation}
\label{defPK}
 P \coloneqq (K_{21}, K_{22} K_{21}, \dots, K_{22}^{d_2-1} K_{21}),
 \end{equation}
is invertible. We set
\begin{equation}
\label{eq:cascadeK}
 \widehat{K_{22}}\coloneqq  \begin{pmatrix} 0 & \dots & \dots & 0 & c_0\\
1 & 0 &\dots & \vdots & c_1\\
0 & \ddots & \ddots & \vdots & c_2\\
\vdots & \ddots & \ddots & 0 & \vdots\\
0 & \dots &0 & 1 & c_{d_2-1}
\end{pmatrix}\ \text{and}\ \widehat{K_{21}}\coloneqq \begin{pmatrix} 1\\0\\ \vdots\\ 0 \end{pmatrix}.
\end{equation}
From \eqref{HamiltonCayley}, \eqref{defPK}, \eqref{eq:cascadeK}, we check that we have the following relations
\[K_{22}P = P \widehat{K_{22}}\ \text{and}\  K_{21} = P \widehat{K_{21}},\ \text{i.e.}\ \widehat{K_{22}}= P^{-1} K_{22} P\ \text{and}\ {\widehat K}_{21}= P^{-1} K_{21}.\]
\indent The function $w =(w_1,w_2) = (f_1,P^{-1}f_2)$ solves 
\begin{equation} \label{Syst_bloc_hyp_K_w}
\mkern-3mu\left\{\!
\begin{array}{ll}
\left( \partial_t +A'\partial_x+K_{11}\right)w_1+\left(A_{12}P\partial_x+K_{12}P\right)w_2= u_1  1_\omega
&\text{in}\ (0,T)\times \T,\!\\
\left( \partial_t - \partial_x^2+\widehat{K_{22}}\right)w_2 + \widehat{K_{21}}w_1 = 0 &\mathrm{in}\ (0,T)\times \T,\!\\
(w_1,w_2)(0,\cdot)=(w_{01},w_{02}) & \text{in}\ \T.
\end{array} \right.
\end{equation}

The system \eqref{Syst_bloc_hyp_K_w} is a ``cascade system''. Indeed, roughly speaking the control $u_1$ directly controls the component $w_1$, the component $w_1$ indirectly controls the component $w_{2}^1$ in the second equation through the coupling term $w_1$, the component $w_{2}^1$ indirectly controls the component $w_{2}^2$ in the third equation through the coupling term $w_2^1$, \ldots\ the component $w_{2}^{d_2-1}$ indirectly controls the component $w_{2}^{d_2}$ in the last equation through the coupling term $w_2^{d_2-1}$.

The adjoint system of \eqref{Syst_bloc_hyp_K_w} is
\begin{equation} \label{Syst_bloc_hyp_K_Adj}
\mkern-3mu\left\{\!
\begin{array}{ll}
\left( \partial_t -\Tr{A'}\partial_x+\Tr K_{11}\right)g_1 +\Tr{\widehat{K_{21}}} g_2= 0
&\text{in}\ (0,T)\times \T,\!\\
\left( \partial_t - \partial_x^2+\Tr{\widehat{K_{22}}}\right)g_2 +\left(-\Tr{(A_{12}P)}\partial_x+\Tr{(K_{12}P)}\right)g_1  = 0 &\mathrm{in}\ (0,T)\times \T,\!\\
(g_1,g_2)(0,\cdot)=(g_{01},g_{02}) & \text{in}\ \T.
\end{array} \right.
\end{equation}
From \Cref{CorHUM}, we know that for every $g_0 \in L^2(\T)^d$, the solution $g$ of \eqref{Syst_bloc_hyp_K_Adj} satisfies
\begin{equation} \label{IO_H_Obs1K}
\| g(T,\cdot) \|_{L^2(\T)} \leq C \left( \| g_1\|_{L^2(q_T)} + \|g_2\|_{H^{-2d_2+1}(q_T)} \right) .
\end{equation}
By using the fact that ${\widehat K}_{22}$ is a companion matrix, see \eqref{eq:cascadeK}, we have that for every $i \in \{2, \dots, d_2\}$, the $i$-th equation of \eqref{Syst_bloc_hyp_K_Adj} is
\begin{equation*}
 \partial_t g_2^{i-1} - \partial_x^2 g_2^{i-1} + g_2^{i} + b_{i-1} \partial_x g_1+ a_{i-1} g_1 =0,\ \text{with}\ (a_{i-1}, b_{i-1}) \in \R^2
 \end{equation*}
Then we deduce
\begin{equation}
\|g_2^{i}\|_{H^{-2i + 1 }(q_T)} \leq C \left(\|g_2^{i-1}\|_{H^{-2(i-1)+1}(q_T)} + \| g_1\|_{L^2(q_T)} \right).
\label{Obs_CascadeK}
\end{equation}
Here, we have used in particular that
\[ \| (\partial_t  - \partial_x^{2} )g_2^{i-1}\|_{H^{-2 i +1 }(q_T)} \leq C \|g_2^{i-1}\|_{H^{-2(i-1)+1}(q_T)}\]
and 
\[ \| b_{i-1} \partial_x g_1+ a_{i-1} g_1 \| _{H^{-2 i +1 }(q_T)} \leq C \|g_1\|_{L^2(q_T)}.\]
Then, we deduce from \eqref{IO_H_Obs1K} and \eqref{Obs_CascadeK} that 
\begin{equation} \label{IO_H_Obs2K}
\| g(T,\cdot) \|_{L^2(\T)} \leq C \left( \| g_1\|_{L^2(q_T)} + \|g_2^1\|_{H^{-1}(q_T)} \right) .
\end{equation}
By using the fact that ${\widehat K}_{21}$ is the first vector of the canonical basis of $\R^{d_2}$, see \eqref{eq:cascadeK}, the first equation of \eqref{Syst_bloc_hyp_K_Adj} is
\[ \partial_t g_1 - A' \partial_{x} g_1 + K_{11} g_1 + g_2^{1} = 0.\]
Then, we obtain that
\begin{equation}
\label{Obs_FirstVectorK}
\|g_2^1\|_{H^{-1}(q_T)} \leq C \| g_1\|_{L^2(q_T)}.
\end{equation}
So, we deduce from \eqref{IO_H_Obs2K} and \eqref{Obs_FirstVectorK} the observability inequality 
\[\| g(T,\cdot) \|_{L^2(\T)^{d}}^2 \leq C \int_0^T \int_{\omega} |g_1(t,x)|^2 \diff x \diff t, \]
in the case $d_1=1$. This concludes the proof of \Cref{th:main_2_facile} in the case $d_1=1$ by duality.

\subsubsection{The case of several hyperbolic components: $d_1 >1$}
\label{SectionKd1>1}

In this section, we deal with the general problem of null-controllability of~\eqref{Syst_bloc_hyp_K}. To this aim, we introduce $K_{21}^i \in \R^{d_2}$ the $i$-th column of the matrix $K_{21}$ ($1 \leq i \leq d_1$), i.e.
\[ K_{21} = \left(K_{21}^1 | K_{21}^2 | \dots | K_{21}^{d_1} \right),\] 
From the Kalman rank condition~\eqref{Kalman_Thm2_facile}, we construct an adapted basis of $\C^{d_2}$.
\begin{lemma}
\label{AdaptedBasisLemma}
There exist $r \in \{1, \dots, d_2\}$ and sequences $(l_j)_{1 \leq j \leq r} \subset \{1, 2, \dots, d_1\}$ and $(s_j)_{1 \leq j \leq r} \subset \{1, 2, \dots, d_2\}$ with $\sum_{j=1}^r s_j =d_2$, such that
\[ \mathcal{B} = \bigcup\limits_{j=1}^{r} \left\{K_{21}^{l_j}, K_{22} K_{21}^{l_j}, \dots, K_{22}^{s_j-1} K_{21}^{l_j} \right\}\]
is a basis of $\C^{d_2}$. Moreover, for every $j$, with $1 \leq j \leq r$, there exist $\alpha_{k,s_j}^i \in \R$ ($1 \leq i \leq j$, $1 \leq k \leq s_j$) such that
\begin{equation}
K_{22}^{s_j} K_{21}^{l_j} = \sum\limits_{i=1}^j \left( \alpha_{1, s_j}^i K_{21}^{l_i} + \alpha_{2, s_j}^i K_{22} K_{21}^{l_i} + \dots + \alpha_{s_i,s_j}^i K_{22}^{s_i-1} K_{21}^{l_i}\right).
\label{eqLemmaBasisK}
\end{equation}
\end{lemma}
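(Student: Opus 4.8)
The plan is to build $\mathcal B$ by a greedy \emph{staircase} procedure on the Krylov sequences of $K_{22}$ generated by the columns $K_{21}^1,\dots,K_{21}^{d_1}$; this is exactly the finite-dimensional construction underlying a cascade (Brunovsk\'y) form for the pair $(K_{22},K_{21})$. Since $K_{21}$ and $K_{22}$ are real, \eqref{Kalman_Thm2_facile} is equivalent to the real span of the $K_{22}^jK_{21}^i$ being $\R^{d_2}$, so the whole construction can be carried out over $\R$ and produces the real coefficients $\alpha$ required in \eqref{eqLemmaBasisK}; a basis of $\R^{d_2}$ is then also a basis of $\C^{d_2}$.

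First I would construct the chains inductively. Set $W_0=\{0\}$. Assume chains attached to column indices $l_1,\dots,l_{j-1}$ with lengths $s_1,\dots,s_{j-1}$ have been produced, and that
\[ W_{j-1}=\bigoplus_{i=1}^{j-1}\Span\{K_{21}^{l_i},K_{22}K_{21}^{l_i},\dots,K_{22}^{s_i-1}K_{21}^{l_i}\}, \]
the directness being part of the induction hypothesis. If $K_{21}^l\in W_{j-1}$ for every $l\in\{1,\dots,d_1\}$, stop and put $r=j-1$. Otherwise choose any $l_j$ with $K_{21}^{l_j}\notin W_{j-1}$ and let $s_j\ge1$ be the largest integer such that $K_{21}^{l_j},K_{22}K_{21}^{l_j},\dots,K_{22}^{s_j-1}K_{21}^{l_j}$ are linearly independent modulo $W_{j-1}$; equivalently, $s_j$ is the smallest integer with $K_{22}^{s_j}K_{21}^{l_j}\in W_{j-1}+\Span\{K_{21}^{l_j},\dots,K_{22}^{s_j-1}K_{21}^{l_j}\}$. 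Define $W_j$ as the corresponding direct sum. By construction the family $\mathcal B$ obtained at the end is linearly independent, and the closure relation \eqref{eqLemmaBasisK} holds by the very definition of $s_j$, because the vectors appearing on the right-hand side of \eqref{eqLemmaBasisK} are precisely a basis of $W_j$, and $K_{22}^{s_j}K_{21}^{l_j}\in W_j$.

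The two substantive points are that the procedure terminates and that at termination $W_r=\C^{d_2}$; the latter forces $\sum_{j=1}^r s_j=\dim W_r=d_2$, whence $\mathcal B$ is a basis (and $r\le d_2$, while $r\ge1$ since $\C^{d_2}$, $d_2\ge1$, is not spanned by the empty family). Termination is immediate since $\dim W_j$ strictly increases. For the equality $W_r=\C^{d_2}$ I would prove by induction that every $W_j$ is $K_{22}$-invariant: this is clear for $W_0$, and granting $K_{22}W_{j-1}\subset W_{j-1}$, applying $K_{22}$ to a generator $K_{22}^kK_{21}^{l_j}$ of the $j$-th chain yields $K_{22}^{k+1}K_{21}^{l_j}$, which lies in the chain if $k\le s_j-2$ and in $W_j$ by the stopping rule if $k=s_j-1$; hence $K_{22}W_j\subset W_j$. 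Since the algorithm stopped, $K_{21}^l\in W_r$ for all $l$, and by $K_{22}$-invariance $K_{22}^jK_{21}^l\in W_r$ for all $j\ge0$ and all $l$; the Kalman rank condition \eqref{Kalman_Thm2_facile} then gives $W_r=\C^{d_2}$.

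The proof is essentially bookkeeping. The only mildly delicate step — and the only place where the Kalman hypothesis is genuinely used — is the $K_{22}$-invariance of the running span $W_j$ together with the way it combines with \eqref{Kalman_Thm2_facile} to yield $W_r=\C^{d_2}$. Everything else (linear independence of $\mathcal B$, the form of \eqref{eqLemmaBasisK}, and the dimension count $\sum_j s_j=d_2$) is a direct consequence of the construction.
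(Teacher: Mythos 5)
Your proof is correct. The paper does not give its own argument but cites \cite[Lemma 3.1]{ammarkhodja_2009a}, which carries out precisely the greedy Brunovsky-type staircase construction you describe: extend the Krylov chains column by column, record the dependence relation that terminates each chain, and use the $K_{22}$-invariance of the running span together with the Kalman rank condition \eqref{Kalman_Thm2_facile} to conclude that the final span is all of $\C^{d_2}$.
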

For a proof of this lemma, see \cite[Lemma 3.1]{ammarkhodja_2009a}.

Let $\mathcal{B}$ the basis of $\C^{d_2}$ provided by \Cref{AdaptedBasisLemma} and $P$ be the matrix whose columns are the elements of $\mathcal{B}$, i.e.
\[ P \coloneqq \left(K_{21}^{l_1} | K_{22} K_{21}^{l_1} | \dots | K_{22}^{s_1-1} K_{21}^{l_1}| \dots | K_{21}^{l_r} | \dots | K_{22}^{s_r-1} K_{21}^{l_r} \right).\]
Let us observe that the basis $\mathcal{B}$ has been constructed in such a way that \eqref{eqLemmaBasisK} is satisfied.

Let the matrices $C_{ii} \in \R^{s_i \times s_i}$ and $C_{ij} \in \R^{s_i \times s_j}$, $1 \leq i < j \leq r$, be defined by
\begin{equation}
C_{ii} =  \begin{pmatrix} 0 & 0 & 0 & \dots & \alpha_{1, s_i}^i\\
1 & 0 & 0 & \dots & \alpha_{2, s_i}^i\\
0 & 1 & 0 & \dots & \alpha_{3, s_i}^i \\
\vdots & \vdots & \ddots & \ddots & \vdots\\
0 & 0 & \dots & 1 & \alpha_{s_i,s_i}^i
\end{pmatrix}\ \text{and}\ 
C_{ij} = \begin{pmatrix} 0 & \dots & 0 & \alpha_{1, s_j}^i\\
\vdots & \ddots & \vdots & \alpha_{2, s_j}^i\\
0 &  \dots & 0 & \alpha_{s_i,s_j}^i
\end{pmatrix}.
\label{defblockC_iK}
\end{equation}
We set 
\begin{equation}
\label{eq:cascadeK_d1>1}
\widehat{K_{22}}\coloneqq  \begin{pmatrix} C_{11} & C_{12} & \dots & C_{1r}\\
0 & C_{22} &\dots &C_{2r}\\
\vdots & \vdots & \ddots & \vdots \\
0 & 0 &\dots & C_{rr}\\
\end{pmatrix}\ \text{and}\
\widehat{K_{21}}\coloneqq P^{-1} K_{21}.
\end{equation}

From \eqref{eqLemmaBasisK}, \eqref{eq:cascadeK_d1>1} and \eqref{defblockC_iK}, by denoting $P_ i \coloneqq \left(K_{21}^{l_i} | K_{22} K_{21}^{l_i} | \dots | K_{22}^{s_i-1} K_{21}^{l_i}\right)$, we have
\begin{align*}
K_{22} P_i &= \left(K_{22}K_{21}^{l_i} | K_{22}^2 K_{21}^{l_i} | \dots | K_{22}^{s_i} K_{21}^{l_i}\right)\\
&= \left(K_{22}K_{21}^{l_i} \Big| K_{22}^2 K_{21}^{l_i}  \Big| \dots  \Big| \sum_{k=1}^i \left( \alpha_{1, s_i}^k K_{21}^{l_k} + \alpha_{2, s_i}^k K_{22} K_{21}^{l_k} + \dots + \alpha_{s_k,s_i}^k K_{22}^{s_k-1} K_{21}^{l_k}\right) \right)\\
&= \left(0  \Big| \dots  \Big| 0  \Big|  \sum_{k=1}^{i-1} \left( \alpha_{1, s_i}^k K_{21}^{l_k} + \alpha_{2, s_i}^k K_{22} K_{21}^{l_k} + \dots + \alpha_{s_k,s_i}^k K_{22}^{s_k-1} K_{21}^{l_k}\right) \right)\\
&\quad + \left(K_{22}K_{21}^{l_i}  \Big| K_{22}^2 K_{21}^{l_i}  \Big| \dots  \Big| \left( \alpha_{1, s_i}^i K_{21}^{l_i} + \alpha_{2, s_i}^i K_{22} K_{21}^{l_i} + \dots + \alpha_{s_i,s_i}^i K_{22}^{s_i-1} K_{21}^{l_k}\right) \right) \\
&= P_1 C_{1i} + P_{2} C_{2i} + \dots + P_{i} C_{ii}.
\end{align*}
Then, we obtain
\begin{equation}
 K_{22} P = P\widehat{K_{22}}\ \text{and}\ P e_{S_i} = K_{21}^{l_i},\ 1 \leq i \leq r,
\label{RelationK22PK21}
\end{equation}
where $e_{S_i}$ is the $S_i$-element of the canonical basis of $\R^n$ with $S_i = 1 + \sum_{j=1}^{i-1} s_j$. In the following, we will also use the notation $S_{r+1}\coloneqq d_2+1$.

We argue as in the previous subsection. We perform the same change of variable $w =(w_1,w_2) = (f_1,P^{-1}f_2)$, we consider the solution $g$ of the adjoint system \begin{equation} \label{Syst_bloc_hyp_K_Adjd1>1}
\mkern-3mu\left\{\!
\begin{array}{ll}
\left( \partial_t -\Tr{A'}\partial_x+\Tr K_{11}\right)g_1 +\Tr{\widehat{K_{21}}} g_2= 0
&\text{in}\ (0,T)\times \T,\!\\
\left( \partial_t - \partial_x^2+\Tr{\widehat{K_{22}}}\right)g_2 +\left(-\Tr{(A_{12}P)}\partial_x+\Tr{(K_{12}P)}\right)g_1  = 0 &\mathrm{in}\ (0,T)\times \T,\!\\
(g_1,g_2)(0,\cdot)=(g_{01},g_{02}) & \text{in}\ \T.
\end{array} \right.
\end{equation}
From \Cref{CorHUM}, we recall that the solution $g$ of~\eqref{Syst_bloc_hyp_K_Adjd1>1} satisfies
\begin{equation} \label{IO_H-1-d1>1K}
\| g(T,\cdot) \|_{L^2(\T)} \leq C \left( \| g_1\|_{L^2(q_T)} + \|g_2\|_{H^{-2m+1}(q_T)} \right),\ \text{with}\ m=\max_{1 \leq i \leq r}s_i.
\end{equation}
Then, we use the coupling terms in the system \eqref{Syst_bloc_hyp_K_Adjd1>1} in order to get rid of the term $\|g_2\|_{H^{-2m+1}(q_T)}^2$ in the right hand side of the inequality \eqref{IO_H-1-d1>1K}.

From the cascade form of the matrix $\widehat{K_{22}}$, see \eqref{eq:cascadeK_d1>1}, more precisely from the cascade form of the block matrix $C_{ii}$ and the form of the matrices $C_{1,i}, \dots, C_{i-1,i}$,  see \eqref{defblockC_iK}, the equations of the adjoint system \eqref{Syst_bloc_hyp_K_Adjd1>1} are
\begin{align}
\label{equationadjointBloc}
&\forall i \in \{1, \dots, r\},\ \forall j \in \{S_i, \dots, S_{i+1}-2\},\notag\\
&\partial_t g_2^{j} - \partial_x^2 g_2^{j} + g_2^{j+1} + \sum_{k=1}^{d_1} b_{i,j}^k \partial_x g_1^k + a_{i,j}^k g_1^k =0,\qquad ( a_{i,j}^k,b_{i,j}^k) \in \R^2 .
\end{align}
To simplify, we will denote by $H^k$, $k \in \Z^{-}$, the space $H^{k}(q_T)$.

We deduce successively from \eqref{equationadjointBloc} with $j=S_{i+1}-2, S_{i+1}-3, \dots, S_{i+1}-2-(s_{i}-2) = S_i$, the following estimates
\begin{align*}
\norme{g_2^{S_{i+1}-1}}_{H^{-2s_i+1}(q_T)} & \leq C \left(\norme{g_2^{S_{i+1}-2}}_{H^{-2(s_i-1)+1}(q_T)} + \norme{g_1}_{L^2(q_T)}\right)\notag\\
& \leq C \left(\norme{g_2^{S_{i+1}-3}}_{H^{-2(s_i-2)+1}(q_T)} + \norme{g_1}_{L^2(q_T)}\right) \notag\\
& \leq \dots\label{cascadeg2RK}\\
& \leq C \left(\norme{g_2^{S_{i+1} -2 -(s_i-2)}}_{H^{-1}(q_T)} + \norme{g_1}_{L^2(q_T)}\right). \notag
 \end{align*}
So, we have for every $i \in \{1, \dots, r\}$ and $j \in \{S_i+1, \dots, S_{i+1}-1\}$,
\begin{equation}
\label{cascadeg2RKBis}
\norme{g_2^{j}}_{H^{-2m+1}(q_T)} \leq  C\left( \norme{g_2^{S_i}}_{H^{-1}(q_T)} + \norme{g_1}_{L^2(q_T)}\right).
\end{equation}
Then, by using \eqref{eq:cascadeK_d1>1} and \eqref{RelationK22PK21}, we have $\widehat{K_{21}}^{l_i}=P^{-1} K_{21}^{l_i} = e_{S_i}$. Consequently, the $l_i$-th equation of the adjoint system \eqref{Syst_bloc_hyp_K_Adjd1>1} is 
\begin{equation*}
\partial_t g_1^{l_i} + \sum\limits_{k=1}^{d_1} a_{l_i,k} \partial_x g_1^{k} + b_{l_i,k} g_1^{k} + g_2^{S_i} = 0,\qquad (a_{l_i,k}, b_{l_i,k}) \in \R^2.
\end{equation*}
Then, we obtain
\begin{equation}
\label{g_2S_RK}
\norme{g_2^{S_i}}_{H^{-1}(q_T)} \leq C \norme{g_1}_{L^2(q_T)}.
\end{equation}
By gathering \eqref{cascadeg2RKBis} and \eqref{g_2S_RK}, we obtain 
\begin{equation}
\label{Obsg2Rg1K}
\forall i \in \{1, \dots, r\},\ \forall j \in \{S_i, \dots, S_{i+1}-1\},\  \norme{g_2^{j}}_{H^{-2m+1}(q_T)} \leq C \norme{g_1}_{L^2(q_T)}.
\end{equation}
By using that $\{S_1, \dots, S_{2}-1, S_2, \dots, S_{3}-1, \dots, S_{r}, \dots, S_{r+1}-1\} = \{1, \dots, d_2\}$, we finally deduce from \eqref{Obsg2Rg1K} and \eqref{IO_H-1-d1>1K}
the observability inequality\[\| g(T,\cdot) \|_{L^2(\T)^{d}}^2 \leq C \int_0^T \int_{\omega} |g_1(t,x)|^2 \diff x \diff t. \]
This concludes the proof of \Cref{th:main_2_facile} in the case $d_1>1$ by duality.

\section{Hyperbolic control: coupling of order one} \label{Sec:Hyp}

The goal of this section is to prove Theorem \ref{th:main_2}.
The requirement of the Kalman rank condition \eqref{Kalman_Thm2}  for null-controllability is an adaptation of the proof given in \Cref{SectionRequirementKalman}. Now, we explain how to complete the proof of Theorem~\ref{th:main} to prove that the Kalman condition is sufficient for null controllability. We set
\begin{equation}
\label{defInvQuant}
\bold{F_2} \coloneqq L^2(\T)^{d_1} \times L^2_\m(\T)^{d_2} = \left\{ f_0=(f_{01},f_{02}) \in L^2(\T)^d\ ;\ \int_{\T} f_{02}(x) \diff x = 0\right\}.
\end{equation} 
\indent We only give the proof in the case $d_1=1$. The case $d_1 >1$ is an easy adaptation of the case $d_1=1$ and the arguments already presented for coupling terms of order zero in \Cref{SectionKd1>1}.

\subsection{A special observability inequality}

The goal of this section is to prove the following observability inequality. 
\begin{prop}\label{PropObs1A}
There exists $C>0$ such that for every $g_0 \in \bold{F_2}$, the solution of the adjoint system \eqref{SystAdj} satisfies
\begin{equation} \label{IO_H_Obs1A}
\| g(T,\cdot) \|_{L^2(\T)}^2 \leq C \left( \| g_1\|_{L^2(q_T)}^2 + \|\partial_{x}^{d_2} g_2\|_{H^{-2d_2+1}(q_T)}^2 \right) .
\end{equation}
\end{prop}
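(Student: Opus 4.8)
The plan is to read \eqref{IO_H_Obs1A} as the dual, through the Hilbert Uniqueness Method, of a \emph{strengthened} version of \Cref{th:main}: the null-controllability of \eqref{Syst} (with $M=I_d$) in time $T>T^*$ by a control $(u_1,u_2)$ with $u_1\in L^2((0,T)\times\omega)^{d_1}$ and $u_2=\partial_x^{d_2}w_2$ for some $w_2\in C^\infty_c((0,T)\times\omega)^{d_2}$. I emphasize that \eqref{IO_H_Obs1A} is genuinely stronger than \Cref{CorHUM}: since $\partial_x^{d_2}$ maps $L^2(q_T)$ continuously into $H^{-2d_2+1}(q_T)$, its right-hand side is controlled by $\|g\|_{L^2(q_T)^d}^2$, so \Cref{CorHUM} (nor the plain observation behind \Cref{th:main}) does not suffice, and the construction of the parabolic control really has to be revisited.

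First I would set up the duality as in the proof of \Cref{CorHUM}, with the free-evolution and input--output maps composed with the orthogonal projection onto $\bold{F_2}$, and with control space $L^2(q_T)^{d_1}\times H^{2d_2-1}_0(q_T)^{d_2}$, the second control being applied through $\partial_x^{d_2}$. Using the input--output identity \eqref{dualite}, an integration by parts of $\partial_x^{d_2}$ over $\omega$ (boundary terms vanishing by the support of $w_2$), and \Cref{th:lemma_dual_Hs} with $s=2d_2-1$, one finds that the adjoint of the input--output map sends $g_T$ to an element whose norm in $L^2(q_T)^{d_1}\times H^{2d_2-1}_0(q_T)^{d_2}$ equals $\big(\|g_1\|_{L^2(q_T)}^2+\|\partial_x^{d_2}g_2\|_{H^{-2d_2+1}(q_T)}^2\big)^{1/2}$, where $g(\cdot)=\eu^{-(T-\cdot)\mathcal L^*}g_T$. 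By \Cref{LemmaDuality}, \eqref{IO_H_Obs1A} for $g_0\in\bold{F_2}$ is then equivalent to the controllability statement above (steering any $f_0$ into $\bold{F_2}^\perp$; for $f_0\in\bold{F_2}$ this is steering to $0$, since the $n=0$ Fourier mode of $f_2$ is conserved).

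To prove that controllability statement, I would rerun the proof of \Cref{th:main} and check that its controls can be taken of the required form. The hyperbolic high-frequency control (\Cref{LemHyp}) uses only $u_1\in L^2(q_T)^{d_1}$, hence is untouched. For the parabolic high frequencies I would redo the Lebeau--Robbiano scheme of \Cref{subsec:parab} with the ansatz $u_2=\partial_x^{d_2}\big(\rho_1((T-t)/T)\,\rho_2(x)\,v_2(t,x)\big)$, where $v_2$ is a trigonometric polynomial carrying only the frequencies $n_0<|k|\le N$ as in \Cref{Prop:FNPF}; integrating $\partial_x^{d_2}$ by parts turns the moment problem \eqref{moment} into $AV=\widetilde F$ with the \emph{same} operator $A$ as in \Cref{Prop:ANinv} and a rescaled right-hand side satisfying $|\widetilde F|\le n_0^{-d_2}|F|$ (here the gap $|k|>n_0\ge 1$ is crucial), so that all estimates of \Cref{Prop:FNPF} survive, the power of $N$ being merely shifted by $2d_2$, which is harmless against $\eu^{\mathcal C N}$. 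For the low frequencies, $\partial_x^{d_2}$ annihilates only the $n=0$ mode of the parabolic component; but since $K_{21}=0$ (and $\int_\omega\partial_x^{d_2}w_2=0$) that mode is conserved, hence equals $0$ for $f_0\in\bold{F_2}$, while the modes $0<|n|\le n_0$ are reached by the compactness--uniqueness argument of \Cref{ControlLowFreqSec}, which rests on the spectral inequality \eqref{eq:spectral} and is insensitive to the factor $\partial_x^{d_2}$.

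I expect the main difficulty to be precisely this ``strengthened rerun'': verifying that inserting $\partial_x^{d_2}$ --- an operator interacting with the spatial cut-offs and non-invertible on the constant-in-$x$ mode --- preserves the exponential-in-$N$ control estimates needed for the Lebeau--Robbiano summation, and that the uncontrollable $n=0$ mode of $f_2$ is genuinely a non-issue thanks to the zero-mean constraint defining $\bold{F_2}$. The Sobolev-exponent bookkeeping in the duality step is routine but must be carried out carefully so as to land exactly on the norm $\|\partial_x^{d_2}g_2\|_{H^{-2d_2+1}(q_T)}$, which is the form in which the subsequent cascade argument can absorb it into $\|g_1\|_{L^2(q_T)}$.
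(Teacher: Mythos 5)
Your overall strategy is correct, and the duality setup, the treatment of the hyperbolic high frequencies, and the modification of Step~6 in the low-frequency argument all match what the paper does. The genuine divergence is in the parabolic high-frequency step (the analogue of \Cref{LemPar}). You propose to re-derive the moment problem with the ansatz $u_2=\partial_x^{d_2}(\rho v_2)$, integrate by parts to get $AV=\widetilde F$ with $|\widetilde F_n|=|n|^{-d_2}|F_n|$, and then rerun the estimates of \Cref{Prop:FNPF} and the Lebeau--Robbiano iteration while tracking the extra polynomial-in-$N$ losses from $\partial_x^{d_2}$. This does work (as you note, any polynomial factor is absorbed by the superexponential decay $\eu^{-c_\p N_\ell^2 T_\ell}$), but it is substantially heavier than what the paper actually does. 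The paper's \Cref{LemPar_Bisd_x} exploits the same key fact you identify --- that $F^\p$ is supported on frequencies $|n|>n_0\geq 1$, so $\partial_x^{d_2}$ is invertible on it --- but applies it at the level of the \emph{initial data} rather than the moment conditions: set $f_0^\blacklozenge\coloneqq\partial_x^{-d_2}f_0\in F^\p$, feed $f_0^\blacklozenge$ into the already-proved \Cref{LemPar_Bis} to get $u_\p$, then differentiate the resulting solution $d_2$ times (this commutes with the constant-coefficient dynamics and with $\Pi^\p$). One obtains the control $\partial_x^{d_2}u_\p$ for $f_0$ directly, with all estimates inherited verbatim from \Cref{LemPar_Bis}; the moment problem, \Cref{Prop:ANinv}, and the iteration of \Cref{Subsec:LR} are never reopened. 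Your route buys a more concrete view of how $\partial_x^{d_2}$ interacts with the Gram matrix; the paper's route buys a one-line reduction that sidesteps the bookkeeping you (correctly) anticipate as the main difficulty. Both hinge on the same zero-mean constraint that you flag.
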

In order to prove \Cref{PropObs1A}, by a duality argument, it is sufficient to establish the following null-controllability result.
\begin{prop}\label{PropNullControldx}
For every $f_0 \in \bold{F_2}$, there exists $u \in L^2(q_T)^{d_1}\times(H_0^{2d_2-1}(q_T))^{d_2}$ such that $S(T, f_0, (u_{\h}, \partial_{x}^{d_2} u_{\p})) = 0$.
\end{prop}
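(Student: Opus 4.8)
The plan is to establish \Cref{PropNullControldx} as a strengthened version of \Cref{th:main}: the control $(u_{\h},\partial_x^{d_2}u_{\p})$ is a full control of system~\eqref{Syst} in the sense of \Cref{th:main} (i.e.\ with $M=I_d$), but whose $d_2$ parabolic components are required to be of the special form $\partial_x^{d_2}u_{\p}$ with $u_{\p}$ supported in a compact subset of $q_T=(0,T)\times\omega$. The first point is that this constraint is compatible with the subspace $\bold{F_2}$: since $D=I_{d_2}$ and $K_{21}=K_{22}=0$, and since $u_{\p}(t,\cdot)$ compactly supported in $\omega$ gives $\int_\omega\partial_x^{d_2}u_{\p}\,\diff x=\int_{\T}\partial_x^{d_2}u_{\p}\,\diff x=0$, the zero Fourier mode of the source $(\partial_x^{d_2}u_{\p})1_\omega$ vanishes; combined with $\widehat{A_{21}\partial_x f_1}(0)=\widehat{A_{22}\partial_x f_2}(0)=0$, this makes $\widehat{f_2}(\cdot,0)$ constant in time, so $\bold{F_2}$ is left invariant by the controlled dynamics and the zero mode of $f_2$, being $0$ at $t=0$, remains $0$ throughout. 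I would therefore follow the three-step proof of \Cref{th:main} (Sections~\ref{subsec:decomp}--\ref{ControlLowFreqSec}), modifying only the parabolic high-frequency step.

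For that step, the decisive observation is that $F^\p$ and $F^\p_N$ only involve frequencies $|n|>n_0\geq1$, on which $\partial_x$ is an invertible Fourier multiplier. I would rerun the proof of \Cref{Prop:FNPF} looking for a control of the form $\partial_x^{d_2}u_{\p}$ with $u_{\p}=\rho(t,x)v_2(t,x)$, where $\rho$ is the cut-off used there and $v_2=\sum_{n_0<|k|\leq N}\eu^{-k^2(T-t)E_2(k)}V_k\eu^{\iu kx}$. Integrating by parts $d_2$ times in $x$ --- the boundary terms vanishing because $u_{\p}$ is compactly supported in $\omega$ --- the relation $\Pi^\p_N S(T;f_0,(0,\partial_x^{d_2}u_{\p}))=0$ becomes exactly the moment problem of Step~1 of \Cref{Prop:FNPF} written for $u_{\p}$, with $F_n$ replaced by $(\iu n)^{-d_2}F_n$. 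The associated Gram matrix is unchanged --- it is the matrix $A$ of \Cref{Prop:ANinv} --- and since $|(\iu n)^{-d_2}F_n|\leq n_0^{-d_2}|F_n|$, \Cref{Prop:ANinv} yields $V=A^{-1}F'$ with the same bound, hence $u_{\p}\in C^\infty_0(q_T)^{d_2}$ with $\|u_{\p}\|_{H_0^s(q_T)}\leq\frac{\mathcal C}{T^{s+1}}N^{2s}\eu^{\mathcal C N}\|f_0\|_{L^2(\T)^d}$ for every $s\in\N$. Inserting these controls into the Lebeau--Robbiano iteration of \Cref{Subsec:LR} verbatim (the super-exponential gain $\eu^{-c_\p N_\ell^2 T_\ell}$ still dominates the polynomial factors and $\eu^{\mathcal C N_\ell}$) produces $u_{\p}\in C^\infty_c(q_T)^{d_2}$ with $\Pi^\p S(T;f_0,(0,\partial_x^{d_2}u_{\p}))=0$ and $\|u_{\p}\|_{H_0^s(q_T)}\leq C_s\|f_0\|_{L^2(\T)^d}$; in particular $u_{\p}\in H_0^{2d_2-1}(q_T)^{d_2}$.

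The hyperbolic high frequencies are handled exactly as in \Cref{LemHyp}, with $u_{\h}$ cancelling on $F^\h$ the effect of the now-fixed parabolic control $\partial_x^{d_2}u_{\p}$; that argument is unchanged. The low-frequency modes are treated by the compactness--uniqueness scheme of \Cref{ControlLowFreqSec} run inside the invariant subspace $\bold{F_2}$; the only new ingredient is the unique-continuation step, where the observation is now $(g_1,\partial_x^{d_2}g_2)$ on $q_T$. If $g$ is an exponential solution of~\eqref{SystAdj} in the relevant finite-dimensional subspace (which lies in $\bold{F_2}$) with $g_1=0$ and $\partial_x^{d_2}g_2=0$ on $q_T$, then $g_1\equiv0$ (a trigonometric polynomial vanishing on an open set vanishes identically), so $\partial_x^{d_2}g_2\equiv0$ by the same argument, which forces all the nonzero Fourier modes of $g_2$ to vanish; and the zero mode of $g_2$ is $0$ at $t=0$ (because $g\in\bold{F_2}$) and constant in time (because $g_1\equiv0$), hence $g_2\equiv0$ and $g\equiv0$. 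Assembling the three controls as in \Cref{sec:NC} then gives $u=(u_{\h},\partial_x^{d_2}u_{\p})$ with $u_{\h}\in L^2(q_T)^{d_1}$, $u_{\p}\in H_0^{2d_2-1}(q_T)^{d_2}$ and $S(T;f_0,(u_{\h},\partial_x^{d_2}u_{\p}))=0$; the observability inequality~\eqref{IO_H_Obs1A} then follows by the duality \Cref{LemmaDuality}, with \Cref{th:lemma_dual_Hs} used to identify the adjoint of $u_{\p}\mapsto\partial_x^{d_2}u_{\p}$.

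The two points that will require real care are: first, checking that the integration-by-parts reduction indeed leaves the Gram matrix of \Cref{Prop:ANinv} and the resulting $H_0^s$-bounds on $u_{\p}$ --- rather than on the actual control $\partial_x^{d_2}u_{\p}$ --- strong enough to close the Lebeau--Robbiano iteration with no change; and, more delicately, the low-frequency unique-continuation argument, where one apparently loses $d_2$ scalar directions of observation on the parabolic block and must recover them from the analyticity of trigonometric polynomials together with the constraint $f_0\in\bold{F_2}$. Everything else is a verbatim repetition of the proof of \Cref{th:main}.
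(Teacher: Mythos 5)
Your proposal is correct and proves the statement, but it handles the decisive step --- the parabolic high-frequency control of the form $\partial_x^{d_2}u_{\p}$ --- by a genuinely different (and more laborious) route than the paper. You re-open the Lebeau--Robbiano machinery: re-derive the moment problem of \Cref{Prop:FNPF} with the control $\partial_x^{d_2}u_{\p}$, integrate by parts $d_2$ times to turn it into the original moment problem on $u_{\p}$ with right-hand side $(\iu n)^{-d_2}F_n$, observe that the Gram matrix of \Cref{Prop:ANinv} is unchanged and $|(\iu n)^{-d_2}F_n|\leq n_0^{-d_2}|F_n|$, and then push the same $H^s_0$ bounds through the iteration of \Cref{Subsec:LR}. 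This works, and you correctly flag the two places that need care (the $H^s_0$ bookkeeping and the low-frequency unique continuation). The paper instead uses a one-line commutation trick in \Cref{LemPar_Bisd_x}: since the coefficients are constant, $\partial_x^{d_2}$ commutes with $\mathcal L$, with the semigroup, and with $\Pi^{\p}$; because every $f_0\in F^{\p}$ has zero mean, it admits a $d_2$-th antiderivative $f_0^{\blacklozenge}\in F^{\p}$; applying the already-built operator $\underline{\mathcal U}_T^{\p}$ to $f_0^{\blacklozenge}$ and then differentiating the entire trajectory $d_2$ times in $x$ yields a control of the prescribed form $(0,\partial_x^{d_2}u_{\p})$ steering $f_0 = \partial_x^{d_2}f_0^{\blacklozenge}$ to zero (modulo $F^0\oplus F^\h$), with $\underline{\mathcal U}_T^{\p,\blacklozenge} = \underline{\mathcal U}_T^{\p}\circ\partial_x^{-d_2}$ automatically continuous. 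The paper thus recycles \Cref{LemPar_Bis} as a black box rather than re-deriving it, and bypasses the integration-by-parts/Gram-matrix bookkeeping that you rightly flagged as requiring real care. Your remaining steps --- the hyperbolic high-frequency part via \Cref{LemHyp}, the Fredholm assembly as in \Cref{PropControlHighFreq}, and the low-frequency unique continuation (where you correctly use that $f_0\in\bold{F_2}$ fixes the mean of the parabolic block so that $\partial_x^{d_2}f_{02}\equiv0$ plus zero mean gives $f_{02}=0$) --- match the paper's Section~\ref{ControlLowFreqSec} adaptation.
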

\begin{proof}[Proof of the equivalence between \Cref{PropObs1A} and \Cref{PropNullControldx}]
We apply \Cref{LemmaDuality} with 
\[ \Phi_2 \colon f_0 \in \mathbf{F_2} \mapsto f(T,\cdot) \in \mathbf{F_2},\]
where $f$ is the solution to the system \eqref{Syst} with initial data $f_0$ and control $u=0$, and
\[ \Phi_3 \colon u=(u_1,u_2) \in L^2(q_T)^{d_1} \times H_0^{2d_2-1}(q_T)^{d_2} \mapsto f(T,\cdot) \in \mathbf{F_2},\]
where $f$ is the solution to the system \eqref{Syst} with initial data $f_0=0$ and control $(u_1,\partial_x^{d_2} u_2)$. Note that by integrating the second equation of the system~\eqref{Syst_bloc}, we see that a control of the form $(u_1,\partial_x^{d_2}u_2)$ cannot change the mean of the parabolic component. This justifies that $\Phi_2$ and $\Phi_3$ do indeed take values in $\mathbf{F_2}$.

The null-controllability result of \Cref{PropNullControldx} is equivalent to the existence of $C>0$ such that for every $g_T\in L^2(\T)^d$,
\begin{equation}\label{eq:obs_lemma2hyp}
\lVert\Phi_2^*(g_T)\rVert_{\mathbf{F_2}}\leq C\lVert \Phi_3^*(g_T)\rVert_{L^2(q_T)^{d_1}\times H^{2d_2-1}_0(q_T)^{d_2}}.
\end{equation}


We have $\Phi_2^*(g_T) = (\eu^{-T\mathcal L})^* g_T = \eu^{-T\mathcal L^*}g_T$. We claim that the right-hand side of the inequality~\eqref{eq:obs_lemma2hyp} satisfies
\begin{equation}\label{eq:dual_hyper_dx_rhs}
\lVert\Phi_3^*(g_T)\rVert_{L^2(q_T)^{d_1}\times H^{2d_2-1}_0(q_T)^{d_2}} = \lVert (g_1, (-1)^{d_2}\partial_{x}^{d_2} g_2) \rVert_{L^2(q_T)^{d_1}\times H^{-2d_2+1}(q_T)^{d_2}},
\end{equation}
where  $g = \eu^{-(T-t)\mathcal L^*}g_T$. This will prove that the inequality~\eqref{eq:obs_lemma2hyp} is exactly the observability inequality \eqref{IO_H_Obs1A}. 

We write $\Phi_3$ as 
\[\Phi_3 = \mathcal F_T \circ (I, \partial_x^{d_2}) \circ (I,\iota_{2d_2-1}),\]
where $\mathcal F_T\colon L^2(\T)^d \to L^2(\T)^d$ is the input-output operator introduced in the proof of~\Cref{CorHUM}, $\partial_x^{d_2}$ is seen as an unbounded operator on $L^2(\T)^{d_2}$ with domain $H^{d_2}(\T)^{d_2}$, and $\iota_{2d_2-1}\colon H^{2d_2-1}(\T)^{d_2}\to L^2(\T)^{d_2}$ is the inclusion map (see \Cref{th:lemma_dual_Hs}). Note that while $\Phi_3$ written this way looks like an unbounded operator (because $\partial_x^{d_2}$ is), we have $\Ima(\iota_{2d_2-1})\subset D(\partial_x^{d_2})$, so that the composition of operators above is indeed a continuous operator. So, we have
\[
\Phi_3^* = (I,\iota_{2d_2-1}^*)\circ(I, (\partial_x^{d_2})^*) \circ \mathcal F_T^* =(I,\iota_{2d_2-1}^*)\circ(I,(-1)^{d_2} \partial_x^{d_2}) \circ \mathcal F_T^*.
\]
Since $\iota_{2d_2-1}^*$ is an isometry between $H^{2d_2-1}_0$ and $H^{-2d_2+1}$ (see \Cref{th:lemma_dual_Hs}), this proves the relation~\eqref{eq:dual_hyper_dx_rhs}.
\end{proof}

First, we show that the null-controllability result of \Cref{PropNullControldx} is true at the high-frequency level, i.e.\ we prove the following adaptation of \Cref{PropControlHighFreq}. 
\begin{prop}
\label{PropControlHighFreqd_x}
There exists a closed subspace $\mathcal{G}^{\blacklozenge}$ of $L^2(\T)^{d}$ with finite codimension and a continuous operator 
\begin{equation*}
\mathcal{U}^{\blacklozenge} \colon\begin{array}[t]{@{}c@{}l}
\mathcal{G}^{\blacklozenge} &\to  L^2((0,T')\times\omega)^{d_1} \times C^\infty_c((T',T)\times\omega)^{d_2}\\
f_0 &\mapsto (u_{\h},u_{\p}),
\end{array}
\end{equation*}
that associates with each $f_0 \in \mathcal{G}^{\blacklozenge} $ a pair of controls $\mathcal{U}^{\blacklozenge} f_{0}=(u_{\h},u_{\p})$ such that 
\begin{equation}
\label{PropKd_x}
\forall f_0 \in \mathcal{G}^{\blacklozenge},\ \Pi S(T;f_0, (u_{\h},\partial_{x}^{d_2}u_{\p})) = 0.
\end{equation}
\end{prop}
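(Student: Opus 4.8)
The plan is to repeat, almost verbatim, the proof of \Cref{PropControlHighFreq}, feeding $\partial_x^{d_2}u_{\p}$ wherever $u_{\p}$ appeared, and to check that the extra $d_2$ spatial derivatives are harmless at high frequency. As in that proof, the problem decouples into steering the hyperbolic high frequencies for an arbitrary parabolic control (analogue of \Cref{LemHyp}) and steering the parabolic high frequencies for an arbitrary hyperbolic control (analogue of \Cref{LemPar}), the two being glued by a Fredholm argument.

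For the hyperbolic part there is nothing new: the hyperbolic control still enters \eqref{Syst} undifferentiated, and $\partial_x^{d_2}u_{\p}\in C^\infty_c((T',T)\times\omega)^{d_2}\subset L^2((T',T)\times\omega)^{d_2}$, so one simply applies \Cref{LemHyp} with parabolic control $\partial_x^{d_2}u_{\p}$, i.e.\ sets $\mathcal U^{\h,\blacklozenge}(f_0,u_{\p})\coloneqq \mathcal U^\h(f_0,\partial_x^{d_2}u_{\p})$. Continuity of $\partial_x^{d_2}\colon H^{d_2}_0((T',T)\times\omega)^{d_2}\to L^2((T',T)\times\omega)^{d_2}$ together with that of $\mathcal U^\h$ yields a continuous operator $\mathcal U^{\h,\blacklozenge}$ with $\Pi^\h S(T;f_0,(\mathcal U^{\h,\blacklozenge}(f_0,u_{\p}),\partial_x^{d_2}u_{\p}))=0$.

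The only genuinely new point is the analogue of \Cref{LemPar}. By the reduction of \Cref{sec:reduc_parab} and the Lebeau--Robbiano scheme of \Cref{Subsec:LR}, it suffices to prove the analogue of \Cref{Prop:FNPF}: for $f_0\in F^\p$, $T\in(0,1]$ and $N>n_0$, a smooth control $u_2=\mathcal K^{\blacklozenge}_{T,N}(f_0)\in C^\infty_0((0,T)\times\omega)^{d_2}$ with $\Pi^\p_N S(T;f_0,(0,\partial_x^{d_2}u_2))=0$ and the bounds of \Cref{Prop:FNPF}. Running Step 1 of the proof of \Cref{Prop:FNPF}, the condition $\Pi^\p_N S(T;f_0,(0,\partial_x^{d_2}u_2))=0$ is equivalent to a moment problem; since $u_2$ is supported in $\omega$, $d_2$ integrations by parts give $\int_\omega \partial_x^{d_2}u_2(t,x)\eu^{-\iu nx}\diff x=(\iu n)^{d_2}\int_\omega u_2(t,x)\eu^{-\iu nx}\diff x$, so this moment problem is exactly \eqref{moment} with $F_n$ replaced by $(\iu n)^{-d_2}F_n$. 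As $|n|>n_0\geq 1$, one has $|(\iu n)^{-d_2}F_n|\leq|F_n|$, hence $\big(\sum_{n_0<|n|\leq N}|(\iu n)^{-d_2}F_n|^2\big)^{1/2}\leq C_2\|f_0\|_{L^2(\T)^d}$, so \Cref{Prop:ANinv} and the construction $u_2=\rho v_2$ of Step 2 of \Cref{Prop:FNPF} apply word for word, producing $\mathcal K^{\blacklozenge}_{T,N}$ with the same estimates. Plugging $u_{\p}$ (not $\partial_x^{d_2}u_{\p}$) into the iteration of \Cref{Subsec:LR}, the sole modification is that the input-output estimate for $S$ now features $\|\partial_x^{d_2}u_\ell\|_{L^2((0,T_\ell)\times\omega)}\leq \|u_\ell\|_{H^{d_2}}\leq \frac{\mathcal C}{T_\ell^{d_2+1}}N_\ell^{2d_2}\eu^{\mathcal C N_\ell}\|f_\ell\|_{L^2(\T)^d}$; the extra polynomial-in-$N_\ell$ factor is absorbed by the super-exponential dissipation $\eu^{-c_\p N_\ell^2 T_\ell}=\eu^{-c_\p A 2^{(2-\rho)\ell}}$ of \Cref{Prop:dissip_p} (recall $2-\rho>1$), so the iteration converges as before and furnishes a continuous operator $\mathcal U^{\p,\blacklozenge}$ from $L^2(\T)^d\times L^2((0,T')\times\omega)^{d_1}$ into $C^\infty_c((T',T)\times\omega)^{d_2}$ with $\Pi^\p S(T;f_0,(u_{\h},\partial_x^{d_2}\mathcal U^{\p,\blacklozenge}(f_0,u_{\h})))=0$.

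Finally, with $\mathcal U^{\h,\blacklozenge}$ and $\mathcal U^{\p,\blacklozenge}$ in hand I would repeat the Fredholm argument of the proof of \Cref{PropControlHighFreq} unchanged: decomposing $\mathcal U^{\h,\blacklozenge}(f_0,u_{\p})=\mathcal U^{\h,\blacklozenge}_1 f_0+\mathcal U^{\h,\blacklozenge}_2 u_{\p}$ and $\mathcal U^{\p,\blacklozenge}(f_0,u_{\h})=\mathcal U^{\p,\blacklozenge}_1 f_0+\mathcal U^{\p,\blacklozenge}_2 u_{\h}$, the relation $\Pi S(T;f_0,(u_{\h},\partial_x^{d_2}u_{\p}))=0$ holds whenever $(I-\mathcal U^{\p,\blacklozenge}_2\mathcal U^{\h,\blacklozenge}_2)u_{\p}=(\mathcal U^{\p,\blacklozenge}_1+\mathcal U^{\p,\blacklozenge}_2\mathcal U^{\h,\blacklozenge}_1)f_0$; since $\mathcal U^{\p,\blacklozenge}_2\mathcal U^{\h,\blacklozenge}_2$ is smoothing, hence compact, Fredholm's alternative delivers the finite-codimension closed subspace $\mathcal G^{\blacklozenge}$ and the continuous operator $\mathcal U^{\blacklozenge}$. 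Note that $\Pi$ annihilates $F^0$, which contains the constant mode, so the zero-mean constraint defining $\mathbf{F_2}$ is irrelevant at this high-frequency stage; it will only enter when the finitely many low frequencies are controlled, exactly as in \Cref{ControlLowFreqSec}. The main obstacle is conceptual rather than computational, and is precisely the observation above that $\partial_x^{d_2}$ acts on the moment problem as multiplication by $(\iu n)^{d_2}$, hence is invertible — indeed contracting — on the high frequencies $|n|>n_0$; everything else is bookkeeping to ensure that the $N$-powers created by the $d_2$ derivatives never beat the exponential and super-exponential gains already present in \Cref{Prop:ANinv} and in the Lebeau--Robbiano iteration.
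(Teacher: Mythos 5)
Your argument is correct, but it takes a considerably longer route than the paper's. The paper adapts only \Cref{LemPar_Bis}, via a two-line commutation trick (\Cref{LemPar_Bisd_x}): since $F^\p$ contains no constant mode, $\partial_x^{d_2}$ is invertible on it and commutes with $\Pi^\p$ and with $\eu^{-t\mathcal L}$, so one sets $\underline{\mathcal U}_T^{\p,\blacklozenge}(f_0)=\underline{\mathcal U}_T^{\p}\bigl(\partial_x^{-d_2}f_0\bigr)$ and immediately obtains $\Pi^\p S(T;f_0,(0,\partial_x^{d_2}u_\p))=\partial_x^{d_2}\Pi^\p S(T;\partial_x^{-d_2}f_0,(0,u_\p))=0$, with the estimates on $u_\p$ inherited directly from \Cref{LemPar_Bis}. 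You instead re-derive \Cref{Prop:FNPF} for the control $\partial_x^{d_2}u_2$ (correctly observing that the moment problem turns into the original one with $F_n$ replaced by $(\iu n)^{-d_2}F_n$, which only improves the bound since $|n|>n_0\geq 1$) and then re-run the Lebeau--Robbiano iteration of \Cref{Subsec:LR}, checking that the extra polynomial factor $T_\ell^{-d_2-1/2}N_\ell^{2d_2}$ coming from the $H^{d_2}$-norm of the control is absorbed by the super-exponential gain $\eu^{-c_\p N_\ell^2 T_\ell}$. Both routes ultimately rest on the same fact --- that $\partial_x^{d_2}$ acts on Fourier coefficients as multiplication by $(\iu n)^{d_2}$, hence is invertible on the zero-mean high-frequency range --- but the paper applies it once, at the level of initial data, whereas you propagate it through the entire moment/LR machinery. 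Your treatment of the hyperbolic half (the undifferentiated $u_\h$ combined with $\partial_x^{d_2}u_\p\in L^2$ as parabolic data lets you apply \Cref{LemHyp} verbatim) and of the Fredholm gluing coincides with the paper's.
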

In order to prove \Cref{PropControlHighFreqd_x}, it is enough to prove \Cref{LemPar} with parabolic control of the form $\partial_{x}^{d_2} u_{\p}$. Thus, by using \Cref{sec:reduc_parab}, it is sufficient to show the following adaptation of \Cref{LemPar_Bis}.
\begin{prop}
\label{LemPar_Bisd_x}
If $n_0$ is large enough, then for every $T>0$, there exists a continuous operator 
\begin{equation*}
\underline{\mathcal{U}}_T^{\p,\blacklozenge} \colon \begin{array}[t]{@{}c@{}l}
  F^\p  & \rightarrow  C^\infty_c((0,T)\times\omega)^{d_2} \\
  f_0 &\mapsto u_{\p},
\end{array}
\end{equation*}
that associates with each $f_0 \in F^{\p} $ a control $\underline{\mathcal{U}}_T^{\p,\blacklozenge} f_{0}=u_{\p}$ such that 
\begin{equation*}
\Pi^{\p} S(T; f_0, (0, \partial_{x}^{d_2}u_{\p})) = 0.
\end{equation*}
\end{prop}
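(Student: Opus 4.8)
The plan is to mirror the proof of \Cref{LemPar_Bis} from Section~\ref{Subsec:LR}, adjusting only the ingredient that constructs smooth controls on a finite-dimensional block of parabolic frequencies. Concretely, I would first prove an analogue of \Cref{Prop:FNPF} in which the control enters the parabolic component through $\partial_x^{d_2}$: there exists $\mathcal C>0$ such that for every $T\in(0,1]$ and $N>n_0$ there is a linear map $\mathcal K^{\blacklozenge}_{T,N}\colon F^\p\to C^\infty_0((0,T)\times\omega)$ with $\Pi^\p_N S(T;f_0,(0,\partial_x^{d_2}\mathcal K^{\blacklozenge}_{T,N}(f_0)))=0$ and the same type of estimate $\|\mathcal K^{\blacklozenge}_{T,N}(f_0)\|_{H^s_0((0,T)\times\T)}\leq \mathcal C T^{-s-1}N^{2s}\eu^{\mathcal C N}\|f_0\|_{L^2(\T)^d}$. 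Once this is in hand, the Lebeau--Robbiano iteration of Section~\ref{Subsec:LR} goes through verbatim: one fixes $\delta\in(0,T/2)$, $\rho\in(0,1)$, sets $N_\ell=2^\ell$, $T_\ell=A2^{-\rho\ell}$, and defines $f_1=\eu^{-\delta\mathcal L^\p}f_0$, $g_\ell=\Pi^\p S(T_\ell;f_\ell,(0,\partial_x^{d_2}\mathcal K^{\blacklozenge}_{T_\ell,N_\ell}(f_\ell)))$, $f_{\ell+1}=\eu^{-T_\ell\mathcal L^\p}g_\ell$. The dissipation estimate of \Cref{Prop:dissip_p} gives $\|f_{\ell+1}\|\leq K_p\eu^{-c_pN_\ell^2T_\ell}\|g_\ell\|$, the superexponential decay $\eu^{-c_pN_\ell^2T_\ell}$ beats the cost $\eu^{\mathcal C N_\ell}$ of the control, so $\|f_\ell\|_{L^2}\to 0$ with a doubly-exponential rate, and summing the controls in every $H^s_0$ norm yields a control $\underline{\mathcal U}^{\p,\blacklozenge}_T(f_0)\in C^\infty_c((0,T)\times\omega)^{d_2}$ with $\Pi^\p S(T;f_0,(0,\partial_x^{d_2}\underline{\mathcal U}^{\p,\blacklozenge}_T(f_0)))=0$.

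For the moment-problem analogue of \Cref{Prop:FNPF}, the only change is in Step~1 of its proof: testing $\Pi^\p_N S(T;f_0,(0,\partial_x^{d_2}u_2))=0$ against $g_T\in\widetilde{F^\p_N}$ and integrating by parts in $x$ moves the $d_2$ derivatives onto $g_2$, so the duality relation becomes
\[
\langle S(T;f_0,(0,\partial_x^{d_2}u_2)),g_T\rangle = \langle f_0,g(0)\rangle + (-1)^{d_2}\int_0^T\!\!\int_\omega \langle u_2(t,x),\partial_x^{d_2}g_2(t,x)\rangle\,\diff x\,\diff t,
\]
and, using \Cref{LemEquationg2} and \Cref{th:G}, choosing $g_2^T=Xe_n$ produces the modified moments problem
\[
\forall n_0<|n|\leq N,\ \forall X\in\C^{d_2},\quad \int_0^T\!\!\int_\omega \eu^{-n^2(T-t)E_2(n)^*}u_2(t,x)(\iu n)^{d_2}\eu^{-\iu nx}\,\diff x\,\diff t = (-1)^{d_2}F_n,
\]
with the same $F_n$ as in~\eqref{moment}. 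This is solved exactly as before: seek $u_2=\rho(t,x)v_2(t,x)$ with the same profile $\rho=\rho_1((T-t)/T)\rho_2(x)$ (with $\rho_1$ satisfying~\eqref{def:rho1}) and $v_2=\sum_{n_0<|k|\leq N}\eu^{-k^2(T-t)E_2(k)}V_k e_k$; the resulting Gram-type matrix is $A^{\blacklozenge}=(n^d_2\bar k^{\,d_2}\, \tilde A_{n,k})$ where $\tilde A_{n,k}$ is the matrix of \Cref{Prop:ANinv} up to the harmless factor $(\iu n)^{d_2}$, so the coercivity computation of \Cref{Prop:ANinv} still applies — the extra factors $|n|^{d_2}$, $|k|^{d_2}$ are bounded above and below by powers of $N$ on the block $n_0<|n|,|k|\leq N$ and can be absorbed into the constant $\mathcal C$ after enlarging it, yielding $|(A^{\blacklozenge})^{-1}F|\leq \mathcal C T^{-1}\eu^{\mathcal C N}|F|$. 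Step~3 of the proof of \Cref{Prop:FNPF} (the $H^s$ estimates on $u_2=\rho v_2$ via Bessel--Parseval and the pointwise bound $|\eu^{-k^2(T-t)E_2(k)}|\leq K_p\eu^{-c_pk^2(T-t)}$) is then unchanged up to extra polynomial-in-$N$ factors, which again get absorbed.

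The main obstacle — and the place to be careful — is precisely the invertibility and the quantitative inverse bound for the twisted Gram matrix $A^{\blacklozenge}$: one must check that inserting the scalar factors $(\iu n)^{d_2}$ into the moments does not destroy the positive-definiteness exploited in \Cref{Prop:ANinv}. Since those factors are constants (not depending on $t$) and nonzero for $n_0<|n|\leq N$, the quadratic form $\langle A^{\blacklozenge}X,X\rangle$ equals $\int_0^T\!\!\int_\omega |\sum_{n_0<|k|\leq N}(\iu k)^{d_2}\eu^{-k^2(T-t)E_2(k)}X_k e_k|^2\rho\,\diff x\,\diff t$, i.e.\ exactly the form of \Cref{Prop:ANinv} applied to the rescaled vectors $(\iu k)^{d_2}X_k$; the Lebeau--Robbiano spectral inequality~\eqref{eq:spectral} and the lower bound $|(\iu k)^{d_2}X_k|\geq |X_k|$ (valid since $|k|\geq n_0+1\geq 1$) then give coercivity with a constant worsened by at most $\eu^{\mathcal C N}$, which is all we need. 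Everything downstream (the Lebeau--Robbiano iteration, the summability of the controls in $C^\infty_c$) is then a literal transcription of Section~\ref{Subsec:LR}, so no new difficulty arises there. This proves \Cref{LemPar_Bisd_x}.

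\qed
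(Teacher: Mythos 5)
Your proof is viable, but it takes a long detour compared to the paper, whose own proof of this proposition is a few lines and re-uses \Cref{LemPar_Bis} verbatim. The key observation is this: since $f_0\in F^\p$ is supported on frequencies $|n|>n_0\geq 1$, it has zero mean, hence admits a $d_2$-th antiderivative $f_0^\blacklozenge$ with $\partial_x^{d_2}f_0^\blacklozenge=f_0$; moreover $f_0^\blacklozenge\in F^\p$, because dividing each Fourier coefficient by $(\iu n)^{d_2}$ neither changes the frequency support nor takes one out of $\Ima(P^\p(\iu/n))$. Apply \Cref{LemPar_Bis} to $f_0^\blacklozenge$ to get $u_\p$, then differentiate: since $\mathcal L$ has constant coefficients, $\partial_x^{d_2}$ commutes with $\eu^{-t\mathcal L}$ and with the Fourier multiplier $\Pi^\p$, so $\partial_x^{d_2}f^\blacklozenge$ solves \eqref{Syst} with data $f_0$ and control $(0,\partial_x^{d_2}u_\p)$, and its $\Pi^\p$-component at time $T$ vanishes. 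Setting $\underline{\mathcal U}_T^{\p,\blacklozenge}=\underline{\mathcal U}_T^\p\circ\partial_x^{-d_2}$ finishes, and continuity is inherited. Your route --- a twisted moment problem and a re-run of the Lebeau--Robbiano iteration --- works in principle (the extra factors $(\iu n)^{d_2}$ are polynomial in $N$ and absorbed by $\eu^{\mathcal C N}$), but it re-derives estimates that the commutation identity makes unnecessary.

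Two points to tighten if you keep your route. First, the phase bookkeeping: tracing the conjugations through, the factor produced by the integration by parts and the substitution $\partial_x^{d_2}g_2=(\iu n)^{d_2}g_2$ is $(\iu n)^{d_2}$ on the left, with the right-hand side remaining $F_n$; the extra $(-1)^{d_2}$ you place on the right-hand side does not look right. Second, and more substantively, the identity $\langle A^\blacklozenge X,X\rangle=\int_0^T\int_\omega|\sum_k(\iu k)^{d_2}\eu^{-k^2(T-t)E_2(k)}X_ke_k|^2\rho$ is inconsistent with the ansatz $v_2=\sum_k\eu^{-k^2(T-t)E_2(k)}V_ke_k$ that you wrote: with that ansatz, the factor $(\iu n)^{d_2}$ comes only from the test function, i.e.\ only from the row index, so $A^\blacklozenge=DA$ with $D$ block-diagonal of entries $(\iu n)^{d_2}I_{d_2}$, which is not Hermitian for $d_2$ odd and has no reason to satisfy a coercivity bound. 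Either change the ansatz to $v_2=\sum_k(\iu k)^{d_2}\eu^{-k^2(T-t)E_2(k)}V_ke_k$, which is what your coercivity computation actually requires, or skip coercivity altogether: from $A^\blacklozenge=DA$ and $|D^{-1}|\leq 1$, one gets $|(A^\blacklozenge)^{-1}|\leq|A^{-1}|$ directly, so the bound of \Cref{Prop:ANinv} carries over unchanged.
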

\begin{proof}
Let $f_0 \in F^{\p}$ and $f_0^{\blacklozenge}$ be such that $\partial_{x}^{d_2}f_0^{\blacklozenge} = f_0$. Note that $f_0^{\blacklozenge}$ is well-defined because $\int_{\T} f_0(x) dx = 0$. We know from \Cref{LemPar_Bis} that there exists $u_{\p} \in C^\infty_c((0,T)\times\omega)^{d_2}$ such that the solution $f^{\blacklozenge}$ of \eqref{Syst} with initial data $f_0^{\blacklozenge}$ and control $(0,u_{\p})$ satisfies
\[ \Pi^{\p} f^{\blacklozenge}(T,\cdot)=0.\]
Then, by setting $f\coloneqq \partial_{x}^{d_2} f^{\blacklozenge}$ and by applying $\partial_{x}^{d_2}$ to the system \eqref{Syst} satisfied by $f^{\blacklozenge}$, we deduce that $f$ is the solution of \eqref{Syst} with initial data $f_0$ and control $(0, \partial_x^{d_2}u_{\p})$ and satisfies 
\[ \Pi^{\p} f(T,\cdot)=0,\]
because $\partial_{x}^{d_2}$ and $\Pi^{\p}$ commute.

We get the conclusion of the proof of \Cref{LemPar_Bisd_x} with the continuous operator $\underline{\mathcal{U}}_T^{\p,\blacklozenge}(f_0) =  \underline{\mathcal{U}}_T^\p\left(f_0^{\blacklozenge}\right)$ where $\underline{\mathcal{U}}_T^\p$ is the operator defined in \Cref{LemPar_Bis}.
\end{proof}

Secondly, we have to show that the null-controllability result of \Cref{PropNullControldx} is true at the low frequency-level, as we have already shown for \Cref{th:main} in \Cref{ControlLowFreqSec}. All the steps of \Cref{ControlLowFreqSec} remain unchanged except the Step $6$. Indeed, the unique continuation argument transforms into: if $f(t,\cdot) = \eu^{tM} f_0$ with $(f_1, \partial_{x}^{d_2} f_2) = (0,0)$ in $(0,\varepsilon)\times\omega$ then $(f_{01}, \partial_{x}^{d_2} f_{02})= (0,0)$ thanks to the spectral inequality of Lebeau-Robbiano \eqref{eq:spectral}, that is to say, $f_0 = 0$ because $\int_{\T} f_{02}(x) \diff x =0$.

This concludes the proof of \Cref{PropNullControldx} thus the proof of \Cref{PropObs1A}. \qed

\subsection{The case of one hyperbolic component: $d_1 = 1$}
\label{SectionControlHyperbolicd_1=1}

By the Hamilton-Cayley's theorem, there exist $c_0, \dots, c_{d_2-1} \in \R$ such that
\[ A_{22}^{d_2} = c_0 I_{d_2} + c_1 A_{22} + \dots + c_{d_2-1} A_{22}^{d_2-1}.\]
By using the Kalman condition \eqref{Kalman_Thm2}, the matrix $P$ defined as follows
\[ P \coloneqq (A_{21}, A_{22} A_{21}, \dots, A_{22}^{d_2-1} A_{21}),\]
is invertible. By setting
\begin{equation}
\label{eq:cascade}
 \widehat{A_{22}} \coloneqq  \begin{pmatrix} 0 & \dots & \dots & 0 & c_0\\
1 & 0 &\dots & \vdots & c_1\\
0 & \ddots & \ddots & \vdots & c_2\\
\vdots & \ddots & \ddots & 0 & \vdots\\
0 & \dots &0 & 1 & c_{d_2-1}
\end{pmatrix}\ \text{and}\ \widehat{A_{21}} \coloneqq \begin{pmatrix} 1\\0\\ \vdots\\ 0 \end{pmatrix},
\end{equation}
we check that we have the following relations
\[A_{22}P = P \widehat{A_{22}}\ \text{and}\  A_{21} = P \widehat{A_{21}},\ \text{i.e.}\ \widehat{A_{22}} = P^{-1} A_{22} P\ \text{and}\ \widehat{A_{21}} = P^{-1} A_{21}.\]
Then, by setting $w =(w_1,w_2) = (f_1,P^{-1}f_2)$, we have
\begin{equation} \label{Syst_bloc_hyp_A_w}
\mkern-3mu\left\{\!
\begin{array}{ll}
\left( \partial_t +A'\partial_x+K_{11}\right)w_1+\left(A_{12}P\partial_x+K_{12}P\right)w_2= u_1  1_\omega
&\text{in}\ (0,T)\times \T,\!\\
\left( \partial_t - \partial_x^2+\widehat{A_{22}}\partial_x\right)w_2 + \widehat{A_{21}}\partial_x w_1 = 0 &\mathrm{in}\ (0,T)\times \T,\!\\
(w_1,w_2)(0,\cdot)=(w_{01},w_{02}) & \text{in}\ \T.
\end{array} \right.
\end{equation}
The system \eqref{Syst_bloc_hyp_A_w} is a “cascade system” with coupling terms of order one in the spatial variable.

The adjoint system of \eqref{Syst_bloc_hyp_A_w} is
\begin{equation} \label{Syst_bloc_hyp_A_Adj}
\mkern-3mu\left\{\!
\begin{array}{ll}
\left( \partial_t -\Tr{A'}\partial_x+\Tr{K_{11}}\right)g_1 - \Tr{\widehat{A_{21}}}\partial_x g_2= 0
&\text{in}\ (0,T)\times \T,\!\\
\left( \partial_t - \partial_x^2-\Tr{\widehat{A_{22}}}\partial_x\right)g_2 +\left(-\Tr{(A_{12}P)}\partial_x+\Tr{(K_{12}P)}\right)g_1  = 0 &\mathrm{in}\ (0,T)\times \T,\!\\
(g_1,g_2)(0,\cdot)=(g_{01},g_{02}) & \text{in}\ \T.
\end{array} \right.
\end{equation}
\normalsize
We know from \Cref{PropObs1A} that the solution $g$ of \eqref{Syst_bloc_hyp_A_Adj} satisfies
\begin{equation} \label{IO_H_Obs1ASC}
\| g(T,\cdot) \|_{L^2(\T)} \leq C \left( \| g_1\|_{L^2(q_T)} + \|\partial_{x}^{d_2} g_2\|_{H^{-2d_2+1}(q_T)} \right) .
\end{equation}
By using the fact that $\widehat{A_{22}}$ is a companion matrix, see \eqref{eq:cascadeK}, for every $i \in \{2, \dots, d_2\}$, the $i$-th equation of \eqref{Syst_bloc_hyp_A_Adj} is
\begin{equation}
 \partial_t g_2^{i-1} - \partial_x^2 g_2^{i-1} + \partial_{x} g_2^{i} + b_{i-1} \partial_x g_1+ a_{i-1} g_1 =0,\qquad (a_{i-1}, b_{i-1}) \in \R^2.
 \label{ithequationA}
 \end{equation}
Then, by applying $\partial_{x}^{i-1}$ to \eqref{ithequationA} with $i \in \{2, \dots, d_2\}$, we get that there exists $C>0$ such that
\begin{align}
\|\partial_{x}^{i} g_2^{i}\|_{H^{-2i + 1 }(q_T)} &\leq C \left(\|(\partial_t - \partial_{x}^2)\partial_x^{i-1} g_2^{i-1}\|_{H^{-2i+1}(q_T)} + \| (b_{i-1} \partial_x^{i}+ a_{i-1}\partial_{x}^{i-1}) g_1\|_{H^{-2i+1}(q_T)} \right),\notag
\end{align}
therefore we have
\begin{align}
\|\partial_{x}^{i} g_2^{i}\|_{H^{-2i + 1 }(q_T)} &\leq C \left(\|\partial_x^{i-1} g_2^{i-1}\|_{H^{-2(i-1)+1}(q_T)} + \| g_1\|_{L^2(q_T)} \right).
\label{Obs_CascadeA}
\end{align}
Then, we deduce from \eqref{IO_H_Obs1ASC} and \eqref{Obs_CascadeA} that 
\begin{align} 
\| g(T,\cdot) \|_{L^2(\T)} & \leq C \left( \| g_1\|_{L^2(q_T)} + \|\partial_{x}^{d_2} g_2\|_{H^{-2d_2+1}(q_T)} \right)\notag\\
& \leq C \left(\| g_1\|_{L^2(q_T)} + \sum_{i=1}^{d_2}\|\partial_{x}^{i} g_2^{i}\|_{H^{-2i + 1 }(q_T)} \right)\notag\\
&\leq C \left( \| g_1\|_{L^2(q_T)} + \|\partial_{x} g_2^1\|_{H^{-1}(q_T)}\right). \label{IO_H_Obs2A}
\end{align}
By using the fact that $\widehat{A_{21}}$ is the first vector of the canonical basis of $\R^{d_2}$, see \eqref{eq:cascade}, the first equation of \eqref{Syst_bloc_hyp_A_Adj} is
\[ \partial_t g_1 - A' \partial_{x} g_1 + K_{11} g_1 + \partial_{x} g_2^{1} = 0.\]
Then, we obtain that
\begin{equation}
\label{Obs_FirstVectorA}
\|\partial_{x} g_2^1\|_{H^{-1}(q_T)} \leq C \| g_1\|_{L^2(q_T)}.
\end{equation}
So, we deduce from \eqref{IO_H_Obs2A} and \eqref{Obs_FirstVectorA} the observability inequality
\[\| g(T,\cdot) \|_{L^2(\T)} \leq C  \| g_1\|_{L^2(q_T)}, \]
which concludes the proof of \Cref{th:main_2} in the case $d_1=1$. \qed

\section{Parabolic control} \label{Sec:Par}

The goal of this section is to prove Theorem \ref{th:main_3} and to illustrate the necessity of a regularity assumption on the initial condition.

\subsection{A regularity assumption is necessary}

We consider for $\lambda\in\mathbb{R}^*$ the system
\begin{equation} \label{c-ex_parab}
\left\lbrace \begin{array}{ll}
\partial_t \widetilde{f}_1 + \lambda \partial_x \widetilde{f}_1 + \partial_x \widetilde{f}_2 = 0, &\ \text{ in } (0,T)\times\T, \\
\partial_t \widetilde{f}_2 - \partial_x^2 \widetilde{f}_2 + \lambda \partial_x \widetilde{f}_2 = v(t,x), &\ \text{ in } (0,T)\times\T,
\end{array}\right.
\end{equation}
i.e.\ $\omega=\T$, $d=2$, $m=1$, 
\[A=\begin{pmatrix}
\lambda & 1 \\ 0 & \lambda 
\end{pmatrix},\quad
A'=(\lambda), \quad
B=\begin{pmatrix}
0 & 0 \\ 0 & 1
\end{pmatrix},\quad
M=\begin{pmatrix}
0  \\ 1 
\end{pmatrix},\]
that satisfies (\ref{h:D}),(\ref{h:A1}) and the Kalman condition~\eqref{Kalman_Thm3} because $A_{12}=1$.
By Theorem \ref{th:main_3}, any initial condition $f_0=(f_{01},f_{02})\in H^{2}_\m \times H^2(\T)$ is null controllable.
The following statement illustrates that
\begin{itemize}
\item a regularity assumption on $f_{01}$ is necessary for the null controllability
\item the one given by Theorem \ref{th:main_3} is sufficient but may not be necessary.
\end{itemize}

\begin{prop}
An initial condition $f_0=(f_{01},f_{02}) \in L^2_\m(\T)\times L^2(\T)$ 
is null controllable with $v \in L^2((0,T)\times\T)$ if and only if $f_{01} \in H^1(\T)$.
\end{prop}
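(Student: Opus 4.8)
The plan is to make the whole problem explicit by Fourier series, after first removing the transport drift.

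\emph{Step 1: reduction to $\lambda=0$ and Fourier decoupling.} Since for each $t$ the map $x\mapsto x+\lambda t$ is an isometry of $\T$, the change of unknowns $\widetilde g_j(t,x)=\widetilde f_j(t,x+\lambda t)$ and of control $\widetilde v(t,x)=v(t,x+\lambda t)$ is a bijective isometry of $L^2((0,T)\times\T)$ that turns \eqref{c-ex_parab} into
\begin{equation*}
\partial_t\widetilde g_1+\partial_x\widetilde g_2=0,\qquad \partial_t\widetilde g_2-\partial_x^2\widetilde g_2=\widetilde v \quad\text{in }(0,T)\times\T,
\end{equation*}
preserves the $L^2$-norm of the state, sends $\{\widetilde f(T,\cdot)=0\}$ to $\{\widetilde g(T,\cdot)=0\}$, and keeps $\widehat g_{0j}(n)=\widehat f_{0j}(n)$; so one may assume $\lambda=0$. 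Expanding in Fourier series decouples the system into the ODEs
\begin{equation*}
\widehat g_1'(n,t)=-\iu n\,\widehat g_2(n,t),\qquad \widehat g_2'(n,t)+n^2\widehat g_2(n,t)=\widehat v(n,t),
\end{equation*}
with $v\in L^2((0,T)\times\T)$ if and only if $\sum_{n\in\Z}\|\widehat v(n,\cdot)\|_{L^2(0,T)}^2<\infty$. The mode $n=0$ only requires $\widehat f_{01}(0)=0$ (granted since $f_{01}\in L^2_\m(\T)$) and is otherwise trivially controllable, so from now on I fix $n\neq 0$.

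\emph{Step 2: the moment problem on the $n$-th mode.} Solving the second ODE and integrating the first, the conditions $\widehat g_2(n,T)=0$ and $\widehat g_1(n,T)=0$ read, respectively,
\begin{gather*}
\int_0^T e^{-n^2(T-s)}\,\widehat v(n,s)\diff s=-e^{-n^2T}\widehat f_{02}(n),\\
\int_0^T\frac{1-e^{-n^2(T-s)}}{n^2}\,\widehat v(n,s)\diff s=\frac{\widehat f_{01}(n)}{\iu n}-\frac{1-e^{-n^2T}}{n^2}\widehat f_{02}(n).
\end{gather*}
The functions $\phi_1^{(n)}(s)=e^{-n^2(T-s)}$ and $\phi_2^{(n)}(s)=(1-e^{-n^2(T-s)})/n^2$ are linearly independent in $L^2(0,T)$ for $n\neq0$, so this problem has solutions, and the one of least $L^2$-norm lies in $\Span\{\phi_1^{(n)},\phi_2^{(n)}\}$ with $\min\|\widehat v(n,\cdot)\|_{L^2(0,T)}^2=\langle G_n^{-1}b_n,b_n\rangle_{\C^2}$, where $G_n$ is the (real symmetric positive) Gram matrix of $(\phi_1^{(n)},\phi_2^{(n)})$ and $b_n\in\C^2$ is the right-hand side vector above. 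Hence $f_0$ is null-controllable with an $L^2$ control if and only if $\sum_{n\neq0}\langle G_n^{-1}b_n,b_n\rangle<\infty$.

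\emph{Step 3: asymptotics of the cost, and conclusion.} A direct integration gives, as $|n|\to\infty$,
\begin{equation*}
G_n=\begin{pmatrix}\frac{1}{2n^2}&\frac{1}{2n^4}\\[3pt]\frac{1}{2n^4}&\frac{T}{n^4}\end{pmatrix}\bigl(1+o(1)\bigr),\qquad \det G_n=\frac{T}{2n^6}\bigl(1+o(1)\bigr),
\end{equation*}
so $(G_n^{-1})_{22}=\frac{n^4}{T}(1+o(1))$ while the remaining entries of $G_n^{-1}$ are $O(n^2)$. Since the first entry of $b_n$ is $O(e^{-n^2T}|\widehat f_{02}(n)|)$ and the second is $\widehat f_{01}(n)/(\iu n)+O(|\widehat f_{02}(n)|/n^2)$, expanding $\langle G_n^{-1}b_n,b_n\rangle$ and splitting the cross-terms by Young's inequality yields, for any fixed $\eta\in(0,1/T)$, a decomposition
\begin{equation*}
\langle G_n^{-1}b_n,b_n\rangle=\frac{n^2}{T}\,|\widehat f_{01}(n)|^2+\rho_n,\qquad |\rho_n|\le c_n+\eta\,n^2|\widehat f_{01}(n)|^2,
\end{equation*}
with $\sum_{n\neq0}c_n\le C_\eta\bigl(\| f_{01}\|_{L^2(\T)}^2+\| f_{02}\|_{L^2(\T)}^2\bigr)$, the finitely many small $|n|$ (for which $G_n$ is merely invertible) being absorbed into $C_\eta$. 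If $f_{01}\in H^1(\T)$ then $\sum_n n^2|\widehat f_{01}(n)|^2<\infty$, hence $\sum_{n\neq0}\langle G_n^{-1}b_n,b_n\rangle<\infty$, and assembling the mode-by-mode minimal controls (and undoing the change of frame) produces an $L^2$ control of $f_0$. Conversely, if $v\in L^2$ steers $f_0$ to $0$, then each Fourier coefficient $\widehat v(n,\cdot)$ solves the $n$-th moment problem, so $\|\widehat v(n,\cdot)\|_{L^2(0,T)}^2\ge\langle G_n^{-1}b_n,b_n\rangle$; summing over $|n|\le N$ and using $\eta<1/T$ gives
\begin{equation*}
\Bigl(\tfrac1T-\eta\Bigr)\sum_{|n|\le N}n^2|\widehat f_{01}(n)|^2\le\| v\|_{L^2}^2+C_\eta\bigl(\| f_{01}\|_{L^2}^2+\| f_{02}\|_{L^2}^2\bigr)
\end{equation*}
uniformly in $N$, whence $f_{01}\in H^1(\T)$.

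The main obstacle is Step 3: one must make the expansion of $G_n^{-1}$ precise enough to obtain two-sided bounds uniform in $n$, and carefully bookkeep which correction terms are summable on their own and which must be absorbed — via Young's inequality, at the cost of the leading constant — into the series $\sum n^2|\widehat f_{01}(n)|^2$. Everything else is routine: the Galilean reduction of Step 1, the solvability of the moment problems (linear independence of $\phi_1^{(n)},\phi_2^{(n)}$), and the verification that the $L^2$ control assembled from the mode-by-mode minimal controls actually drives the semigroup solution of \eqref{c-ex_parab} to zero, which holds because that solution map is continuous on $L^2$ and commutes with the Fourier projections.
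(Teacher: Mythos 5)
Your proposal is correct and lives in the same circle of ideas as the paper (Galilean change of frame, Fourier decoupling, two-constraint moment problem per mode), but the way you organize the estimates is genuinely different. You work with the raw Gram matrix $G_n$ of the natural test functions $\phi_1^{(n)}=e^{-n^2(T-\cdot)}$ and $\phi_2^{(n)}=(1-e^{-n^2(T-\cdot)})/n^2$ and extract both directions from a single asymptotic expansion of the exact optimal cost $\langle G_n^{-1}b_n,b_n\rangle\sim\frac{n^2}{T}|\widehat f_{01}(n)|^2$; the paper instead performs a preliminary linear change of moment functionals, replacing $\phi_2^{(n)}$ by the constant function $1$ and rescaling $\phi_1^{(n)}$ by $n$, so that the resulting Grammian $\widetilde G_n$ converges to the invertible matrix $\bigl(\begin{smallmatrix}1/2&0\\0&T\end{smallmatrix}\bigr)$ and the moment vector becomes $(-n e^{-n^2T}\widehat f_{02}(n),\ \iu n\widehat f_{01}(n)-\widehat f_{02}(n))$, whose $\ell^2$-norm is exactly $\lVert\partial_x f_{01}-f_{02}\rVert_{L^2}$ up to an exponentially small tail. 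With that normalization, the necessary direction is a one-line Cauchy--Schwarz against the constant test function, and the sufficient direction only needs $\sup_n\lVert\widetilde G_n^{-1}\rVert<\infty$, with no asymptotic bookkeeping of cross-terms or Young splittings. Your version is more self-contained (you explicitly identify the minimal-norm control and its cost, which the paper never writes down), at the price of having to verify that the off-diagonal and $f_{02}$-corrections in the degenerate $G_n^{-1}$ are either summable or absorbable; you do that correctly, including the treatment of the finitely many low modes. One small remark: the $(1+o(1))$ on $(G_n^{-1})_{22}$ should really be stated as $1+O(n^{-2})$ (which it is), since an unspecified $o(1)$ absorbed against $\eta\,n^2|\widehat f_{01}(n)|^2$ is fine for the argument but slightly undersells the precision you actually have and that makes the bookkeeping clean.
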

\begin{remark}
Similar problems of regularity between initial data and control have already been noticed in the context of transport systems, see \cite[Remark 5]{alabau-boussouira_2017}.
\end{remark}
\begin{proof}
In the proof, we use the notation $Q_T = (0,T)\times\Omega$.

The change of variable
\[\widetilde{f}_j(t,x)=f_j(t,x-\lambda t),\qquad v(t,x)=u(t,x-\lambda t)\]
leads to
\begin{equation} \label{c-ex_parabVar}
\left\lbrace \begin{array}{ll}
\partial_t f_1 - \partial_x f_2 = 0, &\ \text{ in } (0,T)\times\T, \\
\partial_t f_2 - \partial_x^2 f_2 = u(t,x), &\ \text{ in } (0,T)\times\T. 
\end{array}\right.
\end{equation}
The null controllability of $(\widetilde{f}_1,\widetilde{f}_2)$ with control $v \in L^2(Q_T)$ is equivalent to the null controllability of $(f_1,f_2)$ with control $u \in L^2(Q_T)$. On Fourier components, equation \eqref{c-ex_parabVar} gives the ordinary differential equations
\begin{equation} \label{c-ex_parab2}
\left\lbrace \begin{array}{ll}
\frac{\diff}{\diff t} \widehat{f}_1(t,n) = \iu n \widehat{f}_2(t,n), \\
\frac{\diff}{\diff t} \widehat{f}_2(t,n)=-n^2 \widehat{f}_2(t,n) + \widehat{u}(t,n).
\end{array}\right.
\end{equation}
Let $f_0=(f_{01},f_{02}) \in L_{\m}^2(\T) \times L^2(\T)$. The solution writes
\[\widehat{f}_2(t,n)=\widehat{f}_{02}(n) \eu^{-n^2 t} + \int_0^t \eu^{-n^2(t-\tau)}\widehat{u}(\tau,n) \diff\tau,\]
\begin{align*}
\widehat{f}_1(t,n) 
& =\widehat{f}_{10}(n) + \iu n \int_0^t \widehat{f}_2(\tau,n) \diff \tau \\
& = \widehat{f}_{01}(n) + \frac{\iu}{n}(1-\eu^{-n^2 T})\widehat{f}_{02}(n) + \iu n \int_0^t \hat u(\tau,n)  \frac{1-\eu^{-n^2(t-\tau)}}{n^2} \diff\tau.
\end{align*}
thus the relation $f(T)=0$ is equivalent to the moment problem
\begin{equation} \label{moment_pb}
\begin{array}{ll}
\int_0^T \eu^{-n^2(T-\tau)}\widehat{u}(\tau,n) \diff \tau=-\widehat{f}_{02}(n) \eu^{-n^2 T}, \quad & \forall n \in \Z, \\ 
\int_0^T \widehat{u}(\tau,n)  \diff \tau = \iu n \widehat{f}_{01}(n) - \widehat{f}_{02}(n), \quad & \forall n \in \Z\setminus\{0\}.
\end{array}
\end{equation}
Note that the assumption $\int_{\T} f_{01} = 0$ implies $\int_{\T} f_1(t) =0$ for every $t>0$ thus the null controllability of this component does not require any condition on the control $u$.
  
 \paragraph{Necessary condition:} We assume $f_0=(f_{01},f_{02})$ null controllable with a control $u \in L^2(Q_T)$ and we prove that $f_{01} \in H^1(\T)$.
 By the Bessel-Parseval equality and Cauchy-Schwarz inequality,
\begin{align*}
\|u\|_{L^2(Q_T)}^2 & = \sum\limits_{n\in\Z} 
\int_0^T | \widehat{u}(t,n) |^2 \diff t
\\
 & \geq \sum\limits_{n\in\Z} \frac{1}{T}
\left| \int_0^T  \widehat{u}(t,n)  \diff t \right|^2
\\ & \geq
\sum\limits_{n\in\Z} \frac{1}{T} \left| \iu n \widehat{f}_{01}(n) - \widehat{f}_{02}(n) \right|^2 = \frac{1}{T} \|\partial_x f_{01} - f_{02}\|_{L^2(\T)}^2 
 \end{align*}
 thus $f_{01} \in H^1(\T)$.
 
 \paragraph{Sufficient condition:} We assume $f_0=(f_{01},f_{02}) \in H^1_\m \times L^2(\T)$ and we construct a control $u \in L^2((0,T)\times\T)$ that steers this initial condition to $0$. 
 
 Let $G_n$ be the Grammian matrix, in $L^2(0,T)$, of the family $(w_{1,n},w_{2,n})$  where $w_{1,n}\colon\tau \mapsto n \eu^{-n^2(T-\tau)}$ and $w_{2,n}: \tau \mapsto 1$, i.e.\ $(G_n)_{i,j}=\int_0^T w_{i,n}(\tau) w_{j,n}(\tau) \diff\tau$ for every $1 \leq i,j \leq 2$. 
 Then $G_n$ is invertible for every $n\in\Z\setminus\{0\}$ 
 (because it is the Grammian matrix of a linearly independent family)
    and, when $|n| \rightarrow \infty$,
 \[G_n \sim \left( \begin{array}{cc} 
 1/2 & 1/n \\ 1/n & T
  \end{array}\right)\]
thus there exists $C>0$ such that, for every $n\in\mathbb{Z} \setminus\{0\}$, $\| G_n^{-1} \| \leq C$. We take 
 \[u(\tau,x)= - \frac{1}{T} \widehat{f}_{02}(0) + \sum_{n\in\mathbb{Z}\setminus\{0\}} \left( \alpha_n w_{1,n}(\tau) + \beta_n w_{2,n}(\tau) \right) \eu^{\iu nx}\]
 where 
 \begin{equation} \label{def:alpha_beta_n}
 \left(\begin{array}{c} \alpha_n \\ \beta_n \end{array}\right)\coloneqq G_n^{-1}
 \left(\begin{array}{c} 
 -n \widehat{f}_{02}(n) \eu^{-n^2 T}
 \\ 
 \iu n \widehat{f}_{01}(n) - \widehat{f}_{02}(n)
 \end{array} \right).
 \end{equation}
 Then, by Bessel-Parseval equality, we have for various positive constants $C$ depending on $T$,
 \begin{align*}
 \|u\|_{L^2(Q_T)}^2 
 & =\frac{1}{T} |\widehat{f}_{02}(0)|^2 + \int_0^T \sum_{n\in\Z\setminus\{0\}}| \alpha_n w_{1,n}(t) + \beta_n w_{2,n}(t) |^2 dt \\
 & \leq \frac{1}{T} |\widehat{f}_{02}(0)|^2 + C\sum_{n\in\Z\setminus\{0\}} \left( |\alpha_n|^2 + |\beta_n|^2  \right) \\
 & \leq \frac{1}{T} |\widehat{f}_{02}(0)|^2 + C \sum_{n\in\Z\setminus\{0\}} \left( |n \widehat{f}_{02}(n) \eu^{-n^2 T}|^2 + |\iu n \widehat{f}_{01}(n) - \widehat{f}_{02}(n)|^2  \right) \\
& \leq C \left( \|f_{01}\|_{H^1(\T)}^2 + \|f_{02}\|_{L^2(\T)}^2 \right) < \infty.
  \end{align*}
Thus $u\in L^2(Q_T)$. Note that the moment problem (\ref{moment_pb})
can equivalently be written 
\begin{equation} \label{moment_pb_bis}
\begin{array}{ll}
\int_0^T \widehat{u}(\tau,n) \diff \tau=-  \widehat{f}_{02}(0)\,, \\
\int_0^T w_{1,n}(\tau) \widehat{u}(\tau,n) \diff \tau=- n \widehat{f}_{02}(n) \eu^{-n^2 T}, \quad & \forall n \in \mathbb{Z}\setminus\{0\}, \\ 
\int_0^T w_{2,n}(\tau) \widehat{u}(\tau,n)  \diff \tau = \iu n \widehat{f}_{01}(n) - \widehat{f}_{02}(n), \quad & \forall n \in \mathbb{Z}\setminus\{0\}.
\end{array}
\end{equation}
Thus, by (\ref{def:alpha_beta_n}), $u$ solves (\ref{moment_pb}).
  \end{proof}

\subsection{Proof of Theorem \ref{th:main_3}}

The  Kalman rank condition \eqref{Kalman_Thm3} is a necessary condition for null-controllability of \eqref{Syst_bloc_parab_A} by the same arguments as in \Cref{SectionRequirementKalman}. Thus we only  explain how to complete the proof of Theorem~\ref{th:main} to prove that it is a sufficient condition for null-controllability of \eqref{Syst_bloc_parab_A}. We introduce the space
\begin{equation}
\label{defInvQuantPar}
\bold{F_1} \coloneqq H^{d_1+1}_\m(\T)^{d_1} \times H^{d_1+1}(\T)^{d_2},
\end{equation}
equipped with the scalar product of $H^{d_1+1}(\T)^{d}$
and the space
\begin{equation}
\label{defInvQuantParAdj}
\widetilde{\bold{F_1}} \coloneqq L^{2}_\m(\T)^{d_1} \times L^2(\T)^{d_2},
\end{equation}
equipped with the scalar product of $L^2(\T)^d$.

The null-controllability of the system \eqref{Syst_bloc_parab_A} in $\bold{F_1}$ with control of the form $(0,u_2)\in\{0\}^{d_1}\times L^2(q_T)^{d_2}$ is equivalent to the following observability inequality: for every $T>T^\ast$, there exists $C>0$ such that, for every $g_0 \in \widetilde{\bold{F_1}}$, the solution of the adjoint system~\eqref{SystAdj} satisfies
\begin{equation} 
\label{IO_hyp_NewAPar}
\| g(T,\cdot) \|_{H^{-(d_1+1)}(\T)^{d}}^2 \leq C \int_0^T \int_{\omega} |g_2(t,x)|^2 \diff x \diff t.
\end{equation}
where $g_2(t,x)\in\C^{d_2}$ is made of the last $d_2$ components of $g(t,x)$.
\begin{proof}[Proof of the equivalence between the null-controllability in $\bold{F_1}$ and the observability inequality \eqref{IO_hyp_NewAPar}] 
We apply the duality \Cref{LemmaDuality} with
\[ \Phi_2 \colon f_0 \in \bold{F_1} \mapsto \eu^{-T \mathcal{L}} f_0  \in \widetilde{\bold{F_1}},\]
\[ \Phi_3 \colon u_2 \in L^2(q_T)^{d_2} \mapsto S(T;0,(0,u_2)) \in \widetilde{\bold{F_1}}.\]
Note that the mean value of the $d_1$ first components is indeed zero.
The null-controllability result in $\bold{F_1}$ is equivalent to the inclusion $\Ima(\Phi_2)\subset \Ima(\Phi_3)$, thus to the existence of a constant $C>0$ such that for every $g_T\in \widetilde{\bold{F_1}}$
\begin{equation}\label{eq:obs_lemma2_Par}
\lVert\Phi_2^*(g_T)\rVert_{H^{d_1+1}(\T)^{d}}\leq C\lVert \Phi_3^*(g_T)\rVert_{L^2(q_T)^{d_2}}.
\end{equation}
We compute the adjoint operators of $\Phi_2$ and $\Phi_3$ thanks to the duality relation between the solution $f$ of \eqref{Syst} and the solution $\varphi(\cdot)= g(T-\cdot)$ of the adjoint system~\eqref{SystAdj}:
\begin{equation} \label{dualite_3}
\langle f(T) , \varphi(T) \rangle_{L^2(\T)^d} = \langle f(0), \varphi(0) \rangle_{L^2(\T)^d} + \int_{0}^{T}\int_{\omega}  \langle u_2(t,x) , \varphi_2(t,x) \rangle \diff t \diff x.
\end{equation}
First, $\Phi_3^*(g_T)$ is the restriction of the $d_2$-last components of $\eu^{(t-T)\mathcal L^*}g_T$  to $[0,T]\times \omega$. Then, by (\ref{dualite_3}) and \Cref{th:lemma_dual_Hs} (working as in the proof of \Cref{CorHUM}), the left-hand side of \eqref{eq:obs_lemma2_Par} is
\[
\lVert\Phi_2^*(g_T)\rVert_{H^{d_1+1}(\T)^{d}} = \|\eu^{-T\mathcal{L}^*} g_T\|_{H^{-(d_1+1)}(\T)^{d}}.
\]

Thus the inequality \eqref{eq:obs_lemma2_Par} is indeed the observability inequality \eqref{IO_hyp_NewAPar}.
\end{proof}
By using the strategy developed in \Cref{Sec:Hyp}, we claim that, in the case $d_2=1$, it is sufficient to prove the following result in order to prove the observability inequality~\eqref{IO_hyp_NewAPar}.
\begin{prop}\label{PropObs1APar}
For every $T>T^\ast$, there exists $C>0$ such that for every $g_0 \in \widetilde{\bold{F_1}}$, the solution $g$ of the adjoint system \eqref{SystAdj} satisfies
\begin{equation} \label{IO_H_Obs1APar}
\| g(T,\cdot) \|_{H^{-(d_1+1)}(\T)^{d}}^2 \leq C \left( \|\partial_x^{d_1} g_1\|_{H^{-(d_1+1)}(q_T)}^2 + \|g_2\|_{L^2(q_T)}^2 \right) .
\end{equation}
\end{prop}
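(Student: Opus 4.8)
The strategy is to mirror the proof of \Cref{PropObs1A}: reduce \eqref{IO_H_Obs1APar} by the Hilbert Uniqueness Method to a null-controllability statement, prove the latter on the high frequencies by adapting \Cref{PropControlHighFreq}, and then on the low frequencies by adapting \Cref{ControlLowFreqSec}. Concretely, exactly as in the equivalence between \Cref{PropObs1A} and \Cref{PropNullControldx} (and as in the equivalence proved just before the present statement), inequality \eqref{IO_H_Obs1APar} is equivalent to the following null-controllability property for the full system \eqref{Syst}: for every $f_0\in H^{d_1+1}_\m(\T)^{d_1}\times H^{d_1+1}(\T)^{d_2}$ there exist $u_1\in H_0^{d_1+1}(q_T)^{d_1}$ and $u_2\in L^2(q_T)^{d_2}$ such that $S(T;f_0,(\partial_x^{d_1}u_1,u_2))=0$. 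One checks this by introducing $\Phi_2\colon f_0\mapsto\eu^{-T\mathcal L}f_0$ and $\Phi_3\colon(u_1,u_2)\mapsto S(T;0,(\partial_x^{d_1}u_1,u_2))$, factoring $\Phi_3=\mathcal F_T\circ(\partial_x^{d_1},I)\circ(\iota_{d_1+1},I)$ with $\mathcal F_T$ the input--output operator of the proof of \Cref{CorHUM}, and using \Cref{th:lemma_dual_Hs} to identify $\iota_{d_1+1}^*$ with an isometry from $H_0^{d_1+1}$ onto $H^{-(d_1+1)}$; dually, the smoothness $H^{d_1+1}$ imposed on $f_0$ reflects the weak norm $H^{-(d_1+1)}$ on $g(T)$, and the form $\partial_x^{d_1}u_1$ of the hyperbolic control reflects the observation $\partial_x^{d_1}g_1$.

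For the high-frequency part I would adapt \Cref{PropControlHighFreq}. The parabolic high frequencies are steered to zero verbatim by \Cref{LemPar}, whose control lies in $C^\infty_c\subset L^2$, hence is admissible. For the hyperbolic high frequencies I need the analogue of \Cref{LemHyp}--\Cref{LemHyp_BIS} delivering a control of the special form $\partial_x^{d_1}u_1$ with $u_1\in H_0^{d_1+1}$. Since $F^\h$ only involves frequencies $|n|>n_0$, the operator $\partial_x^{d_1}$ is boundedly invertible on $F^\h$ and commutes with $\eu^{-t\mathcal L^\h}$ and $\Pi^\h$, so --- arguing as in the proof of \Cref{LemPar_Bisd_x} --- it is enough to steer $0$ to $\partial_x^{-d_1}f_{T'}$ on the hyperbolic branches by a control in $H_0^{d_1+1}(q_{T'})^{d_1}$. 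The control produced by \Cref{LemHyp_BIS} is only $L^2$ in time and space, so its regularity must be upgraded; this is precisely where the hypothesis that $\omega$ is an interval is used, through the cut-off lemma \cite[Lemma~2.6]{alabau-boussouira_2017} (\Cref{LemmaCutOffTransport}), which lets one replace the transport controls by controls that are as smooth as desired and compactly supported in $\omega$. Feeding this strengthened version of \Cref{LemHyp} into the Fredholm/compactness argument of \Cref{PropControlHighFreq} gives a closed finite-codimension subspace $\mathcal G$ of $L^2(\T)^d$ and a continuous operator $f_0\mapsto(u_1,u_2)\in H_0^{d_1+1}((0,T')\times\omega)^{d_1}\times C^\infty_c((T',T)\times\omega)^{d_2}$ with $\Pi S(T;f_0,(\partial_x^{d_1}u_1,u_2))=0$.

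The remaining finite-dimensional low-frequency part is then handled exactly as in \Cref{ControlLowFreqSec}, the only modification being in Step~6: the unique continuation statement becomes that if $f(t)=\eu^{tM}f_0$ satisfies $\partial_x^{d_1}f_1=0$ and $f_2=0$ on $(0,\varepsilon)\times\omega$, then the Lebeau--Robbiano spectral inequality \eqref{eq:spectral} applied to the finite Fourier sum \eqref{sommewfin} forces $\partial_x^{d_1}f_{01}=0$ and $f_{02}=0$, hence $f_{01}$ is constant and therefore $f_{01}=0$ by the mean-zero constraint, so $f_0=0$. Combining the high- and low-frequency steps yields the null-controllability in $H^{d_1+1}_\m(\T)^{d_1}\times H^{d_1+1}(\T)^{d_2}$ with controls of the prescribed form, which is \eqref{IO_H_Obs1APar}.

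The one genuinely delicate point is the hyperbolic high-frequency step: the transport controllability results give only $L^2$ controls, whereas the special form $\partial_x^{d_1}u_1$ with $u_1\in H_0^{d_1+1}$ demands controls with several extra derivatives, and recovering that regularity is exactly what forces the interval hypothesis on $\omega$ and the use of \Cref{LemmaCutOffTransport}. One must also, as in the footnote following \Cref{PropControlHighFreq}, keep track of the continuity of the control operators in the relevant Sobolev topologies; the parabolic side and the low-frequency compactness/unique-continuation arguments are otherwise routine adaptations of Sections~\ref{sec:NC} and~\ref{Sec:Hyp}.
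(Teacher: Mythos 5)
Your proposal is correct and follows essentially the same route as the paper: duality to reduce to null-controllability with controls of the form $(\partial_x^{d_1}u_\h,u_\p)$, high-frequency control combining the parabolic Lebeau--Robbiano step (\Cref{LemPar}) with a smoothed hyperbolic exact controllability built from \Cref{LemmaCutOffTransport} (whence the interval hypothesis on $\omega$), a Fredholm alternative, and the low-frequency unique-continuation argument. One small correction: the finite-codimension subspace produced by the Fredholm argument must live in $\bold{F_1}=H^{d_1+1}_\m(\T)^{d_1}\times H^{d_1+1}(\T)^{d_2}$, not in $L^2(\T)^d$, since the $H^{d_1+1}$ regularity of $f_0$ is genuinely needed so that the hyperbolic high-frequency target $\partial_x^{-d_1}\eu^{(T-T')\mathcal{L}^\h}\Pi^\h S(T;f_0,(0,u_\p))$ falls in the Sobolev space to which \Cref{LemmaCutOffTransport} applies (the paper also works with $u_\h\in H^{2d_1+1}_0$ rather than your $H^{d_1+1}_0$, which is what the bootstrap naturally yields; both suffice for the observability inequality via your duality).
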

The observability inequality \eqref{IO_H_Obs1APar} has to be compared to the observability inequality \eqref{IO_H_Obs1A} in \Cref{Sec:Hyp}. Roughly speaking, the term $\|\partial_x^{d_1} g_1\|_{H^{-(d_1+1)}(q_T)}$ comes from the fact that we will perform $(d_1-1)$ steps of elimination, each of them “costs” one derivative (instead of two in \Cref{SectionControlHyperbolicd_1=1}) because we will use transport equations which are of order one in time and space (instead of parabolic equations which are of order two in space variable). The last step of elimination “costs” two derivatives because we will use a heat equation which is of order one in time and two in space. This explains the number $(d_1-1)+2 = d_1+1$ derivatives. By adapting the arguments of \Cref{SectionKd1>1}, we can also treat the case $d_2>1$.\\
\indent In order to prove \Cref{PropObs1APar}, by duality (a simple adaptation of the proof that \Cref{PropObs1A} and \Cref{PropNullControldx} are equivalent), it is sufficient to establish the following null-controllability result. 
\begin{prop}\label{PropNullControldxPar}
For every $f_0 \in \bold{F_1}$, there exists $u=(u_{\h},u_{\p}) \in (H_0^{2d_1+1}(q_T))^{d_1} \times L^2(q_T)^{d_2}$ such that $S(T, f_0, (\partial_x^{d_1} u_{\h},u_{\p})) = 0$.
\end{prop}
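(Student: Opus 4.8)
The plan is to mimic the strategy already used for \Cref{PropNullControldx}: first establish the null-controllability result at the high-frequency level (an analogue of \Cref{PropControlHighFreqd_x}), then handle the finitely many low frequencies by the compactness/unique-continuation argument of \Cref{ControlLowFreqSec}. The only structural novelty is that the control now enters the \emph{hyperbolic} block through the operator $\partial_x^{d_1}$, and since $\int_\T f_{01}(x)\,dx=0$ (because $f_0\in\mathbf{F_1}$), we may invert $\partial_x^{d_1}$ on the hyperbolic component. So, for $f_0=(f_{01},f_{02})\in\mathbf{F_1}$, define $f_0^\blacklozenge=(f_{01}^\blacklozenge,f_{02})$ where $\partial_x^{d_1}f_{01}^\blacklozenge=f_{01}$; then $f_0^\blacklozenge\in L^2(\T)^d$. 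By \Cref{PropControlHighFreq} (applied to $f_0^\blacklozenge$) there is a closed finite-codimension subspace $\mathcal G$ and controls $(u_\h,u_\p)=\mathcal U f_0^\blacklozenge$ with $u_\h\in L^2((0,T')\times\omega)^{d_1}$, $u_\p\in C^\infty_c((T',T)\times\omega)^{d_2}$, such that the solution $f^\blacklozenge$ of \eqref{Syst} with data $f_0^\blacklozenge$ and control $(u_\h,u_\p)$ satisfies $\Pi f^\blacklozenge(T,\cdot)=0$. Setting $f=\partial_x^{d_1}f^\blacklozenge$ and applying $\partial_x^{d_1}$ to \eqref{Syst} — which commutes with every coefficient matrix and with $\Pi$ — we get that $f$ solves \eqref{Syst} with data $f_0$ and control $(\partial_x^{d_1}u_\h,u_\p)$, and $\Pi f(T,\cdot)=0$. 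This yields the high-frequency analogue of \Cref{PropControlHighFreqd_x}, with $\mathcal G^\blacklozenge=\{f_0\in\mathbf{F_1};\,f_0^\blacklozenge\in\mathcal G\}$ a closed finite-codimension subspace of $\mathbf{F_1}$; note the gain of regularity: $u_\h\in L^2$ means $\partial_x^{d_1}u_\h\in H^{-d_1}$, but we want $u_\h\in H_0^{2d_1+1}(q_T)^{d_1}$, which is exactly where the smoothness coming from \Cref{PropControlHighFreqd_x}-type estimates is invoked — one applies the strengthened version of \Cref{LemPar}/\Cref{PropControlHighFreqd_x} with the hyperbolic control produced as $\partial_x^{d_1}$ of a smooth function, exactly as \Cref{LemPar_Bisd_x} was deduced from \Cref{LemPar_Bis}.

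Next I would treat the low frequencies. One repeats verbatim Steps~1--7 of \Cref{ControlLowFreqSec}, working in $\mathbf{F_1}$ rather than $L^2(\T)^d$, with $\mathcal F_T$ the subspace of $\mathbf{F_1}$ steerable to $0$ by controls of the required form; the only step that must be modified is Step~6, the unique continuation property. Here the statement becomes: if $f(t,\cdot)=\eu^{tM}f_0$ is a finite-frequency solution with $(\partial_x^{d_1}f_1,f_2)=(0,0)$ on $(0,\varepsilon)\times\omega$, then $f_0=0$. Since $f$ is a trigonometric polynomial in $x$, the Lebeau--Robbiano spectral inequality \eqref{eq:spectral} forces $(\partial_x^{d_1}f_{01},f_{02})=(0,0)$, hence $f_{02}=0$ and $f_{01}$ is a constant; but $f_{01}\in L^2_\m(\T)^{d_1}$ has zero mean, so $f_{01}=0$ as well, i.e.\ $f_0=0$. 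The rest of the argument (finite codimension via Fredholm, invariance of $\mathcal F_T^\perp$ under $\eu^{-t\mathcal L^*}$, steering to $\mathcal F_T$ in arbitrarily short time by HUM) goes through unchanged, giving the null-controllability of \eqref{Syst} in $\mathbf{F_1}$ by controls $(\partial_x^{d_1}u_\h,u_\p)$ with $u_\h\in H_0^{2d_1+1}(q_T)^{d_1}$ and $u_\p\in L^2(q_T)^{d_2}$ — indeed $u_\p\in C^\infty_c$, which is stronger than needed.

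The main obstacle is the bookkeeping of regularity in the high-frequency step: one must check that the strengthened controllability result of \Cref{th:main} genuinely produces a hyperbolic control of the form $\partial_x^{d_1}$ of an $H_0^{2d_1+1}$-function (and not merely an $H^{-d_1}$-distribution), which requires revisiting the proof of \Cref{PropControlHighFreqd_x} / \Cref{LemPar_Bis} and tracking how many derivatives are available on the indirect control; this is the analogue, on the hyperbolic side, of the $\partial_x^{d_2}$-argument of \Cref{LemPar_Bisd_x}, and is where the hypothesis $f_{01}\in H^{d_1+1}_\m$ is consumed. Everything else is a routine transcription of \Cref{Sec:Hyp} with the roles of the parabolic and hyperbolic blocks — and of the operators $\partial_x^{d_2}$ and $\partial_x^{d_1}$ — interchanged.
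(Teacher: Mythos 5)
Your reduction via an antiderivative is not legitimate here, and the gap you identify yourself is the genuinely hard part of the paper's proof, which you cannot dispose of by analogy with \Cref{LemPar_Bisd_x}.

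First, the antiderivative trick breaks the structure of the problem. In \Cref{LemPar_Bisd_x}, the paper applies $\partial_x^{d_2}$ to the \emph{whole} solution vector and to the \emph{whole} control $(0,u_\p)$; this is consistent because (a) $f_0\in F^\p$ has every Fourier mode with $|n|>n_0>0$, so $\partial_x^{-d_2}f_0$ is well-defined on \emph{both} components, and (b) the control has a zero in the first slot, so $\partial_x^{d_2}(0,u_\p)=(0,\partial_x^{d_2}u_\p)$ keeps the required form. Your setting has neither feature. If you put $f_0^\blacklozenge=(f_{01}^\blacklozenge,f_{02})$ with $\partial_x^{d_1}f_{01}^\blacklozenge=f_{01}$, then $\partial_x^{d_1}f_0^\blacklozenge=(f_{01},\partial_x^{d_1}f_{02})\neq f_0$: the parabolic component gets differentiated and there is no way to undo this, since $f_{02}\in H^{d_1+1}(\T)^{d_2}$ has no zero-mean constraint and $\partial_x^{-d_1}f_{02}$ need not exist. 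Likewise $\partial_x^{d_1}$ applied to the control $(u_\h,u_\p)$ produces $(\partial_x^{d_1}u_\h,\partial_x^{d_1}u_\p)$, not $(\partial_x^{d_1}u_\h,u_\p)$; you cannot differentiate the hyperbolic block of a coupled system without differentiating the parabolic block too, because the commutator of $\mathrm{diag}(\partial_x^{d_1},I)$ with the coupling terms $A_{12}\partial_x$, $A_{21}\partial_x$, $K_{12}$, $K_{21}$ does not vanish. So the whole reduction to \Cref{PropControlHighFreq} collapses.

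Second, even ignoring the above, the regularity of the hyperbolic control is not obtainable the way you suggest. \Cref{PropControlHighFreq} only yields $u_\h\in L^2((0,T')\times\omega)^{d_1}$, and there is no ``strengthened version of \Cref{LemHyp}'' that produces a smooth $u_\h$ for free: the Lebeau--Robbiano machinery of \Cref{LemPar_Bis} gives $C^\infty_c$ controls on the \emph{parabolic} block (hence \Cref{LemPar_Bisd_x} can afford to differentiate them), but the hyperbolic construction of \Cref{LemHyp}/\Cref{LemHyp_BIS} works by HUM on the transport equation and produces $L^2$ controls only. Obtaining $u_\h\in H_0^{2d_1+1}(q_T)^{d_1}$ is precisely what the paper's \Cref{LemHyp_BIS1} provides, and its proof is genuinely new: after reducing to an exact controllability problem on $F^\h\cap H^{2d_1+1}(\T)^d$ (using that $\partial_x^{-d_1}$ is well-defined on the zero-mean space $F^\h$, at the level of the high-frequency target only), one builds regular controls for the pure transport equation $\partial_t f+\mu\partial_x f=u1_\omega$ explicitly, via the cut-off construction of~\cite[Lemma~2.6]{alabau-boussouira_2017} (\Cref{LemmaCutOffTransport} and \Cref{PropExactControllabilityResultTransport}), and then transfers this to the perturbed hyperbolic component via the asymptotics of \Cref{th:perturb} and a compactness-uniqueness argument, all tracked in $H^{-(2d_1+1)}$ norms. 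Without this construction, the main estimate simply does not close. The low-frequency and Fredholm parts of your proposal are fine and match the paper, but the high-frequency hyperbolic step is wrong as written.
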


The proof of this result is an adaptation of the proof of \Cref{th:main}:
\begin{itemize}
\item we prove that parabolic high frequencies are null-controllable,
\item we prove that hyperbolic high frequencies are null-controllable,
\item we combine these two propositions to prove that high frequencies are null-controllable,
\item we finally deal with low frequencies.
\end{itemize}

For the first point, we just need a special case of the corresponding result that was used in the proof of \Cref{th:main}, i.e.\ \Cref{LemPar}.
\begin{prop}
\label{LemPar_BIS1}
If $n_0$ is large enough, there exists a continuous operator 
\begin{equation*}
\mathcal U^{\p,\sharp} \colon \begin{array}[t]{@{}c@{}l}
  \bold{F_1} \times H^{2d_1+1}_0((0,T')\times\omega)^{d_1}  & {}\to{}  C^\infty_c((T',T)\times\omega)^{d_2} \\
 (f_0,u_{\h}) &{}\mapsto u_{\p},
\end{array}
\end{equation*}
that associates with any $(f_0,u_{\h})\in \bold{F_1} \times H^{2d_1+1}_0((0,T')\times\omega)^{d_1} $ a control $u_{\p}=\mathcal U^{\p,\sharp}(f_0,u_{\h})$ such that
\begin{equation*}
\Pi^{\p} S(T;f_0, (\partial_x^{d_1} u_{\h},u_{\p})) = 0.
\end{equation*}
\end{prop}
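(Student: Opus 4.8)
The plan is to reduce \Cref{LemPar_BIS1} to \Cref{LemPar}: the only genuinely new feature is that the hyperbolic control now enters the source term of the first $d_1$ equations as $\partial_x^{d_1}u_{\h}$ instead of as a raw $L^2$ function, and this should be absorbed into a continuous change of unknown, exactly as the factor $\partial_x^{d_2}$ was absorbed in \Cref{LemPar_Bisd_x}. First I would record two elementary continuity facts: differentiation $\partial_x^{d_1}$ is bounded from $H^{2d_1+1}_0((0,T')\times\omega)^{d_1}$ into $L^2((0,T')\times\omega)^{d_1}$ (it is already bounded out of $H^{d_1}_0$, and $2d_1+1\geq d_1$), and $\bold{F_1}$ embeds continuously into $L^2(\T)^d$. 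Consequently, for $(f_0,u_{\h})\in\bold{F_1}\times H^{2d_1+1}_0((0,T')\times\omega)^{d_1}$ the pair $(f_0,\partial_x^{d_1}u_{\h})$ lies in the domain $L^2(\T)^d\times L^2((0,T')\times\omega)^{d_1}$ of the operator $\mathcal U^\p$ provided by \Cref{LemPar}.

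Then I would simply set $\mathcal U^{\p,\sharp}(f_0,u_{\h})\coloneqq\mathcal U^\p(f_0,\partial_x^{d_1}u_{\h})$, which takes values in $C^\infty_c((T',T)\times\omega)^{d_2}$. Applying the defining property of $\mathcal U^\p$ with the hyperbolic control $\partial_x^{d_1}u_{\h}\in L^2((0,T')\times\omega)^{d_1}$ gives at once
\[
\Pi^\p S\big(T;f_0,(\partial_x^{d_1}u_{\h},\mathcal U^{\p,\sharp}(f_0,u_{\h}))\big)=0,
\]
which is the asserted identity. Continuity of $\mathcal U^{\p,\sharp}$ in the sense of \Cref{PropControlHighFreq} (i.e. with values in every $H^s_0((T',T)\times\omega)^{d_2}$) then follows by composing the continuity of $\mathcal U^\p$ with the boundedness of $(f_0,u_{\h})\mapsto(f_0,\partial_x^{d_1}u_{\h})$: for each $s\in\N$ there is $C_s>0$ with $\|\mathcal U^{\p,\sharp}(f_0,u_{\h})\|_{H^s_0}\leq C_s(\|f_0\|_{\bold{F_1}}+\|u_{\h}\|_{H^{2d_1+1}_0})$.

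If one prefers to stay closer to \Cref{sec:reduc_parab}, the same thing can be phrased through \Cref{LemPar_Bis}: using the support conditions \eqref{suppHhHp}, the requested equality is equivalent to $\Pi^\p S\big(T-T';\,\Pi^\p S(T';f_0,(\partial_x^{d_1}u_{\h},0)),\,(0,u_{\p}(\cdot+T'))\big)=0$, so it suffices to feed $\Pi^\p S(T';f_0,(\partial_x^{d_1}u_{\h},0))\in F^\p$ into the operator $\underline{\mathcal U}^\p_{T-T'}$ of \Cref{LemPar_Bis} and translate in time, after checking (via \Cref{def:solution}, \Cref{decompL2} and the boundedness of $\partial_x^{d_1}$) that $(f_0,u_{\h})\mapsto\Pi^\p S(T';f_0,(\partial_x^{d_1}u_{\h},0))$ is continuous from $\bold{F_1}\times H^{2d_1+1}_0((0,T')\times\omega)^{d_1}$ into $F^\p$.

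The hard part will be essentially nonexistent: this is bookkeeping. The only points deserving a line of attention are that the exponent $2d_1+1$ is comfortably above the $d_1$ derivatives taken, so $\partial_x^{d_1}u_{\h}$ is genuinely an $L^2$ hyperbolic control, and that working over the smaller, smoother space $\bold{F_1}$ rather than $L^2(\T)^d$ is harmless thanks to the continuous embedding; the special structure of the hyperbolic source term plays no role whatsoever in controlling the parabolic high frequencies.
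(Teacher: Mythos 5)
Your proof is correct and follows exactly the same route as the paper: compose the operator $\mathcal U^\p$ from \Cref{LemPar} with the continuous map $(f_0,u_\h)\mapsto(f_0,\partial_x^{d_1}u_\h)$ from $\bold{F_1}\times H^{2d_1+1}_0((0,T')\times\omega)^{d_1}$ into $L^2(\T)^d\times L^2((0,T')\times\omega)^{d_1}$. The paper states this in a single sentence; your write-up is a correct expansion of the same observation.
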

\begin{proof}
Proposition \ref{LemPar_BIS1} is a consequence of Proposition \ref{LemPar} because $\bold{F_1} \times H^{2d_1+1}_0((T',T)\times\omega)^{d_2}$ is continuously embedded in $L^2(\T)^d \times L^2((T',T)\times \omega)^{d_2}$ and $\partial_x^{d_1} u_{\h} \in L^2((0,T')\times\omega)^{d_1}$ for every $u_{\h} \in H^{2d_1+1}_0((0,T')\times\omega)^{d_1}$. 
\end{proof}

For the second point, we will prove the following adaptation of \Cref{LemHyp}.
\begin{prop}
\label{LemHyp_BIS1}
If $n_0$ is large enough, there exists a continuous operator 
\begin{equation*}
\mathcal U^{\h,\sharp} \colon\begin{array}[t]{@{}c@{}l}
 \bold{F_1} \times H_0^{2d_1+1}((T',T)\times\omega)^{d_2} & \rightarrow  H^{2d_1+1}_0((0,T')\times\omega)^{d_1}\\
 (f_0,u_{\p}) &\mapsto u_{\h},
\end{array}
\end{equation*}
that associates with any $(f_0,u_{\p})\in  \bold{F_1} \times H^{2d_1+1}_0((T',T)\times\omega)^{d_2} $ a control $u_{\h}=\mathcal U^{\h,\sharp}(f_0,u_{\p})$ such that
\begin{equation}\label{eq:LemHyp_BIS1}
\Pi^{\h} S(T;f_0, (\partial_x^{d_1} u_{\h},u_{\p})) = 0.
\end{equation}
\end{prop}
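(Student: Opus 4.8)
The strategy is to reproduce the proof of \Cref{LemHyp} via \Cref{LemHyp_BIS}, carrying along the $d_1$ extra $x$-derivatives and the corresponding Sobolev exponents. First I would deduce \Cref{LemHyp_BIS1} from a ``smooth'' version of \Cref{LemHyp_BIS}: \emph{if $n_0$ is large enough, then for every $T'>T^\ast$ there is a continuous operator $\underline{\mathcal U}_{T'}^{\h,\sharp}\colon H^{d_1+1}(\T)^d\cap F^\h\to H^{2d_1+1}_0((0,T')\times\omega)^{d_1}$ such that $\Pi^\h S\big(T';0,(\partial_x^{d_1}\underline{\mathcal U}_{T'}^{\h,\sharp}(\phi),0)\big)=\phi$ for every $\phi\in H^{d_1+1}(\T)^d\cap F^\h$}. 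Granting this, one argues exactly as in the proof of \Cref{LemHyp}: by linearity and the support conditions on $(u_\h,u_\p)$, \eqref{eq:LemHyp_BIS1} is equivalent to $\Pi^\h S(T';0,(\partial_x^{d_1}u_\h,0))=-\eu^{(T-T')\mathcal L^\h}\Pi^\h S(T;f_0,(0,u_\p))$, where $\eu^{t\mathcal L^\h}$ is the group of \Cref{ExtendSemigroups}. The right-hand side depends continuously on $(f_0,u_\p)\in\bold{F_1}\times H^{2d_1+1}_0((T',T)\times\omega)^{d_2}$ with values in $H^{d_1+1}(\T)^d\cap F^\h$: this follows from well-posedness of \eqref{Syst} in $H^{d_1+1}(\T)^d$ (\Cref{rk:semigroup_Hs}), the boundedness of $\Pi^\h$ on $H^{d_1+1}(\T)^d$ (same computation as in the proof of \Cref{decompL2}), and the boundedness of $\eu^{t\mathcal L^\h}$ on $H^{d_1+1}(\T)^d\cap F^\h$. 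One then sets $\mathcal U^{\h,\sharp}(f_0,u_\p)=\underline{\mathcal U}_{T'}^{\h,\sharp}\big(-\eu^{(T-T')\mathcal L^\h}\Pi^\h S(T;f_0,(0,u_\p))\big)$.

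To prove the smooth version, I would first remove the derivative from the control. Since $\partial_x$ has constant coefficients it commutes with $\mathcal L$, with $\Pi^\h$ and with $S(T';0,\cdot)$, so $\Pi^\h S(T';0,(\partial_x^{d_1}u_\h,0))=\partial_x^{d_1}\,\Pi^\h S(T';0,(u_\h,0))$; moreover $\partial_x^{-d_1}$ is a bounded bijection from $H^{s}(\T)^d\cap F^\h$ onto $H^{s+d_1}(\T)^d\cap F^\h$ for every $s$, because $F^\h$ contains no zero Fourier mode. Hence it suffices to prove: for $T'>T^\ast$ and $n_0$ large, every $\psi\in H^{2d_1+1}(\T)^d\cap F^\h$ is reachable, i.e.\ there is $u_\h\in H^{2d_1+1}_0((0,T')\times\omega)^{d_1}$, depending continuously on $\psi$, with $\Pi^\h S(T';0,(u_\h,0))=\psi$; one then takes $\underline{\mathcal U}_{T'}^{\h,\sharp}(\phi)$ to be the control associated with $\psi=\partial_x^{-d_1}\phi$. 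By the Hilbert Uniqueness Method (adapting \cite[Theorem~2.42]{coron_2007}) and \Cref{th:lemma_dual_Hs}, which identifies the dual of $H^{2d_1+1}_0((0,T')\times\omega)^{d_1}$ with $H^{-(2d_1+1)}((0,T')\times\omega)^{d_1}$, this exact controllability is equivalent to the observability inequality: there exists $C>0$ such that for every $g_0\in\widetilde{F^\h}$ the solution $g$ of \eqref{SystAdj} satisfies $\|g_0\|_{H^{-(2d_1+1)}(\T)^d}^2\le C\|g_1\|_{H^{-(2d_1+1)}(q_{T'})}^2$.

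This inequality is established by rerunning, up to the change of norms, the three steps of the proof of \Cref{ObsHyp}. One decomposes $g_0=\sum_{\mu\in\Sp(A')}\sum_{|n|>n_0}P_\mu^\h(\iu/n)^*\widehat g_0(n)e_n$ and $g=\sum_\mu G_\mu$ as in \eqref{DecompVarphiThyp}--\eqref{DecompVarphiTHypJ}. Step~1: $G_\mu$ solves the transport equation \eqref{SystAdjSAsymp} with a source $S_\mu$ that, by holomorphy of $z\mapsto R_\mu^\h(z)$ and the factor $O(1/n)$, is bounded in $L^\infty((0,T'),H^{-(2d_1+1)}(\T)^d)$ by $C\|g_0\|_{H^{-(2d_1+2)}(\T)^d}$ (the analog of \eqref{defsourcecomp}); since $T'>T^\ast=\max_{\mu\in\Sp(A')}\ell(\omega)/|\mu|$, the observability of $\partial_t-\mu\partial_x$ on $\omega$ holds in $H^{-(2d_1+1)}$ (see \cite[Theorem~2.2]{alabau-boussouira_2017} and \Cref{LemmaCutOffTransport}), and the Duhamel formula, as in \eqref{eq_gmudiese}--\eqref{dmutilde-gmudiese}, gives $\|G_\mu(0,\cdot)\|_{H^{-(2d_1+1)}(\T)^d}\le C\big(\|G_\mu\|_{H^{-(2d_1+1)}(q_{T'})}+\|g_0\|_{H^{-(2d_1+2)}(\T)^d}\big)$. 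Step~2: using $P_\mu^\h(0)^*P_\lambda^\h(0)^*=0$ for $\lambda\neq\mu$ and the holomorphy of the projections, one replaces $G_\mu$ by $P_\mu^\h(0)^*g$ modulo $\|g_0\|_{H^{-(2d_1+2)}(\T)^d}$, exactly as in \eqref{Gmu_Phmu_decom}. Step~3: since $P_\mu^\h(0)^*=P_\mu^\h(0)^*P^\h(0)^*$ and $P^\h(0)^*g=(g_1,0)$, one gets $\|P_\mu^\h(0)^*g\|_{H^{-(2d_1+1)}(q_{T'})}\le C\|g_1\|_{H^{-(2d_1+1)}(q_{T'})}$; summing over $\mu$ yields $\|g_0\|_{H^{-(2d_1+1)}(\T)^d}\le C\big(\|g_1\|_{H^{-(2d_1+1)}(q_{T'})}+\|g_0\|_{H^{-(2d_1+2)}(\T)^d}\big)$, and the compact embedding $H^{-(2d_1+1)}(\T)\hookrightarrow H^{-(2d_1+2)}(\T)$ together with the Peetre-type compactness--uniqueness argument concluding the proof of \Cref{ObsHyp} removes the lower-order term (the only eigenfunctions that could obstruct, $Xe_n$ with $|n|>n_0$ and $P^\h(0)X=0$, would satisfy $|X|=|(P^\h(\iu/n)-P^\h(0))X|\le(C/|n|)|X|$, which is impossible for $n_0$ large).

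The main obstacle is Step~1 of the last paragraph, and more precisely the fact that this observability in negative Sobolev norms cannot be obtained as a black box from \Cref{ObsHyp}: one would like to apply $\partial_x^{-(2d_1+1)}$ and invoke the $L^2$ inequality, but $\partial_x^{-(2d_1+1)}$ is non-local and does not commute with restriction to $\omega$, so it does not turn $\|g_1\|_{H^{-(2d_1+1)}(q_{T'})}$ into $\|(\partial_x^{-(2d_1+1)}g)_1\|_{L^2(q_{T'})}$. Hence the proof of \Cref{ObsHyp} must genuinely be reproduced with all the norms shifted down by $2d_1+1$ (and by $2d_1+2$ for the remainders), the key points being that the source estimate \eqref{defsourcecomp} still holds because the $O(1/n)$ factor is one full derivative, and that the transport observability of \cite{alabau-boussouira_2017} and the compactness--uniqueness step are insensitive to the Sobolev index; all these modifications are routine but must be carried out explicitly.
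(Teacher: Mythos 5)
Your proposal follows the paper's proof closely. You reduce the null-controllability statement to an exact controllability result for the hyperbolic projection with $H^{2d_1+1}_0$ controls (your ``smooth version'' is the paper's \Cref{LemHyp_BISPar_bis} up to the cosmetic change of variable $\phi\mapsto\partial_x^{-d_1}\phi$, which both arguments perform anyway), dualize it via \Cref{th:lemma_dual_Hs} to the observability inequality $\|g_0\|_{H^{-(2d_1+1)}(\T)^d}\le C\|g_1\|_{H^{-(2d_1+1)}(q_{T'})}$ on $\widetilde{F^\h}$, and then rerun the three steps of the proof of \Cref{ObsHyp} with the Sobolev index shifted by $2d_1+1$, feeding in the transport observability of \cite{alabau-boussouira_2017} (the paper packages that last ingredient as \Cref{PropExactControllabilityResultTransport} via \Cref{LemmaCutOffTransport}, again by duality). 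Your remark that the $H^{-s}$ observability cannot be obtained as a black box from the $L^2$ version because $\partial_x^{-(2d_1+1)}$ does not commute with restriction to $\omega$ correctly identifies why the argument has to be redone with shifted norms rather than merely cited; aside from that emphasis the route and all the technical lemmas invoked coincide with the paper's.
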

While the ideas of the proof are the same as for \Cref{LemHyp}, the proof of this Proposition is technically more delicate, as we have to build regular controls, and, on the observability side, deal with the (slightly impractical) $H^s_0$ and $H^{-s}$ norms. We postpone the proof to the next subsection. For now, let us assume \Cref{LemHyp_BIS1} holds true, and finish the proof of \Cref{th:main_3}.

We now combine Propositions~\ref{LemPar_BIS1} and~\ref{LemHyp_BIS1} with the Fredholm alternative, as in the proof of \Cref{PropControlHighFreq}, to prove that high frequencies are null-controllable. That is to say, we get the following adaptation of \Cref{PropControlHighFreq}. 
\begin{prop}
\label{PropControlHighFreqd_xPar}
There exists a closed subspace $\mathcal{G}^{\sharp}$ of $\bold{F}_1$ with finite codimension and a continuous operator 
\begin{equation*}
\mathcal U^{\sharp} \colon\begin{array}[t]{@{}c@{}l}
\mathcal{G}^{\sharp} &{}\to  H_0^{2d_1+1}(q_T)^{d_1}  \times H_0^{2d_1+1}(q_T)^{d_2}\\
f_0 &{}\mapsto (u_{\h},u_{\p}),
\end{array}
\end{equation*}
that associates with each $f_0 \in \mathcal{G}^{\sharp}$ a pair of controls $\mathcal U^{\sharp} f_{0}=( u_{\h}, u_{\p})$ such that 
\begin{equation}
\label{PropKd_xPar}
\forall f_0 \in \mathcal{G}^{\sharp},\ \Pi S(T;f_0, ( \partial_{x}^{d_1}u_{\h}, u_{\p})) = 0.
\end{equation}
\end{prop}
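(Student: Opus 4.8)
The plan is to reproduce, almost verbatim, the scheme used for \Cref{PropControlHighFreq}, now with \Cref{LemHyp_BIS1} and \Cref{LemPar_BIS1} playing the roles of \Cref{LemHyp} and \Cref{LemPar}. First I would record that, the system \eqref{Syst} and the constructions underlying those two propositions being linear, the control operators they furnish are linear, so that one may split
\[
\mathcal U^{\h,\sharp}(f_0,u_{\p}) = \mathcal U^{\h,\sharp}_1(f_0) + \mathcal U^{\h,\sharp}_2(u_{\p}), \qquad
\mathcal U^{\p,\sharp}(f_0,u_{\h}) = \mathcal U^{\p,\sharp}_1(f_0) + \mathcal U^{\p,\sharp}_2(u_{\h}),
\]
with $\mathcal U^{\h,\sharp}_1(f_0) = \mathcal U^{\h,\sharp}(f_0,0)$, $\mathcal U^{\h,\sharp}_2(u_{\p}) = \mathcal U^{\h,\sharp}(0,u_{\p})$, and similarly for $\mathcal U^{\p,\sharp}$, each piece continuous. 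Since $\Pi = \Pi^\h + \Pi^\p$, the desired identity $\Pi S(T;f_0,(\partial_x^{d_1}u_{\h},u_{\p})) = 0$ will hold as soon as the coupled system $u_{\h} = \mathcal U^{\h,\sharp}(f_0,u_{\p})$, $u_{\p} = \mathcal U^{\p,\sharp}(f_0,u_{\h})$ is satisfied, by \Cref{LemHyp_BIS1} and \Cref{LemPar_BIS1} respectively.

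Next I would substitute the hyperbolic equation into the parabolic one, reducing the problem to finding $u_{\p} \in H^{2d_1+1}_0((T',T)\times\omega)^{d_2}$ with $(I - \mathcal U^{\p,\sharp}_2\mathcal U^{\h,\sharp}_2)u_{\p} = C^\sharp f_0$, where $C^\sharp \coloneqq \mathcal U^{\p,\sharp}_1 + \mathcal U^{\p,\sharp}_2\mathcal U^{\h,\sharp}_1 \colon \bold{F_1} \to C^\infty_c((T',T)\times\omega)^{d_2}$. The key observation is that $\mathcal U^{\p,\sharp}_2\mathcal U^{\h,\sharp}_2$ is compact on $H^{2d_1+1}_0((T',T)\times\omega)^{d_2}$: indeed $\mathcal U^{\h,\sharp}_2$ is continuous into $H^{2d_1+1}_0((0,T')\times\omega)^{d_1}$, and by the continuity convention of \Cref{LemPar_BIS1} the operator $\mathcal U^{\p,\sharp}_2$ sends that space continuously into $H^{2d_1+2}_0((T',T)\times\omega)^{d_2}$ (in fact into $C^\infty_c$), which embeds compactly into $H^{2d_1+1}_0((T',T)\times\omega)^{d_2}$ by Rellich's theorem. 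Then Fredholm's alternative provides $N \in \N$ and continuous linear forms $l_1,\dots,l_N$ on $H^{2d_1+1}_0((T',T)\times\omega)^{d_2}$ so that the equation is solvable exactly on the finite-codimension closed subspace $\mathcal G^\sharp \coloneqq \{ f_0 \in \bold{F_1} \ ;\ l_j(C^\sharp f_0) = 0,\ 1 \le j \le N \}$, on which a solution $u_{\p} = L(f_0)$ is given by a continuous linear map.

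It then remains to upgrade the regularity of the constructed controls, exactly as in \Cref{PropControlHighFreq}: from $u_{\p} = \mathcal U^{\p,\sharp}_2\mathcal U^{\h,\sharp}_2 u_{\p} + C^\sharp f_0$ and the fact that $\mathcal U^{\p,\sharp}_1$ and $\mathcal U^{\p,\sharp}_2$ range in $C^\infty_c((T',T)\times\omega)^{d_2}$, one gets $L(f_0) \in C^\infty_c((T',T)\times\omega)^{d_2} \subset H^{2d_1+1}_0(q_T)^{d_2}$; and $u_{\h} = \mathcal U^{\h,\sharp}(f_0,L(f_0)) \in H^{2d_1+1}_0((0,T')\times\omega)^{d_1}$ which, extended by zero in time, lies in $H^{2d_1+1}_0(q_T)^{d_1}$ since it is compactly supported inside $(0,T')$. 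Setting $\mathcal U^\sharp(f_0) \coloneqq (\mathcal U^{\h,\sharp}(f_0,L(f_0)), L(f_0))$ gives the announced operator, continuous because $L$ and $\mathcal U^{\h,\sharp}$ are.

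Concerning difficulty: the present statement is essentially a bookkeeping/fixed-point step, and the genuine analytical content is packaged into \Cref{LemHyp_BIS1} (construction of smooth hyperbolic controls and the associated observability inequality in negative Sobolev norms), which is assumed here. Within this proof the only point demanding a little care is the compactness of $\mathcal U^{\p,\sharp}_2\mathcal U^{\h,\sharp}_2$, which relies on the ``for every $s \in \N$'' continuity of the operator of \Cref{LemPar_BIS1} to gain one derivative, together with Rellich's theorem, and on the harmless verification that extension by zero in time keeps the controls in the relevant $H^s_0$ spaces.
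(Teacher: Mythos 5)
Your proposal is correct and carries out exactly what the paper intends: the paper only gestures at the proof (``combine Propositions~\ref{LemPar_BIS1} and~\ref{LemHyp_BIS1} with the Fredholm alternative, as in the proof of \Cref{PropControlHighFreq}''), and you execute that scheme faithfully. The one genuine adaptation you had to supply --- and which you handle correctly --- is the choice of pivot space for Fredholm's alternative: in the original proof of \Cref{PropControlHighFreq} the equation $(I-\mathcal U^\p_2\mathcal U^\h_2)u_\p = Cf_0$ is posed on $L^2((T',T)\times\omega)^{d_2}$, whereas here $\mathcal U^{\h,\sharp}_2$ is only defined on $H^{2d_1+1}_0((T',T)\times\omega)^{d_2}$, so the fixed-point equation must be posed on that space; your observation that the ``for every $s$'' continuity of $\underline{\mathcal U}^{\p,\blacklozenge}$-type operators gains one derivative, hence gives compactness of $\mathcal U^{\p,\sharp}_2\mathcal U^{\h,\sharp}_2$ on $H^{2d_1+1}_0$ via Rellich, is precisely what is needed. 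The bootstrap showing the resulting $u_\p$ is in fact $C^\infty_c$ and the extension-by-zero argument placing the controls in $H^{2d_1+1}_0(q_T)$ are also correct.
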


The last step consists in showing that the null-controllability result of \Cref{PropNullControldxPar} is true at the low frequency-level, as we have already shown for \Cref{th:main} in \Cref{ControlLowFreqSec}. All the steps of \Cref{ControlLowFreqSec} remain unchanged except the Step 6. Indeed, the unique continuation argument transforms into: if $f(t,\cdot) = \eu^{tM} f_0$ with $(\partial_x^{d_1} f_1, f_2) = (0,0)$ in $(0,\varepsilon)\times\omega$ then $(\partial_x^{d_1}  f_{01},f_{02})= (0,0)$ thanks to the spectral inequality of Lebeau-Robbiano \eqref{eq:spectral}, that is to say, $f_0 = 0$ because $\int_{\T} f_{01}(x) \diff x =0$. 

This concludes the proof of \Cref{PropNullControldxPar} thus the proof of \Cref{PropObs1APar}. \qed

\subsection{Proof of \Cref{LemHyp_BIS1}}
The proof of \Cref{LemHyp_BIS1} is an adaptation of the one of \Cref{LemHyp}, with the following changes:
\begin{itemize}
\item we deal with the fact that we want a control of the form $(\partial_x^{d_1}u_\h,0)$,
\item we adapt the duality argument to take into account the regularity of the controls that we want (it involves some $H^{-s}$ norms),
\item we adapt all the inequalities to replace the relevant $L^2$ norms by $H^{-s}$ norms,
\item to build regular controls of the simple transport equation $\partial_t f+ \mu\partial_x f=0$, we use~\cite{alabau-boussouira_2017}. 
\end{itemize}

\paragraph{Step 1: reduction to an exact controllability problem.} We claim that in order to prove \Cref{LemHyp_BIS1}, we only have to prove the following exact controllability result.

\begin{prop}
\label{LemHyp_BISPar_bis}
If $n_0$ is large enough, then for every ${T'}>T^{*}$, there exists a continuous operator 
\begin{equation*}
{\underline{\mathcal{U}}}_{T'}^{\h,\sharp} \colon \begin{array}[t]{@{}c@{}l}
  F^\h \cap H^{2d_1+1}(\T)^{d}  & {}\to H_0^{2d_1+1}(q_{T'})^{d_1} \\
  f_{T'} &{}\mapsto  u_{\h},
\end{array}
\end{equation*}
that associates with any $f_{T'} \in F^\h \cap H^{2d_1+1}(\T)^{d}$, a control $\underline{\mathcal{U}}_{T'}^{\h,\sharp} (f_{T'}) = u_{\h}$ such that 
\begin{equation*}
\Pi^{\h} S({T'}; 0,  (u_{\h}, 0)) = f_{T'}.
\end{equation*}
\end{prop}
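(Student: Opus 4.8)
\textbf{Plan for the proof of \Cref{LemHyp_BISPar_bis}.}

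The plan is to mirror the proof of \Cref{LemHyp_BIS}, replacing the $L^2$-estimates by $H^{2d_1+1}_0$-estimates via the standard dual pair $(H^s_0, H^{-s})$ from \Cref{th:lemma_dual_Hs}, and using the fact that Alabau-Boussouira, Coron and Olive's construction (cf.\ \cite{alabau-boussouira_2017}) produces controls that gain regularity from the data for the pure transport equation $\partial_t f + \mu\partial_x f = 0$. By the Hilbert Uniqueness Method adapted to the functional setting, the existence of the continuous operator $\underline{\mathcal U}_{T'}^{\h,\sharp}$ producing $u_\h \in H^{2d_1+1}_0(q_{T'})^{d_1}$ with $\Pi^\h S(T';0,(u_\h,0)) = f_{T'}$ for every $f_{T'} \in F^\h \cap H^{2d_1+1}(\T)^d$ is equivalent to the observability inequality: there exists $C>0$ such that for every $g_0 \in \widetilde{F^\h}$, the solution $g$ of \eqref{SystAdj} satisfies
\[
\|g_0\|_{H^{-(2d_1+1)}(\T)^d}^2 \leq C \|g_1\|_{H^{-(2d_1+1)}(q_{T'})^{d_1}}^2.
\]
So the real content is to prove this weak observability inequality, and the plan is to re-run the three-step argument of \Cref{ObsHyp} with $H^{-(2d_1+1)}$ norms in place of $L^2$ norms throughout.

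First I would decompose $g_0 \in \widetilde{F^\h}$ as in \eqref{DecompVarphiThyp}--\eqref{DecompVarphiTHypJ} along the eigenprojections $P^\h_\mu(\iu/n)^*$, and for each $\mu \in \Sp(A')$ derive, exactly as in \eqref{SystAdjSAsymp}--\eqref{def:gmutilde}, that $\widetilde G_\mu = \eu^{tR_\mu^\h(0)^*}G_\mu$ solves a transport equation $\partial_t \widetilde G_\mu - \mu\partial_x\widetilde G_\mu = \eu^{tR_\mu^\h(0)^*}S_\mu$ with a source $S_\mu$ that, by the holomorphy of $z\mapsto R_\mu^\h(z)$, satisfies $\|S_\mu\|_{L^\infty((0,T'),H^{-(2d_1+1)}(\T)^d)} \leq C\|g_0\|_{H^{-(2d_1+3)}(\T)^d}$ (one extra factor $1/n^2$ beyond the ambient regularity). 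The key point is that all these manipulations commute with negative Sobolev norms since they act diagonally on Fourier modes. Step 1 then reduces to the observability of the pure transport equation $\partial_t G^\flat_\mu - \mu\partial_x G^\flat_\mu = 0$ in $H^{-(2d_1+1)}(\T)^d$ on $(0,T')\times\omega$ with $T' > T_\mu = \ell(\omega)/|\mu|$; this is the point where I invoke \cite[Thm.~2.2]{alabau-boussouira_2017} (or rather its negative-Sobolev version, obtained by applying it mode by mode, which is legitimate because the transport semigroup is an isometry on each $H^s$). Step 2 replaces $G_\mu$ by $P^\h_\mu(0)^*g$ using \eqref{Gmu_Phmu_decom} and the holomorphy estimates, again in $H^{-(2d_1+1)}$, absorbing the error into $\|g_0\|_{H^{-(2d_1+3)}(\T)^d}$. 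Step 3 sums over $\mu$, uses $P^\h_\mu(0)^* = P^\h_\mu(0)^* P^\h(0)^*$ so that $\|P^\h_\mu(0)^* g\|_{H^{-(2d_1+1)}(q_{T'})} \leq C\|g_1\|_{H^{-(2d_1+1)}(q_{T'})}$, and yields
\[
\|g_0\|_{H^{-(2d_1+1)}(\T)^d} \leq C\big(\|g_1\|_{H^{-(2d_1+1)}(q_{T'})^{d_1}} + \|g_0\|_{H^{-(2d_1+3)}(\T)^d}\big).
\]
Finally a compactness-uniqueness argument (Peetre's lemma, \cite{peetre_1961}, together with the compact embedding $H^{-(2d_1+1)}(\T)\hookrightarrow H^{-(2d_1+3)}(\T)$) removes the weak term: the residual space is finite-dimensional, stable by $\eu^{-t\mathcal L^*|_{\widetilde{F^\h}}}$ after a harmless perturbation of $T'$, hence contains an eigenfunction $Xe_n$ with $|n|>n_0$, $X = P^\h(\iu/n)X$, and $X_1 = 0$ on $\omega$ forces $P^\h(0)X = 0$, so $|X| = |(P^\h(\iu/n) - P^\h(0))X| \leq (C/|n|)|X|$, impossible for $n_0$ large.

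The main obstacle I anticipate is the careful bookkeeping on the dual/HUM side: one must verify that the adjoint of the control-to-state map $u_\h \mapsto \Pi^\h S(T';0,(u_\h,0))$, restricted to controls in $H^{2d_1+1}_0(q_{T'})^{d_1}$, is precisely $g_0 \mapsto g_1|_{q_{T'}}$ read in $H^{-(2d_1+1)}(q_{T'})^{d_1}$ — this requires factoring the control map through the inclusion $H^{2d_1+1}_0 \hookrightarrow L^2$ and using \Cref{th:lemma_dual_Hs}, as in the proof of \Cref{CorHUM}, but now also keeping track of the regularity of the target $f_{T'} \in F^\h \cap H^{2d_1+1}(\T)^d$ and checking that the semigroup $\eu^{t\mathcal L^\h}$ (which is a group by \Cref{ExtendSemigroups} and preserves every $H^s$, by \Cref{rk:semigroup_Hs}) allows one to reduce, exactly as in the proof of \Cref{LemHyp}, the controllability at time $T'$ on $F^\h$ to the observability inequality above. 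A secondary technical point is making the negative-Sobolev transport observability of \cite{alabau-boussouira_2017} precise when $\omega$ is a general open set — but since here we ultimately only need $\omega$ an interval (the standing hypothesis of \Cref{th:main_3}), \cite[Lemma 2.6]{alabau-boussouira_2017} applies directly and this difficulty does not arise.
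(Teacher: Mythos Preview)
Your plan is essentially the paper's own proof: reduce to the observability inequality $\|g_0\|_{H^{-(2d_1+1)}(\T)^d} \leq C\|g_1\|_{H^{-(2d_1+1)}(q_{T'})^{d_1}}$ for $g_0\in\widetilde{F^\h}$, decompose along $\mu\in\Sp(A')$, compare each $G_\mu$ to the free transport $G_\mu^\flat$ with remainder in a strictly weaker norm, and close by compactness--uniqueness. The paper uses $H^{-(2d_1+2)}$ for the remainder (one power of $n^{-1}$ from the holomorphy of $R_\mu^\h$ and $P_\mu^\h$), not $H^{-(2d_1+3)}$, but this is harmless.

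The one point that deserves more care than you give it is the transport observability $\|G_\mu(0,\cdot)\|_{H^{-(2d_1+1)}(\T)} \leq C\|G_\mu^\flat\|_{H^{-(2d_1+1)}(q_{T'})}$. Your justification ``apply the $L^2$ result mode by mode, legitimate because the transport semigroup is an isometry on each $H^s$'' does not work: the $H^{-s}(q_{T'})$ norm is local in $(t,x)$ and is \emph{not} diagonal in the spatial Fourier modes, so there is no way to reduce it to the $L^2$ inequality frequency by frequency. The paper obtains this estimate by duality from the \emph{exact controllability} of $\partial_t f + \mu\partial_x f = u\mathbf 1_\omega$ with data in $H^{2d_1+1}(\T)$ and control in $H^{2d_1+1}_0(q_{T'})$, proved by the explicit formula $u(t,x)=\eta(t,x)Q_{x-\mu t}^{-1}(f_{T'}(x)-f_0(x-\mu t))$ using the cut-off $\eta$ of \cite[Lemma~2.6]{alabau-boussouira_2017} (this is exactly where the assumption that $\omega$ is an interval enters). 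You do mention this lemma in your last paragraph, so you have the right ingredient; just be aware that it is the route, not a fallback.
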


Indeed, by the choice of support in time of the controls, and by the reversibility of $\eu^{-t\mathcal L^\h}$ (see \Cref{SecReduceHyp} for the details), the relation~\eqref{eq:LemHyp_BIS1} is equivalent to
\begin{equation*}
\Pi^\h(S(T';0,(\partial_x^{d_1} u_\h,0))) = -\eu^{(T-T')\mathcal L^\h}\Pi^\h S(T;f_0,(0,u_\p)).
\end{equation*}
Note that functions in $F^\h$ have zero mean (see the definition of $F^\h$ Eq.~\eqref{defWh}). Thus, $\partial_x^{d_1}$ is invertible on $F^\h$, and its inverse $\partial_x^{-d_1}$ is, on the Fourier side, the multiplication by $(\iu n)^{-d_1}$. Moreover, the operator $\partial_x$ commute with $\Pi^\h$ and the semi-group $\eu^{-t\mathcal L}$. So the relation~\eqref{eq:LemHyp_BIS1} is equivalent to
\begin{equation}\label{eq:LemHyp_ter}
\Pi^\h(S(T';0,(u_\h,0))) = - \partial_x^{-d_1}\eu^{(T-T')\mathcal L^\h}\Pi^\h S(T;f_0,(0,u_\p))\eqqcolon K(f_0,u_\p).
\end{equation}
So, if \Cref{LemHyp_BISPar_bis} holds, we may choose (assuming it makes sense)
\[
u_\h \coloneqq \underline{\mathcal{U}}_{T'}^{\h,\sharp}(K(f_0,u_\p)).
\]
Thus, to end this first step, we just have to check that the right-hand side $K(f_0,u_\p)$ of~\eqref{eq:LemHyp_ter} is indeed in $F^\h\cap H^{2d_1+1}(\T)^d$.

The projection $\Pi^\h$ has range $F^\h$, and $\eu^{t\mathcal L^\h}$ sends $F^\h$ to itself, as do $\partial_x^{-d_1}$. So $K(f_0,u_\p)$ belongs to $F^\h$.

The group $\eu^{t\mathcal L^\h}$ sends every $H^s(\T)^d$ into itself (see \Cref{rk:semigroup_Hs}). Since $\Pi^\h$ is just the multiplication on the Fourier side by $P^\h(\iu/n)$, the operator $\Pi^\h$ also sends every $H^s(\T)^d$ into itself. Thus, we just have to check that $S(T,f_0,(0,u_\p)) = \eu^{-T\mathcal L}f_0+ S(T,0,(0,u_\p))\in H^{d_1+1}(\T)^d$ because $\partial_{x}^{-d_1}$ sends $H^{d_1+1}(\T)^d$ into $H^{2d_1+1}(\T)^d$.
\begin{itemize}
\item The function $f_0$ belongs to $H^{d_1+1}(\T)$ by hypothesis, so $\eu^{-T\mathcal L}f_0$ also belongs to $H^{d_1+1}(\T)$ (see \Cref{rk:semigroup_Hs}).
\item  The parabolic control $u_{\p}$ belongs to $H^{2d_1+1}_0((T',T)\times\omega)^{d_2}$ by hypothesis, thus for almost every $t \in (0,T)$, $(0,u_{\p})(t,\cdot)$ belongs to $H^{2d_1+1}(\T)$ and thus
\[S(T;0, (0,u_{\p})) = \int_0^T \eu^{-(T-t)\mathcal{L}} (0,u_{\p})(t) \diff t \in H^{2d_1+1}(\T)^d.\]
\end{itemize}

This concludes this first step.

\paragraph{Step 2: Observability inequality associated to the controllability problem of \Cref{LemHyp_BISPar_bis}.}
Let 
\[\Phi_2 \coloneqq \Pi^\h\circ \iota_{2d_1+1} \colon H^{2d_1+1}(\T)^d \rightarrow L^2(\T)^d\] 
be the restriction of $\Pi^\h$ to $H^{2d_1+1}(\T)^d$ and
\[\Phi_3\coloneqq \Pi^\h \circ \mathcal F_{T'}\circ (\iota_{2d_1+1},0)\colon H_0^{2d_1+1}(q_{T'})^{d_1} \to L^2(\T)^d,\]
where $(\iota_{2d_1+1},0)$ stands for the map $u_\h\in H^{2d_1+1}_0(q_{T'})^{d_1}\mapsto (u_\h,0) \in L^2(q_{T'})^d$. Note that $\Phi_2$ and $\Phi_3$ are continuous.

The controllability problem of \Cref{LemHyp_BISPar_bis} is equivalent to the inclusion $\Ima(\Phi_2)\subset \Ima(\Phi_3)$. Therefore, according to the duality \Cref{LemmaDuality}, it is equivalent to the following inequality: there exists $C>0$ such that for every $g_0\in L^2(\T)^d$, $\|\Phi_2^* g_0\|_{H^{2d_1+1}(\T)^d} \leq C\|\Phi_3^* g_0\|_{H^{2d_1+1}_0(q_{T'})^d}$.
Since ${\Pi^\h}^*$ is a projection on $\widetilde{F^\h}$, since $\mathcal F_{T'}^*g_0$ is the restriction of the first $d_1$ components of $\eu^{-(T'-t)\mathcal L^*}g_0$ to $q_{T'}$, and since $\iota_s^*$ is an isometry between $H^s_0$ and $H^{-s}$,\footnote{See \Cref{th:lemma_dual_Hs}, and also recall that because $\T$ has no boundary $H^s_0(\T) = H^s(\T)$.} this inequality reads: there exists $C>0$ such that for every $g_0\in \widetilde F^\h$, the solution $g = \eu^{-t\mathcal L^*}g_0$ of the adjoint system~\eqref{SystAdj} satisfies
\begin{equation}
\label{obs_lem_hyp_par}
\|g_0\|_{H^{-2d_1-1}(\T)^d} \leq C \|g_1\|_{H^{-2d_1-1}(q_{T'})^{d_1}},
\end{equation}
where $g_1$ are the first $d_1$ components of $g$.

Let $g_0 \in \widetilde F^\h$. For the remaining of this proof, we use the notations of \Cref{subsec:Hyp_NC}, and in particular we introduce the decompositions~(\ref{DecompVarphiThyp}) and~(\ref{DecompVarphiTHypJ}). In the following arguments, the constants $C$ do not depend on $g_0$. 

\paragraph{Step 3: We prove the observability inequality (\ref{obs_lem_hyp_par}) assuming that, for every $\mu \in \Sp(A')$, there exists $C>0$ such that the solution $G_\mu^\flat$ of \eqref{eq_gmudiese} satisfies}
\begin{equation} \label{Obs_H(-2d1-1)_transport}
\|G_{\mu}(0,\cdot)\|_{H^{-(2d_1+1)}(\T)} = \|G_{\mu}^\flat(0,\cdot)\|_{H^{-(2d_1+1)}(\T)} \leq C \|G_\mu^\flat \|_{H^{-(2d_1+1)}(q_T)}.
\end{equation}
We will prove \eqref{Obs_H(-2d1-1)_transport} in Step 3.\\
\indent We proceed as in the proof given in \Cref{subsec:Hyp_NC}.
By the explicit expression (\ref{Smu}) of $S_\mu$ and Bessel-Parseval identity, there exists $C=C(T')$ independent of $g_0$ such that
\begin{equation}
\label{defsourcecompBis}
 \norme{S_\mu}_{L^\infty((0,T'),H^{-(2d_1+1)}(\T)^d)} \leq C \norme{ g(0,\cdot)}_{H^{-(2d_1+2)}(\T)^d}.
\end{equation}
Using the Duhamel formula, we obtain that the function $\widetilde{G}_\mu$ defined by (\ref{def:gmutilde}) satisfies
\begin{align} 
& \| \widetilde{G}_\mu - G_\mu^\flat \|_{L^\infty((0,T'),H^{-(2d_1+1)}(\T)^d)}\notag\\
& \leq C \| \eu^{t R_{\mu}^{\h}(0)^*} S_\mu \|_{L^1((0,T'),H^{-(2d_1+1)}(\T)^d)} \leq C
\| g_0 \|_{H^{-(2d_1+2)}(\T)^d}.
\label{dmutilde-gmudiese_bis}
\end{align}
By (\ref{Obs_H(-2d1-1)_transport}), the triangular inequality, (\ref{def:gmutilde}) and (\ref{dmutilde-gmudiese_bis}), we deduce that
\begin{equation} \label{IO_faible_Gmu}
\begin{aligned}
\| G_\mu(0,\cdot) \|_{H^{-(2d_1+1)}(\T)^d} 
& \leq C \left( \| \widetilde{G}_\mu \|_{H^{-(2d_1+1)}(q_{T'})^d} +  \| \widetilde{G}_\mu - G_\mu^\flat \|_{H^{-(2d_1+1)}(q_{T'})^d} \right) \\
& \leq C \left( \|G_\mu\|_{H^{-(2d_1+1)}(q_{T'})^d}+ \| g_0 \|_{H^{-(2d_1+2)}(\T)^d} 
\right).
\end{aligned}
\end{equation}
Using Bessel-Parseval identity and the decomposition (\ref{Gmu_Phmu_decom}), we obtain
\begin{equation} \label{Obs_g1_Gmu}
\|G_{\mu} -P^\h_\mu(0)^* g \|_{L^\infty((0,T'),H^{-(2d_1+1)}(\T)^d)} \leq C \| g_0\|_{H^{-(2d_1+2)}(\T)^d}.
\end{equation}
We deduce from (\ref{IO_faible_Gmu}), the triangular inequality and (\ref{Obs_g1_Gmu}) that
\[ 
\| G_\mu(0,\cdot) \|_{H^{-(2d_1+1)}(\T)^d} \leq C \left(
\| P^\h_\mu(0)^* g \|_{H^{-(2d_1+1)}(q_{T'})^d} + \| g_0\|_{H^{-(2d_1+2)}(\T)^d} \right).
\]
Taking into account that $P^\h_\mu(0)^*=P^\h_\mu(0)^* P^\h(0)^*$, we get\footnote{Remark that if $K$ is a matrix and $f\in (H^{-s})^d$, then $\|Kf\|_{H^{-s}}\leq |K|\|f\|_{H^{-s}}$. Indeed, noting $\langle\cdot,\cdot\rangle$ the duality between $H^s_0$ and $H^{-s}$, we have for every $g\in H^{s}_0$, $\langle Kf,g\rangle = \langle f, K^*g\rangle\leq \|f\|_{H^{-s}} \|K^* g_0\|_{H^s_0}\leq\|f\|_{H^{-s}}|K^*| \| g_0\|_{H^s_0} $, and taking the supremum over $\|g\|_{H^s_0} = 1$, we do have $\|Kf\|_{H^{-s}}\leq |K^*|\|f\|_{H^{-s}}$.} 
\[\| P^\h_\mu(0)^* g \|_{H^{-(2d_1+1)}(q_{T'})^d}
\leq |P^\h_\mu(0)^*| \| P^\h(0)^* g \|_{H^{-(2d_1+1)}(q_{T'})^d} \leq C \|g_1\|_{H^{-(2d_1+1)}(q_{T'})^{d_1}}.\]
Using (\ref{DecompVarphiTHypJ}), the triangular inequality and the previous two estimates, we obtain
\begin{align}
&\|g_0\|_{H^{-(2d_1+1)}(\T)^d}  \notag\\
& \leq \sum\limits_{\mu \in \Sp(A')} \|G_\mu(0,\cdot)\|_{H^{-(2d_1+1)}(\T)^d} 
\leq C \left( \|g_1\|_{H^{-(2d_1+1)}(q_{T'})^{d_1}} + \| g_0\|_{H^{-(2d_1+2)}(\T)^d} \right).
\label{IO_faible_2d1+1}
\end{align}
Proceeding as in the end of the proof given in \Cref{subsec:Hyp_NC}, the inequality (\ref{IO_faible_2d1+1}), together with  a compactness-uniqueness argument, end Step 2.

\paragraph{Step 4: We prove that the solution $G_\mu^\flat$ of \eqref{eq_gmudiese} satisfies (\ref{Obs_H(-2d1-1)_transport})}.
By duality, it is actually enough to prove the following exact-controllability result.
\begin{prop}
\label{PropExactControllabilityResultTransport}
Let $\omega = (a,b)$ and $T' > \frac{2 \pi -(b-a)}{|\mu|}$. For every $(f_0, f_{T'}) \in (H^{2d+1}(\T)^{d})^2$, there exists $u \in H_0^{2d_1+1}(q_T)^{d}$ such that the solution $f$ of 
\begin{equation}
\label{SystAdjSAsympBis}
\left\{
\begin{array}{l l}
 \partial_t f +  \mu \partial_x f = u 1_{\omega} &\text{in}\ Q_{T'},\\
f(0,\cdot)=f_{0}& \text{in}\ \T,
\end{array}
\right.
\end{equation}
satisfies $f(T',\cdot)=f_{T'}$.
\end{prop}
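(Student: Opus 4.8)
The plan is to reduce \Cref{PropExactControllabilityResultTransport} to a null controllability statement for a scalar transport equation on $\T$, and then to build the control explicitly along the characteristics. Since $\mu$ is a scalar, the operator $\mu\partial_x$ acts diagonally and~\eqref{SystAdjSAsympBis} decouples into $d$ independent scalar transport equations, so it suffices to treat the scalar case with $f_0,f_{T'}\in H^{2d+1}(\T)$ and $u\in H_0^{2d_1+1}(q_{T'})$. Let $f^{\flat}(t,x)=f_0(x-\mu t)$ be the free solution issued from $f_0$, and set $g\coloneqq f-f^{\flat}$. Then $(f,u)$ solves the control problem if and only if $g(0,\cdot)=0$, $\partial_t g+\mu\partial_x g=u1_\omega$ on $Q_{T'}$, and $g(T',\cdot)=h$, where $h(x)\coloneqq f_{T'}(x)-f_0(x-\mu T')\in H^{2d+1}(\T)$. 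The time reversal $s=T'-t$ turns this into the null controllability, in time $T'$, of $\partial_s w-\mu\partial_x w=v1_\omega$ from the datum $h$, with a control $v\in H_0^{2d_1+1}(q_{T'})$ (time reversal leaves $H_0^{2d_1+1}$ invariant). Thus it is enough to prove: \emph{for $T'>\ell(\omega)/|\mu|$ and every $w_0\in H^{2d+1}(\T)$ there is $v\in H_0^{2d_1+1}(q_{T'})$, supported in a compact subset of $q_{T'}$, with the solution of $\partial_t w+\mu\partial_x w=v1_\omega$, $w(0,\cdot)=w_0$, satisfying $w(T',\cdot)=0$} (we keep writing $\mu$ for the velocity).

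\paragraph{Construction of the control.} Let $\Phi_t(x)=x+\mu t$ be the transport flow on $\T$. I would look for $w$ of the form $w(t,x)=\alpha(t,\Phi_{-t}(x))\,w_0(\Phi_{-t}(x))$, i.e. the free solution modulated by an amplitude $\alpha(t,y)$ carried by the characteristic of label $y$. A direct computation gives $(\partial_t+\mu\partial_x)w(t,x)=(\partial_t\alpha)(t,\Phi_{-t}(x))\,w_0(\Phi_{-t}(x))$, while $w(0,\cdot)=\alpha(0,\cdot)w_0$ and $w(T',\cdot)=\alpha(T',\cdot)\,w_0\circ\Phi_{-T'}$. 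So one requires $\alpha(0,\cdot)\equiv 1$, $\alpha(T',\cdot)\equiv 0$, $\alpha$ flat in $t$ at $t=0$ and $t=T'$, and $\partial_t\alpha(t,y)=0$ whenever $\Phi_t(y)\notin\widehat\omega$ for a fixed open set $\widehat\omega$ with $\widehat\omega\subset\subset\omega$, so that the control is supported in $\widehat\omega$. The hypothesis $T'>\ell(\omega)/|\mu|$ enters exactly here: one may choose $\widehat\omega$ close enough to $\omega$ and $\varepsilon>0$ small enough that every characteristic $t\mapsto\Phi_t(y)$ spends on $[\varepsilon,T'-\varepsilon]$ a time interval inside $\widehat\omega$, of length bounded below uniformly in $y$; this is the room needed to make $\alpha$ drop from $1$ to $0$ while being frozen outside $\widehat\omega$ and still depend smoothly on $(t,y)$. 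The existence of such an $\alpha$ (using that $\omega$ is an interval) is precisely the content of \cite[Lemma~2.6]{alabau-boussouira_2017} (see also \Cref{LemmaCutOffTransport}). The resulting control $v(t,x)=(\partial_t\alpha)(t,\Phi_{-t}(x))\,w_0(\Phi_{-t}(x))\,1_\omega$ is then supported in $[\varepsilon,T'-\varepsilon]\times\widehat\omega\subset\subset q_{T'}$; since $\alpha\in C^\infty$ and $w_0\in H^{2d+1}(\T)$ with $2d+1\geq 2d_1+3$, it belongs to $H_0^{2d_1+1}(q_{T'})$, and the map $w_0\mapsto v$ is continuous from $H^{2d+1}(\T)$ into $H_0^{2d_1+1}(q_{T'})$. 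This establishes the reduced statement, hence \Cref{PropExactControllabilityResultTransport}.

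\paragraph{Main obstacle.} The reduction to null controllability and the final Sobolev bookkeeping are routine; the delicate point is the construction of the amplitude $\alpha$, where one must reconcile three demands — $\alpha$ goes from $1$ to $0$ in time, $\partial_t\alpha$ vanishes with all its derivatives at $t=0$ and $t=T'$, and $\partial_t\alpha$ is supported, along characteristics, only where they meet a compact subset of $\omega$ — while keeping $\alpha$ jointly smooth in $(t,y)$. This is exactly where the strict inequality $T'>\ell(\omega)/|\mu|$ and the interval structure of $\omega$ are used, and is the technical input borrowed from~\cite{alabau-boussouira_2017}; the alternative of invoking a general exact-controllability theorem for transport with a density/continuity argument would work as well but is less transparent about the regularity of the control.
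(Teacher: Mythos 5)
Your proof is correct and rests on the same key input as the paper's, namely \Cref{LemmaCutOffTransport}: the paper defines the exact-controllability control directly via $u(t,x)=\eta(t,x)\,Q_{x-\mu t}^{-1}\bigl(f_{T'}(\cdot)-f_0(\cdot-\mu t)\bigr)$, whereas you first subtract the free solution and reverse time, then build the same object as an amplitude modulation $\alpha(t,y)=1-Q_y^{-1}\int_0^t\eta(s,y+\mu s)\,ds$ carried along the characteristics — the two are the same computation, only packaged differently. One small imprecision worth fixing: the existence of your $\alpha$ with the stated boundary/flatness/support properties is not literally the content of the lemma (which produces $\eta$ with $Q_y\neq 0$); one should write the one-line formula above turning $\eta$ into $\alpha$.
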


To prove \Cref{PropExactControllabilityResultTransport}, we will use the following lemma, which is an easy adaptation of \cite[Lemma 2.6]{alabau-boussouira_2017}.
\begin{lemma}\label{LemmaCutOffTransport}
Let $\omega = (a,b)$ and $T' > \frac{2 \pi -(b-a)}{|\mu|}$. Then, there exists $\delta >0$ small enough and a cut-off function $\eta \in C^{\infty}([0,T']\times [0, 2\pi])$ with
\begin{equation}
\label{supporteta}
\eta = 0\ \text{in}\ [0,T']\times[0,2\pi] \setminus ((\delta, T'-\delta)\times (a+\delta, b-\delta)),
\end{equation}
such that, for every $x \in [0, 2 \pi]$, 
\begin{equation}
\label{defQx}
Q_x \coloneqq \int_0^{T'} \eta(s, x + \mu s) ds \neq 0.
\end{equation}
\end{lemma}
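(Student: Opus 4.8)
The plan is to build $\eta$ explicitly as a tensor product $\eta(s,y)=\theta_1(s)\,\theta_2(y)$ of nonnegative smooth bump functions and to verify \eqref{defQx} by a short covering argument on the characteristics of $\partial_t+\mu\partial_x$. After a rotation of the torus we may assume $0<a<b<2\pi$, and since only $|\mu|$ will play a role (the case $\mu<0$ reduces to $\mu>0$ by the symmetry $s\leftrightarrow T'-s$) we may assume $\mu>0$. Recall the characteristic through $(0,x)$ is $s\mapsto x+\mu s \bmod 2\pi$; the content of \eqref{defQx} is that every such characteristic, restricted to a time window slightly inside $[0,T']$, visits a sub-arc of $\omega$ slightly smaller than $\omega$.

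The key elementary step is the choice of $\delta$. Since $\mu T'>2\pi-(b-a)$, we may fix $\delta\in\bigl(0,(b-a)/2\bigr)$ small enough that
\[
\mu\,(T'-4\delta)\;>\;2\pi-(b-a)+4\delta .
\]
For any $x\in\R$, the set $\{\,x+\mu s \bmod 2\pi\ :\ s\in[2\delta,T'-2\delta]\,\}$ is the image in $\T$ of a real interval of length $\mu(T'-4\delta)$; since this length exceeds the length $2\pi-(b-a)+4\delta$ of the closed arc $\T\setminus(a+2\delta,b-2\delta)$, this set cannot be contained in that arc. Hence there is some $s^\ast=s^\ast(x)\in[2\delta,T'-2\delta]$ with $x+\mu s^\ast\in(a+2\delta,b-2\delta)\bmod 2\pi$. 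This is the only place where the strict inequality $T'>\tfrac{2\pi-(b-a)}{|\mu|}$ is used: it provides precisely the room to shrink the box by $\delta$ in each variable while still forcing every characteristic through the interior region, and it is the main (though mild) obstacle in the argument.

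Next I would choose $\theta_1\in C^\infty_c\bigl((\delta,T'-\delta);[0,\infty)\bigr)$ with $\theta_1>0$ on $[2\delta,T'-2\delta]$, and $\theta_2\in C^\infty_c\bigl((a+\delta,b-\delta);[0,\infty)\bigr)$, extended $2\pi$-periodically to a smooth function on $\T$, with $\theta_2>0$ on $[a+2\delta,b-2\delta]$. Setting $\eta(s,y)=\theta_1(s)\,\theta_2(y)$ gives $\eta\in C^\infty([0,T']\times[0,2\pi])$, $\eta\ge 0$, and $\eta\equiv 0$ outside $(\delta,T'-\delta)\times(a+\delta,b-\delta)$, which is exactly \eqref{supporteta}.

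Finally, to obtain \eqref{defQx}, fix $x\in[0,2\pi]$ and let $s^\ast$ be as in the second step. The map $s\mapsto\eta(s,x+\mu s)$ is continuous and nonnegative on $[0,T']$, and it is strictly positive at $s=s^\ast$ since $\theta_1(s^\ast)>0$ and $\theta_2(x+\mu s^\ast)>0$; hence it is positive on a neighborhood of $s^\ast$, so $Q_x=\int_0^{T'}\eta(s,x+\mu s)\,\diff s>0$. The remaining checks — smoothness of $\eta$ on $[0,T']\times[0,2\pi]$, the support condition, and the reduction from $\mu<0$ — are routine.
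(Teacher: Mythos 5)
Your proof is correct. The paper does not give its own proof of this lemma (it only declares it ``an easy adaptation of [Lemma 2.6, alabau-boussouira\_2017]''), and your tensor-product cut-off $\eta(s,y)=\theta_1(s)\theta_2(y)$ together with the arc-length covering argument for the characteristics is exactly the standard and expected construction; the choice of $\delta$ with $\mu(T'-4\delta)>2\pi-(b-a)+4\delta$, the reduction to $\mu>0$ by $s\mapsto T'-s$, and the reason the periodic extension of $\theta_2$ is smooth are all correctly handled. (One tiny detail: your stated bound $\delta<(b-a)/2$ should implicitly be $\delta<(b-a)/4$ so that $[a+2\delta,\,b-2\delta]$ is nonempty, but the phrase ``small enough'' covers it.)
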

\begin{remark}
We assumed that the function $\eta$ is extended by $2\pi$-periodicity in the spatial variable.
\end{remark}
Now, we give the proof of \Cref{PropExactControllabilityResultTransport} thanks to \Cref{LemmaCutOffTransport}.
\begin{proof}[Proof of \Cref{PropExactControllabilityResultTransport}]\renewcommand{\qedsymbol}{$\lozenge$}
We take the control
\begin{equation}
\label{Defcontrolu}
u(t,x) = \eta(t,x) Q_{x-\mu t}^{-1}( f_{T'}(x)-f_{0}(x-\mu t)).
\end{equation}
We easily check that the control $u$ belongs to $H_0^{k}(q_T)$ by using the support of $\eta$ \eqref{supporteta}, and the regularity of the three functions $\eta$, $f_{T'}$ and $f_0$. Let $f$ be the solution of \eqref{SystAdjSAsympBis} with initial data $f_0$ and control $u$ defined in \eqref{Defcontrolu}. We just have to check that $f$ satisfies $f(T',\cdot)=f_{T'}$. We write the solution along the characteristic, that is to say
\[ \frac{\diff}{\diff t} f(t, x + \mu t) = u(t,x+t) =  \eta(t,x+\mu t) Q_x^{-1}( f_{T'}(x+\mu t)-f_{0}(x)).\]
By integrating in space between $0$ and $T'$ and by using the defintion of $Q_x$ \eqref{defQx}, we obtain
\[ f(T', \cdot + \mu T') - f(0,\cdot) = f_{T'}(\cdot+ \mu T') - f_0(\cdot),\]
then $f(T',\cdot) = f_{T'}$ which concludes the proof of \Cref{PropExactControllabilityResultTransport}.
\end{proof}
This ends the proof of \Cref{LemHyp_BISPar_bis}.\qed

\begin{appendices}

\section{Pure transport solutions are not enough to disprove the observability inequality}\label{app:transport}

\begin{prop}\label{th:transport?}
Let us assume that the $d\times d^2$ matrix 
\begin{equation*}
\left(\begin{array}{c|c|c|c}
B  & AB & \cdots & A^{d-1} B
\end{array}\right)
\end{equation*}
has rank $=d$, or, equivalently, that there is no eigenvector of $A^*$ in the kernel of $B^*$ (see for instance~\cite[Lemma~1]{beauchard_2011}).
Let  $\mu\in \set R$ and $T>0$. There exists $C=C(\mu,T)>0$\footnote{With the help of \Cref{th:perturb}, we could even prove that $C(\mu,T)$ can be chosen indepentantly of $\mu$.} such that every solution of the adjoint system~\eqref{SystAdj} of the form $g(t,x) = g_0(x-\mu t)$ satisfies $\|g(T,\cdot)\|_{L^2(\set T)^d}\leq C \|g\|_{L^2([0,T]\times\omega)^d}$.
\end{prop}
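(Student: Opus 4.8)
The statement is a quantitative unique-continuation-type estimate: a pure transport solution $g(t,x)=g_0(x-\mu t)$ of the adjoint system \eqref{SystAdj} must, in order to actually solve that system, have its $L^2(\T)^d$ norm controlled by its restriction to $[0,T]\times\omega$. The starting point is to plug the transport ansatz into \eqref{SystAdj} and see what constraint it forces. Writing $g_0(y)=\sum_{n} \widehat{g}_0(n)\eu^{\iu ny}$, we have $g(t,x)=\sum_n \widehat{g}_0(n)\eu^{\iu n(x-\mu t)}$, and each Fourier mode solves an ODE: for $g$ to solve $\partial_t g - \Tr{B}\partial_x^2 g - \Tr A \partial_x g + \Tr K g = 0$ we need, for every $n\in\Z$,
\[
\big(-\iu n\mu + n^2\Tr B - \iu n \Tr A + \Tr K\big)\widehat g_0(n) = 0,
\]
i.e.\ $\widehat g_0(n)\in\ker\!\big(n^2\Tr B - \iu n(\mu I_d+\Tr A) + \Tr K\big)$. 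First I would argue that under the Kalman-type hypothesis on $(A,B)$ (equivalently, no eigenvector of $A^*$ in $\ker B^*$), this kernel is $\{0\}$ for all but finitely many $n$. Indeed, dividing by $n^2$ and using \eqref{defBh}-style reasoning, the matrix is $n^2\big(\Tr B - \tfrac{\iu}{n}(\mu I_d + \Tr A) + \tfrac{1}{n^2}\Tr K\big) = n^2 \widetilde E(\iu/n)$ where $\widetilde E(z) = \Tr B + z\,\Tr{(\mu I_d + A)} - z^2\Tr K$ is the perturbation matrix of Section~\ref{subsubsec:PT} (for the adjoint, shifted transport speed). Its hyperbolic branch near $z=0$ is singular exactly when $z$ is an ``eigenvalue-like'' parameter; more elementarily, $\det(n^2\Tr B - \iu n(\mu I_d+\Tr A)+\Tr K)$ is a polynomial in $n$ which is not identically zero precisely because the leading behaviour as $n\to\infty$ is governed by $\det$ of the pencil $\Tr B - \tfrac\iu n(\cdots)$, which vanishes to finite order thanks to the rank/no-common-eigenvector hypothesis. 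So there is $n_0=n_0(\mu)$ with $\widehat g_0(n)=0$ for $|n|>n_0$.

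Consequently every transport solution $g$ of \eqref{SystAdj} is a trigonometric polynomial of degree at most $n_0$: $g(t,x)=\sum_{|n|\le n_0}\widehat g_0(n)\eu^{\iu n(x-\mu t)}$, living in a fixed finite-dimensional space $V=V(\mu)\subset L^2(\T)^d$ of dimension $\le (2n_0+1)d$. On this finite-dimensional space the two seminorms $g_0\mapsto\|g(T,\cdot)\|_{L^2(\T)^d}$ and $g_0\mapsto\|g\|_{L^2([0,T]\times\omega)^d}$ are both continuous; to get the inequality with a constant it suffices to prove that the second one is actually a norm on $V$, i.e.\ that $g\equiv 0$ on $[0,T]\times\omega$ forces $g_0=0$. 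This is the Lebeau--Robbiano / trigonometric-polynomial unique continuation: a trigonometric polynomial of degree $\le n_0$ vanishing on a nonempty open interval of $\T$ vanishes identically (it is the boundary value of a holomorphic function, or one uses \eqref{eq:spectral} directly). Since $g(t,\cdot)$ is for each $t$ such a polynomial and $\omega$ is open, $g(t,\cdot)=0$ on $\omega$ implies $g(t,\cdot)=0$ on all of $\T$ for a.e.\ $t$, hence $g_0=0$. Then by equivalence of norms on the finite-dimensional space $V(\mu)$ (and picking up the $T$-dependence through the time integration), we obtain the desired $C=C(\mu,T)$.

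I would write the argument in three steps: (i) derive the Fourier constraint and invoke the no-common-eigenvector hypothesis to cap the frequencies at $n_0(\mu)$, citing the pencil computation (or \Cref{th:perturb}, as the footnote suggests, to see $n_0$ can be taken uniform in $\mu$); (ii) observe $g$ ranges in the finite-dimensional $V(\mu)$ and reduce the claimed inequality to the nondegeneracy of $\|\cdot\|_{L^2([0,T]\times\omega)^d}$ on $V(\mu)$; (iii) prove that nondegeneracy via unique continuation for trigonometric polynomials on the interval $\omega$, using \eqref{eq:spectral}. The main obstacle is step (i): one must be careful that $\det(n^2\Tr B-\iu n(\mu I_d+\Tr A)+\Tr K)$ is genuinely not the zero polynomial. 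This is where the rank-$d$ hypothesis on $(B\mid AB\mid\cdots\mid A^{d-1}B)$ enters — equivalently, no eigenvector of $A^*$ lies in $\ker B^*$ — guaranteeing that the highest-degree term that survives has a nonzero coefficient, so the polynomial has only finitely many roots; the cleanest route is to note that if it vanished identically then for every $n$ there is a nonzero $\widehat g_0(n)$ in the kernel, and letting $n\to\infty$ and normalizing, a limiting vector would be a common element of $\ker\Tr B$ that is ``asymptotically an eigenvector of $\Tr A$'', contradicting the hypothesis (this is exactly the content of \cite[Lemma~1]{beauchard_2011}, which the statement already invokes). Everything after that is soft finite-dimensional linear algebra plus the standard polynomial unique continuation, so no heavy computation is needed.
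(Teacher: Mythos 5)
Your proof is correct and follows essentially the same route as the paper: derive the Fourier-mode constraint, use the normalize-and-pass-to-the-limit compactness argument (your ``cleanest route'') to show that the no-common-eigenvector hypothesis forces all but finitely many Fourier coefficients to vanish, then conclude by unique continuation for trigonometric polynomials and equivalence of norms on the finite-dimensional space $\Sol_\mu$. The preliminary determinant-of-the-pencil discussion is a red herring you rightly abandon, but the compactness limit and the finite-dimensional reduction are exactly what the paper does.
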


This statement shows that, for a dense set of matrices $(A,B)$ pure transport solutions of the adjoint system~\eqref{SystAdj} cannot be used to disprove the observability inequality (\ref{ObsHumG}), and thus the null controllability of (\ref{Syst}).

\newcommand{\Sol}{Sol}
\begin{proof}
Let us note $\Sol_\mu$ the set of solutions of the adjoint system~\eqref{SystAdj} of the form $g_0(x-\mu t)$.
Remark that according to the expression~\eqref{eq:sol_adj} of the solutions of the adjoint system, the relation $g_0\in\Sol_\mu$ is equivalent to
\begin{equation}\label{eq:transport?fourier}
\forall n\neq 0,\; nE\left(\frac\iu n\right)^{\!\!*}\hat g_0(n) = \iu \mu \hat g_0(n).
\end{equation}

We claim that $\Sol_\mu$ is finite dimensional. Indeed, if it is infinite dimensional, then, according to the relation~\eqref{eq:transport?fourier}, there is infinitely many $n$ such that $\iu \mu$ is an eigenvalue of $nE(\iu/n)$. Let $(X_{n_k})_{k\geq 0}$ be an associated sequence of eigenvectors, chosen such that $|X_{n_k}| = 1$. Since the unit sphere of $\set C^d$ is compact, we may assume that $(X_{n_k})$ converges to some $X$ with $|X|= 1$. Then we have
\[
n_k B^* X_{n_k} -\iu A^* X_{n_k} + \frac1{n_k} K^*X_{n_k} = n_kE\left(\frac\iu{n_k} \right)^{\!\!*} X_{n_k} = \iu \mu X_{n_k} \xrightarrow[k\to +\infty]{} \iu \mu X.
\]
And since $-\iu A^* X_{n_k} +(n_k)^{-1} K^*X_{n_k} \xrightarrow[k\to+\infty]{} -\iu A^* X$, we must have $B^*X = 0$ and $A^*X = -\mu X$. But this is in contradiction with the hypothesis of the Proposition. Therefore $\Sol_\mu$ is finite dimensional.

So, according to the description~\eqref{eq:transport?fourier} of $\Sol_\mu$, there exists $N>0$ such that every solution of the adjoint system~\eqref{SystAdj} of the form $g_0(x-\mu t)$ has no frequencies higher than $N$: $\Sol_\mu \subset \Span\{e_n,\:|n|<N\}$. But finite linear combination of exponentials have the unique continuation property.\puncfootnote{For instance because they are entire functions, and entire functions have the unique continuation property.} So the expressions $\|g_0(\cdot - \mu T)\|_{L^2(\T)^d}$ and $\|g_0(x-\mu t)\|_{L^2([0,T]\times \omega)^d}$ both define a norm on $\Sol_\mu$. Since $\Sol_\mu$ is finite dimensional, these two norms are equivalent. This proves the claimed inequality.
\end{proof}

\subparagraph{Acknowledgments:} The second author would like to thank his Ph.\ D.\  advisor, Gilles Lebeau, for numerous discussions and in particular for pointing us~\cite{lebeau_1998}. He also thanks Émmanuel Trélat for helping us with some duality arguments.

 The second author was partially supported by the ERC advanced grant SCAPDE, seventh framework program, agreement no. 320845.

\bibliographystyle{plain}
\bibliography{bib}
\end{appendices}

\end{document}